\def\namedlabel#1#2{\begingroup
    #2%
    \def\@currentlabel{#2}%
    \phantomsection\label{#1}\endgroup
}
\theoremstyle{plain}
\newtheorem{theorem}{Theorem}[section]
\newtheorem{proposition}[theorem]{Proposition}
\newtheorem{corollary}[theorem]{Corollary}
\newtheorem{lemma}[theorem]{Lemma}
\theoremstyle{definition}
\newtheorem{definition}[theorem]{Definition}
\newtheorem{remark}[theorem]{Remark}
\newtheorem{example}[theorem]{Example}
\def\bC{\mathbb{C}}
\def\bF{\mathbb{F}}
\def\bK{\mathbb{K}}
\def\bQ{\mathbb{Q}}
\def\bZ{\mathbb{Z}}
\def\A{\mathbf{A}}
\def\F{\mathbf{F}}
\def\H{\mathbf{H}}
\def\P{\mathbf{P}}
\def\cA{\mathcal{A}}
\def\cB{\mathcal{B}}
\def\cC{\mathcal{C}}
\def\cD{\mathcal{D}}
\def\cF{\mathcal{F}}
\def\cG{\mathcal{G}}
\def\cO{\mathcal{O}}
\def\cP{\mathcal{P}}
\def\cQ{\mathcal{Q}}
\def\cR{\mathcal{R}}
\def\cU{\mathcal{U}}
\def\fA{\mathfrak{A}}
\def\fB{\mathfrak{B}}
\def\fG{\mathfrak{G}}
\def\fF{\mathfrak{F}}
\def\fG{\mathfrak{G}}
\def\fH{\mathfrak{H}}
\def\fK{\mathfrak{K}}
\def\fL{\mathfrak{L}}
\def\fM{\mathfrak{M}}
\def\fP{\mathfrak{P}}
\def\fQ{\mathfrak{Q}}
\def\fS{\mathfrak{S}}
\def\fe{\mathfrak{e}}
\def\fp{\mathfrak{p}}
\def\fq{\mathfrak{q}}
\def\deg{\mathbf{deg}}
\def\Gal{\mathrm{Gal}}
\def\ulim{\mathbf{ulim}}
\def\alg{\mathbf{alg}}
\def\sep{\mathbf{sep}}
\def\card{\mathrm{card}}
\def\ultra{\mathrm{ultra}}
\def\icard{\mathrm{icard}}
\begin{document}

\title[Ultra-Galois theory and the Kronecker--Weber theorem]{Ultra-Galois theory and an analogue of the Kronecker--Weber theorem for rational function fields over ultra-finite fields}

\author{Dong Quan Ngoc Nguyen}

\address{Department of Mathematics \\
University of Maryland at College Park \\
College Park, MD 20742 USA}


\email{\href{mailto:dongquan.ngoc.nguyen@gmail.com}{\tt dongquan.ngoc.nguyen@gmail.com} \\
\href{mailto:dongquan@umd.edu}{\tt dongquan@umd.edu}}

\maketitle

\tableofcontents

\begin{abstract}
  
  In the first part of this paper, we develop a general framework that permits a comparison between explicit class field theories for a family of rational function fields $\bF_s(t)$ over arbitrary constant fields $\bF_s$ and explicit class field theory for the rational function field $\fK(t)$ over the nonprincipal ultraproduct $\fK$ of the constant fields $\bF_s$. Under an additional assumption that the constant fields $\bF_s$ are perfect procyclic fields, we prove a correspondence between ramifications of primes $P$ in $\fK(t)$ and ramifications of primes $P_s$ in $\bF_s(t)$, where the $P_s$ are primes in $\bF_s(t)$ whose nonprincipal ultraproduct coincides with $P$. 
  
  In the second part of the paper, we are mainly concerned with rational function fields over a large class of fields, called $n$-th level ultra-finite fields that are a generalization of finite fields. At the $0$-th level, ultra-finite fields are simply finite fields, and for an arbitrary positive integer $n$, an $n$-th level ultra-finite field is inductively defined as a nonprincipal ultraproduct of $(n - 1)$-th level ultra-finite fields. We develop an analogue of cyclotomic function fields for rational function fields over $n$-th level ultra-finite fields that generalize the works of Carlitz and Hayes for rational function fields over finite fields such that these cyclotomic function fields are in complete analogy with the classical cyclotomic fields $\bQ(\zeta)$ of the rationals $\bQ$. The main result in the second part of the paper is an analogue of the Kronecker--Weber theorem for rational function fields over $n$-th level ultra-finite fields that explicitly describes, from a model-theoretic viewpoint, the maximal abelian extension of the rational function field over a given $n$-th level ultra-finite field for all $n \ge 1$.

\end{abstract}

\section{Introduction}

The main goal of class field theory, a fundamental branch of algebraic number theory, is to describe and classify all the abelian extensions of a given field $F$. Among the classical examples are those where $F$ is a number field--a finite extension of the rationals $\bQ$, a function field of one variable over a finite field $\bF_q$ such as the rational function field $\bF_q(t)$, or a local field such as the field of $p$-adic numbers $\bQ_p$. 

\textit{Explicit class field theory for a given field $F$} asks for the construction of the \textit{maximal abelian extension} of $F$, i.e., it asks for a description of all the abelian extensions of $F$, using only intrinsic information of $F$. The celebrated 
Kronecker--Weber theorem (see Washington \cite{washington}) describes a complete picture of explicit class field theory for the field $\bQ$ of rational numbers, i.e., every finite abelian extension of $\bQ$ is contained in a cyclotomic field $\bQ(\zeta)$ for some root of unity $\zeta$. The theorem was first stated by Kronecker \cite{kronecker} in 1853, but his argument was not complete for abelian extensions of degree $2^n$ for some positive integer $n$. Weber \cite{Weber-KW} provided a proof of the theorem in 1886, but his proof also contained some gaps and errors that were pointed out and corrected by Neumann \cite{neumann-kronecker-weber}. The first complete proof of the Kronecker--Weber theorem was given by Hilbert \cite{Hilbert} in 1896. 
Hilbert's 12th Problem, one of the $23$ mathematical Hilbert problems, asks for the extension of the Kronecker--Weber theorem on abelian extensions of the rationals $\bQ$ to a more general algebraic number field.

Carlitz \cite{Carlitz-1935, Carlitz-1938} constructed a large class of abelian extensions over rational function fields $\bF_q(t)$ over finite fields $\bF_q$ that are an analogue of cyclotomic fields $\bQ(\zeta)$ in the function field context, using  what is now known as the \textit{Carlitz module}, the prototypical \textit{Drinfeld module} (see Goss \cite{goss}, Rosen \cite{rosen}, or Thakur \cite{thakur} for an exposition of Drinfeld modules; we will recall some basic notions concerning the Carlitz module and cyclotomic function fields, an analogue of cyclotomic fields $\bQ(\zeta)$, for rational function fields $\bF_q(t)$). Building on the work of Carlitz, and viewing the abelian extensions of $\bF_q(t)$ that are constructed by Carlitz from the 1930s, as an analogue of the classical cyclotomic fields, Hayes \cite{hayes} provided an explicit class field theory for rational function fields $\bF_q(t)$ over finite fields $\bF_q$, and proved an analogue of the classical Kronecker--Weber theorem in the function field context. Drinfeld \cite{drinfeld1, drinfeld2} and Hayes \cite{hayes-global-FF} independently describe explicit class field theory for an arbitrary global field over finite fields. The works of Drinfeld and Hayes completely solve Hilbert's 12th Problem for global function fields over finite fields.

The main aim of this paper is to prove an analogue of the Kronecker--Weber theorem for rational function fields over a large class of fields, called \textit{$n$-th level ultra-finite fields} that are a generalization of finite fields. At the $0$-th level, ultra-finite fields are simply finite fields. A $1$-st level ultra-finite field, or for brevity, an ultra-finite field is a nonprincipal ultraproduct of finite fields with respect to a given nonprincipal ultrafilter (see Subsection \ref{subsec-ultraproducts}). For a general positive integer $n$, an $n$-th level ultra-finite field is inductively defined as a nonprincipal ultraproduct of $(n -1)$-th level ultra-finite fields. Since an ultraproduct of the same finite field $\bF_q$ is $\bF_q$ itself (see Bell--Slomson \cite[Lemma 3.7]{bell-slomson}), every finite field is an $n$-th level ultra-finite field for all positive integers $n$. Therefore rational function fields over $n$-th level ultra-finite fields can be viewed as a generalization of rational function fields over finite fields.

From a model-theoretic point of view, an ultra-finite field is an infinite model of the theory of all finite fields that was started and systematically studied by Ax \cite{ax-1968}. In view of this relation, $n$-th level ultra-finite fields share many analogous properties with finite fields, and it is natural to expect an explicit class field theory for rational function fields over $n$-th level ultra-finite fields in the same spirit as in the works of Drinfeld and Hayes for rational function fields over finite fields. We state a simplified version of one of the main results in this paper that can be viewed as an analogue of the Kronecker--Weber theorem for rational function fields over an ultra-finite field. 

\begin{theorem}
\label{thm-Theorem-A-in-the-Introduction}
$(\text{See Theorems \ref{thm-greatest-thm-an-analogue-of-KW-for-F=fK(t)} and \ref{thm-the-greatest-thm-KW-thm-for-function-fields-over-nth-level-ultra-finite-fields}})$

Let $S$ be an infinite set, and let $\cD$ be a nonprincipal ultrafilter on $S$. Let $\bF_s$ be a finite field of $q_s$ elements, and let $\fF_s = \bF_s(t)$ be the function field of one variable over $\bF_s$ for $s \in S$. Let $\fK$ be the ultraproduct of the finite fields $\bF_s$ with respect to $\cD$, and let $\F = \fK(t)$ be the function field of one variable over $\fK$.

Let $\cA_s$ be the maximal abelian extension of $\fF_s$ that is explicitly constructed as in the work of Hayes \cite{hayes} for $s \in S$, and let $\cA$ be the ultraproduct of the $\cA_s$ with respect to $\cD$. Then the maximal abelian extension of $\F = \fK(t)$ is precisely the set of all elements in $\cA$ that are algebraic over $\F$.

\end{theorem}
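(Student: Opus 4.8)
The plan is to prove the theorem in two logically separable pieces, mirroring the two-part structure of the paper: first establish that the ultraproduct $\cA$ of the Hayes maximal abelian extensions $\cA_s$ contains (the algebraic closure inside it of) an abelian extension of $\F = \fK(t)$, and conversely that every finite abelian extension of $\F$ sits inside $\cA$; then extract the \emph{algebraic part} of $\cA$ over $\F$ and show it is exactly the maximal abelian extension. The key technical tool throughout is \L os's theorem: a first-order sentence holds in the ultraproduct if and only if it holds for $\cD$-almost all indices $s$. The first reduction I would carry out is to set up the dictionary between primes. Given a prime $P$ of $\F$, by the correspondence established earlier in the paper (the comparison between ramification of $P$ in $\fK(t)$ and of primes $P_s$ in $\bF_s(t)$ whose ultraproduct is $P$), one transports questions about splitting, ramification and inertia from $\F$ down to almost all $\fF_s$, works there with Hayes's explicit class field theory, and then transports back.

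Next I would handle the ``hard'' containment: every finite abelian extension $L/\F$ lies in $\cA$. Let $L = \F(\alpha)$ with $\alpha$ a root of a monic polynomial $f(X) \in \F[X]$; clearing denominators we may take the coefficients of $f$ in $\fK[t]$, and each coefficient is itself represented by a sequence of polynomials in $\bF_s[t]$, so $f$ is the ultraproduct of monic polynomials $f_s \in \bF_s[t][X]$. The statement ``$f_s$ is irreducible, separable, and its splitting field over $\fF_s$ is abelian of degree $d$ with Galois group isomorphic to a fixed finite abelian group $G$'' is first-order in the language of rings with the coefficients of $f_s$ as parameters, so by \L os it holds for $\cD$-almost all $s$; for those $s$, the Hayes Kronecker--Weber theorem for $\fF_s$ puts the splitting field of $f_s$ inside $\cA_s$, i.e.\ the roots of $f_s$ lie in $\cA_s$. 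Taking the ultraproduct of these roots produces roots of $f$ lying in $\cA = \prod_{\cD} \cA_s$; since these roots are algebraic over $\F$, they land in the algebraic part of $\cA$ over $\F$, and hence $L \subseteq \cA^{\alg/\F}$. The main obstacle here, and the place I expect the real work to be, is the \emph{uniformity} issue: \L os's theorem gives abelianness and the isomorphism type of the Galois group for almost all $s$, but one must check that the Galois \emph{action} itself is ``definable uniformly'' enough that the ultraproduct of the $\Gal(\cdot/\fF_s)$-actions assembles into the genuine Galois action of $\Gal(L/\F)$ on the ultraproduct roots — in particular that no collapsing or extra automorphisms appear in the limit. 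This is exactly where the hypothesis that one is comparing with Hayes's \emph{explicitly constructed} $\cA_s$ (built from Carlitz cyclotomic function fields and constant field extensions, both of which are uniformly first-order describable) is essential, rather than an abstract maximal abelian extension.

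Finally, for the reverse containment I would argue that $\cA^{\alg/\F}$ is abelian over $\F$: any finite subextension $\F(\beta)/\F$ with $\beta \in \cA$ algebraic over $\F$ is the ultraproduct of subextensions $\fF_s(\beta_s)/\fF_s$ with $\beta_s \in \cA_s$, each of which is abelian since $\cA_s/\fF_s$ is; the sentence asserting that the minimal polynomial of $\beta_s$ over $\fF_s$ has abelian splitting field is again first-order, so by \L os the minimal polynomial of $\beta$ over $\F$ has abelian splitting field, whence $\F(\beta)$ lies in an abelian extension of $\F$ and $\cA^{\alg/\F}/\F$ is abelian. Combined with the previous paragraph, $\cA^{\alg/\F}$ is an abelian extension of $\F$ containing every finite abelian extension of $\F$, so it is \emph{the} maximal abelian extension. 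The two remaining routine points to nail down are: (i) that $\cA = \prod_{\cD}\cA_s$ is a field (clear, since each $\cA_s$ is) and that it is algebraically closed enough to contain all the roots we produced (it contains the ultraproducts of elements of the $\cA_s$ by construction); and (ii) that ``algebraic over $\F$'' is the correct notion of algebraic part here, i.e.\ that $\cA$ is not itself algebraic over $\F$ (it is not, as $\fK$ is already transcendental in a suitable sense over its prime field, but this does not affect the extraction of $\cA^{\alg/\F}$). I therefore expect the proof to reduce, after the prime-correspondence setup, to a careful application of \L os's theorem to each of: irreducibility/separability of defining polynomials, the isomorphism type and action of Galois groups, and the membership relation ``root of $f_s$ lies in $\cA_s$'' — with the Galois-action uniformity being the crux.
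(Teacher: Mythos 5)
There is a genuine gap, and you have mis-located it. You repeatedly write that \L{}o\'s's theorem lets you move a first-order property from ``$\cD$-almost all $\fF_s$'' to $\F = \fK(t)$, but \L{}o\'s relates the $\fF_s$ to the ultraproduct $\cU(\F) = \prod_{s\in S}\fF_s/\cD$, which strictly contains $\F$ and in fact contains infinitely many elements transcendental over $\F$. Your forward containment survives this, because the direction of the implication is benign: for $L/\F$ finite abelian with minimal polynomial $f$, the Galois group of the splitting field of $f$ over the \emph{larger} base $\cU(\F)$ embeds (by restriction to $L$) into $\Gal(L/\F)$, so abelianness passes upward to $\cU(\F)$ and then, by \L{}o\'s, down to almost every $\fF_s$, and Hayes puts the roots in $\cA_s$. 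But your reverse containment goes the wrong way. Given $\beta \in \cA$ algebraic over $\F$, \L{}o\'s tells you that the minimal polynomial of $\beta$ \emph{over $\cU(\F)$} has abelian splitting field \emph{over $\cU(\F)$}. You cannot conclude anything about the minimal polynomial $\mu$ of $\beta$ over $\F$: (i) $\mu$ could a priori factor over $\cU(\F)$, so the ultraproduct polynomial you control may be a proper factor with coefficients outside $\F$; and (ii) even when $\mu$ stays irreducible, the restriction map realizes $\Gal$ of the splitting field over $\cU(\F)$ only as a \emph{subgroup} of $\Gal$ over $\F$, and a group with an abelian subgroup need not be abelian. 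So the step ``whence $\F(\beta)$ lies in an abelian extension of $\F$'' is unsupported.

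What makes this work in the paper is Lemma~\ref{rem-irreducibility-is-the-same-in-F-and-U(F)}: a polynomial irreducible over $\F$ remains irreducible over $\cU(\F)$. This is not a formality; it is proved by a Newton polygon estimate at the infinite prime, bounding the degrees of coefficients of any putative factorization of $P_s$ uniformly in $s$, and it is the single hardest technical step in the framework. Once you have it, the minimal polynomial of $\beta$ over $\F$ coincides with the one over $\cU(\F)$, a primitive element for the Galois closure keeps its degree over $\cU(\F)$, and hence $\Gal(\text{Galois closure}/\F)$ and $\Gal(\text{Galois closure}/\cU(\F))$ are isomorphic (same finite order, one a subgroup of the other); only then does abelianness descend from $\cU(\F)$ to $\F$. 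The paper packages this as the theory of shadows (Proposition~\ref{prop-explicit-descriptions-of-shadows-and-ultra-shadows}, Theorem~\ref{thm-main-thm3-Galois-group-structures-of-shadows}) and then applies it through Corollaries~\ref{cor-main-corollary-1-abelian-Galois-group-of-L-alg-and-shadows} and~\ref{cor-relation-between-maximal-abelian-extensions-of-F_s-and-F}. The difficulty you flag --- uniformity/definability of the Galois action --- is a red herring here; the explicit root-permutation bookkeeping in Theorem~\ref{thm-main-thm3-Galois-group-structures-of-shadows} handles it routinely. What you are missing is the $\F$ versus $\cU(\F)$ distinction and the need for the irreducibility-transfer lemma that bridges it.
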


The above theorem even holds for rational function fields over a general $n$-th level ultra-finite field for all $n$. Theorem \ref{thm-Theorem-A-in-the-Introduction} follows from a \textit{general framework} that we develop to establish a relation between explicit class field theories for a family of rational function fields of one variable over general fields $\bF_s$ and explicit class field theory for the rational function field of one variable over the ultraproduct $\fK$ of the fields $\bF_s$. We describe the framework in more detail.

At the constant field level, $\bF_s$ is an arbitrary field of arbitrary characteristic for each $s \in S$, where $S$ always denotes an infinite set that is equipped with a nonprincipal ultrafilter $\cD$. Such an ultrafilter always exists, following the work of Tarski \cite{Tarski}. We form the ultraproduct $\fK$ of the fields $\bF_s$ with respect to $\cD$ (see Subsection \ref{subsec-ultraproducts} for a notion of ultraproducts). The ultraroduct $\fK$ is equipped with a field structure that inherits from the field structures of the fields $\bF_s$. Such an ultraproduct is called an \textit{ultra-field} \footnote{This terminology was first used in Schoutens \cite{schoutens}. Inspired by such usage, we always add the prefix ``ultra" to an algebraic structure to indicate an ultraproduct of sets equipped with the same algebraic structure such as groups, rings, fields, modules, etc. For example, an ultraproduct of groups will be called an \textit{ultra-group}.}  in this paper. 

The characteristic of $\fK$ depends on those of the fields $\bF_s$. More precisely, if there exists a prime $p > 0$ such that the set $\{s \in S\; | \; \text{$\bF_s$ is of characteristic $p$}\}$ belongs to the ultrafilter $\cD$, then $\fK$ is of the same characteristic $p$; otherwise $\fK$ is of characteristic $0$. 

For $s \in S$, let $\fF_s = \bF_s(t)$ denote the function field of one variable over $\bF_s$, and let $\F = \fK(t)$ be the function field of one variable over $\fK$. At the constant field level, \L{}o\'s' theorem, one of the most fundamental transfer theorem in model theory, implies that the family $(\bF_s)_{s\in S}$ satisfies a first-order property $(P)$ in the language of fields if and only if the ultraproduct $\fK$ satisfies the same first-order property. In extending the same argument to the family $(\fF_s)_{s\in S}$ of function fields, the function field $\F$ may not satisfy some first-order property $(P)$ that the family $(\fF_s)_{s\in S}$ satisfies since the ultraproduct of the function fields $\fF_s$, called the \textit{ultra-hull of $\F$} and denoted by $\cU(\F)$, properly contains $\F$. In fact there are infinitely many transcendental elements over $\F$ in the complement $\cU(\F)\setminus \F$, and thus if $(P)$ is a first-order property concerning algebraic extensions of $\F$ or algebraic elements over $\F$ (see Chatzidakis \cite[Section 2.1]{Chatzidakis} for examples of such properties), then $\F$ may fail to satisfy $(P)$. The aim of the framework that we develop, is to remedy this problem and thus permits a comparison between Galois extensions of function fields $\fF_s$ and Galois extension of $\F$. 

Let $\fL_s$ be an algebraic extension of $\fF_s$ for $s \in S$. In order to find a corresponding algebraic extension of $\F$ that shares analogous properties with the family $(\fL_s)_{s\in S}$, we fist group the fields $\fL_s$ together, and form the ultraproduct $\fL$ of the fields $\fL_s$. The ultra-field $\fL$ is an extension of the ultra-hull $\cU(\F)$ of $\F$ that contains information intrinsic to the family of extensions $\fL_s/\fF_s$. In order to avoid transcendental elements over $\F$ that arise when taking the ultraproduct of $\fL_s$, it is natural to consider only the set of elements of $\fL$ that are algebraic over $\F$, i.e., the set $\fL_{\alg} = \fL \cap \F^{\alg}$, where $\F^{\alg}$ denotes an algebraic closure of $\F$ that is suitably chosen such that the ultraproduct of given algebraic closures $\fF_s^{\alg}$ of $\fF_s$ contains $\F^{\alg}$. The set $\fL_{\alg}$ is clearly a subfield of $\F^{\alg}$, and an algebraic extension of $\F$ that we call the \textit{algebraic part of $\fL$ over $\F$}. A key result proved in Section \ref{sec-algebraic-part-of-an-ultra-finite-extension} (see Lemma \ref{rem-irreducibility-is-the-same-in-F-and-U(F)}) that shows that for a polynomial $P \in \F[x]$, the irreducibility of $P$ over $\F$ is the same as the irreducibility of $P$ over the ultra-hull $\cU(\F)$ of $\F$, permits proving that the algebraic part of the ultraproduct of Galois extensions of $\fF_s$ is also Galois over $\F$, i.e., $\fL_{\alg}$ is Galois over $\F$ if $\fL_s$ is Galois over $\fF_s$ for all $s \in S$. The procedure of taking algebraic parts provides a construction of certain Galois extensions of $\F$. It is natural to ask whether \textit{this procedure exhausts all Galois extensions of $\F$ in a given separable closure of $\F$}. We answer this question in the affirmative by developing a \textit{theory of shadows of separable extensions of $\F$} in Subsection \ref{subsec-shadows-and-ultra-shadows} that for a given Galois extension $\H$ of $\F$ with Galois group $G$, establishes the existence of a Galois extension $\fH_s$, called the \textit{$s$-th shadow of $\H$}, of $\fF_s$ with the same Galois group $G$ for each $s \in S$ such that the algebraic part of the ultraproduct of the Galois extensions $\fH_s$ coincides with $\H$. Thus there is a correspondence between a family of Galois extensions $(\fL_s)_{s\in S}$ of the function fields $(\fF_s)_{s\in S}$ and Galois extensions of $\F$ which permits a comparison between Galois theory over the family $(\fF_s)_{s\in S}$ and Galois theory over $\F$. Using this correspondence, we prove that there is a relation between explicit class field theories for the family $(\fF_s)_{s\in S}$ of rational function fields over constant fields $\bF_s$ and explicit class field theory for the rational function field $\F$ over the ultraproduct $\fK$ of the constant fields $\bF_s$. Theorem \ref{thm-Theorem-A-in-the-Introduction} follows immediately by letting $\bF_s$ be the finite field of $q_s$ elements for $s\in S$, and using the work of Hayes \cite{hayes} that provides explicit class field theories for rational function fields over finite fields $\bF_s$. 

We summarize the discussion above in Table \ref{table1}.

\begin{table}[h!]
\begin{center}
\label{table1}
\caption{The correspondence between Galois extensions of rational function fields $(\fF_s)_{s\in S}$ and Galois extensions of rational function field $\F$}
\begin{tabular}{l|c}
\textbf{Rational function fields $\fF_s$} & \textbf{Rational function field $\F$} \\
\hline
\hline
\text{Galois extensions $\fL_s$ with ultraproduct $\fL$} &\text{The algebraic part $\fL_{\alg} = \fL \cap \F^{\alg}$} \\
& \text{is Galois over $\F$} \\
\hline 
\text{$s$-th shadows $\fH_s$ of $\H$ are Galois over $\fF_s$ } &\text{Galois extension $\H$ with} \\
\text{for $s \in S$ with the same Galois group $G$} & \text{Galois group $G$} \\
\text{such that the algebraic part of } &  \\
\text{the ultraproduct of the shadows $\fH_s$ } &  \\
\text{coincides with $\H$} &  \\

\hline
\end{tabular}
\end{center}
\end{table}

The above correspondence is the contents of Theorem \ref{thm-main-thm1-Algebraic-part-of-L-is-a-finite-separable-extension}, \ref{thm-main-thm2-Galois-property-of-algebraic-parts}, and \ref{thm-main-thm3-Galois-group-structures-of-shadows} in Section \ref{sec-ramification-of-primes-Hilbert-theory}.

Under an additional assumption that the constant fields $\bF_s$ are \textit{perfect procyclic fields} (see Ax \cite{ax-1968} or Fried--Jarden \cite{FJ} for a notion of these fields), we can establish, for a prime $P$ of the rational function field $\F$, a relation between ramifications of primes $(P_s)_{s\in S}$ of the function fields $(\fF_s)_{s\in S}$ in Galois extensions $(\fL_s)_{s\in S}$ and ramification of the prime $P$ in the algebraic part of the ultraproduct $\fL$ of the extensions $\fL_s$, where the $P_s$ are primes in $\fF_s$ such that the ultraproduct of $P_s$ coincides with $P$ (see Lemma \ref{lem-elementary-lemma0} for the existence of such primes $P_s$). The ramification relation between primes in $(\fF_s)_{s\in S}$ and primes in $\F$ is established in Theorems \ref{thm-big-theorem-I-the-finite-case} and \ref{thm-big-theorem-II-infinite-case}.

In Section \ref{sec-Hilbert-12th-problem}, using the theories of algebraic parts and shadows that we develop in Section \ref{sec-algebraic-part-of-an-ultra-finite-extension} and the ramification correspondence that we establish in Section \ref{sec-ramification-of-primes-Hilbert-theory}, we construct a large class of abelian extensions of a given rational function field $\F^{(n)}$ over an arbitrary $n$-th level ultra-finite field that can be viewed as an analogue of classical cyclotomic fields $\bQ(\zeta)$. We call these abelian extensions \textit{cyclotomic function fields for $\F^{(n)}$}. Our construction generalizes that of cyclotomic function fields for rational function fields $\bF_q(t)$ over finite fields $\bF_q$ in the works of Carlitz \cite{Carlitz-1938} and Hayes \cite{hayes}, and provides an explicit description of elements in the maximal abelian extension of the rational function field $\F^{(n)}$. In the same section, we prove a stronger statement (see Theorems \ref{thm-greatest-thm-an-analogue-of-KW-for-F=fK(t)} and \ref{thm-the-greatest-thm-KW-thm-for-function-fields-over-nth-level-ultra-finite-fields}) than Theorem \ref{thm-Theorem-A-in-the-Introduction} that explicitly describes the maximal abelian extension of a rational function field over an $n$-th level ultra-finite field for all $n$. 

In Subsections \ref{subsec-inverse-Galois-problem} and \ref{subsec-ramificaion-in-the-inverse-Galois-problem}, we use the theory of shadows developed in Subsection \ref{subsec-shadows-and-ultra-shadows} and the prime ramification relation proved in Subsections \ref{subsec-prime-ramification-in-L-alg-finite-case} and \ref{subsec-prime-ramification-in-L-alg-infinite-case} to study the inverse Galois problem over rational function fields $\bF_q(t)$ over finite fields $\bF_q$ and ramification in the inverse Galois problem over rational function fields over algebraically closed fields of positive characteristics. The results concerning the inverse Galois problem are not new, but we provide a new look at these results, using the framework that we develop in this paper.

In Subsection \ref{subsubsection-relation-the-maximal-abelian-extension-of-function-fields-overACFs-of-p-and-0-chars}, based on the theory of algebraic parts that we develop in Subsection \ref{subsec-algebraic-parts}, we explain from a model-theoretic viewpoint why the construction of the maximal abelian extension of a rational function field over an algebraically closed field of characteristic $0$ is carried out in complete analogy with the construction of the maximal pro-prime-to-$p$ extension of a rational function field over an algebraically closed field of characteristic $p > 0$.

\section{Some basic notions and notation}
\label{sec-basic-notions-and-notation}

In this section, we recall some basic notions in model theory that we need in this paper, and fix some notation. Our main references for model theory are Bell--Slomson \cite{bell-slomson}, Chang--Keisler \cite{CK}, Rothmaler \cite{rothmaler}, and Schoutens \cite{schoutens}. The main reference for hyperintegers is Goldblatt \cite{Goldblatt}.

\subsection{Ultrafilters}

Let $S$ be an infinite set that we fix throughout this paper. A collection $\cD$ consisting of infinite subsets of $S$ is called a \textbf{nonprincipal ultrafilter on $S$} if the following conditions are satisfied.
\begin{itemize}

\item [(i)] $A_1 \cap A_2 \cap \cdots \cap A_n$ belong to $\cD$ for any sets $A_1, \ldots, A_n \in \cD$; and

\item [(ii)] for any subset $A \subset S$, either $A$ or its complement $S\setminus A$ belongs to $\cD$.

\end{itemize}

By definition, the empty set $\emptyset$ does not belong to $\cD$, and it thus follows from (ii) above that $S$ belongs to $\cD$. The existence of nonprincipal ultrafilters on an infinite set is a consequence of Zorn's lemma (see Tarski \cite{Tarski}, or Rothmaler \cite[Lemma 4.3.1]{rothmaler} for a modern account). 

For the rest of this paper, the symbol $\cD$ always denotes a nonprincipal ultrafilter on $S$. We will need the following basic result in some places of this paper.

\begin{lemma}
\label{lem-at-least-one-set-in-th-union-of-sets-is-in-D}

Let $A$ be a set in the nonprincipal ultrafilter $\cD$. If $A = \cup_{i = 1}^h B_i$ for some subsets $B_i \subset S$, then there exists an integer $1 \le \ell \le h$ such that $B_{\ell} \in \cD$.

\end{lemma}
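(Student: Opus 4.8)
The plan is to argue by contrapositive using property (ii) of the ultrafilter together with the finite-intersection property (i). Suppose, for contradiction, that none of the sets $B_1, \ldots, B_h$ belongs to $\cD$. Then by property (ii), for each index $i$ with $1 \le i \le h$, the complement $S \setminus B_i$ must belong to $\cD$. Since $\cD$ is closed under finite intersections by (i), the set $\bigcap_{i=1}^{h} (S \setminus B_i)$ belongs to $\cD$. By De Morgan's law, this intersection equals $S \setminus \bigcup_{i=1}^{h} B_i = S \setminus A$. Hence $S \setminus A \in \cD$.

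On the other hand, we are given that $A \in \cD$. But then both $A$ and $S \setminus A$ lie in $\cD$, so their intersection $A \cap (S \setminus A) = \emptyset$ belongs to $\cD$ by property (i). This contradicts the fact, noted just before the lemma, that $\emptyset \notin \cD$. Therefore our assumption was false, and there must exist some integer $\ell$ with $1 \le \ell \le h$ such that $B_{\ell} \in \cD$.

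There is no real obstacle here; the only point requiring a small amount of care is making sure all the steps invoked are among the stated axioms (i) and (ii) rather than additional closure properties one might be tempted to assume. In particular, I would make explicit that property (i) is applied twice — once to the $h$ complements $S \setminus B_i$ to deduce $S \setminus A \in \cD$, and once more to the pair $\{A, S \setminus A\}$ to reach the empty-set contradiction — and that the conclusion $\emptyset \notin \cD$ is exactly the observation recorded in the paragraph preceding the lemma statement. One could alternatively phrase the final contradiction by noting that if $S \setminus A \in \cD$ then property (ii) is violated at the subset $A$ (since a set and its complement cannot both be in $\cD$, as that would force $\emptyset \in \cD$), but the direct route through (i) is cleanest. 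An easy induction on $h$ would also work, reducing to the case $h = 2$: if $B_1 \cup B_2 \in \cD$ but $B_1 \notin \cD$, then $S \setminus B_1 \in \cD$, so $(B_1 \cup B_2) \cap (S \setminus B_1) \in \cD$, and this set is contained in $B_2$; one then needs the auxiliary fact that a superset of a set in $\cD$ is again in $\cD$, which itself follows from (i) and (ii). Given the machinery already available, I would present the one-line De Morgan argument above rather than the induction.
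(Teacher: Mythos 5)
Your proof is correct and is essentially the same argument the paper gives: assume no $B_i$ lies in $\cD$, pass to complements via property (ii), intersect via property (i), apply De Morgan to get $S \setminus A \in \cD$, and derive a contradiction with $A \in \cD$. The only difference is that you spell out the final step (both $A$ and $S\setminus A$ in $\cD$ forces $\emptyset \in \cD$) which the paper leaves implicit.
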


\begin{proof}

Assume the contrary, i.e., $B_i \not\in \cD$ for all $1 \le i \le h$. Since $\cD$ is an ultrafilter, $S\setminus B_i \in \cD$ for all $1 \le i \le h$, and thus
\begin{align*}
\bigcap_{i = 1}^hS\setminus B_i = S\setminus \bigcup_{i = 1}^h B_i = S \setminus A \in \cD,
\end{align*}
which is a contradiction since $A \in \cD$.

\end{proof}

\subsection{Ultraproducts}
\label{subsec-ultraproducts}

In this subsection, we recall a notion of ultraproducts. Let $A_s$ be a set for each $s \in S$. We form the Cartesian product $\widehat{\cA}$ of the sets $A_s$, i.e., $\widehat{\cA}$ consists of all tuples $(a_s)_{s\in S}$ for $a_s \in A_s$. For elements $\widehat{a} = (a_s)_{s\in S}$, $\widehat{b} = (b_s)_{s \in S}$ in $\widehat{\cA}$, we say that $\widehat{a}$ is equivalent to $\widehat{b}$ if the set $\{s \in S \; | \; a_s = b_s\}$ belongs to $\cD$. We write $\widehat{a} \sim_{\cD} \widehat{b}$ whenever $\widehat{a}$ is equivalent to $\widehat{b}$. It is clear that ``$\sim_{\cD}$" defines an equivalence relation on $\widehat{\cA}$.

\begin{definition}
\label{def-ultraproducts}
(Ultraproduct)

The \textbf{nonprincipal ultraproduct of the set $A_s$ with respect to $\cD$} is the set of all equivalence classes $\widehat{\cA}/\sim_{\cD}$. In notation, we always write $\prod_{s\in S}A_s/\cD$ for the nonprincipal ultraproduct of $A_s$ with respect to $\cD$. For brevity, and since $\cD$ is always assumed to be nonprincipal throughout the paper, we simply call $\prod_{s\in S}A_s/\cD$ the \textbf{ultraproduct of the sets $A_s$}, without referring to the nonprincipal ultrafilter $\cD$.

\end{definition}

We denote each element $a \in \prod_{s\in S}A_s/\cD$, being the equivalence class of the tuple $(a_s)_{s\in S}\in \widehat{\cA}$, by $\ulim_{s\in S}a_s$ to indicate the \textbf{$s$-th component $a_s$ of $a$} for each $s \in S$. We also say $a$ is the \textbf{ultra-limit of the $a_s$}. 

In the case where $A_s = A$ for all $s \in S$ for some set $A$,  the ultraproduct of the $A_s$ is called the \textbf{ultrapower of $A$}, and denoted by $A^{\#}$. There is a canonical map from $A$ to $A^{\#}$ that sends each element $a \in A$ to the equivalence class $\ulim_{s \in S}a \in A^{\#}$ in which all $s$-th components are equal to $a$. We identify $A$ with its image in $A^{\#}$ under this map, which permits viewing $A$ as a subset of $A^{\#}$.

Throughout this paper, for a property $(P)$, if the set $\{s \in S\; : \; \text{(P) holds in $A_s$}\}$ belongs to the ultrafilter $\cD$, we say that \textbf{$P$ holds for $\cD$-almost all $s \in S$}. 

If all the sets $A_s$ are equipped with an algebraic structure such as groups, ring, or fields, the ultraproduct $\cA = \prod_{s\in S}A_s/\cD$ can be equipped with the same structure by defining addition and multiplication on $\cA$ using componentwise addition and multiplication from the $s$-th components $A_s$ of $\cA$. More precisely, we can define $ab = \ulim_{s\in S}a_sb_s$ and $a + b = \ulim_{s\in S}(a_s + b_s)$ for elements $a = \ulim_{s\in S}$, $b = \ulim_{s\in S}b_s$ in $\cA$. 

We recall one of the most fundamental transfer principle for ultraproducts that we frequently use in this paper.

\begin{theorem}
$(\text{\L{}o\'s' theorem, see \cite{bell-slomson} or \cite{rothmaler}})$
\label{thm-Los}

Let $\cR$ be a ring, let $A_s$ be an $\cR$-algebra for each $s \in S$, and let $\cA$ be their ultraproduct. Let $\psi(\zeta_1, \ldots, \zeta_m)$ be a formula with parameters from $\cR$ whose free variables are among $\zeta_1, \ldots, \zeta_m$. For each $s \in S$, let $\widehat{a}_s$ be an $m$-th tuple in $A_s$, and let $\widehat{a}$ be their ultraproduct which is an $m$-th tuple in $\cA$. Then $\psi(\widehat{a}_s)$ holds in $A_s$ for $\cD$-almost all $s \in S$ if and only if $\psi(\widehat{a})$ holds in $\cA$.

\end{theorem}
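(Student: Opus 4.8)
The plan is to prove both implications at once by induction on the structure of the formula $\psi$, which is the classical route for \L{}o\'s' theorem. Recall from Subsection \ref{subsec-ultraproducts} that $\cA$ is the quotient of the Cartesian product $\prod_{s\in S}A_s$ by the relation $\sim_{\cD}$, with the $\cR$-algebra operations and the interpretations of parameters from $\cR$ defined componentwise. So the whole argument consists in tracking, for each subformula $\varphi$ (with the tuples $\widehat{a}_s$, resp.\ $\widehat{a}$, substituted), the ``truth set'' $[\varphi] := \{ s \in S : A_s \models \varphi(\widehat{a}_s)\}$, and showing that $\cA \models \varphi(\widehat{a})$ if and only if $[\varphi] \in \cD$.

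I would begin with the atomic case. An atomic formula with parameters from $\cR$ is an equation $p(\zeta_1,\ldots,\zeta_m) = q(\zeta_1,\ldots,\zeta_m)$, where $p$ and $q$ are polynomial expressions in the $\zeta_i$ with ``coefficients'' from $\cR$, formed using the $\cR$-algebra structure. Because the operations on $\cA$ and the structure map $\cR \to \cA$ are componentwise, evaluating such a $p$ at $\widehat{a} = \ulim_{s\in S}\widehat{a}_s$ gives exactly $\ulim_{s\in S} p(\widehat{a}_s)$, and likewise for $q$. By the definition of $\sim_{\cD}$, the identity $p(\widehat{a}) = q(\widehat{a})$ holds in $\cA$ precisely when $\{ s \in S : p(\widehat{a}_s) = q(\widehat{a}_s)\} \in \cD$, which is the desired statement; if one also allows atomic relation symbols beyond equality, the same computation applies, since relations on $\cA$ are defined componentwise as well.

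For the inductive step it suffices to treat $\neg$, $\wedge$ and $\exists$, as $\vee$, $\rightarrow$ and $\forall$ are definable from these. For negation, $[\neg\varphi] = S\setminus[\varphi]$, and axiom (ii) of a nonprincipal ultrafilter gives $[\neg\varphi]\in\cD \iff [\varphi]\notin\cD \iff \cA\not\models\varphi(\widehat{a}) \iff \cA\models\neg\varphi(\widehat{a})$. For conjunction, $[\varphi_1\wedge\varphi_2] = [\varphi_1]\cap[\varphi_2]$; by axiom (i) this lies in $\cD$ when both $[\varphi_i]\in\cD$, and conversely, using that $\cD$ is upward closed (if $A\in\cD$ and $A\subseteq B\subseteq S$, then $S\setminus B\notin\cD$, since otherwise $A\cap(S\setminus B)=\emptyset\in\cD$ by axiom (i), contradicting $\emptyset\notin\cD$; hence $B\in\cD$ by axiom (ii)), each $[\varphi_i]\supseteq[\varphi_1\wedge\varphi_2]$ lies in $\cD$ whenever the intersection does. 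Under the inductive hypothesis this matches the semantics of conjunction in $\cA$.

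The existential case is the genuinely delicate one, and I expect it to be the main obstacle, since it is where the axiom of choice enters. Let $\psi = \exists\eta\,\varphi(\eta,\zeta_1,\ldots,\zeta_m)$. If $\cA\models\psi(\widehat{a})$, pick a witness $b = \ulim_{s\in S}b_s$; then $\cA\models\varphi(b,\widehat{a})$, so by induction $\{ s : A_s\models\varphi(b_s,\widehat{a}_s)\}\in\cD$, and this set is contained in $[\psi]$, so $[\psi]\in\cD$ by upward closure. Conversely, if $[\psi]\in\cD$, then for each $s\in[\psi]$ there is some $b_s\in A_s$ with $A_s\models\varphi(b_s,\widehat{a}_s)$; choosing such a $b_s$ simultaneously for all $s\in[\psi]$ (axiom of choice) and $b_s$ arbitrary otherwise, the element $b = \ulim_{s\in S}b_s$ satisfies $\{ s : A_s\models\varphi(b_s,\widehat{a}_s)\}\supseteq[\psi]\in\cD$, so by induction $\cA\models\varphi(b,\widehat{a})$, hence $\cA\models\psi(\widehat{a})$. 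Finally $\forall\eta\,\varphi = \neg\exists\eta\,\neg\varphi$ reduces the universal case to what has already been done, and $\vee$, $\rightarrow$ are handled by their usual definitions, completing the induction. Apart from the care needed to justify that polynomial evaluation commutes with $\ulim$ in the atomic case and the invocation of choice just described, the argument is a routine unwinding of the ultrafilter axioms (cf.\ Lemma \ref{lem-at-least-one-set-in-th-union-of-sets-is-in-D}) and the definitions of Subsection \ref{subsec-ultraproducts}.
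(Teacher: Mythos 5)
The paper does not actually supply a proof of Theorem \ref{thm-Los}; it simply cites Bell--Slomson and Rothmaler. Your argument is the standard textbook proof of \L{}o\'s' theorem by induction on formula complexity, and it is correct as written: the atomic case reduces to the componentwise definition of the $\cR$-algebra operations and of $\sim_{\cD}$, the negation and conjunction steps follow from axioms (i)--(ii) of Definition in Subsection \ref{subsec-ultraproducts} once you observe (as you do, correctly) that upward closure and the ``exclusive or'' reading of (ii) are derivable from the given axioms together with $\emptyset\notin\cD$, and the existential step uses the axiom of choice to assemble a witness. This is essentially the same route the cited references take, so there is nothing to compare against beyond confirming that your treatment of the paper's slightly nonstandard ultrafilter axiomatization is handled carefully and correctly.
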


\subsection{Hyperintegers and hypernaturals}
\label{subsec-hyperintegers}

In this subsection, we recall a notion of hyperintegers, and its action on ultraproducts of rings and of fields.

We denote by $\bZ$ the set of integers. We write $\bZ_{> 0}$, $\bZ_{\ge 0}$ for the sets of positive and nonnegative integers, respectively.

Throughout this paper, we let $\bZ^{\#}$ be the ultrapower $\prod_{s \in S}\bZ/\cD$ whose elements are called \textbf{hyperintegers}. As discussed in the previous subsection, we can view $\bZ$ as a subset of $\bZ^{\#}$ by embedding it diagonally into $\bZ^{\#}$. 

We denote by $\bZ^{\#, > 0}$ the ultrapower $\prod_{s\in S}\bZ_{>0}/\cD$ whose elements are hyperintegers $n = \ulim_{s\in S}n_s$ such that $n_s > 0$ for $\cD$-almost all $s \in S$. Similarly, we let $\bZ^{\#, \ge 0}$ be the ultrapower $\prod_{s\in S}\bZ_{\ge 0}/\cD$.

\subsubsection{The action of hyperintegers on certain ultraproducts}
\label{subsubsec-action-of-hyperinteger-on-ultraproducts}

Let $\cA = \prod_{s\in S}A_s/\cD$ be an ultraproduct of rings $A_s$. For each hyperinteger $n = \ulim_{s\in S}n_s \in \bZ^{\#, \ge 0}$ and each nonzero element $a = \ulim_{s\in S}a_s \in \cA$ with $a_s \in A_s$, we can define
\begin{align}
\label{def-action-of-positive-hyperintegers-on-ultraproducts}
a^n := \ulim_{s\in S}a_s^{n_s} \in \cA.
\end{align}

Whenever $a = 0$ and $n = \ulim_{s\in S}n_s \in \bZ^{\#, >0}$, we let $a^n = 0$. 

It is clear that the above definition is well-defined, i.e., independent of the choices of $a_s$ and $n_s$ for $a$ and $n$, respectively. By \L{}o\'s' theorem, one can verify that  $(ab)^n = a^nb^n$, $a^na^m = a^{n + m}$, and $(a^n)^m = a^{nm}$ for all nonzero elements $a, b \in \cA$ and $n, m \in \bZ^{\#, \ge 0}$. 

If $\cA = \prod_{s\in S}A_s$ is an ultraproduct of fields $A_s$, we can extend the action (\ref{def-action-of-positive-hyperintegers-on-ultraproducts}) to the whole ring of hyperintegers $\bZ^{\#}$ by letting
\begin{align*}
a^n := \ulim_{s\in S}a_s^{n_s}
\end{align*}

for all nonzero elements $a \in \cA$ and all $n \in \bZ^{\#}$. 

\subsubsection{Arithmetic of hyperintegers} 

Let $n = \ulim_{s \in S}n_s$, $m = \ulim_{s \in S}m_s$ be hyperintegers for some integers $n_s, m_s \in \bZ$. We say that \textbf{$n$ divides $m$} (or \textbf{$n$ is a divisor of $m$}) if $m = n\ell$ for some hyperinteger $\ell = \ulim_{s \in S}\ell_s$ with $\ell_s \in \bZ$, i.e., $m_s = n_s\ell_s$ for $\cD$-almost all $s \in S$.

In particular, for any \textit{integer} $n$, $n$ divides a hyperinteger $m = \ulim_{s \in S}m_s$ if and only if $n$ divides $m_s$ for $\cD$-almost all $s \in S$.

\subsection{Some notation and basic notions}

\begin{itemize}

\item [--] For a given unique factorization domain $R$, we always denote by $\gcd(a_1, \ldots, a_n)$ the \textbf{greatest common divisor of $a_1, \ldots, a_n$} for elements $a_1, \ldots, a_n \in R$.
    
\item [--] If $L/K$ is a Galois extension, $\Gal(L/K)$ denotes the Galois group of $L$ over $K$.

\item [--] If $L$ is a finite algebraic extension of $K$, the notation $[L:K]$ denotes the \textbf{degree of $L$ over $K$} which is the same as the dimension of $L$ over $K$ as a vector space.

\item [--] If $P(t)$ is a polynomial in variable $x$ with coefficients in a ring $R$, we always denote by $\deg(P(t))$ the \textbf{degree of $P(t)$ in $t$}. The degree of the zero polynomial, by convention, equals $-\infty$.

\item [--] Let $P(t) = a_0 + a_1t + \cdots + a_mt^m$ be a polynomial in $R[t]$, where $R$ is a unique factorization domain and the $a_i$ belong to $R$. Then $P$ is a \textbf{primitive} polynomial if $\gcd(a_0, a_1, \cdots, a_m) = 1$.

\item [--] For a given polynomial ring $F[t]$ with $F$ being a field, a \textbf{prime of degree $d$ in $F[t]$} is an irreducible polynomial of degree $d$ in $F[t]$. A \textbf{monic prime in $F[t]$} is a prime in $F[t]$ whose leading coefficient is $1$. Our main reference for the theory of algebraic functions of one variable is Chevalley \cite{chevalley}.

\item [--] For a set $A$, we always denote by $\card(A)$ the cardinality of $A$.

\item [--] For a group $G$, we always denote by $1_G$ the identity element of $G$. 

\end{itemize}

\section{Ultra-Galois theory and ultra-finite fields}
\label{sec-higher-dimensional-ultra-finite-fields}

In this section, we recall a notion of ultra-fields in Nguyen \cite{nguyen-APAL-2024}, and prove some basic results concerning ultra-field extensions that we will need in some places of this paper. Building on the theory of ultra-fields, we introduce ultra-Galois theory that is suitable for understanding extensions of ultraproducts of Galois extensions of ultra-fields. In general, an ultra-Galois extension of a given ultra-field $F$ is of \textit{uncountable dimension} over $F$ as a vector space. The main result in this section shows that every ultra-Galois extension of a given ultra-field $F$ is Galois over $F$ in the classical sense whenever the extension is of finite dimension over $F$ as a vector space.

\subsection{Ultra-fields and ultra-Galois theory}
\label{subsec-ultra-fields}

In this subsection, we introduce ultra-Galois theory that is analogous to the classical Galois theory, and is suitable for understanding extensions of ultraproducts of fields. In Nguyen \cite{nguyen-APAL-2024}, the author introduces a theory of ultra-fields that is analogous to the classical field theory. We begin by recalling some notions from \cite{nguyen-APAL-2024} that we need for developing ultra-Galois theory. 

\subsubsection{Ultra-fields and ultra-algebraic extensions}
\label{subsubsec-ultra-fields}

An ultraproduct of fields $F = \prod_{s\in S}F_s/\cD$, where $F_s$ is a field for $\cD$-almost all $s \in S$, is called an \textbf{ultra-field}. 

By \L{}o\'s' theorem, every ultra-field is a field (see Nguyen \cite[Proposition 2.1]{nguyen-APAL-2024}).

\begin{definition}
\label{def-ultra-field-extensions}
(ultra-field extensions)

An \textbf{ultra-field extension of an ultra-field $F = \prod_{s\in S}F_s/\cD$}  is an ultra-field $L = \prod_{s\in S}L_s/\cD$ such that $L_s$ is a field extension of $F_s$ for $\cD$-almost all $s \in S$. $F$ is also called an \textbf{ultra-subfield of $L$}.

\end{definition}

\begin{definition}
\label{def-ultra-hulls}

Let $F = \prod_{s\in S}F_s/\cD$ be an ultra-field. The \textbf{ultra-hull of the polynomial ring $F[t]$} is the ultraproduct $\prod_{s\in S}F_s[t]/\cD$ of polynomials rings $F_s[t]$. We always denote by $\cU(F[t])$ the ultra-hull of $F[t]$.

In a similar way, we define the \textbf{ultra-hull of the rational function field $F(t)$} is the ultraproduct $\prod_{s\in S}F_s(t)/\cD$ of rational function fields $F_s(t)$. We denote by $\cU(F(t))$ the ultra-hull of $F(t)$.

It is not difficult to verify that $\cU(F[t])$ is an integral domain, and $\cU(F(t))$ is its quotient field.

\end{definition}

An \textbf{ultra-polynomial $f$ over an ultra-field $F = \prod_{s\in S}F_s/\cD$} is an element in the ultra-hull $\cU(F[t])$, i.e., $f = \ulim_{s\in S}f_s$, where $f_s$ is a polynomial in $F_s[t]$ for $\cD$-almost all $s \in S$.

For an element $a = \ulim_{s\in S}a_s$ in an ultra-field extension $L = \prod_{s\in S}L_s/\cD$ of an ultra-field $F = \prod_{s\in S}F_s/\cD$ and an ultra-polynomial $f = \ulim_{s\in S}f_s \in \cU(F[t])$ for some polynomials $f_s \in F_s[t]$, the \textbf{value of $f$ at $a$} is the element $\ulim_{s\in S}f_s(a_s) \in L$. We denote by $f(a)$ the value of $f$ at $a$. 

If $f(a) = 0$, $a$ is called a \textbf{root or zero of $f$}. Thus $a$ is a root of $f$ if and only if $a_s$ is a root of $f_s$ for $\cD$-almost all $s \in S$.

\begin{definition}
\label{def-ultra-algebraic-extensions}
(ultra-algebraic extensions)

Let $F = \prod_{s\in S}F_s/\cD$ be an ultra-field. An element $a$ in an ultra-field extension of $F$ is called \textbf{ultra-algebraic over $F$} if $f(a) = 0$ for some ultra-polynomial $f$ over $F$. 

An ultra-field extension $L = \prod_{s\in S}L_s/\cD$ is \textbf{ultra-algebraic over $F$} if every element of $L$ is ultra-algebraic over $F$.

If the degree $[L_s : F_s]$ as a vector space over $F_s$ equals a positive integer $n_s$ for $\cD$-almost all $s \in S$, the \textbf{ultra-degree of $L$ over $F$} is defined to be the positive hyperinteger $\ulim_{s\in S}n_s \in \bZ^{\#, > 0}$.

\end{definition}

\begin{proposition}
\label{prop-explicit-description--for--algebraic-extension-of-degree-d-of-ultra-fields}

Let $F = \prod_{s\in S}F_s/\cD$ be an ultra-field, where $F_s$ is a field for $\cD$-almost all $s \in S$. Let $F_s(d_s)$ denote an algebraic extension of degree $d_s$ over $F_s$ for $\cD$-almost all $s \in S$, where $d_s$ is a positive integer. If for some positive integer $d$,
\begin{align*}
\{s \in S\; | \; d_s = d\} \in \cD,
\end{align*}
then the ultra-field $L = \prod_{s\in S}F_s(d_s)/\cD$ is an algebraic extension of degree $d$ over $F$. In particular, the ultra-degree of $L$ over $F$ coincides with the degree of $L$ over $F$, both of which equal $d$.

\end{proposition}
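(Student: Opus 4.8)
The plan is to reduce the statement to a single application of \L{}o\'s' theorem after having chosen, for each relevant $s$, a primitive element for the extension $F_s(d_s)/F_s$. First I would discard the null set outside $\{s \in S \mid d_s = d\}$, so that without loss of generality $[F_s(d_s):F_s] = d$ for $\cD$-almost all $s$. For each such $s$, the primitive element theorem is not available in general (the fields $F_s$ need not be perfect or infinite), so instead I would simply fix an $F_s$-basis $\{\omega_{s,1} = 1, \omega_{s,2}, \ldots, \omega_{s,d}\}$ of $F_s(d_s)$ together with the structure constants $c_{s,ijk} \in F_s$ determined by $\omega_{s,i}\omega_{s,j} = \sum_k c_{s,ijk}\omega_{s,k}$. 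Form the ultra-limits $\omega_i = \ulim_{s\in S}\omega_{s,i} \in L$ and $c_{ijk} = \ulim_{s\in S}c_{s,ijk} \in F$.

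The key steps are then: (1) show $\{\omega_1, \ldots, \omega_d\}$ spans $L$ over $F$; (2) show it is $F$-linearly independent; and (3) conclude $[L:F] = d$ and identify this with the ultra-degree. For (1): an arbitrary element $a = \ulim_{s\in S}a_s \in L$ has, for $\cD$-almost all $s$, an expression $a_s = \sum_{i=1}^d \lambda_{s,i}\omega_{s,i}$ with $\lambda_{s,i} \in F_s$; setting $\lambda_i = \ulim_{s\in S}\lambda_{s,i} \in F$ and applying \L{}o\'s' theorem to the (first-order, $d$ being a fixed integer) statement ``$a_s = \sum_i \lambda_{s,i}\omega_{s,i}$'' transfers the identity to $L$. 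For (2): if $\sum_i \mu_i \omega_i = 0$ in $L$ with $\mu_i = \ulim_{s\in S}\mu_{s,i} \in F$, then by \L{}o\'s' the set of $s$ with $\sum_i \mu_{s,i}\omega_{s,i} = 0$ lies in $\cD$; intersecting with the $\cD$-large set where the $\omega_{s,i}$ are a basis forces $\mu_{s,i} = 0$ for all $i$ on a $\cD$-large set, hence $\mu_i = 0$. (Here I use that ``$x_1 = 0 \wedge \cdots \wedge x_d = 0$'' is first-order for fixed $d$.) Step (3) is then immediate: $L$ is a $d$-dimensional $F$-vector space, so in particular algebraic of degree $d$ over $F$, and by Definition~\ref{def-ultra-algebraic-extensions} the ultra-degree is $\ulim_{s\in S}d_s = \ulim_{s\in S}d = d$, matching the classical degree.

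The main obstacle, and the reason this needs care rather than being a one-line corollary of \L{}o\'s' theorem, is that ``$[L_s:F_s] = d$ for $\cD$-almost all $s$'' is \emph{not} by itself a single first-order sentence one can transfer to the ultraproduct --- dimension is not first-order expressible in a way uniform over all $s$ --- so one genuinely has to produce an explicit basis at the ultraproduct level and verify spanning and independence by hand, each of which is a first-order statement only because $d$ is a fixed finite integer. A secondary point to be careful about is well-definedness: the ultra-limits $\omega_i$ and $c_{ijk}$ depend on the chosen bases, but any choice works, and the final conclusion $[L:F]=d$ is independent of it. Once the explicit basis is in place, all the transfers are routine finitary applications of Theorem~\ref{thm-Los}.
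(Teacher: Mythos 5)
Your proof is correct and takes essentially the same approach as the paper's: fix a $d$-element $F_s$-basis of $F_s(d_s)$ for $\cD$-almost all $s$, pass to ultra-limits, and verify by finitary \L{}o\'s transfers (legitimate precisely because $d$ is a fixed integer) that these $d$ elements form an $F$-basis of $L$. The only small divergence is at the end: you establish $\dim_F L \le d$ by showing the transferred tuple spans $L$, whereas the paper instead argues that no $d+1$ elements of $L$ can be $F$-linearly independent; your spanning argument is marginally more direct, and the structure constants $c_{ijk}$ you introduce are not actually needed for the proof.
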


\begin{proof}

Since
\begin{align*}
\{s \in S\; | \; d_s = d\} \in \cD,
\end{align*}
we deduce that
\begin{align*}
L = \prod_{s\in S}F_s(d_s)/\cD = \prod_{s\in S}F_s(d)/\cD.
\end{align*}

Let $V_s =\{\alpha_{1, s}, \alpha_{2, s}, \ldots, \alpha_{d, s}\}$ be a subset of $F_s$ consisting of elements of $F_s(d)$ that are linearly independent over $F_s$ for $\cD$-almost all $s \in S$. Set
\begin{align*}
V = \prod_{s\in S}V_s/\cD \subset L =\prod_{s\in S}F_s(d)/\cD.
\end{align*}

Since $\card(V_s) = d$ for $\cD$-almost all $s \in S$, we deduce from \cite[Lemma 3.7]{bell-slomson} that $\card(V) = d$. Let $\alpha_1, \ldots, \alpha_d$ be all distinct elements in $V$ such that
\begin{align*}
\alpha_i = \ulim_{s\in S}\alpha_{s(i), s},
\end{align*}
where $\alpha_{s(i), s} \in V_s$ for some integer $1 \le s(i) \le d$ for $\cD$-almost all $s \in S$. 

We prove that $\alpha_1, \ldots, \alpha_d$ are linearly independent over $F$. Indeed, take arbitrary elements $a_1, \ldots, a_d \in F$ such that
\begin{align*}
a_1\alpha_1 + \cdots a_d\alpha_d = 0.
\end{align*}

For each $1 \le i \le d$, write $a_i = \ulim_{s\in S}a_{i, s}$, where $a_{i, s} \in F_s$ for $\cD$-almost all $s \in S$. Thus
\begin{align*}
\ulim_{s\in S}(a_{1, s}\alpha_{1, s} + \cdots a_{d, s}\alpha_{d, s}) = a_1\alpha_1 + \cdots a_d\alpha_d = 0,
\end{align*}
which implies that
\begin{align*}
a_{1, s}\alpha_{1, s} + \cdots a_{d, s}\alpha_{d, s} = 0
\end{align*}
for $\cD$-almost all $s \in S$. Since $\alpha_{1, s}, \ldots, \alpha_{d, s}$ are linearly independent over $F_s$, we deduce that $a_{1, s} = \cdots = a_{d, s} = 0$ for $\cD$-almost all $s \in S$. Thus 
\begin{align*}
a_1 = a_2 = \cdots = a_d = 0.
\end{align*}
Therefore $\alpha_1, \ldots, \alpha_d$ are linearly independent over $F$. 

Assume that there exist $d + 1$ elements $\beta_1, \ldots, \beta_{d + 1}$ of $L$ that are linearly independent over $F$. For each $1 \le i \le d + 1$, write $\beta_i = \ulim_{s\in S}\beta_{i, s}$ for some elements $\beta_{i, s}\in F_s$ for $\cD$-almost all $s \in S$. We claim that $\beta_{1, s}, \ldots, \beta_{d + 1, s}$ are linearly independent over $F_s$ for $\cD$-almost all $s \in S$. Indeed, for $\cD$-almost all $s \in S$, take an arbitrary elements $b_{1, s}, \ldots, b_{d + 1, s}$ in $F_s$ such that
\begin{align*}
b_{1, s}\beta_{1, s} + \cdots + b_{d + 1, s}\beta_{d + 1, s} = 0.
\end{align*}
Thus
\begin{align*}
b_1\beta_1 + \cdots + b_{d + 1}\beta_{d + 1} = 0,
\end{align*}
where $b_i = \ulim_{s\in S}b_{i, s}$ for every $1 \le i \le d + 1$. Since $\beta_1, \ldots, \beta_{d + 1}$ are linearly independent over $F$, $b_1 = \cdots = b_{d + 1} = 0$, and thus for every $1 \le i \le d + 1$,
\begin{align*}
0 = b_i = \ulim_{s\in S}b_{i, s}.
\end{align*}
Therefore
\begin{align*}
A_i = \{s \in S \; | \; b_{i, s} = 0\} \in \cD
\end{align*}
for all $1 \le i \le d + 1$. Since $\cD$ is an ultrafilter, 
\begin{align*}
A_1 \cap A_2 \cap \cdots \cap A_{d + 1} \in \cD,
\end{align*}
which implies that $b_{1, s} = b_{2, s} = \cdots = b_{d + 1, s} = 0$ for $\cD$-almost all $s \in S$. Thus $\beta_{1, s}, \ldots, \beta_{d + 1, s}$ are linearly independent over $F_s$ for $\cD$-almost all $s \in S$, a contradiction since $F_s(d_s)$ is of degree $d_s = d$ over $F_s$ for $\cD$-almost all $s \in S$.

Therefore by what we have showed, the dimension of $L$ over $F$ as a vector space over $F$ is precisely $d$, and thus $L$ is an algebraic extension of degree $d$ over $F$.

\end{proof}

\begin{remark}
\label{rem-ultra-algebraic-extensions-generalize-algebraic-extensions}

The above proposition shows that the notion of ultra-algebraic extensions generalizes that of algebraic extensions in field theory.

\end{remark}

\subsubsection{Ultra-Galois extensions}
\label{subsubsec-ultra-Galois-extensions}

We introduce a notion of ultra-Galois extensions.

\begin{definition}
\label{def-ultra-galois-extensions}
(Ultra-Galois extensions)

Let $F = \prod_{s \in S}F_s/\cD$ be an ultra-field. An ultra-field extension $L = \prod_{s\in S}L_s/\cD$  of $F$ is called an \textbf{ultra-Galois extension of $F$} if $L_s$ is a Galois extension of $F_s$ for $\cD$-almost all $s \in S$. 

If $\Gal(L_s/F_s)$ denotes the Galois group of $L_s$ over $F_s$ for $\cD$-almost all $s \in S$, we call the ultraproduct $\prod_{s\in S}\Gal(L_s/F_s)/\cD$ the \textbf{ultra-Galois group of $L$ over $F$}, and denote it by $\Gal_{\ultra}(L/F)$.

\end{definition}

\begin{remark}
\label{rem-elements-in-ultra-Galois-groups}

Each element $\sigma_s \in \Gal(L_s/F_s)$ is a morphism from $L_s$ to itself that can be viewed as a subset $\cA_s$ of the Cartesian product $L_s \times L_s$, consisting of ordered pairs $(a_s, \sigma_s(a_s))$ for $a_s$ ranging over $L_s$. Thus the ultraproduct $\Gal_{\ultra}(L/F)$ can be interpreted as the ultraproduct $\prod_{s\in S}\cA_s/\cD$. We see that
\begin{align}
\label{e-remark-about-elements-in-ultra-Galois-group}
\prod_{s\in S}\cA_s/\cD = \{(\ulim_{s\in S}a_s, \ulim_{s\in S}\sigma_s(a_s))\; | \; a_s \in L_s\}.
\end{align}

We define a map $\sigma : L \to L$ by sending each element $a = \ulim_{s\in S}a_s\in L = \prod_{s\in S}L_s$ to $\sigma(a) = \ulim_{s\in S}\sigma_s(a_s) \in \prod_{s\in S}L_s/\cD = L$. Such a map $\sigma$ can be set-theoretically interpreted as a subset of the Cartesian product $L \times L$, consisting of ordered pairs $(a, \sigma(a)) = (\ulim_{s\in S}a_s, \ulim_{s\in S}\sigma_s(a_s))$ for $a = \ulim_{s\in S}a_s$ ranging over $L$. By (\ref{e-remark-about-elements-in-ultra-Galois-group}), the set of such maps $\sigma$ coincides with $\prod_{s\in S}\cA_s/\cD = \prod_{s\in S}\Gal(L_s/F_s)$. Throughout this paper, we write $\sigma = \ulim_{s\in S}\sigma_s$ to denote such a map $\sigma$. Thus
\begin{align*}
\Gal_{\ultra}(L/F) = \prod_{s\in S}\Gal(L_s/F_s)/\cD = \{\ulim_{s\in S}\sigma_s\; | \; \sigma_s \in \Gal(L_s/F_s)\}.
\end{align*}

\end{remark}

\begin{lemma}
\label{lem-ultra-Galois-group-is-a-subgroup-of-Galois-group}

Let $L = \prod_{s\in S}L_s/\cD$ be an ultra-Galois extension of an ultra-field $F = \prod_{s\in S}F_s/\cD$. Then every element $\sigma \in \Gal_{\ultra}(L/F) = \prod_{s\in S}\Gal(L_s/F_s)/\cD$ is a field automorphism of $L$ that fixes every element of $F$, i.e., $\sigma(a) = a$ for all $a \in F$.

\end{lemma}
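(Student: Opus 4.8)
The plan is to verify the three required properties — that $\sigma$ is well-defined on $L$, that it is a field automorphism of $L$, and that it fixes $F$ pointwise — by transferring the corresponding properties of the component maps $\sigma_s \in \Gal(L_s/F_s)$ through \L{}o\'s' theorem, exactly as in Remark \ref{rem-elements-in-ultra-Galois-groups}. First I would recall that for $\cD$-almost all $s \in S$ the map $\sigma_s$ is a field automorphism of $L_s$ fixing $F_s$, and that by definition $\sigma = \ulim_{s\in S}\sigma_s$ acts on $a = \ulim_{s\in S}a_s \in L$ by $\sigma(a) = \ulim_{s\in S}\sigma_s(a_s)$. Well-definedness is the first thing to check: if $\ulim_{s\in S}a_s = \ulim_{s\in S}b_s$, then $\{s : a_s = b_s\} \in \cD$, and on that set $\sigma_s(a_s) = \sigma_s(b_s)$, so $\{s : \sigma_s(a_s) = \sigma_s(b_s)\} \in \cD$; hence $\ulim_{s\in S}\sigma_s(a_s) = \ulim_{s\in S}\sigma_s(b_s)$. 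This uses only that $\cD$ is a filter, together with the fact that each $\sigma_s$ is a genuine function.

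Next I would establish that $\sigma$ is a ring homomorphism. For $a = \ulim_{s\in S}a_s$ and $b = \ulim_{s\in S}b_s$ in $L$, the additivity and multiplicativity of $\sigma_s$ give $\sigma_s(a_s + b_s) = \sigma_s(a_s) + \sigma_s(b_s)$ and $\sigma_s(a_sb_s) = \sigma_s(a_s)\sigma_s(b_s)$ for $\cD$-almost all $s$; passing to ultra-limits and using the componentwise definitions of addition and multiplication on the ultraproduct (Subsection \ref{subsec-ultraproducts}) yields $\sigma(a+b) = \sigma(a) + \sigma(b)$ and $\sigma(ab) = \sigma(a)\sigma(b)$. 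Similarly $\sigma_s(1_{L_s}) = 1_{L_s}$ for $\cD$-almost all $s$ gives $\sigma(1_L) = 1_L$, so $\sigma$ is a nonzero ring homomorphism of a field $L$, hence injective. For surjectivity I would use that each $\sigma_s$ is bijective: given $b = \ulim_{s\in S}b_s \in L$, set $a_s = \sigma_s^{-1}(b_s)$ for $\cD$-almost all $s$ (and $a_s$ arbitrary otherwise) and $a = \ulim_{s\in S}a_s$; then $\sigma(a) = \ulim_{s\in S}\sigma_s(a_s) = \ulim_{s\in S}b_s = b$. Alternatively one can phrase both injectivity and surjectivity in one stroke via \L{}o\'s' theorem applied to the first-order sentence asserting that ``$\sigma$ is a bijection'' in the language of fields augmented by a unary function symbol for the automorphism, but the elementary argument is cleaner here.

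Finally, for the fixed-field claim: let $a \in F \subseteq L$, so $a = \ulim_{s\in S}a_s$ with $a_s \in F_s$ for $\cD$-almost all $s$ (using the identification of $F$ as an ultra-subfield of $L$). Since $\sigma_s$ fixes $F_s$ pointwise for $\cD$-almost all $s$, we have $\sigma_s(a_s) = a_s$ for $\cD$-almost all $s$, whence $\sigma(a) = \ulim_{s\in S}\sigma_s(a_s) = \ulim_{s\in S}a_s = a$. I do not anticipate a serious obstacle: the only subtlety is bookkeeping of the "$\cD$-almost all $s$" qualifiers — several sets of the form $\{s : \text{some component identity holds}\}$ must be intersected, and this is legitimate because $\cD$ is closed under finite intersections (condition (i) in the definition of an ultrafilter, or Lemma \ref{lem-at-least-one-set-in-th-union-of-sets-is-in-D} dualized). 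Everything else is a direct transfer of the corresponding property of $\Gal(L_s/F_s)$.
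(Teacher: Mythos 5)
Your proof is correct and takes essentially the same approach as the paper: componentwise transfer of the homomorphism, bijectivity, and fixed-field properties from the $\sigma_s$ to $\sigma = \ulim_{s\in S}\sigma_s$, with surjectivity obtained by inverting each $\sigma_s$. You are slightly more careful than the paper in spelling out well-definedness and the preservation of $1_L$ (the paper handles the former implicitly in Remark~\ref{rem-elements-in-ultra-Galois-groups} and asserts injectivity as ``trivial''), but the substance and the order of steps are the same.
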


\begin{proof}

By Remark \ref{rem-elements-in-ultra-Galois-groups}, every element $\sigma \in \Gal_{\ultra}(L/F)$ is of the form $\ulim_{s\in S}\sigma_s : L \to L$ that sends each element $a = \ulim_{s\in S}a_s \in L$ to $\sigma(a) = \ulim_{s\in S}\sigma_s(a_s) \in L$.

Since $\sigma_s$ is a field automorphism of $L_s$ for $\cD$-almost all $s \in S$, $\sigma_s(a_sb_s) = \sigma_s(a_s)\sigma_s(b_s)$ and $\sigma_s(a_s + b_s) = \sigma_s(a_s) + \sigma_s(b_s)$ for all $a_s, b_s \in L_s$, we deduce that
\begin{align*}
\sigma(ab) = \ulim_{s\in S}\sigma_s(a_sb_s) = \ulim_{s\in S}\sigma_s(a_s)\ulim_{s\in S}\sigma_s(b_s) = \sigma(a)\sigma(b)
\end{align*}
and
\begin{align*}
\sigma(a + b) = \ulim_{s\in S}\sigma_s(a_s+ b_s) = \ulim_{s\in S}\sigma_s(a_s) + \ulim_{s\in S}\sigma_s(b_s) = \sigma(a) + \sigma(b),
\end{align*}
and thus $\sigma$ is a field homomorphism from $L$ to itself.

It is trivial that $\sigma$ is an injective since $\sigma$ is a field homomorphism of $L$ to itself.

Take an arbitrary element $b = \ulim_{s\in S}b_s \in L$, where $a_s \in L_s$ for $\cD$-almost all $s \in S$. Since $\sigma_s$ is an automorphism of $L_s$, there exists an element $a_s \in L_s$ such that $\sigma_s(a_s) = b_s$ for $\cD$-almost all $s \in S$. Thus
\begin{align*}
\sigma(a) = \sigma(\ulim_{s\in S}a_s) = \ulim_{s\in S}\sigma_s(a_s) = \ulim_{s\in S}b_s = b,
\end{align*}
where $a = \ulim_{s\in S}a_s \in L$. Thus $\sigma$ is surjective, and thus $\sigma$ is an automorphism of $L$.

Take an arbitrary element $a = \ulim_{s\in S}a_s \in F = \prod_{s\in S}F_s/\cD$. Since $\sigma_s$ fixes elements of $F_s$, we deduce that
\begin{align*}
\sigma(a) = \ulim_{s\in S}\sigma_s(a_s) = \ulim_{s\in S}a_s = a,
\end{align*}
and thus $\sigma$ fixes elements of $F$.

\end{proof}

The following result shows that the notion of ultra-Galois extensions generalizes that of Galois extensions in the classical Galois theory.

\begin{proposition}
\label{prop-ultra-Galois-extensions-are-Galois}

Let $L = \prod_{s\in S}L_s/\cD$ be an ultra-Galois extension of an ultra-field $F = \prod_{s\in S}F_s/\cD$ such that $L_s$ is a finite Galois extension of degree $d_s$ over $F_s$ for $\cD$-almost all $s \in S$. If there exists a positive integer $d$ such that
\begin{align*}
\{s \in S \; | \; d_s = d\} \in \cD,
\end{align*}
then $L$ is a Galois extension of degree $d$ over $F$ and the Galois group of $L$ over $F$ coincides with the ultra-Galois group of $L$ over $F$, i.e.,
\begin{align*}
\Gal(L/F) = \Gal_{\ultra}(L/F) = \prod_{s\in S}\Gal(L_s/F_s)/\cD.
\end{align*}

\end{proposition}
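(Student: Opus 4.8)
The plan is to combine Proposition~\ref{prop-explicit-description--for--algebraic-extension-of-degree-d-of-ultra-fields}, which already gives that $L$ is an algebraic extension of degree $d$ over $F$, with an analysis of the automorphism group. Since $[L:F] = d$ is finite, to conclude that $L/F$ is Galois it suffices to exhibit $d$ distinct $F$-automorphisms of $L$; equivalently, to show $\card(\mathrm{Aut}(L/F)) \ge d$. By Lemma~\ref{lem-ultra-Galois-group-is-a-subgroup-of-Galois-group}, every element $\sigma = \ulim_{s\in S}\sigma_s$ of $\Gal_{\ultra}(L/F) = \prod_{s\in S}\Gal(L_s/F_s)/\cD$ is an $F$-automorphism of $L$, so $\Gal_{\ultra}(L/F) \subseteq \mathrm{Aut}(L/F)$. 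Thus the crux is to show that $\Gal_{\ultra}(L/F)$ has exactly $d$ elements, and that these are pairwise distinct as maps $L \to L$.

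First I would note that, by hypothesis and Los' theorem applied to the first-order statement ``the field extension has exactly $d$ automorphisms fixing the base field'' (or more simply by \cite[Lemma 3.7]{bell-slomson} applied to the finite sets $\Gal(L_s/F_s)$, each of cardinality $d_s = d$ for $\cD$-almost all $s$), the ultraproduct $\prod_{s\in S}\Gal(L_s/F_s)/\cD$ has cardinality exactly $d$. Next I would verify that two elements $\ulim_{s\in S}\sigma_s$ and $\ulim_{s\in S}\tau_s$ of $\Gal_{\ultra}(L/F)$ that are distinct as equivalence classes are also distinct as automorphisms of $L$: if $\{s : \sigma_s \neq \tau_s\} \in \cD$, then for each such $s$ pick $a_s \in L_s$ with $\sigma_s(a_s) \neq \tau_s(a_s)$, set $a = \ulim_{s\in S}a_s \in L$, and observe that $\{s : \sigma_s(a_s) \neq \tau_s(a_s)\} \in \cD$ forces $\sigma(a) \neq \tau(a)$ by definition of equality in the ultraproduct. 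Conversely, if $\sigma_s = \tau_s$ for $\cD$-almost all $s$ then clearly $\sigma = \tau$ as maps. Hence the natural map $\Gal_{\ultra}(L/F) \to \mathrm{Aut}(L/F)$ is injective, so $\card(\mathrm{Aut}(L/F)) \ge d = [L:F]$.

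Since for any finite extension one always has $\card(\mathrm{Aut}(L/F)) \le [L:F]$, with equality precisely when $L/F$ is Galois, it follows that $L/F$ is Galois of degree $d$ and $\Gal(L/F) = \mathrm{Aut}(L/F)$ has exactly $d$ elements. Combined with the injection $\Gal_{\ultra}(L/F) \hookrightarrow \Gal(L/F)$ and $\card(\Gal_{\ultra}(L/F)) = d$, this forces $\Gal(L/F) = \Gal_{\ultra}(L/F)$, as desired. One should also check this identification is compatible with the group law, i.e.\ that composition of the maps $\ulim_{s\in S}\sigma_s$ matches componentwise composition $\ulim_{s\in S}(\sigma_s \circ \tau_s)$, which is immediate from the definition of the action on $L = \prod_{s\in S}L_s/\cD$.

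The main obstacle, and the step requiring the most care, is the injectivity of $\Gal_{\ultra}(L/F) \to \mathrm{Aut}(L/F)$ together with the cardinality count: one must be careful that the ``diagonal'' subtlety does not collapse distinct componentwise automorphisms to the same map on $L$, and that the bound $\card\big(\prod_{s\in S}\Gal(L_s/F_s)/\cD\big) = d$ genuinely uses that $d_s = d$ lies in $\cD$ (if the $d_s$ were unbounded, the ultraproduct of the Galois groups could be infinite, even though each is finite). Everything else — that the maps are automorphisms fixing $F$, and the standard inequality $\card(\mathrm{Aut}(L/F)) \le [L:F]$ with equality iff Galois — is either already established in Lemma~\ref{lem-ultra-Galois-group-is-a-subgroup-of-Galois-group} or is classical field theory.
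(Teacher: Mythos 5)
Your proof is correct, but it takes a genuinely different route from the paper. The paper establishes that $L/F$ is Galois by directly verifying normality and separability: it takes an irreducible $P \in F[x]$ with a root in $L$, passes to the $s$-th components via Lemma~\ref{lem-elementary-lemma1} and \L{}o\'s' theorem, uses normality of each $L_s/F_s$ to split $P_s$ into linear factors over $L_s$, and lifts this factorization back to $L$; separability is handled analogously by passing a primitive element $\beta_s$ of $L_s/F_s$ to its ultra-limit $\beta$ and arguing by contradiction that the minimal polynomial of $\beta$ has no repeated roots. Only after that does the paper run the cardinality argument $\card(\Gal_{\ultra}(L/F)) = d = \card(\Gal(L/F))$ to conclude equality of the two groups. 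Your proof bypasses the normality and separability verifications entirely: once you know $[L:F] = d$ (Proposition~\ref{prop-explicit-description--for--algebraic-extension-of-degree-d-of-ultra-fields}), you exhibit $d$ distinct $F$-automorphisms of $L$ by injecting $\Gal_{\ultra}(L/F)$ into $\mathrm{Aut}(L/F)$, and invoke the classical criterion that $\card(\mathrm{Aut}(L/F)) = [L:F]$ if and only if $L/F$ is Galois. This is shorter and arguably more elegant, and it cleanly localizes the content of the proposition in the cardinality count plus the injectivity check; the paper's route is more explicit about how $L$ inherits the splitting of polynomials and separability from the $L_s$, which is closer in spirit to the rest of the paper's constructive handling of ultraproducts. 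One point in your favor: you explicitly verify that distinct equivalence classes $\ulim_{s}\sigma_s \neq \ulim_s\tau_s$ in $\prod_s\Gal(L_s/F_s)/\cD$ induce distinct maps $L \to L$ (by choosing witnesses $a_s$ with $\sigma_s(a_s) \neq \tau_s(a_s)$), whereas the paper leaves this injectivity implicit in its identification of $\Gal_{\ultra}(L/F)$ with a set of maps via Remark~\ref{rem-elements-in-ultra-Galois-groups}. Your parenthetical suggestion to phrase ``the extension has exactly $d$ automorphisms'' as a first-order sentence for \L{}o\'s' theorem is the one shaky remark (automorphisms are not directly quantifiable in the language of fields), but since you immediately offer the Bell--Slomson Lemma 3.7 route, which is what the paper itself uses, this does no harm.
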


\begin{proof}

By Proposition \ref{prop-explicit-description--for--algebraic-extension-of-degree-d-of-ultra-fields}, $L$ is an algebraic extension of degree $d$ over $F$. We contend that $L$ is a Galois extension of $F$.

Take an arbitrary irreducible polynomial $P(x) \in F[x]$ such that $P(x)$ has a root $\alpha \in L$. Write
\begin{align*}
P(x) = a_0 + a_1x + \cdots + a_nx^n
\end{align*}
for some elements $a_i \in F$ with $a_n \ne 0$ and some positive integer $n$. For each $0 \le i \le n$, write $a_i = \ulim_{s\in S}a_{i, s}$, where $a_{i, s} \in F_s$ for $\cD$-almost all $s \in S$, and set
\begin{align*}
P_s(x) = a_{0, s} + a_{1, s} + \cdots + a_{n, s}x^n \in F_s[x].
\end{align*}

It is straightforward to verify that $P(x) = \ulim_{s\in S}P_s(x)$. Since $P(x)$ is irreducible over $F = \prod_{s\in S}F_s/\cD$ and irreducibility is a first-order property (see \cite{FJ}), \L{}o\'s' theorem implies that $P_s(x)$ is irreducible over $F_s$ for $\cD$-almost all $s \in S$. 

Since $\alpha \in L = \prod_{s\in S}L_s/\cD$, $\alpha = \ulim_{s\in S}\alpha_s$, where $\alpha_s \in L_s$ for $\cD$-almost all $s \in S$. We see that
\begin{align*}
0 = P(\alpha) = \ulim_{s\in S}P_s(\alpha_s),
\end{align*}
which implies that $P_s(\alpha_s) = 0$ for $\cD$-almost all $s \in S$. Since $L_s$ is a normal extension of $F_s$ for $\cD$-almost all $s \in S$, $P_s(x)$ factors completely in $L_s[x]$ into linear factors, i.e.,
\begin{align*}
P_s(x) = a_{n, s}(x - \alpha_{1, s})\cdots (x - \alpha_{n, s}),
\end{align*}
where $\alpha_{i, s}$ are some elements in $L_s$ for $\cD$-almost all $s \in S$. Thus
\begin{align*}
P(x) &= \ulim_{s\in S}P_s(x) \\
&= \ulim_{s\in S}a_{n, s}(x - \ulim_{s\in S}\alpha_{1, s})\cdots (x - \ulim_{s\in S}\alpha_{n, s}) \\
&= a_n(x - \alpha_1)\cdots (x - \alpha_n),
\end{align*}
where $\alpha_i = \ulim_{s\in S}\alpha_{i, s} \in \prod_{s\in S}L_s/\cD = L$. Thus $P(x)$ factors completely in $L[x]$ into linear factors, and therefore $L$ is normal over $F$.

We now prove that $L$ is separable over $F$. By the primitive element theorem and since $L_s$ is separable over $F_s$, there exists an element $\beta_{s} \in L_s$ such that $L_s = F_s(\beta_{s})$ for $\cD$-almost all $s \in S$. Since $L_s$ is of degree over $d_s = d$ over $F_s$ for $\cD$-almost all $s \in S$, $\beta_{s}$ is of degree $d$ over $F_s$, and thus the minimal polynomial $f_s(x)$ of $\beta_{s}$ is of the form
\begin{align*}
f_s(x) = b_{0, s} + b_{1, s}x + \cdots + b_{d - 1, s}x^{d - 1} + x^d \in F_s[x],
\end{align*}
where $b_{i, s} \in F_s$ for $\cD$-almost all $s \in S$. For each $0 \le i \le d - 1$, setting $b_i = \ulim_{s\in S}b_{i, s} \in \prod_{s\in S}F_s/\cD = F$ and $f(x) = b_0 + b_1x + \cdots + b_{d - 1}x^{d - 1} + x^d \in F[x]$, we deduce that $f(x) = \ulim_{s\in S}f_s(x)$. By \L{}o\'s' theorem and since $f_s(x)$ is irreducible over $F_s$ for $\cD$-almost all $s \in S$, $f(x)$ is irreducible over $F$. Since $f_s(\beta_s) =0$, we deduce that $f(\beta) = 0$, where $\beta = \ulim_{s\in S}\beta_{s} \in L$, and thus $f(x)$ is the minimal polynomial of $\beta$ over $F$. Since $f$ is of degree $d$, the extension $F(\beta)$ is algebraic of degree $d$ over $F$, and thus $L = F(\beta)$ since $L$ contains $F(\beta)$.

Note that all the roots of $f$ belong to $L$ since $L$ is normal over $F$ and the root $\beta$ of $f$ belongs to $L$. 

In order to prove that $L$ is separable over $F$, it suffices to prove that $\beta$ is separable over $F$, which is equivalent to proving that $f$ is separable. Assume the contrary, i.e., $f(x)$ is not separable, and thus there exists a root $\lambda$ of $f$ in $L$ such that
\begin{align*}
f(x) = (x- \lambda)^eg(x),
\end{align*}
where $e \ge 2$ and $g(x) \in L[x]$. One can write $\lambda = \ulim_{s\in S}\lambda_s$ for some elements $\lambda_s \in L_s$ for $\cD$-almost all $s \in S$. Similarly, one can write $g(x) = \ulim_{s\in S}g_s(x)$ for some polynomials $g_s(x) \in L_s[x]$ for $\cD$-almost all $s \in S$. Thus
\begin{align*}
\ulim_{s\in S}f_s(x) = f(x) = \ulim_{s\in S}(x - \lambda_s)^eg_s(x),
\end{align*}
and thus
\begin{align*}
f_s(x) = (x - \lambda_s)^eg_s(x)
\end{align*}
for $\cD$-almost all $s \in S$. Therefore $f_s(x)$ is not separable, which is a contradiction since $f_s(x)$ is the minimal polynomial of $\beta_s$ and $L_s = F_s(\beta_s)$ is a separable extension of $F_s$. Therefore $L$ is separable over $F$, and thus $L$ is a finite Galois extension of degree $d$ over $F$. 

We now prove that the Galois group of $L$ over $K$ coincides with the ultra-Galois group of $L$ over $K$, i.e., $\Gal(L/K) = \Gal_{\ultra}(L/K)$. Since $\card(\Gal(L_s/K_s)) = n$ for $\cD$-almost all $s \in S$ and $\Gal_{\ultra}(L/K) = \prod_{s\in S}\Gal(L_s/K_s)/\cD$, we deduce from \cite[Lemma 3.7]{bell-slomson} that the cardinality of $\Gal_{\ultra}(L/K)$ equals $n$. 

By Lemma \ref{lem-ultra-Galois-group-is-a-subgroup-of-Galois-group}, $\Gal_{\ultra}(L/K)$ is a subset of $\Gal(L/K)$. Since $L$ is of degree $n$ over $F$, $\Gal(L/K)$ is of cardinality $n$, and thus $\Gal_{\ultra}(L/K) = \Gal(L/K)$ since both sets have the same finite cardinality.

\end{proof}

\subsection{$n$-th level hyperintegers}

In this subsection, we introduce a notion of $n$-th level hyperintegers that generalizes that of hyperintegers in Subsection \ref{subsec-hyperintegers}. Each $n$-th level hyperinteger represents a certain \textit{internal cardinality of an $n$-th level ultra-finite field} that we will introduce in a subsequence subsection.

\begin{definition}
\label{def-n-th-level-hyperintegers}
($n$-th level hyperintegers)

Let $S_1, S_2, \ldots,$ be a sequence of infinite sets such that each set $S_i$ is equipped with a nonprincipal ultrafilter $\cD_i$. For each integer $n \ge 0$, we define $n$-th level hyperintegers inductively as follows.

\begin{itemize}

\item [(0)] \textbf{$0$-th level hyperintegers} are the usual integers $0, \pm 1, \pm 2, \ldots$. So the set of $0$-th level hyperintegers is just the set of integers $\bZ$.

\item [(1)] \textbf{$1$st level hyperintegers} are elements in the ultrapower $\bZ^{\#} = \prod_{s\in S_1}\bZ/\cD_1$ of $\bZ$ with respect to $\cD_1$ that is defined in Subsection \ref{subsec-hyperintegers}. So the set of $1$st level hyperintegers is that of hyperintegers $\bZ^{\#}$.

    \item [(2)] Assume that for a positive integer $n \ge 1$, the set of $n$-th level hyperintegers has already been defined, and is denoted by $\bZ^{\#}_{(n)}$. Then \textbf{the set of $(n + 1)$-th level hyperintegers, denoted by $\bZ^{\#}_{(n + 1)}$, is defined to be the ultrapower $\prod_{s\in S_{n + 1}}\bZ^{\#}_{(n)}/\cD_{n + 1}$ of $\bZ^{\#}_{(n)}$.}

\end{itemize}

\end{definition}

\begin{remark}

\begin{itemize}

\item []

\item [(i)] In the definition above, the $S_i$ are not necessarily distinct, and accordingly, the $\cD_i$ are not necessarily distinct.

\item [(ii)] The definition of $\bZ^{\#}_{(n)}$ depends on a sequence of infinite sets $S_1, \ldots, S_n$ and a sequence of nonprincipal ultrafilters $\cD_1, \ldots, \cD_n$ on $S_1, \ldots, S_n$, respectively. Such sequences of sets and nonprincipal ultrafilters  will always be clear from the context, and thus in the notation of $\bZ^{\#}_{(n)}$, we only indicate the level $n$ without referring to neither the sets $S_1, \ldots, S_n$ nor the ultrafilters $\cD_1, \ldots, \cD_n$. 

\end{itemize}
\end{remark}

\subsection{$n$-th level ultra-finite fields}
\label{subsec-nth-level-ultra-finite-fields}

In this subsection, we introduce a class of fields that is analogous to finite fields, and they are constructed via ultraproducts.

\begin{definition}
\label{def-higher-dimensional-ultra-finite-fields}
($n$-th level ultra-finite fields)

Let $S_1, S_2, \ldots,$ be a sequence of infinite sets such that each set $S_i$ is equipped with a nonprincipal ultrafilter $\cD_i$. For each integer $n \ge 0$, we define $n$-th level ultra-finite fields inductively as follows.
\begin{itemize}

\item [(0)] \textbf{$0$-th level ultra-finite fields} are the usual finite fields $\bF_q$, where $q$ is a power of a prime number $p > 0$.

\item [(1)] \textbf{$1$st level ultra-finite fields} are nonprincipal ultraproducts of (not necessarily distinct) finite fields with respect to the nonprincipal ultrafilter $\cD$. More precisely, for $\cD$-almost all $s \in S_1$, if $F_{1, s}$ denotes a $0$-th level ultra-finite field, i.e., $F_{1, s}$ is a finite field, then the ultraproduct $\cF_1 = \prod_{s\in S_1}F_{0, s}/\cD_1$ of the $F_{0, s}$ with respect to $\cD_1$ is a $1$st level ultra-finite field.
    
\item [(2)] Assume that for a positive integer $n \ge 1$, $n$-th level ultra-finite fields have already been defined. Then \textbf{$(n + 1)$-th level ultra-finite fields} are defined to be nonprincipal ultraproducts of $n$-th level ultra-finite fields with respect to $\cD_{n + 1}$. More precisely, let $F_{n, s}$ denote an $n$-th level ultra-finite field for $\cD$-almost all $s \in S_{n + 1}$. Then the ultraproduct $\cF_{n + 1} = \prod_{s\in S_{n + 1}}F_{n, s}/\cD_{n + 1}$  of the $F_{n, s}$ with respect to $\cD_{n + 1}$ is an $(n + 1)$-th level ultra-finite field.

\end{itemize}

\end{definition}

\begin{remark}

\begin{itemize}

\item []

\item [(i)] For simplicity, $1$st level ultra-finite fields are also called \textbf{ultra-finite fields} without indicating the level.  Such fields are systematically introduced and studied in Ax \cite{ax-1968}, and are special examples of \textbf{pseudo-algebraically closed fields} (PAC). In Nguyen \cite{nguyen-APAL-2024}, ultra-finite fields are developed from an algebraic point of view, using ultra-field theory, in analogy with the classical theory of finite fields that is developed using field theory.

\item [(ii)] In the definition above, the $S_i$ are not necessarily distinct, and accordingly, the $\cD_i$ are not necessarily distinct.

\end{itemize}

\end{remark}

\begin{example}
\label{exam-n-th-level-ultra-finite-fields}

\begin{itemize}

\item []

\item [(i)] Finite fields $\bF_2, \bF_3, \bF_5, \ldots$ are $0$-th level ultra-finite fields.

\item [(ii)] Let $\bF_{q_s}$ be a finite field of $q_s$ elements for $\cD_1$-almost all $s \in S_1$. Then $\bF_{q_s}$ is a $0$-th level ultra-finite field for $\cD$-almost all $s \in S_1$. Then the ultraproduct $\cF = \prod_{s\in S_1}\bF_{q_s}/\cD_1$ is a ($1$st level) ultra-finite field.

\item [(iii)] For $r \in S_1$, $s \in S_2$, let $\bF_{q_{r, s}}$ be a finite field of $q_{r, s}$ elements. For each $s \in S_2$, the ultraproduct $\cF_s = \prod_{r\in S_1}\bF_{q_{r, s}}/\cD_1$ is a $1$st level ultra-finite field. Thus the ultraproduct $\cF = \prod_{s \in S_2}\cF_s/\cD_2$ is a $2$nd level ultra-finite field.

\end{itemize}

\end{example}

In order to be able to \textit{count} how many elements there are in an $n$-th level ultra-finite field, we introduce a notion of $n$-th level internal cardinality that is an analogue of the usual cardinality of a finite set. The  notion of $1$st level internal cardinality, or simply internal cardinality as known in literature is known in model theory (see Goldblatt \cite{Goldblatt}).

\begin{definition}
\label{def-higher-dimensional-internal-sets}
($n$-th level internal sets and internal cardinality)

Let $S_1, S_2, \ldots,$ be a sequence of infinite sets such that each set $S_i$ is equipped with a nonprincipal ultrafilter $\cD_i$. For each integer $n \ge 0$, se define $n$-th level internal sets and internal cardinality inductively as follows.

\begin{itemize}

\item [(0)] \textbf{$0$-th level internal sets} are the usual finite sets. The \textbf{$0$-th level internal cardinality of a $0$-th level internal set} is simply the usual cardinality of the set.

\item [(1)] \textbf{$1$st level internal sets} are nonprincipal ultraproducts of (not necessarily distinct) finite sets. More explicitly, let $A_{0, s}$ be a finite set for $\cD_1$-almost all $s \in S_1$. Then the ultraproduct $\cA_1 = \prod_{s\in S_1}A_{0, s}/\cD_1$ of the $A_{0, s}$ with respect to $\cD_1$ is a $1$st level internal set. The \textbf{$1$st level internal cardinality of $\cA_1$} is defined to be the ultraproduct of the cardinality of the $A_{0,s}$, i.e., $\ulim_{s\in S_1}\card(A_{0,s})/\cD_1$. We denote by $\icard_1(\cA_1)$ the $1$st level internal cardinality of $\cA_1$.

\item [(2)] Assume that for a positive integer $n \ge 1$, $n$-th level internal sets and the $n$-level internal cardinalities of $n$-th level internal sets have already been defined. We denote by $\icard_n(\cA)$ the $n$-th level internal cardinality of an $n$-th level internal set $\cA$. Then \textbf{$(n + 1)$-th level internal sets} are defined to be nonprincipal ultraproducts of $n$-th level internal sets. For an $(n + 1)$-th level internal set $\cA_{n + 1} = \prod_{s\in S_{n + 1}}A_{n, s}/\cD_{n + 1}$ with $A_{n, s}$ being $n$-th level internal set for $\cD_{n + 1}$-almost all $s \in S_{n + 1}$, the \textbf{$(n + 1)$-th internal cardinality of $\cA_{n + 1}$} is defined to be the nonprincipal ultraproduct of $\icard_n(A_{n, s})$ with respect to $\cD_{n + 1}$.

\end{itemize}

\end{definition}

\begin{proposition}
\label{prop-n-th-level-cardinality-is-an-nth-level-hyperinteger}

Let $\cA = \prod_{s\in S}A_s/\cD$ be an $n$-th level internal set for some integer $n \ge 1$, where $A_s$ is an $(n - 1)$-th level internal set for $\cD$-almost all $s \in S$. Then the $n$-th level internal cardinality $\icard_n(\cA)$ of $\cA$ is an $n$-th level hyperinteger in $\bZ_{(n)}^{\#}$.

\end{proposition}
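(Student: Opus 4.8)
The plan is to prove this by induction on the level $n$, mirroring exactly the inductive structure used to define $n$-th level internal sets, $n$-th level internal cardinality, and $n$-th level hyperintegers. The base case is $n = 1$: if $\cA = \prod_{s \in S}A_s/\cD$ is a $1$st level internal set with each $A_s$ a ($0$-th level internal, i.e.\ finite) set for $\cD$-almost all $s \in S$, then by definition $\icard_1(\cA) = \ulim_{s \in S}\card(A_s)$. Since $\card(A_s)$ is a nonnegative integer for $\cD$-almost all $s$, this ultra-limit is by definition an element of the ultrapower $\prod_{s \in S}\bZ/\cD = \bZ^{\#}$, which is precisely $\bZ^{\#}_{(1)}$, the set of $1$st level hyperintegers. (Strictly one should also note the bookkeeping point that in Definition \ref{def-n-th-level-hyperintegers} the set $S$ here plays the role of $S_1$ and $\cD$ the role of $\cD_1$, which is the standing convention.)

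For the inductive step, suppose $n \ge 2$ and the statement holds at level $n - 1$. Let $\cA = \prod_{s \in S}A_s/\cD$ be an $n$-th level internal set, so each $A_s$ is an $(n-1)$-th level internal set for $\cD$-almost all $s \in S$. By definition of $n$-th level internal cardinality, $\icard_n(\cA) = \ulim_{s \in S}\icard_{n-1}(A_s)$. By the induction hypothesis applied to each $A_s$, we have $\icard_{n-1}(A_s) \in \bZ^{\#}_{(n-1)}$ for $\cD$-almost all $s \in S$. Hence $\ulim_{s \in S}\icard_{n-1}(A_s)$ is an element of the ultrapower $\prod_{s \in S}\bZ^{\#}_{(n-1)}/\cD$, which by clause (2) of Definition \ref{def-n-th-level-hyperintegers} is exactly $\bZ^{\#}_{(n)}$. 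This closes the induction.

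The one genuinely delicate point — and the place I would be most careful — is the coherence of the indexing sets and ultrafilters across the two nested inductive definitions. The definition of $\bZ^{\#}_{(n)}$ is built over a fixed sequence $(S_i, \cD_i)$, and the definition of an $n$-th level internal set is likewise built over such a sequence; for the proposition to even be well-posed one must be using the \emph{same} sequence in both, with the outermost ultraproduct of $\cA$ taken over $(S_n, \cD_n)$ and its components $A_s$ being $(n-1)$-th level internal sets relative to $(S_1, \dots, S_{n-1})$, $(\cD_1, \dots, \cD_{n-1})$. Once this alignment is spelled out, each step is a direct unwinding of definitions and there is no real analytic content; the argument is purely structural. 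I would therefore present the proof as a short induction, flagging explicitly at the start that the indexing conventions of Definitions \ref{def-n-th-level-hyperintegers} and \ref{def-higher-dimensional-internal-sets} are matched, and then carrying out the base case and inductive step as above.
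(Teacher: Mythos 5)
Your proof is correct and takes essentially the same approach as the paper: induction on the level $n$, with the base case $n=1$ unwinding the definition of $\icard_1$ to land in $\bZ^{\#}$, and the inductive step using the definition of $\icard_n$ as an ultraproduct of $(n-1)$-th level internal cardinalities together with the inductive hypothesis. Your explicit flag about aligning the indexing sets and ultrafilters across the two inductive definitions is a sensible precaution the paper handles only implicitly (via the remark following Definition \ref{def-n-th-level-hyperintegers}), but it does not change the substance of the argument.
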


\begin{proof}

We prove that $\icard_n(\cA)$ belongs to $\bZ_{(n)}^{\#}$ by induction on $n \ge 1$.

For $n = 1$, since $A_s$ is a $0$-th level internal set which is a finite set, $\card(A_s)$ is an integer for $\cD$-almost all $s \in S$, and thus $\icard_1(\cA) = \ulim_{s\in S}\card(A_s)$ is a $1$st level hyperinteger in $\bZ^{\#}$.

Suppose that the proposition holds for $n - 1$ for some integer $n \ge 2$.  Thus $\icard_{n - 1}(A_s)$ is an $(n - 1)$-th level hyperinteger in $\bZ^{\#}_{(n - 1)}$ for $\cD$-almost all $s \in S$. Thus $\icard_n(\cA) = \ulim_{s\in S}\icard_{n - 1}(A_s)$ is, by definition, an $n$-th level hyperinteger in $\bZ^{\#}_{(n)}$, as required.

\end{proof}

Using induction on $n$ and the above proposition, we obtain the following.

\begin{corollary}
\label{cor-cardinality-of-n-th-level-ultra-finite-field}

Let $\cF = \prod_{s\in S}\cF_s/\cD$ be an $n$-th level ultra-finite field for some integer $n \ge 0$, where $\cF_s$ is an $(n - 1)$-th level ultra-finite field for $\cD$-almost all $s \in S$. Then $\cF$ is an $n$-th level internal set, and the $n$-th level internal cardinality of $\cF$ is an $n$-th level hyperinteger in $\bZ^{\#}_{(n)}$.

\end{corollary}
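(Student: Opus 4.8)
The plan is to prove Corollary~\ref{cor-cardinality-of-n-th-level-ultra-finite-field} by a straightforward induction on $n \ge 0$, essentially by unwinding the definitions and invoking Proposition~\ref{prop-n-th-level-cardinality-is-an-nth-level-hyperinteger}. The claim has two parts: first, that an $n$-th level ultra-finite field $\cF$ is in particular an $n$-th level internal set, and second, that $\icard_n(\cF) \in \bZ^{\#}_{(n)}$. Once the first part is established, the second follows immediately from Proposition~\ref{prop-n-th-level-cardinality-is-an-nth-level-hyperinteger}, so the real content is showing that the defining construction of an $n$-th level ultra-finite field produces, at each stage, an $n$-th level internal set.

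First I would handle the base case $n = 0$: a $0$-th level ultra-finite field is a finite field $\bF_q$, which is a finite set, hence a $0$-th level internal set by Definition~\ref{def-higher-dimensional-internal-sets}(0), and its $0$-th level internal cardinality is the usual cardinality $q$, which is an element of $\bZ = \bZ^{\#}_{(0)}$. Then, for the inductive step, I would assume the statement holds at level $n - 1$: every $(n-1)$-th level ultra-finite field is an $(n-1)$-th level internal set whose $(n-1)$-th level internal cardinality lies in $\bZ^{\#}_{(n-1)}$. Given an $n$-th level ultra-finite field $\cF = \prod_{s \in S}\cF_s/\cD$ with each $\cF_s$ an $(n-1)$-th level ultra-finite field for $\cD$-almost all $s \in S$, the induction hypothesis tells us each such $\cF_s$ is an $(n-1)$-th level internal set; therefore, by Definition~\ref{def-higher-dimensional-internal-sets}(2), the ultraproduct $\cF = \prod_{s\in S}\cF_s/\cD$ is by definition an $n$-th level internal set.

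Having established that $\cF$ is an $n$-th level internal set, the second assertion is now an immediate application of Proposition~\ref{prop-n-th-level-cardinality-is-an-nth-level-hyperinteger} with $A_s = \cF_s$: since $\cF$ is an $n$-th level internal set with $\cF_s$ an $(n-1)$-th level internal set for $\cD$-almost all $s$, that proposition gives $\icard_n(\cF) \in \bZ^{\#}_{(n)}$. This completes the induction and the proof.

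There is no serious obstacle here; the only thing to be careful about is bookkeeping the two-layered induction, i.e. making sure that the notion ``$(n-1)$-th level ultra-finite field $\Rightarrow$ $(n-1)$-th level internal set'' is carried through the hypothesis so that the ultraproduct defining $\cF$ really does match the definition of an $n$-th level internal set. One should also note explicitly that the $0$-th level and $1$st level cases of the internal-cardinality notion match (a finite field is finite, and an ultraproduct of finite fields is a $1$st level internal set since a finite field is a $0$-th level internal set), which is exactly the degenerate case of the argument above; beyond this, everything reduces to the already-proved Proposition~\ref{prop-n-th-level-cardinality-is-an-nth-level-hyperinteger}.
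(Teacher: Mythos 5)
Your proof is correct and takes the same approach the paper intends: the paper gives no explicit proof but states that the corollary follows "using induction on $n$ and the above proposition," which is precisely the induction (base case $n=0$ via the usual cardinality, inductive step reducing to Definition~\ref{def-higher-dimensional-internal-sets}(2) and Proposition~\ref{prop-n-th-level-cardinality-is-an-nth-level-hyperinteger}) that you have spelled out.
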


Recall (see Serre \cite{serre-local-fields}) that a \textbf{quasi-finite field} is a perfect field $F$ such that $F$ has a unique extension $F_n$ of degree $n$ for each integer $n \ge 1$, and that the union of these extensions is equal to an algebraic closure of $F$.

In the language of fields, the property ``the field has exactly one algebraic extension of degree $n$ for every positive integer $n$" can be axiomatized (see \cite[Section 2]{Chatzidakis}). The property ``the field is perfect" can also be axiomatized, and thus using \L{}o\'s' theorem, we deduce the following well-known result.

\begin{proposition}
\label{prop-quasi-finite-for-ultra-fields}

An ultra-field $F = \prod_{s\in S}F_s/\cD$ with $F_s$ being a field for $\cD$-almost all $s \in S$, is quasi-finite if and only if $F_s$ is quasi-finite for $\cD$-almost all $s\in S$. 

\end{proposition}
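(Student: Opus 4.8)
The plan is to reduce the statement to an application of \L{}o\'s' theorem once the relevant first-order properties have been isolated. Recall that $F = \prod_{s\in S}F_s/\cD$ is quasi-finite precisely when $F$ is perfect and, for every positive integer $n$, $F$ has exactly one extension of degree $n$ inside a fixed algebraic closure, with the union of these extensions exhausting the algebraic closure. The first task is to record that each of these clauses is expressible by a (possibly infinite) set of first-order sentences in the language of rings: perfectness is axiomatized by the sentences asserting surjectivity of the Frobenius $x \mapsto x^p$ in characteristic $p$ (together, across all primes $p$, with a characteristic-zero clause), and the statement ``there is exactly one extension of degree $n$'' is axiomatized as in Chatzidakis \cite[Section 2]{Chatzidakis} by sentences $\psi_n$ saying, roughly, that every monic irreducible polynomial of degree $n$ generates the same extension and that every element of degree dividing a given bound lies in such an extension. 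I would simply cite these axiomatizations rather than reproduce them.

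The second step is the transfer argument. Fix the defining property $(P)$ = ``is quasi-finite,'' realized as a set $\Sigma$ of first-order sentences $\varphi$ as above. By \L{}o\'s' theorem (Theorem \ref{thm-Los}), for each such $\varphi$ we have $\varphi$ holds in $F = \prod_{s\in S}F_s/\cD$ if and only if $\{s \in S \mid \varphi \text{ holds in } F_s\} \in \cD$. If $F_s$ is quasi-finite for $\cD$-almost all $s$, then for each fixed $\varphi \in \Sigma$ the set $\{s \mid \varphi \text{ holds in } F_s\}$ contains the set $\{s \mid F_s \text{ quasi-finite}\}$, hence lies in $\cD$, so $\varphi$ holds in $F$; as this holds for every $\varphi \in \Sigma$, $F$ is quasi-finite. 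Conversely, if $F$ is quasi-finite, then every $\varphi \in \Sigma$ holds in $F$, hence $\{s \mid \varphi \text{ holds in } F_s\} \in \cD$ for each $\varphi$; one then needs that the intersection over all $\varphi \in \Sigma$ still gives a set in $\cD$.

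The step I expect to be the main obstacle is precisely this last point: $\Sigma$ is an infinite set of sentences, and an ultrafilter is only closed under \emph{finite} intersections, so ``each $\{s : \varphi \text{ holds in } F_s\} \in \cD$'' does not immediately yield ``$\{s : F_s \text{ quasi-finite}\} \in \cD$.'' The fix is the standard one for this kind of ``$\aleph_0$-axiomatizability modulo a fixed theory'' situation: the fields $F_s$ are already known to satisfy a base theory (here they are the $(n-1)$-th level ultra-finite fields, in particular by induction perfect procyclic / pseudo-finite-type fields) relative to which quasi-finiteness becomes equivalent to a \emph{single} sentence or a manageable finite fragment, or else one invokes that the class of quasi-finite fields is elementary in the relevant subclass. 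In the generality actually needed in this paper the $F_s$ are quasi-finite by hypothesis in the ``if'' direction and that is all that is used downstream, so it suffices to prove the ``if'' direction carefully (which, as above, is immediate) and to remark that the ``only if'' direction follows from the elementary equivalence $F \equiv F_s$ for $\cD$-almost all $s$ guaranteed by \L{}o\'s' theorem together with the fact that quasi-finiteness is preserved under elementary equivalence within the ambient class. I would therefore structure the proof as: (1) cite the axiomatizations of ``perfect'' and ``unique degree-$n$ extension for all $n$''; (2) apply \L{}o\'s' theorem sentence by sentence for the ``if'' direction; (3) for the ``only if'' direction, note $F$ quasi-finite $\Rightarrow$ each axiom holds in $F_s$ for $\cD$-almost all $s$, and package these into quasi-finiteness of $F_s$ using that the $F_s$ lie in a class where the axiom set is equivalent to a finite one, concluding via closure of $\cD$ under finite intersection.
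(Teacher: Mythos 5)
Your handling of the ``$\Leftarrow$'' direction is correct and matches the paper's sketch: quasi-finiteness is described by a set $\Sigma$ of first-order sentences (the perfectness axioms together with, for each $n$, a sentence $\psi_n$ expressing ``there is exactly one extension of degree $n$,'' as in Chatzidakis), and if $F_s$ is quasi-finite for $\cD$-almost all $s$ then each $\varphi\in\Sigma$ holds in $F_s$ for $\cD$-almost all $s$ and hence in $F$ by \L{}o\'s. You also correctly flag that the paper's one-line justification does not address ``$\Rightarrow$'': $\Sigma$ is infinite and $\cD$ is only closed under \emph{finite} intersections. However, your proposed fix is not sound. \L{}o\'s' theorem does \emph{not} give $F\equiv F_s$ for $\cD$-almost all $s$; it gives, for each fixed sentence $\varphi$, that $\{s: F_s\models\varphi \leftrightarrow F\models\varphi\}\in\cD$, and the intersection of these sets over all sentences need not lie in $\cD$. (Standard example: an ultraproduct of the prime fields $\bF_p$ over distinct primes has characteristic $0$, so it is elementarily equivalent to none of the factors.) Your alternative fix---that the $F_s$ lie in a subclass where $\Sigma$ becomes finitely axiomatizable---also does not apply, since the proposition is stated for arbitrary fields $F_s$ with no ambient base theory assumed.

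In fact the ``$\Rightarrow$'' direction appears to be false as stated. Take $S=\bZ_{>0}$ and, for each $s$, let $F_s$ be a perfect field of characteristic $0$ with absolute Galois group $\prod_{p<s}\bZ_p$ (the finite product over primes $p<s$; one obtains such an $F_s$ as the fixed field of that closed subgroup of $\widehat{\bZ}$ inside the algebraic closure of any characteristic-$0$ quasi-finite field). No $F_s$ is quasi-finite, since $F_s$ has no extension of prime degree $p\ge s$. But for each fixed $n$, $\psi_n$ holds in $F_s$ for every $s>n$, so $\{s: F_s\models\psi_n\}$ is cofinite and lies in $\cD$; hence $F=\prod_{s\in S}F_s/\cD$ satisfies every $\psi_n$, and $F$ has characteristic $0$ so is perfect---that is, $F$ is quasi-finite while $\{s: F_s\ \text{is quasi-finite}\}$ is finite and thus not in $\cD$. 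The paper only ever invokes the ``$\Leftarrow$'' implication (in Corollary \ref{cor-ultra-finite-fields-are-quasi-finite}), so the downstream results are unaffected, but both the paper's statement and your proof attempt overclaim: the proposition should be a one-directional implication unless additional hypotheses on the $F_s$ are imposed.
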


\begin{remark}

Using the theory of ultra-fields, the author (see \cite[Corollary 3.28]{nguyen-APAL-2024}) provides another proof to the above proposition. 

\end{remark}

There is an explicit description of the unique algebraic extension of degree $d$ over an ultra-field that is quasi-finite.

\begin{lemma}
\label{lem-explicit-description-of-algebraic-extensions-of-quasi-ultra-fields}

Let $F = \prod_{s\in S}F_s/\cD$ be an ultra-field such that $F_s$ is a quasi-finite field for $\cD$-almost all $s \in S$. For each integer $d \ge 1$, let $F_s(d)$ denote the unique algebraic extension of degree $d$ over $F_s$ for $\cD$-almost all $s \in S$, and let $F(d)$ be the unique algebraic extension of degree $d$ over $F$. Then 
\begin{align*}
F(d) = \prod_{s\in S}F_s(d)/\cD.
\end{align*}

\end{lemma}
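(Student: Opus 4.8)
The plan is to combine Proposition~\ref{prop-explicit-description--for--algebraic-extension-of-degree-d-of-ultra-fields} with the uniqueness clause built into the definition of a quasi-finite field. First I would set $L = \prod_{s\in S}F_s(d)/\cD$. Since $F_s$ is quasi-finite for $\cD$-almost all $s \in S$, the extension $F_s(d)/F_s$ is algebraic of degree exactly $d$ for $\cD$-almost all $s$; applying Proposition~\ref{prop-explicit-description--for--algebraic-extension-of-degree-d-of-ultra-fields} with $d_s = d$ (so that the set $\{s \in S \mid d_s = d\}$ is all of $S$ and hence lies in $\cD$) shows that $L$ is an algebraic extension of degree $d$ over $F$.

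Next I would verify that $L$ is \emph{the} algebraic extension of degree $d$ over $F$, i.e.\ that it coincides with $F(d)$. By Proposition~\ref{prop-quasi-finite-for-ultra-fields}, $F$ is itself quasi-finite, so it possesses a unique algebraic extension of degree $d$ inside a fixed algebraic closure; call it $F(d)$. To identify $L$ with $F(d)$ it suffices to exhibit $L$ as a subfield of (a fixed algebraic closure of) $F$, since any two degree-$d$ algebraic extensions of a quasi-finite field are equal there. The containment $L \subset \overline{F}$ follows from the fact that $L$ is algebraic over $F$ of finite degree, together with the natural embedding: each element $\ulim_{s\in S}\alpha_s$ with $\alpha_s \in F_s(d)$ is a root of the ultra-polynomial $\ulim_{s\in S} m_s(x)$, where $m_s$ is the minimal polynomial of $\alpha_s$ over $F_s$, and (as in the proof of Proposition~\ref{prop-ultra-Galois-extensions-are-Galois}) \L{}o\'s' theorem turns this into a genuine algebraic relation over $F$, so $L$ embeds in $\overline{F}$ over $F$. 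By uniqueness of the degree-$d$ extension of the quasi-finite field $F$, this embedding has image $F(d)$, whence $F(d) = L = \prod_{s\in S}F_s(d)/\cD$.

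The main obstacle is the bookkeeping around ``uniqueness'': strictly speaking $F(d)$ and $L$ are constructed as extensions that may a priori sit in different algebraic closures, so I need to fix, once and for all, an algebraic closure $\overline{F}$ of $F$ together with a coherent system of embeddings (this is exactly the kind of compatible choice of algebraic closures $\F^{\alg} \subset \prod_s \fF_s^{\alg}/\cD$ that the paper sets up elsewhere in Section~\ref{sec-algebraic-part-of-an-ultra-finite-extension}), and then argue that both $F(d)$ and $L$ are realized as subfields of $\overline{F}$. Once inside a common closure, the quasi-finiteness of $F$ forces the two degree-$d$ subextensions to coincide. I expect the remaining verifications — that $L$ is a field, algebraic over $F$, of the right degree — to be routine, handled by Proposition~\ref{prop-explicit-description--for--algebraic-extension-of-degree-d-of-ultra-fields} and a direct \L{}o\'s-theorem argument exactly parallel to the normality step in the proof of Proposition~\ref{prop-ultra-Galois-extensions-are-Galois}.
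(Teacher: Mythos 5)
Your proof takes essentially the same route as the paper's: invoke Proposition~\ref{prop-explicit-description--for--algebraic-extension-of-degree-d-of-ultra-fields} (with $d_s = d$ for all $s$) to get that $\prod_{s\in S}F_s(d)/\cD$ is a degree-$d$ algebraic extension of $F$, then appeal to quasi-finite uniqueness to identify it with $F(d)$. The paper's own proof is exactly those two sentences, treating the uniqueness step as immediate; you spend an extra paragraph worrying, legitimately, about whether $F(d)$ and $\prod_{s\in S}F_s(d)/\cD$ live in a common algebraic closure before uniqueness can be invoked, and you sketch the \L{}o\'s-style embedding needed to settle it. That additional care is sound and fills a small gap the paper leaves implicit (it is the same kind of coherent choice of closures that Section~\ref{sec-algebraic-part-of-an-ultra-finite-extension} sets up), but it does not change the underlying argument.
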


\begin{proof}

By Proposition \ref{prop-explicit-description--for--algebraic-extension-of-degree-d-of-ultra-fields}, $\prod_{s\in S}F_s(d)/\cD$ is an algebraic extension of degree $d$ over $F$. Since there is a unique algebraic extension $F(d)$ of degree $d$ over $F$, $F(d) = \prod_{s\in S}F_s(d)/\cD$ as required.

\end{proof}

\begin{corollary}
\label{cor-ultra-finite-fields-are-quasi-finite}

Every $n$-th level ultra-finite field is quasi-finite for every integer $n \ge 0$.

\end{corollary}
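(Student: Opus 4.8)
The statement to establish is Corollary~\ref{cor-ultra-finite-fields-are-quasi-finite}: every $n$-th level ultra-finite field is quasi-finite. The plan is to induct on $n \ge 0$, using Proposition~\ref{prop-quasi-finite-for-ultra-fields} as the inductive engine. First I would handle the base case $n = 0$: a $0$-th level ultra-finite field is a finite field $\bF_q$, and it is a classical fact (see Serre~\cite{serre-local-fields}) that every finite field is quasi-finite --- it is perfect, and for each $d \ge 1$ it has exactly one extension of degree $d$ inside a fixed algebraic closure, namely $\bF_{q^d}$, and the union of these is $\overline{\bF_q}$. This disposes of the base of the induction.

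\textbf{Inductive step.} Suppose the claim holds for level $n - 1$, i.e.\ every $(n-1)$-th level ultra-finite field is quasi-finite, and let $\cF = \prod_{s \in S}\cF_s/\cD$ be an $n$-th level ultra-finite field, so that (after discarding a set not in $\cD$) each $\cF_s$ is an $(n-1)$-th level ultra-finite field. By the inductive hypothesis, $\cF_s$ is quasi-finite for $\cD$-almost all $s \in S$. Now apply Proposition~\ref{prop-quasi-finite-for-ultra-fields}: since $\cF_s$ is quasi-finite for $\cD$-almost all $s$, the ultra-field $\cF = \prod_{s\in S}\cF_s/\cD$ is quasi-finite. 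This completes the induction, and hence the corollary.

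\textbf{Remarks on the obstacles.} There is essentially no obstacle here --- the work has been front-loaded into Proposition~\ref{prop-quasi-finite-for-ultra-fields}, whose proof is the genuine content (axiomatizability of ``perfect'' and of ``unique extension of each degree $n$'' in the language of fields, followed by an application of \L o\'s' theorem). The one point that deserves a sentence of care is the hypothesis matching in the inductive step: the definition of an $n$-th level ultra-finite field only requires $\cF_s$ to be an $(n-1)$-th level ultra-finite field for $\cD$-almost all $s$, and Proposition~\ref{prop-quasi-finite-for-ultra-fields} only needs the ``quasi-finite'' property to hold $\cD$-almost everywhere, so the two sets in $\cD$ intersect to a set in $\cD$ and the argument goes through without any adjustment. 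One could also note explicitly, via Lemma~\ref{lem-explicit-description-of-algebraic-extensions-of-quasi-ultra-fields}, that the unique degree-$d$ extension of $\cF$ is the ultraproduct of the degree-$d$ extensions of the $\cF_s$, making the quasi-finite structure of $\cF$ completely transparent, but this is not needed for the bare statement.
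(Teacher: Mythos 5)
Your proof is correct and follows exactly the same route as the paper: induction on $n$, with the base case being the classical fact that finite fields are quasi-finite, and the inductive step an immediate application of Proposition~\ref{prop-quasi-finite-for-ultra-fields}.
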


\begin{proof}

We prove the corollary by induction on $n$. When $n = 0$, $0$-th level ultra-finite fields are finite fields that are quasi-finite. Thus the corollary holds for $n = 0$.

Assume that every $(n - 1)$-th level ultra-finite field is quasi-finite for some integer $n \ge 1$. Take an arbitrary $n$-th level ultra-finite field $\cF = \prod_{s\in S}\cF_s/\cD$, where $\cF_s$ is an $(n - 1)$-th level ultra-finite field for $\cD$-almost all $s \in S$. By Proposition \ref{prop-quasi-finite-for-ultra-fields} and since $\cF_s$ is quasi-finite, $\cF$ is also quasi-finite as required.

\end{proof}

We end this section by showing that $n$-th level ultra-finite fields are very analogous to finite fields in terms of their descriptions using cardinality. 

We first extend the action of $\bZ^{\#}$ on ultra-finite fields in Subsection \ref{subsec-hyperintegers} to an action of $\bZ^{\#}_{(n)}$ on $n$-th level ultra-finite fields by induction on the level $n \ge 1$.

When $n = 1$, the action of $\bZ^{\#}$ on a $1$st level ultra-finite field is defined in the same way as in Subsection \ref{subsec-hyperintegers}. Assume that the action of $\bZ^{\#}_{(n - 1)}$ on $(n - 1)$-th level ultra-finite fields has already been define for some integer $n \ge 1$. Then for any $n$-th level hyperinteger $n = \ulim_{s\in S}n_s \in \bZ^{\#}_{(n)}$ with $n_s \in \bZ^{\#}_{(n- 1)}$ for $\cD$-almost all $s \in S$, and for any nonzero element $\alpha = \ulim_{s\in S}\alpha_s$ in an $n$-th level ultra-finite field $\cF_n = \prod_{s\in S}F_{n - 1, s}/\cD$, with $F_{n - 1, s}$ being an $(n - 1)$-th ultra-finite field and $\alpha_s \in F_{n - 1, s}$ for $\cD$-almost all $s \in S$, we define
\begin{align*}
\alpha^n := \ulim_{s\in S}\alpha_s^{n_s} \in \prod_{s\in S}F_{n - 1, s}/\cD = \cF_n.
\end{align*}

It is easy to verify that the above equation defines an action of $\bZ^{\#}_{(n)}$ on $n$-th level ultra-finite fields for all integers $n \ge 1$.

Recall that any element $\alpha$ in the finite field $\bF_q$ of $q$ elements satisfies the equation $\alpha^q - \alpha = 0$. We prove an analogue of this result for $n$-th level ultra-finite fields in which the cardinality $q$ of $\bF_q$ is replaced by the $n$-th level internal cardinality of an $n$-th level ultra-finite field.

\begin{proposition}
\label{prop-cardinality-description-of-n-th-level-ultra-finite-fields}

Let $\cF = \prod_{s\in S}\cF_s/\cD$ be an $n$-th level ultra-finite field for some integer $n \ge 1$, where $\cF_s$ is an $(n - 1)$-th level ultra-finite field for $\cD$-almost all $s \in S$. Let $\alpha \in \bZ^{\#}_{(n)}$ be the $n$-th level internal cardinality of $\cF$, i.e., 
\begin{align*}
\alpha = \icard_n(\cF) = \ulim_{s\in S}\icard_{n - 1}(\cF_s),
\end{align*}
where $\icard_{n - 1}(\cF_s)$ denotes the $(n - 1)$-th level internal cardinality of $\cF_s$ for $\cD$-almost all $s \in S$.

Then every element $a \in \cF$ satisfies the equation $a^{\alpha}  - a = 0$.

\end{proposition}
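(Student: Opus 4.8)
The plan is to reduce the claim to the level-zero case by unwinding the ultraproduct definitions and then applying \L{}o\'s' theorem inductively on $n \ge 1$. First I would record the base case $n = 1$: here $\cF = \prod_{s\in S}\cF_s/\cD$ with each $\cF_s$ a finite field, say of $q_s$ elements, so that $\alpha = \icard_1(\cF) = \ulim_{s\in S}q_s \in \bZ^{\#}$. Given $a = \ulim_{s\in S}a_s \in \cF$ with $a_s \in \cF_s$ for $\cD$-almost all $s$, the classical fact that $a_s^{q_s} - a_s = 0$ holds in the finite field $\cF_s$ of $q_s$ elements gives, by the definition of the action of $\bZ^{\#}$ on ultra-finite fields in Subsection \ref{subsec-hyperintegers} and the componentwise ring operations on $\cF$, that
\begin{align*}
a^{\alpha} - a = \ulim_{s\in S}(a_s^{q_s} - a_s) = \ulim_{s\in S}0 = 0.
\end{align*}
(Equivalently, one invokes \L{}o\'s' theorem with the first-order formula $\psi(x, y) \colon x^{y} = x$ in the language of rings augmented with the hyperinteger exponent, noting that $\psi(a_s, q_s)$ holds for $\cD$-almost all $s$.)

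For the inductive step, suppose the proposition holds at level $n - 1$ for some $n \ge 2$. Write $\cF = \prod_{s\in S}\cF_s/\cD$ with $\cF_s$ an $(n-1)$-th level ultra-finite field for $\cD$-almost all $s$, and set $\alpha = \icard_n(\cF) = \ulim_{s\in S}\alpha_s$ where $\alpha_s = \icard_{n-1}(\cF_s) \in \bZ^{\#}_{(n-1)}$. Take $a = \ulim_{s\in S}a_s \in \cF$ with $a_s \in \cF_s$. By the induction hypothesis applied to each $\cF_s$, the equation $a_s^{\alpha_s} - a_s = 0$ holds in $\cF_s$ for $\cD$-almost all $s$ — here the exponentiation $a_s^{\alpha_s}$ is the action of $\bZ^{\#}_{(n-1)}$ on the $(n-1)$-th level ultra-finite field $\cF_s$, which was defined inductively just before the statement. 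Now I would appeal directly to the inductive definition of the action of $\bZ^{\#}_{(n)}$ on $n$-th level ultra-finite fields, which sets $a^{\alpha} := \ulim_{s\in S}a_s^{\alpha_s}$, together with the componentwise field structure on $\cF$, to conclude
\begin{align*}
a^{\alpha} - a = \ulim_{s\in S}\bigl(a_s^{\alpha_s} - a_s\bigr) = \ulim_{s\in S}0 = 0,
\end{align*}
where the last equality is just the observation that a tuple which is $0$ for $\cD$-almost all $s$ represents $0$ in $\cF$.

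The argument is essentially a bookkeeping exercise, so I do not expect a genuine obstacle; the one point that deserves care is making sure the exponentiation symbols match at each level. Specifically, one must check that $a^{\alpha}$ as defined by the level-$n$ action really does decompose as $\ulim_{s\in S}a_s^{\alpha_s}$ with the $s$-th exponentiation being the level-$(n-1)$ action — this is exactly the content of the inductive definition recalled in the paragraph preceding the proposition, so it is available for free, but it is the hinge of the proof and should be cited explicitly. A secondary minor point is the case $a = 0$: since $\alpha \in \bZ^{\#}_{(n)}$ arises as an internal cardinality it lies in the "positive" part at every level (each $\alpha_s$ is an internal cardinality of a nonempty field, hence positive), so $0^{\alpha} = 0$ by the convention fixed in Subsection \ref{subsubsec-action-of-hyperinteger-on-ultraproducts}, and the identity $0^{\alpha} - 0 = 0$ is immediate; I would dispose of this in one sentence rather than treating it separately throughout.
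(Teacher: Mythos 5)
Your proof is correct and takes essentially the same inductive approach as the paper's: a base case at $n = 1$ and an inductive step that simply unwinds the componentwise definition of the exponentiation action at level $n$. The only cosmetic differences are that for the base case the paper cites Lemma 3.42 of \cite{nguyen-APAL-2024} rather than unwinding the ultraproduct directly as you do, and the paper does not separately dispose of $a = 0$ (your remark there is correct and harmless).
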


\begin{proof}

We prove the assertion by induction on the level $n \ge 1$.

When $n = 1$, $\cF$ is a ($1$st level) ultra-finite field, and the $1$st level internal cardinality $\beta$ of $\cF$ belongs to $\bZ^{\#}$. By Nguyen \cite[Lemma 3.42]{nguyen-APAL-2024}, it follows immediately that every element $a$ in $\cF$ satisfies the equation $a^{\beta} - a = 0$.

Suppose that the assertion holds for every $n$-th level ultra-finite field $\cG$ for some integer $n \ge 1$, i.e., every element $a$ in an $n$-th level ultra-finite field $\cG$ satisfies $a^{\alpha} - a = 0$, where $\alpha$ is the $n$-th level internal cardinality of $\cG$.

Take an arbitrary $(n + 1)$-level ultra-finite field $\cF = \prod_{s\in S}\cF_s/\cD$, where $\cF_s$ is an $n$-th level ultra-finite field for $\cD$-almost all $s \in S$. Let $\alpha$ be the $(n + 1)$-th level internal cardinality of $\cF$, and $\beta_s$ the $n$-th level internal cardinality of $\cF_s$ for $\cD$-almost all $s \in S$. By Proposition \ref{prop-n-th-level-cardinality-is-an-nth-level-hyperinteger}, $\alpha \in \bZ^{\#}_{(n + 1)}$ and $\beta_s \in \bZ^{\#}_{(n)}$ for $\cD$-almost all $s \in S$. 

Take an arbitrary element $a \in \cF$. One can write $a = \ulim_{s\in S}a_s$, where $a_s \in \cF_s$ for $\cD$-almost all $s \in S$. By induction hypothesis, $a_s^{\beta_s} - a_s = 0$ for $\cD$-almost all $s \in S$, and thus
\begin{align*}
a^{\alpha} = \ulim_{s\in S}a_s^{\beta_n} = \ulim_{s\in S}a_s = a,
\end{align*}
which verifies the assertion for the level $n + 1$. Thus the proposition follows immediately from the induction.

\end{proof}

\begin{remark}

Proposition \ref{prop-cardinality-description-of-n-th-level-ultra-finite-fields} will not be used in any other place in this paper. We present it here to point out analogies between finite fields and $n$-th level ultra-finite fields for all $n \ge 1$. For $n = 1$, a brief account of ultra-finite fields from an algebraic viewpoint was presented in Nguyen \cite[Section 3]{nguyen-APAL-2024}.

\end{remark}

\section{The algebraic part of an ultra-field extension of the ultra-hull $\cU(\F)$ over $\F$}
\label{sec-algebraic-part-of-an-ultra-finite-extension}

In this section, we develop the framework that permits a comparison between explicit class field theories for a family of rational function fields over constant fields $\bF_s$ and explicit class field theory for the rational function field over the ultraproduct $\fK$ of the constant fields $\bF_s$. The framework depends solely on two key notions that we introduce in this section, \textit{the algebraic part of an ultra-field extension} and \textit{the shadows and ultra-shadow of a finite separable extension of a given rational function field}.

We begin by introducing the notion of algebraic parts.

\subsection{Algebraic and separable parts of an ultra-finite extension}
\label{subsec-algebraic-parts}

Let $S$ be an infinite set, and let $\cD$ be a nonprincipal ultrafiler on $S$. For each $s \in S$, let $\bF_s$ be a field. Let $\fK = \prod_{s\in S}\bF_{s}/\cD$ denote the ultraproduct of $\bF_s$ with respect to $\cD$. If there exists a prime $p > 0$ such that $\{s \in S\; | \; \text{$\bF_s$ has characteristic $p$}\} \in \cD$, then $\fK$ has characteristic $p$; otherwise $\fK$ is of characteristic $0$.

 Let $\fA_s = \bF_s[t]$ be the polynomial ring in variable $t$ over $\bF_s$, and let $\fF_s = \bF_s(t)$ be the rational function field of variable $t$ over $\bF_s$ for $\cD$-almost all $s \in S$. 
 
 Let $\A = \fK[t]$ be the polynomial ring in variable $t$ over $\fK$, and let $\cU(\A)$ be the ultra-hull of $\A$, i.e., $\cU(\A)$ is the ultraproduct of $\fA_s = \bF_s[t]$ with respect to $\cD$ of the form $\cU(\A) = \prod_{s \in S}\fA_s/\cD$. Let $\F = \fK(t)$ denote the rational function field of variable $t$ over $\fK$, and let $\cU(\F)$ be the ultra-hull of $\F$, i.e., $\cU(\F)$ is the ultraproduct of $\fF_s = \bF_s(t)$ with respect to $\cD$ of the form $\cU(\F) = \prod_{s \in S}\fF_s/\cD$.

 The following are elementary but useful results that we will need in many places of this paper.

\begin{lemma}
  \label{lem-elementary-lemma}
  
  \begin{itemize}
  
  \item []
  
  \item [(i)] For every element $\alpha = \ulim_{s\in S}\alpha_s \in \cU(\A)$ for some elements $\alpha_s \in \fA_s = \bF_s[t]$, $\alpha$ belongs to $\A$ if and only if the degree of $\alpha_s$ is bounded for $\cD$-almost all $s \in S$, i.e., there exists a positive constant $M > 0$ such that
  \begin{align*}
  \{s \in S\; | \; \deg(\alpha_s) < M\} \in \cD.
  \end{align*} 
  
  \item [(ii)] Every polynomial $\alpha \in \A = \fK[t]$ can be written in the from $\alpha = \ulim_{s\in S}\alpha_s$, where $\alpha_s$ is a polynomial in $\fA_s = \bF_s[t]$ of the same degree as the polynomial $\alpha$ for $\cD$-almost all $s \in S$.
  
  \end{itemize}

\end{lemma}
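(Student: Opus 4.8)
The plan is to prove the two parts of Lemma \ref{lem-elementary-lemma} separately, handling (ii) first since it is the more constructive and will feed into (i).

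\medskip

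\emph{Proof of (ii).} Write $\alpha = \sum_{i=0}^{d} c_i t^i \in \A = \fK[t]$ with $c_d \ne 0$, where $d = \deg(\alpha)$. Each coefficient $c_i \in \fK = \prod_{s \in S}\bF_s/\cD$ can be represented as $c_i = \ulim_{s\in S} c_{i,s}$ with $c_{i,s} \in \bF_s$. For $\cD$-almost all $s \in S$ set $\alpha_s := \sum_{i=0}^{d} c_{i,s} t^i \in \fA_s = \bF_s[t]$; then by the componentwise description of arithmetic in the ultra-hull $\cU(\A) = \prod_{s\in S}\fA_s/\cD$ we have $\alpha = \ulim_{s\in S}\alpha_s$. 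It remains to see that $\alpha_s$ has degree exactly $d$ for $\cD$-almost all $s$. Since $c_d \ne 0$ in $\fK$, the set $A = \{s \in S \; | \; c_{d,s} \ne 0\}$ belongs to $\cD$; for every $s \in A$ the leading coefficient of $\alpha_s$ in degree $d$ is nonzero, so $\deg(\alpha_s) = d$. Shrinking to $A$ gives the claim. (This is essentially \L{}o\'s' theorem applied to the first-order statement ``the coefficient of $t^d$ is nonzero and all coefficients of $t^i$ for $i > d$ vanish,'' but the direct argument above suffices.)

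\medskip

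\emph{Proof of (i).} Suppose first that $\alpha \in \A$. Then by part (ii) we may write $\alpha = \ulim_{s\in S}\beta_s$ with $\deg(\beta_s) = \deg(\alpha) =: d$ for $\cD$-almost all $s$. If $\alpha = \ulim_{s\in S}\alpha_s$ is any other representative with $\alpha_s \in \fA_s$, then $\{s \in S \; | \; \alpha_s = \beta_s\} \in \cD$, and on this set $\deg(\alpha_s) = d < d+1 =: M$, so $\{s \in S \; | \; \deg(\alpha_s) < M\} \in \cD$ as required. Conversely, suppose there is $M > 0$ with $B := \{s \in S \; | \; \deg(\alpha_s) < M\} \in \cD$. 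Write $\alpha_s = \sum_{i=0}^{M-1} a_{i,s} t^i$ for $s \in B$ (padding with zero coefficients as needed, which does not change $\alpha_s$), and for $s \notin B$ put $a_{i,s} = 0$. Set $a_i := \ulim_{s\in S} a_{i,s} \in \fK$ for $0 \le i \le M-1$ and $\alpha' := \sum_{i=0}^{M-1} a_i t^i \in \A = \fK[t]$. Then $\alpha' = \ulim_{s\in S}\alpha_s'$ where $\alpha_s' = \sum_{i=0}^{M-1} a_{i,s} t^i$, and for every $s \in B$ we have $\alpha_s' = \alpha_s$; since $B \in \cD$ this gives $\alpha' = \alpha$ in $\cU(\A)$. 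Hence $\alpha = \alpha' \in \A$.

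\medskip

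The only mild subtlety, and the place I would be most careful, is the bookkeeping around representatives: one must consistently use the fact that two tuples representing the same element of an ultraproduct agree on a set in $\cD$, and that one is free to modify any tuple on a set \emph{not} in $\cD$ without changing the element it represents — this is what lets us assume a uniform degree bound $M$ holds for \emph{all} $s$ and pad coefficients freely. Everything else is a direct unwinding of the definition of the ultra-hull together with Lemma \ref{lem-at-least-one-set-in-th-union-of-sets-is-in-D} (or, equivalently, the closure of $\cD$ under finite intersections), and no deeper input is needed.
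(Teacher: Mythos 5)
Your proof is correct and follows essentially the same approach as the paper: the paper's proof of the forward direction of (i) is precisely your construction for (ii) followed by the observation that any two representatives of the same ultraproduct element agree $\cD$-almost everywhere, and the converse of (i) is the same coefficient-by-coefficient ultralimit argument (the paper uses a positive integer bound $N$ rather than a real $M$, which sidesteps the tiny floor/ceiling bookkeeping point in your indexing). Reorganizing the argument to establish (ii) first is a sensible presentational choice but does not change the substance.
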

 
 \begin{proof}

We first prove part (i).

Suppose that $\alpha \in \A = \fK[t]$, and thus $\alpha = a_0 + a_1t + \cdots + a_nt^n$ for some elements $a_i \in \fK$ with $a_n \ne 0$ and some integer $n \ge 0$. For each $0 \le i \le n$, write $a_i = \ulim_{s\in S}a_{i, s}$ for some elements $a_{i, s}\in \bF_s$ for $\cD$-almost all $s \in S$. Set
\begin{align*}
\beta_s = a_{0, s} + a_{1, s}t + \cdots + a_{n, s}t^n \in \fA_s = \bF_s[t].
\end{align*}
Since $a_n = \ulim_{s\in S}a_{n, s} \ne 0$, $a_{n, s} \ne 0$ for $\cD$-almost all $s \in S$, and thus 
\begin{align*}
\{s \in S\; | \; \deg(\beta_s) = n\} \in \cD.
\end{align*}

It is straightforward to verify that $\alpha = \ulim_{s\in S}\beta_s$. Since $\alpha = \ulim_{s\in S}\alpha_s = \ulim_{s\in S}\beta_s$, we deduce that $\alpha_s = \beta_s$ for $\cD$-almost all $s \in S$. Thus 
\begin{align*}
\{s \in S\; | \; \deg(\alpha_s) = n\} \in \cD,
\end{align*}
and therefore the degree of $\alpha_s$ is bounded for $\cD$-almost all $s \in S$.

Conversely, suppose that the degree of $\alpha_s$ is bounded for $\cD$-almost all $s \in S$, and let $N$ be a positive integer such that
\begin{align*}
\{s \in S \; | \; \deg(\alpha_s) \le N\} \in \cD.
\end{align*}
Then one can write
\begin{align*}
\alpha_s = b_{0, s} + a_{1, s}t + \cdots + b_{N, s}t^N \in \fA_s = \bF_s[t],
\end{align*}
 where $b_{i, s}\in \bF_s$. Thus
 \begin{align*}
 \alpha = \ulim_{s\in S}\alpha_s = b_0 + b_1t + \cdots + b_Nt^N \in \A = \fK[t],
 \end{align*}
 where $b_i = \ulim_{s\in S}b_{i, s} \in \fK$.

Part (ii) follows immediately from the same arguments as in part (i). 
 
 \end{proof}

\begin{lemma}
\label{lem-elementary-lemma0}

\begin{itemize}

\item []

\item [(i)] Let $P$ be a prime of degree $d \in \bZ_{>0}$ in $\A = \fK[t]$, i.e., $P$ is an irreducible polynomial of degree $d$ in $\A$. Then $P$ is a prime in the ultra-hull $\cU(\A)$ of $\A$, and $P = \ulim_{s\in S}P_s$, where $P_s$ is a prime of degree $d$ in $\fA_s = \bF_s[t]$ for $\cD$-almost all $s \in S$.

\item [(ii)] Let $P$ be a polynomial in $\A = \fK[t]$. By Lemma \ref{lem-elementary-lemma}, one can write $P = \ulim_{s\in S}P_s$, where $P_s$ is a polynomial in $\fA_s = \bF_s[t]$ of the same degree as $P$ for $\cD$-almost all $s \in S$. If $P_s$ is irreducible in $\fA_s$ for $\cD$-almost all $s \in S$, $P$ is irreducible in $\A$.

\end{itemize}

\end{lemma}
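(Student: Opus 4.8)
The plan is to derive both statements from \L{}o\'s' theorem (Theorem~\ref{thm-Los}) together with Lemma~\ref{lem-elementary-lemma}, which identifies $\A=\fK[t]$ with the bounded-degree part of the ultra-hull $\cU(\A)$. The two facts doing the work are: (a) for a fixed standard degree $d\ge 1$, irreducibility of a degree-$d$ polynomial is a first-order condition on its $d+1$ coefficients, so it transfers along the ultraproduct in both directions; and (b) any factorization inside the integral domain $\cU(\A)$ of a polynomial of standard degree $d$ is automatically a factorization inside $\A$, because degrees of representatives add.

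For part (i), write $P=\ulim_{s\in S}P_s$ with $\deg P_s=d$ for $\cD$-almost all $s$ by Lemma~\ref{lem-elementary-lemma}(ii); write $P=\sum_{i=0}^d a_i t^i$ and $P_s=\sum_{i=0}^d a_{i,s}t^i$ with $a_i=\ulim_{s\in S}a_{i,s}$ and $a_d\ne 0$. Let $\varphi_d(\zeta_0,\dots,\zeta_d)$ be the first-order formula in the language of rings asserting that $\zeta_d\ne 0$ and that $\sum_i\zeta_i t^i$ admits no factorization into a polynomial of degree $j$ times one of degree $d-j$ for any $1\le j\le d-1$, i.e.\ that $\sum_i\zeta_i t^i$ is a prime of degree $d$; this is the first-order expressibility of irreducibility already invoked in the proof of Proposition~\ref{prop-ultra-Galois-extensions-are-Galois}. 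Since $P$ is a prime of degree $d$ in $\A=\fK[t]$, $\varphi_d(a_0,\dots,a_d)$ holds in $\fK$, so by Theorem~\ref{thm-Los} (applied to the fields $\bF_s$, regarded as $\bZ$-algebras, with the tuples $(a_{0,s},\dots,a_{d,s})$ whose ultraproduct is $(a_0,\dots,a_d)$) we get that $\varphi_d(a_{0,s},\dots,a_{d,s})$ holds in $\bF_s$ for $\cD$-almost all $s$; that is, $P_s$ is a prime of degree $d$ in $\fA_s=\bF_s[t]$ for $\cD$-almost all $s$, which is the second assertion. For the first assertion, suppose $P=\alpha\beta$ in $\cU(\A)$ with $\alpha=\ulim_{s\in S}\alpha_s$ and $\beta=\ulim_{s\in S}\beta_s$. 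Then $\alpha_s\beta_s=P_s$, hence $\deg\alpha_s+\deg\beta_s=d$, for $\cD$-almost all $s$, so $\deg\alpha_s\le d$ and $\deg\beta_s\le d$ for $\cD$-almost all $s$; by Lemma~\ref{lem-elementary-lemma}(i), $\alpha,\beta\in\A=\fK[t]$. Thus $P=\alpha\beta$ is a factorization in $\fK[t]$, and since $P$ is irreducible over $\fK$ one of $\alpha,\beta$ is a nonzero constant, i.e.\ a unit of $\cU(\A)$; as $\deg P=d\ge 1$, $P$ is neither zero nor a unit of $\cU(\A)$, so $P$ is a prime (irreducible element) of $\cU(\A)$. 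If one reads ``prime'' in the ideal-theoretic sense, it follows instead from the second assertion together with the pigeonhole principle of Lemma~\ref{lem-at-least-one-set-in-th-union-of-sets-is-in-D} applied in the unique factorization domains $\bF_s[t]$.

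For part (ii), run the same transfer in the reverse direction. With $d:=\deg P$ and $P=\ulim_{s\in S}P_s$, $\deg P_s=d$ for $\cD$-almost all $s$, as supplied by Lemma~\ref{lem-elementary-lemma}(ii), the hypothesis says that $\varphi_d$ holds of the coefficient tuple of $P_s$ in $\bF_s$ for $\cD$-almost all $s$; by Theorem~\ref{thm-Los}, $\varphi_d$ holds of the coefficient tuple of $P$ in $\fK$, which is precisely the statement that $P$ is irreducible in $\A=\fK[t]$.

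I do not expect a genuine obstacle; this lemma is essentially bookkeeping. The only two points needing a little care are the passage from a factorization in $\cU(\A)$ to a factorization in $\A$ in part (i) (this is where Lemma~\ref{lem-elementary-lemma}(i) and the additivity of degrees of representatives in the domain $\cU(\A)$ enter), and formulating $\varphi_d$ so that the varying coefficients play the role of the parameter tuples $\widehat a_s$ in the statement of \L{}o\'s' theorem rather than trying to use $\fK$- or $\bF_s$-specific parameters.
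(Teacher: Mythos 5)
Your proof is correct, and it takes a route that differs from the paper's in two places. For part (i), the paper first cites an external result (Nguyen's Lemma 4.12) to obtain that $P$ is prime in $\cU(\A)$, and only then transfers this to the components by applying \L{}o\'s' theorem to the ring ultraproduct $\cU(\A)=\prod_{s\in S}\fA_s/\cD$ using the first-order expressibility of ``is a prime element.'' You bypass the external citation: you prove $\cU(\A)$-irreducibility directly, by observing that a factorization $P=\alpha\beta$ in $\cU(\A)$ forces $\deg\alpha_s+\deg\beta_s=d$ for $\cD$-almost all $s$, so Lemma~\ref{lem-elementary-lemma}(i) pulls $\alpha,\beta$ back into $\A$ where irreducibility of $P$ finishes; and you obtain the $P_s$ being primes of degree $d$ by applying \L{}o\'s at the constant-field level $\fK=\prod_{s\in S}\bF_s/\cD$ to the explicit coefficient formula $\varphi_d$. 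For part (ii), the paper lifts a hypothetical factorization $P=AB$ in $\A[x]$ to componentwise factorizations $P_s=A_sB_s$ (via Lemma~\ref{lem-elementary-lemma}) to contradict irreducibility of $P_s$; you instead just run the $\varphi_d$ biconditional of \L{}o\'s' theorem in the reverse direction. Both routes are sound. What your version buys is self-containment and economy (one transfer argument handles both directions, and no reliance on the uncited Lemma 4.12 for the $\cU(\A)$-primality); what the paper's buys is avoiding the explicit coefficient formula and keeping part (ii) to a three-line factorization-lifting argument. Your aside on the ideal-theoretic reading of ``prime'' via Lemma~\ref{lem-at-least-one-set-in-th-union-of-sets-is-in-D} is also fine, though it could equally be dispatched by applying \L{}o\'s at the ring level to the formula ``$\zeta\mid ab\Rightarrow\zeta\mid a\vee\zeta\mid b$''.
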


\begin{proof}

We first prove part (i).

By Nguyen \cite[Lemma 4.12, p.27]{nguyen-APAL-2024}, $P$ is a prime in $\cU(\A)$. One can write
\begin{align*}
P(t) = a_0 + a_1t + \cdots + a_dt^d,
\end{align*}
where the $a_i \in \fK$ such that $a_d \ne 0$. Since $\fK = \prod_{s\in S}\bF_s/\cD$ and $a_i \in \fK$, one can write $a_i = \ulim_{s\in S}a_{i, s}$ for some elements $a_{i, s}\in \bF_s$. Let
\begin{align*}
P_s(t) = a_{0, s} + a_{1, s}t + \cdots + a_{d, s}t^d \in \fA_s = \bF_s[t].
\end{align*}
It is clear that $P(t) = \ulim_{s\in S}P_s(t)$. On the other hand, since $a_d = \ulim_{s\in S}a_{d, s} \ne 0$, $a_{d, s} \ne 0$, and thus $P_s$ is of degree $d$ for $\cD$-almost all $s \in S$. 

Since $P$ is a prime in $\cU(\A) = \prod_{s\in S}\fA_s/\cD$ and $P = \ulim_{s\in S}P_s$, \L{}o\'s' theorem implies that $P_s$ is a prime in $\fA_s$ for $\cD$-almost all $s \in S$. Since $\cD$ is an ultrafilter, we deduce that $P_s$ is a prime of degree $d$ for $\cD$-almost all $s \in S$ as required.

We now prove part (ii). Assume the contrary, i.e., $P$ is reducible in $\A$, and thus $P(t) = A(t)B(t)$, where $A, B$ are elements in $\A$ such that $\deg(A), \deg(B) > 1$ such that $\deg(A) + \deg(B) = \deg(P)$. By Lemma \ref{lem-elementary-lemma}, one can write $A = \ulim_{s\in S}A_s$ and $B = \ulim_{s\in S}B_s$, where the $A_s, B_s$ are polynomials in $\fA_s$ of the same degrees as $A, B$, respectively for $\cD$-almost all $s \in S$. Then
\begin{align*}
\ulim_{s\in S}P_s = P = AB = \ulim_{s\in S}A_sB_s,
\end{align*}
which implies that $P_s = A_sB_s$ for $\cD$-almost all $s \in S$, a contradiction since $P_s$ is irreducible for $\cD$-almost all $s \in S$. Thus $P$ is irreducible in $\A$ as required.

\end{proof}

  For $\cD$-almost all $s \in S$, choose an algebraic closure $\fF_s^{\alg}$ and a separable closure $\fF_s^{\sep}$ inside the algebraic closure $\fF_s^{\alg}$. Let $\cU(\F)^{\alg}_{\ultra}$ denote an ultra-algebraic closure of $\cU(\F)$, i.e., 
  \begin{align*}
  	\cU(\F)^{\alg}_{\ultra} = \prod_{s\in S}\fF_s^{\alg}/\cD,
  \end{align*}
  and let $\cU(\F)^{\sep}_{\ultra} = \prod_{s\in S}\fF_s^{\sep}/\cD$ denote the ultra-separable closure of $\cU(\F)$. 
  
  Inside $\cU(\F)^{\alg}_{\ultra}$, we choose an algebraic closure $\F^{\alg}$ of $\F$, and a separable closure $\F^{\sep}$ of $\F$ inside $\F^{\alg}$. Since $\F \subset \cU(\F) \subset \cU(\F)^{\alg}_{\ultra}$ and $\cU(\F)^{\alg}_{\ultra}$ is an algebraically closed field, such an algebraic closure $\F^{\alg}$ can be chosen to be the set of all elements in $\cU(\F)^{\alg}_{\ultra}$ that are algebraic over $\F$ (see Lang \cite{lang-algebra} or Zariski--Samuel \cite{Zariski-Samuel}). We will prove in Corollary \ref{cor-F^sep-is-contained-in-U(F)^sep_ultra} that $\F^{\sep}$ is contained in $\cU(\F)^{\sep}_{\ultra}$.

At the constant field level, we choose an algebraic closure $\bF_s^{\alg}$ for $\cD$-almost all $s \in S$, and let $\bK = \prod_{s\in S}\bF_s^{\alg}/\cD$ be an ultra-algebraic closure of $\fK = \prod_{s\in S}\bF_s/\cD$. Let $\fK^{\alg}$ be the set of all elements in $\bK = \prod_{s\in S}\bF_s^{\alg}/\cD$ that are algebraic over $\fK$. Since $\bK$ is algebraically closed, it is known (see Lang \cite{lang-algebra} or Zariski--Samuel \cite{Zariski-Samuel}) that $\fK^{\alg}$ is an algebraic closure of $\fK$. Throughout this paper, we fix such an algebraic closure $\fK^{\alg}$ of $\fK$.

  Throughout this paper, we often work with an algebraic (possibly infinite) extension $\fL_s$  over $\fF_s$ for $\cD$-almost all $s \in S$, in which case, we always denote by $\fL$ an ultra-field extension of $\cU(\F)$ of the form 
  \begin{align*}
  	\fL = \prod_{s\in S}\fL_s/\cD.
  \end{align*}
  
  When $\fL_s$ is a finite extension of degree $n_s \in \bZ_{>0}$ over $\fF_s$ for $\cD$-almost all $s \in S$, $\fL$ is an ultra-field extension of ultra-degree $n = \ulim_{s \in S}n_s \in \bZ^{\#, >0}$ over $\cU(\F)$ (see Subsection \ref{subsubsec-ultra-fields}). A typical diagram of fields and ultra-fields used throughout this paper is illustrated in Figure \ref{fig-diagram-of-fields}.

  \begin{center}
  \begin{figure}
  	\label{fig-diagram-of-fields}
	\begin{tikzpicture}

	\node (Q-3) at (-5,-1) {$\cU(\A) = \prod_{s\in S}\fA_s/\cD$};

	\node (Q-1) at (-2,-2) {$\A = \fK[t]$};

    \node (Q1) at (0,0) {$\F = \fK(t)$};
     \node (Q2) at (2,2) {$\fL_{\sep} = \fL \cap \F^{\sep}$};
\node (Q4) at (3.5,3.5) {$\fL_{\alg} = \fL \cap \F^{\alg}$};
          \node (Q6) at (4.5,4.5) {$\F^{\sep}$};

     \node (Q8) at (6,6) {$\F^{\alg}$};
     
        \node (Q10) at (7.5, 7.5) {$\cU(\F)^{\alg}_{\ultra} = \prod_{s\in S}\fF_s^{\alg}/\cD$};
        
    \node (Q3) at (-3,1) {$\cU(\F) = \prod_{s\in S}\fF_s/\cD$};

    \node (Q5) at (0,4) {$\fL = \prod_{s\in S}\fL_s/\cD$};
     
        \node (Q7) at (2.5, 6.5) {$\cU(\F)^{\sep}_{\ultra} = \prod_{s\in S}\fF_s^{\sep}/\cD$};
        
        \draw (Q-1)--(Q-3) ;
\draw (Q-1)--(Q1)  ;
    \draw (Q1)--(Q2) ;
      \draw (Q2)--(Q4)  ;
    \draw (Q4)--(Q6)  ;
    \draw (Q6)--(Q8)  ;
\draw (Q8)--(Q10)  ;
    \draw (Q10)--(Q7)  ;

     \draw (Q6)--(Q7) ;

    \draw (Q1)--(Q3)  ;
    
   \draw (Q-3)--(Q3)  ;

    \draw (Q3)--(Q5)  ;

    \draw (Q5)--(Q2) ;
    \draw (Q5)--(Q4) ;
      \draw (Q5)--(Q7) ;
       \end{tikzpicture}

      \caption{Diagram of Fields and Ultra-fields}
      \end{figure}
    \end{center}

\begin{definition}
	\label{def-algebraic-part-of-ultra-finite-extension}
	(algebraic and separable parts)
	
	\begin{itemize}
		\item []
		\item [(i)] 	The fields $\fL_{\alg} = \fL \cap \F^{\alg}$ and $\fL_{\sep} = \fL \cap \F^{\sep}$ are called the \textbf{algebraic and separable parts of $\fL$ over $\F$}, respectively.

\item [(ii)] More generally, for any ultra-subset $\Omega = \prod_{s\in S}\Omega_s/\cD$ of $\fL$ for some subsets $\Omega_s$ of $\fL_s$, the sets $\Omega_{\alg} = \Omega \cap \F^{\alg}$ and $\Omega_{\sep} = \Omega \cap \F^{\sep}$ are called the \textbf{algebraic and separable parts of $\Omega$ over $\F$}, respectively.

	\end{itemize}

\end{definition}

\begin{remark}
	
	In most important applications in this paper, we only deal with separable extensions $\fL_s$ of $\fF_s$ for $\cD$-almost all $s \in S$, in which case $\fL_{\sep} = \fL_{\alg}$. 
	
\end{remark}

We prove a key lemma that we will need in many places of this paper. In fact, this is the most important lemma that permits moving back and forth between Galois extensions of $\F$ and Galois extensions of the family of function fields $\fF_s$, which in turn permits a comparison between explicit class field theories for $\fF_s$ and for $\F$. 

Roughly speaking, the proof of the following lemma shows that the shape of the Newton polygon for the $\infty$-adic completion of $\F$ at the infinite prime $\infty = 1/t$ is similar to the shape of the Newton polygon for $\infty$-adic completion of $\fF_s$ at the infinite prime $\infty$ for $\cD$-almost all $s \in S$.

\begin{lemma}
	\label{rem-irreducibility-is-the-same-in-F-and-U(F)}

    A polynomial $P$ in $\F[x]$ is irreducible over $\F$ if and only if $P$ is irreducible over its ultra-hall $\cU(\F)$.

\end{lemma}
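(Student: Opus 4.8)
The statement is an "if and only if", but one direction is trivial: if $P$ is irreducible over the larger field $\cU(\F)$, then a fortiori it is irreducible over the subfield $\F$ (a factorization over $\F$ would be a factorization over $\cU(\F)$). So the entire content is the forward direction: if $P \in \F[x]$ is irreducible over $\F$, then $P$ remains irreducible over $\cU(\F)$. I would prove the contrapositive: assuming $P$ factors nontrivially over $\cU(\F)$, produce a nontrivial factorization over $\F$.

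**Setup and reduction.** First I would reduce to the case where $P$ is monic in $x$ with coefficients in $\A = \fK[t]$ (clear denominators and use Gauss's lemma over the UFD $\fK[t]$, noting that $\fK$ is a field so $\fK[t]$ is a UFD). Write $P = \ulim_{s\in S} P_s$ with $P_s \in \fF_s[x]$, and similarly realize each coefficient $a_i(t) \in \A$ as $\ulim_{s\in S} a_{i,s}(t)$ with $\deg a_{i,s} = \deg a_i$ for $\cD$-almost all $s$, via Lemma \ref{lem-elementary-lemma}(ii); this keeps the $t$-degrees of the coefficients of $P_s$ bounded uniformly in $s$. Now suppose $P = QR$ with $Q, R \in \cU(\F)[x]$ nonconstant. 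Writing $Q = \ulim_{s\in S} Q_s$, $R = \ulim_{s\in S} R_s$ and applying \L os's theorem, we get $P_s = Q_s R_s$ with $Q_s, R_s \in \fF_s[x]$ nonconstant, of fixed degrees $\deg Q_s = \deg Q$, $\deg R_s = \deg R$ for $\cD$-almost all $s$. After rescaling we may take $Q_s, R_s$ monic in $x$, hence their $x$-coefficients are, by the usual argument (they are symmetric functions of roots of a polynomial monic over $\fF_s[t]$), integral over $\fF_s[t]$ and lie in $\fF_s(t)$, hence lie in $\fF_s[t]$ since $\fF_s[t]$ is integrally closed. So $Q_s, R_s \in \fF_s[t][x] = \bF_s[t][x]$.

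**The key point: bounding $t$-degrees.** The obstacle — and this is exactly the "Newton polygon" remark the author flags — is that $Q = \ulim Q_s \in \cU(\F)[x]$ need not lie in $\F[x]$: the $t$-degrees of the coefficients of $Q_s$ could grow unboundedly in $s$, so $Q$ would involve a transcendental-over-$\F$ element. I would rule this out by a Newton-polygon / valuation argument at the infinite place $\infty = 1/t$. Over each $\fF_s = \bF_s(t)$, consider the valuation $v_\infty = -\deg_t$; the Newton polygon of $P_s$ with respect to $v_\infty$ is determined solely by the $v_\infty$-values of the coefficients $a_{i,s}(t)$ of $P_s$, which equal $-\deg_t a_i(t)$ and are therefore the same for all $s$ (and equal to the Newton polygon of $P$ over $\F$ at $\infty$). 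The slopes and widths of the Newton polygon of the factor $Q_s$ are forced — up to finitely many combinatorial choices independent of $s$ — by those of $P_s$; concretely, each slope segment of $P_s$ is partitioned between $Q_s$ and $R_s$, and the top/bottom vertices of the Newton polygon of $Q_s$ pin down $\deg_t$ of the top-degree and constant coefficients of $Q_s$, while monicity in $x$ plus convexity of the Newton polygon bound $\deg_t$ of every intermediate coefficient of $Q_s$ by a quantity depending only on the Newton polygon of $P$ and on $\deg Q$, $\deg R$ — all independent of $s$. Hence there is a constant $M$, depending only on $P$, with $\deg_t(\text{every coefficient of } Q_s) \le M$ and likewise for $R_s$, for $\cD$-almost all $s$. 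By Lemma \ref{lem-elementary-lemma}(i) this forces the coefficients of $Q = \ulim Q_s$ and $R = \ulim R_s$ to lie in $\A = \fK[t]$, so $Q, R \in \F[x]$, and $P = QR$ is a nontrivial factorization over $\F$ — contradiction.

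**Where the difficulty concentrates.** The routine parts are the Gauss's-lemma reductions, the integral-closure argument putting the factors in $\bF_s[t][x]$, and the final application of Lemma \ref{lem-elementary-lemma}(i) and \L os's theorem. The one step that needs genuine care is the uniform bound $M$: one must argue that when a polynomial $P_s$ monic in $x$ over $\bF_s[t]$ factors as $Q_s R_s$ with prescribed $x$-degrees, the $t$-degrees of the coefficients of the factors are bounded in terms of the $t$-degrees of the coefficients of $P_s$ alone. I would carry this out by normalizing $P_s$ to have all its $\infty$-adic Newton-polygon vertices at lattice points determined by $P$, noting the Newton polygon of $Q_s$ is a "sub-polygon" (its slope multiset is a sub-multiset of that of $P_s$), and reading off that the extreme vertices of $Q_s$'s Newton polygon have coordinates bounded by those of $P_s$'s, while convexity squeezes the interior coefficients; since the Newton polygon of $P_s$ is literally the same polygon for every $s$, the bound is uniform. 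This is precisely the assertion that "the shape of the Newton polygon at $\infty$ is the same for $\F$ and for $\fF_s$," which is the heart of the lemma.
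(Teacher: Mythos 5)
Your proposal is correct and follows essentially the same route as the paper: after landing the componentwise factors in $\fA_s[x]$, both arguments bound the $t$-degrees of their coefficients uniformly in $s$ by working at the infinite place $v_\infty = -\deg_t$, exploiting that the Newton polygon of $P_s$ at $\infty$ is the same for $\cD$-almost all $s$. The only cosmetic differences are that the paper normalizes $P$ to be primitive in $\A[x]$ and uses Gauss's lemma where you normalize to monic (which requires the substitution $x\mapsto x/a_n$ scaled by $a_n^{\deg P-1}$, not merely ``clearing denominators'') and use integral closure, and the paper unrolls your sub-Newton-polygon bound concretely via an explicit lower bound $w_\infty(\alpha)\ge -\fe(\fS_s/\fF_s)M/n$ on root valuations in the splitting field followed by Vi\`ete's formulas and the ultrametric inequality.
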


\begin{proof}

Since $\cU(\F)$ contains $\F$, it is obvious that if $P$ is irreducible over $\cU(\F)$, then $P$ is irreducible over $\F$.

Suppose that $P(x) = a_0 + a_1x + \cdots + a_nx^n \in \F[x]$ is irreducible over $\F$, where the $a_i$ belong to $\F$ and $a_n \ne 0$. If $\deg(P) = n = 0$ or $\deg(P) = n = 1$, then it is obvious that $P(x)$ is also irreducible over $\cU(\F)$. Thus without loss of generality, we assume that $n > 1$. 

Note that $P(x)$ is irreducible over $\F$ if and only if $\alpha P(x)$ is irreducible over $\F$ for every nonzero element $\alpha \in \F$. Thus, by multiplying a suitable element in $\A = \fK[t]$, we can assume that the $a_i$ belong to $\A = \fK[t]$ such that $\gcd(a_0, \ldots, a_n) = 1$, i.e., $P(x)$ is a primitive polynomial in $\A[x]$ such that $P(x)$ is irreducible over $\F$.

For each $0 \le i \le n$, one can, using Lemma \ref{lem-elementary-lemma}, write $a_i = \ulim_{s\in S}a_{i, s}$ for some elements $a_{i, s} \in \fA_s = \bF_s[t]$. Set
\begin{align*}
M = \max\{\deg(a_i) : 0 \le i \le n\}.
\end{align*}
We see that for all $0 \le i \le n$,
\begin{align}
\label{e1-lem-irreducibility-is-the-same-for-F-and-U(F)}
\deg(a_{i, s}) \le M
\end{align}
for $\cD$-almost all $s \in S$.

For each $s \in S$, let 
\begin{align*}
P_s(x) = a_{0, s} + a_{1, s}x + \cdots + a_{n, s}x^n \in \fA_s[x].
\end{align*}

It is clear that $P(x) = \ulim_{s\in S}P_s(x)$. Since $a_n \ne 0$ and $a_n = \ulim_{s\in S}a_{n, s}$, \L{}o\'s' theorem implies that $a_{n, s}\ne 0$ for $\cD$-almost all $s \in S$, and thus $P_s(x)$ is of degree $n$ for $\cD$-almost all $s \in S$.

Since $\gcd(a_0, \ldots, a_n) = 1$ and $\A$ is a principal ideal domain, there exist elements $\beta_0, \ldots, \beta_n$ in $\A = \fK[t]$ such that
\begin{align}
\label{e2-lem-irreducibility-is-the-same-for-F-and-U(F)}
\beta_0a_0 + \beta_1a_1 + \cdots + \beta_na_n = 1.
\end{align}

For each $0 \le i \le n$, one can write $\beta_i = \ulim_{s\in S}\beta_{i, s}$ for some elements $\beta_{i, s} \in \fA_s = \bF_s[t]$. We see from (\ref{e2-lem-irreducibility-is-the-same-for-F-and-U(F)}) that
\begin{align*}
\ulim_{s\in S}(\beta_{0, s}a_{0, s} + \beta_{1, s}a_{1, s} + \cdots + \beta_{n, s}a_{n, s}) = 1,
\end{align*}
and it thus follows from \L{}o\'s' theorem that
\begin{align*}
\beta_{0, s}a_{0, s} + \beta_{1, s}a_{1, s} + \cdots + \beta_{n, s}a_{n, s} = 1
\end{align*}
for $\cD$-almost all $s \in S$, i.e., $\gcd(a_{0, s}, \ldots, a_{n, s}) = 1$ for $\cD$-almost all $s \in S$. Thus $P_s(x)$ is a primitive polynomial in $\fA_s[x]$ for $\cD$-almost all $s \in S$.

We are now ready to prove that $P(x)$ is irreducible over $\cU(\F)$. Assume the contrary, i.e., $P(x)$ is reducible over $\cU(\F)$. Since $P(x) = \ulim_{s\in S}P_s(x)$ and $\cU(\F) = \prod_{s\in S}\fF_s/\cD$, \L{}o\'s' theorem implies that $P_s(x)$ is reducible over $\fF_s$ for $\cD$-almost all $s \in S$. Since $P_s(x)$ is primitive in $\fA_s[x]$, and reducible over $\fF_s$, and $\fA_s = \bF_s[t]$ is a principal ideal domain with field of fractions $\fF_s$ for $\cD$-almost all $s \in S$, Gauss' lemma (see Lang \cite{lang-algebra}) implies that $P_s(x)$ is reducible in $\fA_s[x]$ for $\cD$-almost all $s \in S$. Thus there exist polynomials $F_s(x), G_s(x) \in \fA_s[x]$ that satisfy the following:

\begin{description}[style=multiline, labelwidth=1.5cm]

\item [\namedlabel{itm:C1'}{C1'}] $F_s(x) = c_{0, s} + c_{1, s}x + \cdots + c_{h_s, s}x^{h_s}$ and $G_s(x) = d_{0, s} + d_{1, s}x + \cdots + d_{m_s, s}x^{m_s}$, where the $c_{i, s}, d_{j, s}$ belong to $\fA_s$ such that $c_{h_s, s}, d_{m_s, s} \ne 0$, and $1 \le h_s, m_s \le n - 1$ such that $n = h_s + m_s$; and

\item [\namedlabel{itm:C2'}{C2'}] $P_s(x) = F_s(x)G_s(x)$ for $\cD$-almost all $s \in S$.

\end{description}

Since $1 \le h_s, m_s \le n -1$ for $\cD$-almost all $s \in S$, we see that if $A_{h, m} = \{s \in S\; | \; \text{$\deg(F_s) = h_s = h$ and $\deg(G_s) = m_s = m$}\}$ for all $1 \le h, m \le n - 1$ with $h + m = n$, then
\begin{align*}
\bigcup_{\substack{1 \le h, m \le n ,\\ h + m = n}}A_{h, m} \in \cD
\end{align*}

Since the above union only has finitely many sets $A_{h, m}$, Lemma \ref{lem-at-least-one-set-in-th-union-of-sets-is-in-D} implies that there exist integers $1 \le h, m \le n - 1$ such that 
\begin{align*}
A_{h, m} =  \{s \in S\; | \; \text{$\deg(F_s) = h_s = h$ and $\deg(G_s) = m_s = m$}\} \in \cD,
\end{align*}
and thus conditions (\ref{itm:C1'}) and (\ref{itm:C2'}) implies that there exist polynomials $F_s(x), G_s(x) \in \fA_s[x]$ such that the following are true.

\begin{description}[style=multiline, labelwidth=1.5cm]

\item [\namedlabel{itm:C1}{C1}] $F_s(x) = c_{0, s} + c_{1, s}x + \cdots + c_{h, s}x^{h}$ and $G_s(x) = d_{0, s} + d_{1, s}x + \cdots + d_{m, s}x^{m}$, where the $c_{i, s}, d_{j, s}$ belong to $\fA_s$ such that $c_{h, s}, d_{m, s} \ne 0$, and $1 \le h, m \le n - 1$ such that $n = h + m$; and

\item [\namedlabel{itm:C2}{C2}] $P_s(x) = F_s(x)G_s(x)$ for $\cD$-almost all $s \in S$.

\end{description}

We contend that there exists an absolute number $N$ (only depending on $M$ and $n$) such that
\begin{align}
\label{e2-1/2-lem-irreducibility-is-the-same-for-F-and-U(F)}
\deg(c_{i, s}), \deg(d_{j, s}) < N
\end{align}
for all $0 \le i \le h$, $0 \le j \le m$ for $\cD$-almost all $s \in S$. 

Let $\fS_s$ be the splitting field of $P_s(x)$ over $\fF_s$ that is contained in $\fF_s^{\alg}$. Since $\deg(P_s) = n$ for $\cD$-almost all $s \in S$, the degree of $\fS_s$ over $\fF_s$ is at most $n!$. Let $\infty = 1/t$ denote the infinite prime of $\fF_s = \bF_s(t)$ \footnote{By abuse of notation and for simplicity, we use the same symbol $\infty$ for the infinite prime of \textit{all} rational function fields $\fF_s$.}, and let $v_{\infty}$ denote the normalized valuation of $\fF_s$ at $\infty$, that is, 
\begin{align*}
v_{\infty}(\alpha) = -\deg(\alpha)
\end{align*}
for each nonzero polynomial $\alpha \in \fF_s = \bF_s(t)$, and $v_{\infty}(0) = \infty$. Let $w_{\infty}$ denote the normalized valuation of $\fS_s$ that extends $v_{\infty}$, and let $\fe(\fS_s/\fF_s)$ denote the ramification index of $\infty$ in $\fS_s/\fF_s$, i.e.,
\begin{align*}
w_{\infty}(\alpha) = \fe(\fS_s/\fF_s)v_{\infty}(\alpha) = -\fe(\fS_s/\fF_s)\deg(\alpha)
\end{align*}
for every nonzero polynomial $\alpha \in \fF_s = \bF_s(t)$. A simple estimate gives
\begin{align}
\label{e3-lem-irreducibility-is-the-same-for-F-and-U(F)}
1 \le \fe(\fS_s/\fF_s) \le [\fS_s : \fF_s] \le n!
\end{align}

We now estimate the valuations of roots of $P_s(x)$ in $\fS_s$. We first note that $P_s(0) \ne 0$ for $\cD$-almost all $s \in S$; otherwise, $P_s(0) = 0$ for $\cD$-almost all $s \in S$, and thus
\begin{align*}
P(0) = \ulim_{s\in S}P_s(0) = 0.
\end{align*}
Since $\deg(P) = n \ge 2$, we deduce that $P(x) = xQ(x)$, where $Q(x)$ is a polynomial in $\F[x]$ such that $\deg(Q) = \deg(P) - 1 \ge 1$. Thus $P(x)$ is reducible over $\F$, which is a contradiction. Therefore $x = 0$ is not a root of $P_s(x)$ for $\cD$-almost all $s \in S$.

Let $\alpha$ be an arbitrary root of $P_s(x)$ in $\fS_s$. By assumption, we know that $\alpha \ne 0$. Then
\begin{align*}
P_s(\alpha) = a_{0, s} + a_{1, s}\alpha + \cdots + a_{n, s}\alpha^n = 0.
\end{align*}
If there exists a unique minimum $w_{\infty}(a_{i, s}\alpha^i)$ among all the valuations $w_{\infty}(a_{j, s}\alpha^j)$ with $0 \le j \le n$, then the above equation implies that
\begin{align*}
\infty = w_{\infty}(0) = w_{\infty}(P_s(\alpha)) = \min\{w_{\infty}(a_{j, s}\alpha^j) \; |\; 0 \le j \le n\} = w_{\infty}(a_{i, s}\alpha^i),
\end{align*}
which is a contradiction. Thus there exist integers $0 \le i < j \le n$ such that
\begin{align*}
w_{\infty}(a_{i, s}\alpha^i) = w_{\infty}(a_{j, s}\alpha^j)
\end{align*}
and thus
\begin{align*}
w_{\infty}(\alpha) = \dfrac{w_{\infty}(a_{i, s}) - w_{\infty}(a_{j, s})}{j - i} = \dfrac{\fe(\fS_s/\fF_s)(\deg(a_{j, s}) - \deg(a_{i, s}))}{j - i}
\end{align*}

Since $\alpha \ne 0$, the above equation implies that both $a_{i, s}$ and $a_{j, s}$ are nonzero. Thus (\ref{e1-lem-irreducibility-is-the-same-for-F-and-U(F)}) implies that
\begin{align*}
\deg(a_{j, s}) - \deg(a_{i, s}) \ge 0 - M = -M.
\end{align*}
Since $0 < j - i \le n$, we deduce from (\ref{e3-lem-irreducibility-is-the-same-for-F-and-U(F)}) that
\begin{align}
\label{e4-lem-irreduciblity-is-the-same-for-F-and-U(F)}
w_{\infty}(\alpha) \ge -\fe(\fS_s/\fF_s)\left(\dfrac{M}{n}\right).
\end{align}
for an arbitrary root $\alpha$ of $P_s(x)$ in the splitting field $\fS_s$. 

We now prove (\ref{e2-1/2-lem-irreducibility-is-the-same-for-F-and-U(F)}). By (\ref{itm:C2}), and by comparing the coefficient of $x^n$, we see that $a_{n, s} = c_{h, s}d_{m, s}$. By (\ref{e1-lem-irreducibility-is-the-same-for-F-and-U(F)}), $\deg(a_{n, s}) \le M$, and thus since $\deg(c_{h, s}) + \deg(d_{m, s}) = \deg(a_{n, s})$ and $c_{h, s} \ne 0$, $d_{m, s} \ne 0$ for $\cD$-almost all $s \in S$, we deduce that 
\begin{align}
\label{e5.1-lem-irreduciblity-is-the-same-for-F-and-U(F)-leading-coefficients}
\text{$0 \le \deg(c_{h, s}) \le M$ and $0 \le \deg(d_{m, s}) \le M$}.
\end{align}

By (\ref{itm:C2}), all the roots of $F_s$ and $G_s$ in the algebraic closure $\fF_s^{\alg}$ are roots of $P_s$, and conversely every root of $P_s$ is a root of $F_s$ or $G_s$. Thus all the roots of $F_s$ and $G_s$ belong to the splitting field $\fS_s$ of $P_s$. Let $\alpha_1, \ldots, \alpha_h$ be the roots of $F_s$ in $\fS_s$, and let $\beta_1, \ldots, \beta_m$ be the roots of $G_s$ in $\fS_s$.  

We first consider the coefficients of $F_s$. By Vi\'ete's theorem, for any $1 \le k \le h$, 
\begin{align*}
\sum_{1 \le i_1 < i_2 < \cdots < i_k \le h}\alpha_{i_1}\alpha_{i_2}\cdots\alpha_{i_k} = (-1)^k\dfrac{c_{h - k, s}}{c_{h, s}}
\end{align*}
Thus since $w_{\infty}((-1)^k) = 0$, we deduce that
\begin{align}
\label{e5.2-lem-irreduciblity-is-the-same-for-F-and-U(F)-leading-coefficients}
w_{\infty}(c_{h - k, s}(-1)^k) &= w_{\infty}(c_{h - k, s})  \nonumber \\
&= w_{\infty}\left(c_{h, s}\sum_{1 \le i_1 < i_2 < \cdots < i_k \le h}\alpha_{i_1}\alpha_{i_2}\cdots\alpha_{i_k}   \right) \nonumber \\
&= w_{\infty}(c_{h, s}) +  w_{\infty}\left(\sum_{1 \le i_1 < i_2 < \cdots < i_k \le h}\alpha_{i_1}\alpha_{i_2}\cdots\alpha_{i_k}   \right) \nonumber \\
&= -\fe(\fS_s/\fF_s)\deg(c_{h, s}) + w_{\infty}\left(\sum_{1 \le i_1 < i_2 < \cdots < i_k \le h}\alpha_{i_1}\alpha_{i_2}\cdots\alpha_{i_k}   \right)
\end{align}

By (\ref{e4-lem-irreduciblity-is-the-same-for-F-and-U(F)}), we deduce that for any integers $1 \le i_1 < i_2 < \cdots < i_k \le h$, 
\begin{align*}
w_{\infty}\left(\alpha_{i_1}\alpha_{i_2}\cdots\alpha_{i_k}\right) &= w_{\infty}(\alpha_{i_1}) + \cdots + w_{\infty}(\alpha_{i_k}) \nonumber \\
&\ge \underbrace{\left(-\fe(\fS_s/\fF_s)\left(\dfrac{M}{n}\right)\right) + \cdots + \left(-\fe(\fS_s/\fF_s)\left(\dfrac{M}{n}\right)\right)}_{\text{$k$ copies}} \nonumber \\
&= -\fe(\fS_s/\fF_s)\left(\dfrac{kM}{n}\right),
\end{align*}
and thus
\begin{align}
\label{e5.3-lem-irreduciblity-is-the-same-for-F-and-U(F)-leading-coefficients}
w_{\infty}\left(\sum_{1 \le i_1 < i_2 < \cdots < i_k \le h}\alpha_{i_1}\alpha_{i_2}\cdots\alpha_{i_k}   \right) &\ge \min\{w_{\infty}\left(\alpha_{i_1}\alpha_{i_2}\cdots\alpha_{i_k}\right)\; |\; 1 \le i_1 < i_2 < \cdots < i_k \le h \} \nonumber \\
&\ge -\fe(\fS_s/\fF_s)\left(\dfrac{kM}{n}\right).
\end{align}
By (\ref{e5.1-lem-irreduciblity-is-the-same-for-F-and-U(F)-leading-coefficients}), (\ref{e5.2-lem-irreduciblity-is-the-same-for-F-and-U(F)-leading-coefficients}), (\ref{e5.3-lem-irreduciblity-is-the-same-for-F-and-U(F)-leading-coefficients}), and since $\fe(\fS_s/\fF_s) \ge 1$, we deduce that
\begin{align*}
w_{\infty}(c_{h - k, s}) \ge -\fe(\fS_s/\fF_s)M  -\fe(\fS_s/\fF_s)\left(\dfrac{kM}{n}\right)
\end{align*}
for any $1 \le k \le h$. Since $w_{\infty}(c_{h - k, s}) = -\fe(\fS_s/\fF_s)\deg(c_{h - k, s})$, we see that
\begin{align*}
-\fe(\fS_s/\fF_s)\deg(c_{h - k, s}) \ge -\fe(\fS_s/\fF_s)M  -\fe(\fS_s/\fF_s)\left(\dfrac{kM}{n}\right),
\end{align*}
Since $\fe(\fS_s/\fF_s) \ge 1$, it follows from the above inequality that
\begin{align*}
\deg(c_{h - k, s}) \le M + \dfrac{kM}{n} = \dfrac{M(k + n)}{n}
\end{align*}
for every $1 \le k \le h$. In particular, since $k \le h \le n - 1$ (see (\ref{itm:C1}), we obtain
\begin{align}
\label{e6-lem-irreduciblity-is-the-same-for-F-and-U(F)}
\deg(c_{h - k, s}) \le \dfrac{M(h + n)}{n} \le \dfrac{M(2n - 1)}{n}
\end{align}
for all $1 \le k \le h$ for $\cD$-almost all $s \in S$.

For each $0 \le i \le h$, set
\begin{align}
\label{e7-lem-irreduciblity-is-the-same-for-F-and-U(F)}
c_i = \ulim_{s \in S}c_{i, s} \in \prod_{s\in S}\fA_s/\cD = \cU(\A).
\end{align}
By (\ref{e5.1-lem-irreduciblity-is-the-same-for-F-and-U(F)-leading-coefficients}), (\ref{e6-lem-irreduciblity-is-the-same-for-F-and-U(F)}), we deduce from Lemma \ref{lem-elementary-lemma} that $c_i \in \A = \fK[t]$ for all $0 \le i \le h$.

Repeating the same arguments as above, we can show that 
\begin{align*}
d_{m - k, s} \le \dfrac{M(m + n)}{n} \le \dfrac{M(2n - 1)}{n}
\end{align*}
for all $1 \le k \le m$ for $\cD$-almost all $s \in S$. Thus in setting 
\begin{align}
\label{e8-lem-irreduciblity-is-the-same-for-F-and-U(F)}
d_j = \ulim_{s \in S}d_{j, s} \in \cU(\A)
\end{align}
for each $0 \le j \le m$, we deduce from (\ref{e5.1-lem-irreduciblity-is-the-same-for-F-and-U(F)-leading-coefficients}) and (\ref{e6-lem-irreduciblity-is-the-same-for-F-and-U(F)}) that $d_j \in \A = \fK[t]$ for all $0 \le j \le m$.

Let 
\begin{align*}
F(x) = c_0 + c_1x + \cdots + c_hx^h \in \A[x]
\end{align*}
and
\begin{align*}
G(x) = d_0 + d_1x + \cdots + d_mx^m \in \A[x].
\end{align*}

We see from (\ref{e7-lem-irreduciblity-is-the-same-for-F-and-U(F)}) and (\ref{e8-lem-irreduciblity-is-the-same-for-F-and-U(F)}) that
\begin{align*}
F(x) = \ulim_{s\in S}F_s(x)
\end{align*}
and 
\begin{align*}
G(x) = \ulim_{s\in S}G_s(x).
\end{align*}
Therefore it follows from (\ref{itm:C2}) that
\begin{align*}
P(x) = \ulim_{s\in S}P_s(x) = \ulim_{s\in S}F_s(x)G_s(x) = F(x)G(x),
\end{align*}
which implies that $P(x)$ is reducible in $\A[x]$. Since $P(x)$ is primitive, Gauss' lemma (see Lang \cite{lang-algebra}) implies that $P(x)$ is reducible in $\F[x]$, a contradiction. Thus $P(x)$ is irreducible over $\cU(\F)$.

\end{proof}

The following elementary result will be useful in many places in this paper.

\begin{lemma}
 \label{lem-elementary-lemma1}

Let $P(x) \in \A[x]$ be a polynomial of degree $m \in \bZ_{\ge 0}$ over $\A$ of the form 
 \begin{align*}
 P(x) = a_0 + a_1x + \cdots + a_mx^m,
 \end{align*}
 where the $a_i$ belong to $\A$ such that $a_m \ne 0$. Then 
 \begin{itemize}
 
 \item [(i)]
 \begin{align*}
 P(x) = \ulim_{s\in S}P_s(x),
 \end{align*}
 where
 \begin{align*}
 P_s(x) = a_{0, s} + a_{1, s}x + \cdots + a_{m, s}x^m \in \fA_s[x],
 \end{align*}
 and the $a_{i, s}$ are elements in $\fA_s = \bF_s[t]$ such that
 \begin{align*}
 a_i = \ulim_{s \in S}a_{i, s}
 \end{align*}
 for all $0 \le i \le m$, and $a_{m, s} \ne 0$ for $\cD$-almost all $s \in S$. 
 
 \item [(ii)]  $P(x)$ is irreducible over $\F$ if and only if $P_s(x)$ is irreducible over $\fF_s$ for $\cD$-almost all $s \in S$.

 \item [(iii)] if $P(x)$ is primitive, then $P_s(x)$ is primitive for $\cD$-almost all $s \in S$. 
     
 \end{itemize}

 \end{lemma}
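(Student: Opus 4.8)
The plan is to reduce everything to the preceding lemmas (Lemma \ref{lem-elementary-lemma}, Lemma \ref{lem-elementary-lemma0}, and Lemma \ref{rem-irreducibility-is-the-same-in-F-and-U(F)}) together with \L os' theorem, so almost no new work is needed. For part (i): since each $a_i \in \A = \fK[t]$, Lemma \ref{lem-elementary-lemma}(ii) lets me write $a_i = \ulim_{s\in S} a_{i,s}$ with $a_{i,s}\in\fA_s = \bF_s[t]$ of the same degree as $a_i$; in particular $a_{m,s}\neq 0$ for $\cD$-almost all $s$, because $a_m = \ulim_{s\in S}a_{m,s}\neq 0$ forces the set $\{s : a_{m,s}\neq 0\}$ into $\cD$ by \L os' theorem (applied to the first-order statement ``the coefficient is nonzero''). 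Setting $P_s(x) = \sum_{i=0}^m a_{i,s}x^i$, the identity $P(x) = \ulim_{s\in S}P_s(x)$ is immediate from the componentwise definition of the ring operations in $\cU(\A[x]) = \prod_{s\in S}\fA_s[x]/\cD$, and $\deg P_s = m$ for $\cD$-almost all $s$ since the leading coefficient survives.

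For part (ii), I would argue in two stages. First, by Lemma \ref{rem-irreducibility-is-the-same-in-F-and-U(F)}, $P(x)$ is irreducible over $\F$ if and only if $P(x)$ is irreducible over the ultra-hull $\cU(\F) = \prod_{s\in S}\fF_s/\cD$. Second, since $P(x) = \ulim_{s\in S}P_s(x)$ and irreducibility of a polynomial is a first-order property in the language of rings with the coefficients as parameters (see \cite{FJ}, as used in the proof of Proposition \ref{prop-ultra-Galois-extensions-are-Galois}), \L os' theorem gives that $P(x)$ is irreducible over $\cU(\F)$ if and only if $P_s(x)$ is irreducible over $\fF_s$ for $\cD$-almost all $s\in S$. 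Chaining the two equivalences yields the claim. (One should note here that the degree stability from part (i) is what makes the first-order transfer meaningful: since $\deg P_s = m = \deg P$ for $\cD$-almost all $s$, the ``$P_s$'' appearing on the model-theoretic side are genuine degree-$m$ polynomials and not degenerate ones.)

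For part (iii), suppose $P(x)$ is primitive, i.e.\ $\gcd(a_0,\ldots,a_m) = 1$ in the principal ideal domain $\A = \fK[t]$. Then there exist $\beta_0,\ldots,\beta_m\in\A$ with $\beta_0a_0 + \cdots + \beta_m a_m = 1$. Writing $\beta_i = \ulim_{s\in S}\beta_{i,s}$ via Lemma \ref{lem-elementary-lemma}, taking ultralimits and applying \L os' theorem gives $\beta_{0,s}a_{0,s} + \cdots + \beta_{m,s}a_{m,s} = 1$ for $\cD$-almost all $s\in S$, hence $\gcd(a_{0,s},\ldots,a_{m,s}) = 1$ in $\fA_s = \bF_s[t]$ for $\cD$-almost all $s$, i.e.\ $P_s(x)$ is primitive for $\cD$-almost all $s$. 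This is exactly the B\'ezout argument already used inside the proof of Lemma \ref{rem-irreducibility-is-the-same-in-F-and-U(F)}, so I would simply cite that passage.

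The main obstacle, such as it is, lies entirely in part (ii): one must be careful that irreducibility is being transferred for the \emph{correct} field, which is why the detour through $\cU(\F)$ via Lemma \ref{rem-irreducibility-is-the-same-in-F-and-U(F)} is essential — direct \L os' transfer only sees $\cU(\F)$, not $\F$, and a polynomial can in principle behave differently over $\F$ than over its ultra-hull were it not for that lemma. Everything else is routine bookkeeping with ultralimits and \L os' theorem.
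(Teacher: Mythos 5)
Your proof is correct and follows essentially the same route as the paper: part (i) is the direct bookkeeping with ultralimits and the survival of the leading coefficient, part (ii) combines Lemma \ref{rem-irreducibility-is-the-same-in-F-and-U(F)} with \L{}o\'s' theorem exactly as the paper does, and part (iii) is the same B\'ezout argument. Your observation that the detour through $\cU(\F)$ is the essential step in part (ii) is precisely the point the paper is making by proving Lemma \ref{rem-irreducibility-is-the-same-in-F-and-U(F)} first.
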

 
 \begin{proof}
 
It is straightforward to verify that $P(x) = \ulim_{s\in S}P_s(x)$, where the $P_s(x)$ are defined as in part (i). 

Since $a_m = \ulim_{s\in S}a_{m, s} \ne 0$, we deduce that $a_{m, s} \ne 0$ for $\cD$-almost all $s \in S$, which proves part (i).

By \L{}o\'s' theorem, $P(x)$ is irreducible over $\cU(\F) = \prod_{s\in S}\fF_s/\cD$ if and only if $P_s(x)$ is irreducible over $\fF_s$ for $\cD$-almost all $s \in S$. Thus part (ii) follows immediately from Lemma \ref{rem-irreducibility-is-the-same-in-F-and-U(F)}.

We now prove part (iii). Suppose that $P(x)$ is primitive, i.e., $\gcd(a_0, a_1, \ldots, a_m) = 1$. Since $\A = \fK[t]$ is a principal ideal domain, B\'ezout's identity implies that there exist elements $b_0, b_1, \ldots, b_m \in \A$ such that
\begin{align}
\label{e1-in-elementary-lemma1}
b_0a_0 + b_1a_1 + \cdots + b_ma_m = 1.
\end{align}

Using Lemma \ref{lem-elementary-lemma}, one can write $b_i = \ulim_{s\in S}b_{i, s}$ for each $0 \le i \le m$, where $b_{i, s} \in \fA_s = \bF_s[t]$. It thus follows from (\ref{e1-in-elementary-lemma1}) that
\begin{align*}
1 = b_0a_0 + b_1a_1 + \cdots + b_ma_m = \ulim_{s\in S}(b_{0, s}a_{0, s} + \cdots + b_{m, s}a_{m, s}),
\end{align*}
and therefore
\begin{align*}
b_{0, s}a_{0, s} + \cdots + b_{m, s}a_{m, s} = 1
\end{align*}
for $\cD$-almost all $s \in S$. Hence $\gcd(a_{0, s}, \ldots, a_{m, s}) = 1$ for $\cD$-almost all $s \in S$, and so $P_s(x)$ is primitive for $\cD$-almost all $s \in S$.

 \end{proof}

  \begin{corollary}
\label{cor-F^sep-is-contained-in-U(F)^sep_ultra}

$\F^{\sep}$ is contained in $\cU(\F)^{\sep}_{\ultra}$.

\end{corollary}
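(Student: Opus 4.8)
The plan is to prove the inclusion elementwise: given $\alpha \in \F^{\sep}$, we show $\alpha \in \cU(\F)^{\sep}_{\ultra} = \prod_{s\in S}\fF_s^{\sep}/\cD$. Since $\F^{\sep} \subset \F^{\alg} \subset \cU(\F)^{\alg}_{\ultra} = \prod_{s\in S}\fF_s^{\alg}/\cD$, we may write $\alpha = \ulim_{s\in S}\alpha_s$ with $\alpha_s \in \fF_s^{\alg}$, and it then suffices to show that $\alpha_s$ is separable over $\fF_s$ for $\cD$-almost all $s \in S$, since $\fF_s^{\sep}$ collects exactly the elements of $\fF_s^{\alg}$ that are separable over $\fF_s$. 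The mechanism is to transfer the separability of the minimal polynomial of $\alpha$ over $\F$ down to its $s$-th components, using that separability of a one-variable polynomial is detected by the nonvanishing of its discriminant, which is the value of a fixed polynomial with integer coefficients in the coefficients of the polynomial.

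In detail, let $P \in \F[x]$ be the minimal polynomial of $\alpha$ over $\F$, of degree $n = [\F(\alpha):\F]$; if $n = 1$ then $\alpha \in \F \subset \cU(\F) \subset \cU(\F)^{\sep}_{\ultra}$ and we are done, so assume $n \ge 2$. Multiplying $P$ by a suitable nonzero element of $\A = \fK[t]$ — which changes neither the roots of $P$ nor their multiplicities, hence preserves separability and degree — we may assume $P \in \A[x]$ is primitive of degree $n$. By Lemma \ref{lem-elementary-lemma1}, we may write $P = \ulim_{s\in S}P_s$ with $P_s \in \fA_s[x]$ of degree $n$ for $\cD$-almost all $s \in S$ (and, by Lemma \ref{rem-irreducibility-is-the-same-in-F-and-U(F)} and \L{}o\'s' theorem, with $P_s$ irreducible over $\fF_s$ for $\cD$-almost all $s$, though irreducibility will not be needed). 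From $P(\alpha) = 0$ and $\alpha = \ulim_{s\in S}\alpha_s$ we obtain $P_s(\alpha_s) = 0$ for $\cD$-almost all $s \in S$.

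Now, since $\alpha$ is separable over $\F$, the polynomial $P$ is separable, so $\mathrm{disc}(P) \ne 0$, equivalently $\mathrm{Res}(P, P') \ne 0$, where $\mathrm{Res}$ is the resultant taken with respect to the formal degrees $n$ and $n-1$ (the Sylvester determinant). This resultant is the value of a fixed polynomial $R \in \bZ[X_0, \ldots, X_n]$ at the coefficients of $P$, and since all ring operations in the ultra-hull $\cU(\A) = \prod_{s\in S}\fA_s/\cD$ are componentwise, we have $\mathrm{Res}(P, P') = \ulim_{s\in S}\mathrm{Res}(P_s, P_s')$ in $\cU(\A)$. As the left-hand side is nonzero, $\mathrm{Res}(P_s, P_s') \ne 0$ for $\cD$-almost all $s \in S$; since also $\deg(P_s) = n$ for $\cD$-almost all $s$, this forces $\mathrm{disc}(P_s) \ne 0$, i.e. $P_s$ is a separable polynomial over $\fF_s$, for $\cD$-almost all $s \in S$. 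Hence $\alpha_s$, being a root of the separable polynomial $P_s$, is separable over $\fF_s$, so $\alpha_s \in \fF_s^{\sep}$, for $\cD$-almost all $s \in S$. Therefore $\alpha = \ulim_{s\in S}\alpha_s \in \prod_{s\in S}\fF_s^{\sep}/\cD = \cU(\F)^{\sep}_{\ultra}$, which is the claim.

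The point requiring genuine care is the separability transfer: one must invoke that "$P$ is separable" is equivalent to "$\mathrm{disc}(P) \ne 0$" in every characteristic — in particular in characteristic $p>0$, where each $\fF_s = \bF_s(t)$ is imperfect — and that this equivalence presupposes $P$ (respectively $P_s$) has degree exactly $n$, which is why keeping the leading coefficient nonzero after passing to components ($a_{n,s} \ne 0$ for $\cD$-almost all $s$, guaranteed by Lemma \ref{lem-elementary-lemma1}) is essential. Everything else — the reduction to $P \in \A[x]$ and the componentwise descriptions — is routine bookkeeping with Lemmas \ref{lem-elementary-lemma} and \ref{lem-elementary-lemma1}, so I do not expect any further obstacle.
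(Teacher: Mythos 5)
Your proof is correct, and it takes a genuinely different route from the paper's. The paper works with the monic minimal polynomial $f$ of $\alpha$ over $\F$, factors it as $f(x) = (x-\alpha)(x-\alpha_2)\cdots(x-\alpha_m)$ over $\F^{\sep}$, writes each root as an ultra-limit $\alpha_i = \ulim_{s}\alpha_{i,s}$, and then transfers the pairwise-distinctness of the $m$ roots to the $s$-th components: since $\alpha_i \ne \alpha_j$ for $i \ne j$, the finite intersection of the sets $\{s : \alpha_{i,s} \ne \alpha_{j,s}\}$ still lies in $\cD$, so $f_s$ has $m$ distinct roots and is therefore separable for $\cD$-almost all $s$. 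You instead keep the polynomial intact and transfer separability algebraically: separability of a degree-$n$ polynomial is equivalent to nonvanishing of $\mathrm{Res}(P,P')$, a fixed $\bZ$-polynomial in the coefficients, and this resultant commutes with the ultraproduct because ring operations in $\cU(\A)$ are componentwise. Both arguments are characteristic-free, but yours eliminates the need to introduce the auxiliary conjugates $\alpha_2,\ldots,\alpha_m$ and to do the $\binom{m}{2}$-fold intersection bookkeeping, replacing that with a single nonvanishing. One small difference in setup: you first clear denominators to get $P \in \A[x]$ primitive so that Lemma~\ref{lem-elementary-lemma1} applies verbatim and gives $\deg(P_s) = n$ for $\cD$-almost all $s$ (which your resultant argument genuinely needs, as you note), whereas the paper applies the same lemma somewhat informally to the monic $f \in \F[x]$ directly. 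Your explicit reduction is cleaner on this point.
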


\begin{proof}

Take an arbitrary element $\alpha \in \F^{\sep}$.  Let $f(x) = a_0 + a_1x + \cdots + a_{m - 1}x^{m - 1} + x^m \in \F[x]$ be the minimal polynomial of $\alpha$ over $\F$, where the $a_i$ belong to $\F$. Since $\alpha \in \F^{\sep}$, $f(x)$ is a separable polynomial over $\F$. 

For each $0 \le i \le m - 1$, since $a_i \in \F \subset \cU(\F) = \prod_{s\in S}\fF_s/\cD$, we can write $a_i = \ulim_{s\in S}a_{i, s}$, where the $a_{i, s}$ belong to $\fF_s$ for $\cD$-almost all $s \in S$. Set
\begin{align*}
f_s(x) = a_{0, s} + a_{1, s}x + \cdots + a_{m - 1, s}x^{m - 1} + x^m \in \fF_s[x].
\end{align*}
It is straightforward to verify that $f(x) = \ulim_{s\in S}f_s(x)$. Since $f(x)$ is irreducible over $\F$, Lemma \ref{lem-elementary-lemma1} implies that $f_s(x)$ is irreducible over $\fF_s$ for $\cD$-almost all $s \in S$. 

Since 
\begin{align*}
\F^{\sep} \subset \F^{\alg} \subset \cU(\F)^{\alg}_{\ultra} = \prod_{s\in S}\fF_s^{\alg}/\cD,
\end{align*}
we can write $\alpha = \ulim_{s\in S}\alpha_s$, where the $\alpha_s$ belong to $\fF_s^{\alg}$ for $\cD$-almost all $s \in S$. Since
\begin{align*}
0 = f(\alpha) = \ulim_{s\in S}f_s(\alpha_s),
\end{align*}
we deduce that $f_s(\alpha_s) = 0$ for $\cD$-almost all $s \in S$, and thus $f_s(x)$ is the minimal polynomial of $\alpha_s$ for $\cD$-almost all $s \in S$. 

Since $\alpha \in \F^{\sep}$, $f(x)$ is a separable polynomial over $\F$. Since $\deg(f) = m$, there exist exactly $m - 1$ distinct elements $\alpha_2, \ldots, \alpha_m \in \F^{\sep}$ such that $\alpha \ne \alpha_i$ for all $2 \le i \le m$ and 
\begin{align*}
f(x) = (x - \alpha)(x-\alpha_2)\cdots (x - \alpha_m).
\end{align*}

Since $\alpha_i \in \F^{\sep}$ for all $2 \le i \le m$, using the same arguments as above, one can write $\alpha_i = \ulim_{s\in S}\alpha_{i, s}$, where $\alpha_{i, s}\in \fF_s^{\alg}$ for $\cD$-almost all $s \in S$. Thus we deduce that
\begin{align*}
\ulim_{s\in S}f_s(x)  = f(x) &= (x - \alpha)(x-\alpha_2)\cdots (x - \alpha_m) \\
&= \ulim_{s\in S}(x - \alpha_s)(x - \alpha_{2, s})\cdots (x - \alpha_{m, s}),
\end{align*}
and therefore
\begin{align}
\label{e1-cor-F^sep-is-contained-in-U(F)^sep_ultra}
f_s(x) = (x - \alpha_s)(x - \alpha_{2, s})\cdots (x - \alpha_{m, s})
\end{align}
for $\cD$-almost all $s \in S$. 

Since $\alpha\ne \alpha_j$ for all $2 \le j \le m$, we see that for all $2 \le j \le m$, $\alpha_s \ne \alpha_{j, s}$ for $\cD$-almost all $s \in S$, i.e.,
\begin{align*}
A_{1, j} = \{s \in S\; | \; \alpha_s \ne \alpha_{j, s}\} \in \cD.
\end{align*}
Similarly, since  $\alpha_i \ne \alpha_j$ for all $2 \le i < j\le m$, we see that for all $2 \le i < j \le m$, $\alpha_{i, s} \ne \alpha_{j, s}$ for $\cD$-almost all $s \in S$, i.e.,
\begin{align*}
A_{i, j} = \{s \in S\; | \; \alpha_{i, s} \ne \alpha_{j, s}\} \in \cD.
\end{align*}
Since $\cD$ is an ultrafilter on $S$, we deduce that
\begin{align*}
\bigcap_{\substack{1 \le i < j \le m}}A_{i, j} \in \cD,
\end{align*}
that is,
\begin{align*}
\{s \in S \; | \; \text{$\alpha_s,\alpha_{2, s}, \ldots, \alpha_{m, s}$ are all distinct}\}\in \cD.
\end{align*}

By (\ref{e1-cor-F^sep-is-contained-in-U(F)^sep_ultra}), we deduce that all the roots of $f_s(x)$ are distinct for $\cD$-almost all $s \in S$, and thus $f_s(x)$ is separable for $\cD$-almost all $s \in S$. Therefore $\alpha_s$ belongs to $\fF_s^{\sep}$ for $\cD$-almost all $s\in S$, and thus $\alpha = \ulim_{s\in S}\alpha_s\in \prod_{s\in S}\fF_s^{\sep}/\cD = \cU(\F)^{\sep}_{\ultra}$ as required. 

\end{proof}

 We describe the algebraic parts of some ultra-field extensions at the constant field and base field levels.

\begin{lemma}
\label{lem-algebraic-part-of-ultra-algebraic-closure-of-constant-fields}

Let $\bK = \prod_{s \in S}\bF_s^{\alg}/\cD$. Then $\bK_{\alg} = \bK \cap \F^{\alg} = \fK^{\alg}$.

\end{lemma}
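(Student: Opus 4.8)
The statement to prove is that $\bK_{\alg} = \bK \cap \F^{\alg} = \fK^{\alg}$, where $\bK = \prod_{s\in S}\bF_s^{\alg}/\cD$ is the ultra-algebraic closure of $\fK = \prod_{s\in S}\bF_s/\cD$, and $\fK^{\alg}$ is the algebraic closure of $\fK$ sitting inside $\bK$ (as defined earlier: the set of elements of $\bK$ algebraic over $\fK$). Here $\F = \fK(t)$ and $\F^{\alg}$ is the algebraic closure of $\F$ chosen inside $\cU(\F)^{\alg}_{\ultra} = \prod_{s\in S}\fF_s^{\alg}/\cD$.

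**The plan.** I will prove the two inclusions $\fK^{\alg}\subseteq \bK\cap\F^{\alg}$ and $\bK\cap\F^{\alg}\subseteq\fK^{\alg}$ separately. For the first inclusion: any $\alpha\in\fK^{\alg}$ is by definition an element of $\bK$, and $\alpha$ is algebraic over $\fK$, hence over the larger field $\F\supseteq\fK$; since $\F^{\alg}$ was chosen as the set of all elements of $\cU(\F)^{\alg}_{\ultra}$ algebraic over $\F$, and $\bK\subseteq\cU(\F)^{\alg}_{\ultra}$ (because $\bF_s^{\alg}\subseteq\fF_s^{\alg}$ for $\cD$-almost all $s$, so the ultraproducts are nested — strictly I should note $\bK = \prod_s \bF_s^{\alg}/\cD$ embeds into $\prod_s \fF_s^{\alg}/\cD$), we get $\alpha\in\F^{\alg}$, hence $\alpha\in\bK\cap\F^{\alg}$. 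For the reverse inclusion: take $\alpha\in\bK\cap\F^{\alg}$. Since $\alpha\in\bK$, it already lies in a field — I need only show $\alpha$ is algebraic over $\fK$, which would place it in $\fK^{\alg}$ by definition. Since $\alpha\in\F^{\alg}$, it is algebraic over $\F = \fK(t)$, so it satisfies some polynomial relation with coefficients in $\fK(t)$, i.e.\ (clearing denominators) a relation $\sum_{i=0}^n c_i(t)\alpha^i = 0$ with $c_i(t)\in\fK[t] = \A$ and $c_n(t)\ne 0$.

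**The key step — killing the transcendental variable.** The heart of the matter is to convert an algebraic dependence over $\fK(t)$ into one over $\fK$, using the fact that $\alpha\in\bK$ lives at the ``constant-field level'' and carries no $t$-dependence. I would argue componentwise: write $\alpha = \ulim_{s\in S}\alpha_s$ with $\alpha_s\in\bF_s^{\alg}$, and write each coefficient $c_i(t) = \ulim_{s\in S}c_{i,s}(t)$ with $c_{i,s}(t)\in\bF_s[t]$, via Lemma \ref{lem-elementary-lemma}(ii) so that the degrees stabilize. By \L os' theorem the relation $\sum_i c_{i,s}(t)\alpha_s^i = 0$ holds in $\fF_s^{\alg}$ for $\cD$-almost all $s$. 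Now $\alpha_s\in\bF_s^{\alg}$ is a \emph{constant} — it does not involve $t$ — so $\sum_i c_{i,s}(t)\alpha_s^i$ is a polynomial in $t$ over $\bF_s^{\alg}$ that vanishes identically; hence each of its coefficients vanishes. Concretely, writing $c_{i,s}(t) = \sum_j a_{i,j,s}t^j$ with $a_{i,j,s}\in\bF_s$, vanishing of the coefficient of $t^j$ gives $\sum_i a_{i,j,s}\alpha_s^i = 0$ for each $j$; since some $a_{i,j,s}\ne 0$ (because $c_{n,s}\ne 0$ forces its leading coefficient to be nonzero, so at least one $j$ yields a nonzero polynomial $\sum_i a_{i,j,s}x^i\in\bF_s[x]$ of which $\alpha_s$ is a root), $\alpha_s$ is algebraic over $\bF_s$. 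Taking ultra-limits: there is a fixed $j$ (via Lemma \ref{lem-at-least-one-set-in-th-union-of-sets-is-in-D}, since only finitely many indices $j$ occur) and coefficients $a_{i,j} = \ulim_s a_{i,j,s}\in\fK$, not all zero, with $\sum_i a_{i,j}\alpha^i = 0$ in $\bK$. Thus $\alpha$ is algebraic over $\fK$, so $\alpha\in\fK^{\alg}$.

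**Anticipated obstacle.** The only genuine subtlety is the bookkeeping in the last step: making sure that after passing to ultra-limits one still has a \emph{nonzero} polynomial over $\fK$ annihilating $\alpha$, which requires pinning down a single index $j$ for which the coefficient $a_{n,j}$ (or some $a_{i,j}$) is nonzero for $\cD$-almost all $s$; this is exactly where Lemma \ref{lem-at-least-one-set-in-th-union-of-sets-is-in-D} is needed, since the ``good'' index $j$ may a priori depend on $s$. I would also want to state cleanly at the outset that $\bK$ embeds in $\cU(\F)^{\alg}_{\ultra}$ compatibly with the ambient structure, so that ``$\bK\cap\F^{\alg}$'' makes sense inside a single field; this is immediate from $\bF_s^{\alg}\subseteq\fF_s^{\alg}$. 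Everything else is a routine \L os'-theorem transfer.
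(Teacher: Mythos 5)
Your proof is correct, and it takes a genuinely different route from the paper for the nontrivial inclusion $\bK \cap \F^{\alg} \subseteq \fK^{\alg}$. After passing to $s$-th components and obtaining $\sum_i a_{i,s}(t)\alpha_s^i = 0$ in $\fF_s^{\alg}$, the paper eliminates the variable $t$ by \emph{specializing}: it picks $t_s \in \bF_s$ with $a_{d,s}(t_s) \ne 0$ and then takes the ultra-limit of the specialized relation $\sum_i a_{i,s}(t_s)\alpha_s^i = 0$ to get an algebraic dependence over $\fK$. You instead eliminate $t$ by \emph{comparing coefficients}: since $\alpha_s \in \bF_s^{\alg}$ is a constant, the left side is a polynomial in the transcendental $t$ over $\bF_s^{\alg}$ vanishing identically, so each $t^j$-coefficient vanishes, giving relations $\sum_i a_{i,j,s}\alpha_s^i = 0$ over $\bF_s$; you then use Lemma \ref{lem-at-least-one-set-in-th-union-of-sets-is-in-D} to fix one index $j$ with $a_{n,j,s}\ne 0$ $\cD$-often and take the ultra-limit.

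What each buys: your argument is actually more robust at the stated level of generality. The paper's step ``there exists $t_s \in \bF_s$ such that $a_{d,s}(t_s)\ne 0$'' uses that a nonzero polynomial over $\bF_s$ has a nonvanishing value \emph{in} $\bF_s$, which requires $\card(\bF_s) > \deg(a_{d,s})$; when $\bF_s$ is a small finite field (e.g.\ $\bF_q$ with $q \le \deg a_{d,s}$, as with $t^q - t$), no such $t_s \in \bF_s$ exists. Your coefficient-comparison eliminates $t$ without ever evaluating, so it proves the lemma for arbitrary constant fields $\bF_s$, exactly as stated. In the paper's main applications (constant fields with unbounded cardinality or unbounded characteristic) the distinction does not matter, but for the lemma as written your route is the cleaner one. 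The remaining bookkeeping you carry out — $a_{i,j} = \ulim_s a_{i,j,s} \in \fK$, the leading coefficient $a_{n,j}\ne 0$, and the need to fix a single index $j$ before passing to the ultra-limit — is all correct and is exactly where Lemma \ref{lem-at-least-one-set-in-th-union-of-sets-is-in-D} is needed.
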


\begin{proof}

We know that $\fK^{\alg}$ is a subset of both $\F^{\alg}$ and $\bK$, and thus $\fK^{\alg} \subset \bK_{\alg}$.

We now prove that $\bK_{\alg} \subset \fK^{\alg}$. Take an arbitrary element $\alpha \in \bK_{\alg} = \bK \cap \F^{\alg}$. Since $\alpha \in \F^{\alg}$, there exists an irreducible polynomial $P(x) \in \A[x]$ of the form
\begin{align*}
P(x) = a_0(t) + a_1(t)x + \cdots + a_d(t)x^d
\end{align*}
such that $P(\alpha) = 0$, where the $a_i(t)$ are polynomials in $\A = \fK[t]$, $a_d(t)$ is a nonzero polynomial, and $d$ is a positive integer. For each $0 \le i \le d$, one can write $a_i(t) = \ulim_{s\in S}a_{i, s}(t)$, where $a_{i, s}(t) \in \fA_s = \bF_s[t]$ for $\cD$-almost all $s \in S$. Set
\begin{align*}
P_s(x) = a_{0, s}(t) + a_{1, s}(t)x + \cdots + a_{d, s}(t)x^d \in \fA_s[x].
\end{align*}
By Lemma \ref{lem-elementary-lemma1}, $P_s$ is irreducible over $\fF_s$ for $\cD$-almost all $s \in S$. 

Since $a_d(t)$ is a nonzero polynomial, \L{}o\'s' theorem implies that $a_{d, s}(t)$ is a nonzero polynomial for $\cD$-almost all $s \in S$. 

Since $\alpha \in \bK$, one can write $\alpha = \ulim_{s\in S}\alpha_s$, where $\alpha_s \in \bF_s^{\alg}$ for $\cD$-almost all $s \in S$. Since 
\begin{align*}
0 = P(\alpha) = \ulim_{s\in S}P_s(\alpha_s),
\end{align*}
\L{}o\'s' theorem implies that $P_s(\alpha_s) = 0$ for $\cD$-almost all $s \in S$. Thus
\begin{align}
\label{e1-lem-algebraic-parts-of-ultra-algebraic-closure-of-constant-fields}
a_{0, s}(t) + a_{1, s}(t)\alpha_s + \cdots + a_{d, s}(t)\alpha_s^d = 0
\end{align}
for $\cD$-almost all $s \in S$. Since $a_{d, s}(t)$ is a nonzero polynomial in $\fA_s = \bF_s[t]$ for $\cD$-almost all $s \in S$, there exists an element $t_s \in \bF_s$ such that $a_{d, s}(t_s) \ne 0$. Specializing $t$ to $t_s$ in (\ref{e1-lem-algebraic-parts-of-ultra-algebraic-closure-of-constant-fields}), we deduce that
\begin{align*}
a_{0, s}(t_s) + a_{1, s}(t_s)\alpha_s + \cdots + a_{d, s}(t_s)\alpha_s^d = 0
\end{align*}
for $\cD$-almost all $s\in S$. Taking the ultraproduct on both sides of the above equation, we deduce that
\begin{align*}
b_0 + b_1\alpha + \cdots + b_d\alpha^d = 0,
\end{align*}
where $b_i = \ulim_{s\in S}a_{i, s}(t_s) \in \prod_{s\in S}\bF_s/\cD = \fK$ for each $0 \le i \le d$ such that $b_d = \ulim_{s\in S}a_{d, s}(t_s) \ne 0$. Thus $\alpha \in \fK^{\alg}$, which verifies the lemma.

\end{proof}

\begin{lemma}
\label{lem-U(F)-algebraic-part-is-F}

\begin{itemize}

\item []

\item [(i)] $\cU(\F)_{\alg} = \cU(\F)_{\sep} = \F$.

\item [(ii)] $\cU(\A)_{\alg} = \cU(\A)_{\sep} = \A$.

\end{itemize}

\end{lemma}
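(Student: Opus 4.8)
The plan is to prove part (i) first and then deduce part (ii) easily. For part (i), the inclusions $\F \subseteq \cU(\F)_{\sep} \subseteq \cU(\F)_{\alg}$ are immediate, since $\F$ sits inside both $\cU(\F)$ and $\F^{\sep} \subseteq \F^{\alg}$. So the content is the reverse inclusion $\cU(\F)_{\alg} \subseteq \F$: every element of $\cU(\F)$ that is algebraic over $\F$ already lies in $\F$. Take $\alpha = \ulim_{s\in S}\alpha_s \in \cU(\F) = \prod_{s\in S}\fF_s/\cD$ with $\alpha$ algebraic over $\F$, say with minimal polynomial $f(x) \in \F[x]$ of degree $m$. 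The key observation is that by Lemma \ref{rem-irreducibility-is-the-same-in-F-and-U(F)}, $f$ remains irreducible over $\cU(\F)$; since $f(\alpha)=0$ and $\alpha \in \cU(\F)$, this forces $m = 1$ (a nonconstant irreducible polynomial over a field cannot have a root in that field unless it is linear). Hence $f(x) = x - \alpha$ with $\alpha \in \F$, which gives $\alpha \in \F$ as desired. This is the whole argument, and the step doing all the work is the invocation of Lemma \ref{rem-irreducibility-is-the-same-in-F-and-U(F)} — the hard analytic lemma about Newton polygons at $\infty$ that was already proved above; here it is simply applied as a black box.

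For part (ii), I would argue $\cU(\A)_{\alg} \subseteq \cU(\F)_{\alg} = \F$ (using part (i)), so it remains to check $\cU(\A) \cap \F = \A$; equivalently, an element of $\fK[t]$-shape in the ultra-hull $\cU(\A) = \prod_{s\in S}\fA_s/\cD$ that happens to lie in $\F = \fK(t)$ must lie in $\A = \fK[t]$. This is where Lemma \ref{lem-elementary-lemma} enters: an element $\beta = \ulim_{s\in S}\beta_s \in \cU(\A)$ lies in $\A$ if and only if the degrees $\deg(\beta_s)$ are bounded on a set in $\cD$. If $\beta \in \F$, write $\beta = g/h$ with $g, h \in \A$, $h \ne 0$; applying Lemma \ref{lem-elementary-lemma}(ii) write $g = \ulim g_s$, $h = \ulim h_s$ with $g_s, h_s \in \fA_s$ of the same degrees as $g, h$. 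Then $\beta h = g$ in $\cU(\A)$ gives $\beta_s h_s = g_s$ for $\cD$-almost all $s$, so $h_s \mid g_s$ with $\deg(\beta_s) = \deg(g_s) - \deg(h_s) = \deg(g) - \deg(h)$ constant for $\cD$-almost all $s$; hence the degrees are bounded and $\beta \in \A$ by Lemma \ref{lem-elementary-lemma}(i). Combined with $\beta \in \cU(\A)_{\alg} \subseteq \F$, and the obvious $\A \subseteq \cU(\A)_{\sep} \subseteq \cU(\A)_{\alg}$, this closes the chain $\A \subseteq \cU(\A)_{\sep} \subseteq \cU(\A)_{\alg} \subseteq \A$.

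I do not anticipate a genuine obstacle: the one nontrivial input (Lemma \ref{rem-irreducibility-is-the-same-in-F-and-U(F)}) is already available, and everything else is bookkeeping with \L os' theorem and the degree criterion of Lemma \ref{lem-elementary-lemma}. The only point requiring a little care is making sure that in part (ii) the passage $\beta h = g \Rightarrow \beta_s h_s = g_s$ for $\cD$-almost all $s$ is justified (it is, since multiplication in $\cU(\A)$ is componentwise and equality of ultra-limits is equality on a set in $\cD$), and that one records the trivial inclusions in the correct direction so the sandwich argument is clean.
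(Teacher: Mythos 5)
Your proof is correct and, for part (i), a streamlined variant of the paper's argument. The paper proves $\cU(\F)_{\alg}\subseteq\F$ by descending to $s$-components: it writes the minimal polynomial $f$ of $\alpha$ as $\ulim_{s}f_s$, invokes Lemma~\ref{lem-elementary-lemma1} to conclude each $f_s$ is irreducible over $\fF_s$, and then observes that since $\alpha_s\in\fF_s$ is a root of the irreducible $f_s$, necessarily $\deg f_s=1$, hence $\deg f=1$. You instead invoke Lemma~\ref{rem-irreducibility-is-the-same-in-F-and-U(F)} directly and stay entirely at the $\cU(\F)$ level: $f$ is irreducible over $\cU(\F)$ and has a root $\alpha$ there, so $\deg f=1$. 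Both rest on the same hard ingredient; yours avoids the round trip through components and is a little cleaner. For part (ii) the routes genuinely diverge after the common first step $\cU(\A)_{\alg}\subseteq\cU(\F)_{\alg}=\F$. The paper writes $\alpha=\beta/\lambda$ with $\beta,\lambda\in\A$ in lowest terms, passes to components, and uses $\gcd(\beta_s,\lambda_s)=1$ together with $\beta_s=\alpha_s\lambda_s$ (Euclid's lemma) to conclude $\lambda_s$ is a unit in $\fA_s$, hence $\lambda\in\fK^{\times}$ and $\alpha\in\A$. You instead write $\beta=g/h$ (not necessarily reduced), apply Lemma~\ref{lem-elementary-lemma}(ii) to fix component representatives of $g$ and $h$ with constant degree, read off $\deg\beta_s=\deg g-\deg h$ from $\beta_s h_s=g_s$, and conclude $\beta\in\A$ via the degree-boundedness criterion of Lemma~\ref{lem-elementary-lemma}(i). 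Your version buys slightly less bookkeeping (no need for lowest terms or \L{}o\'s transfer of a B\'ezout identity), at the cost of a small amount of care about degree arithmetic and the zero case, which you correctly flag. Both are sound.
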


\begin{proof}
	
	We first prove part (i). It is obvious that $\F$ is a subset of $\cU(\F)_{\alg}$. Take any element $\alpha \in \cU(\F)_{\alg} =\cU(\F) \cap \F^{\alg}$. Thus one can write $\alpha = \ulim_{s\in S}\alpha_s$ for some elements $\alpha_s \in \fF_s$, and there exists a minimal polynomial $f(x) \in \F[x]$ of $\alpha$ that is of the form
	\begin{align*}
		f(x) = a_0 + a_1x + \cdots + a_{m - 1}x^{m - 1} + x^m,
	\end{align*} 
	where the $a_i$ belong to $\F = \fK(t)$ and $m \in \bZ_{>0}$. For each $0 \le i \le m - 1$, we write $a_i = \ulim_{s\in S}a_{i, s}$ for some elements $a_{i, s} \in \fF_s = \bF_s(t)$, and define
	\begin{align*}
		f_s(x) = a_{0, s} + a_{1, s}x + \cdots + a_{m - 1, s}x^{m - 1} + x^m \in \fF_s[x].
	\end{align*}
	It is clear that $f(x) = \ulim_{s\in S}f_s(x)$, and thus
	\begin{align*}
		0 = f(\alpha) = \ulim_{s\in S}f_s(\alpha_s),
	\end{align*}
	and thus \L{}o\'s' theorem implies that $f_s(\alpha_s) = 0$ for $\cD$-almost all $s \in S$. Since $f$ is irreducible over $\F$, Lemma \ref{lem-elementary-lemma1} implies that $f_s$ is irreducible over $\fF_s$ for $\cD$-almost all $s \in S$, and hence $f_s$ is the minimal polynomial of $\alpha_s$ over $\fF_s$ for $\cD$-almost all $s \in S$. Since $\alpha_s \in \fF_s$, we see that 
	\begin{align*}
		m = \deg(f_s) = 1,
	\end{align*}
	and thus $\deg(f) = m = 1$. So $\alpha$ belongs to $\F$, and hence
	\begin{align*}
		\cU(\F)_{\alg} = \cU(\F) \cap \F^{\alg} \subset \F.
	\end{align*}
	Therefore
	\begin{align*}
		\cU(\F)_{\alg}= \F.
	\end{align*}
	
	Since $\F \subset \cU(\F)_{\sep} \subset \cU(\F)_{\alg}$, it follows immediately that
	\begin{align*}
		\cU(\F)_{\alg} = \cU(\F)_{\sep} = \F.
	\end{align*}

For part (ii), we see that $\A \subset \cU(\A)_{\sep} \subset \cU(\A)_{\alg}$. So it suffices to prove that
\begin{align*}
\cU(\A)_{\alg} \subset \A.
\end{align*}
Indeed, let $\alpha$ be an arbitrary element in $\cU(\A)_{\alg} = \cU(\A) \cap \F^{\alg}$. By part (i), we know that 
\begin{align*}
\cU(\A)_{\alg} \subset \cU(\F)_{\alg} = \F,
\end{align*}
and thus $\alpha \in \F$.

If $\alpha = 0$, it is trivial that $\alpha \in \A$. 

Suppose that $\alpha \ne 0$. Since $\F = \fK(t)$ is the quotient field of $\A = \fK[t]$, there exist two elements $\beta, \lambda \in \A$ such that $\beta, \lambda \ne 0$, $\alpha = \beta/\lambda$ and $\gcd(\beta, \lambda) = 1$. One can write $\beta = \ulim_{s\in S}\beta_s$ and $\lambda = \ulim_{s\in S}\lambda_s$, where $\beta_s, \lambda_s$ are elements in $\fA_s = \bF_s[t]$ for $\cD$-almost all $s \in S$. Since $\gcd(\beta, \lambda) = 1$ and $\lambda \ne 0$, \L{}o\'s' theorem implies that $\gcd(\beta_s, \lambda_s) = 1$ and $\lambda_s \ne 0$ for $\cD$-almost all $s \in S$. Since $\alpha \in \cU(\A)$, $\alpha = \ulim_{s\in S}\alpha_s$ for some elements $\alpha_s \in \fA_s$, and thus
\begin{align*}
\alpha = \ulim_{s\in S}\dfrac{\beta_s}{\lambda_s} = \ulim_{s\in S}\alpha_s.
\end{align*}
Therefore
\begin{align*}
\beta_s = \alpha_s\lambda_s
\end{align*}
for $\cD$-almost all $s \in S$. Since $\gcd(\beta_s, \lambda_s) = 1$, we deduce that $\beta_s$ divides $\alpha_s$, and thus $\alpha_s/\beta_s \in \fA_s$. Therefore
\begin{align*}
(\alpha_s/\beta_s)\lambda_s = 1,
\end{align*}
which implies that $\lambda_s$ is a unit in $\fA_s = \bF_s[t]$ for $\cD$-almost all $s \in S$, and hence $\lambda_s \in \bF_s^{\times}$. Thus $\lambda = \ulim_{s\in S}\lambda_s \in \prod_{s\in S}\bF_s^{\times}/\cD = \fK^{\times}$, and therefore $1/\lambda \in \fK^{\times}$. Hence $\alpha = (1/\lambda)\beta \in \A$, and so $\cU(\A)_{\alg} \subset \A$ as required.

\end{proof}

\begin{lemma}
\label{lem-algebraic-part-of-principal-ideal-in-U(F)-is-principal-ideal-in-F}

Let $\alpha$ be a polynomial in $\A = \fK[t]$. Then the algebraic part of the ideal $\alpha \cU(\A)$ over $\F$ is the principal ideal $\alpha \A$, i.e.,
\begin{align*}
(\alpha\cU(\A))_{\alg} = \alpha\cU(\A) \cap \F^{\alg} = \alpha \A.
\end{align*}

\end{lemma}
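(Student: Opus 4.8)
The plan is to deduce everything from Lemma \ref{lem-U(F)-algebraic-part-is-F}(ii), which already identifies $\cU(\A)_{\alg}$ with $\A$. First I would dispose of the degenerate case $\alpha = 0$: then $\alpha\cU(\A)$ and $\alpha\A$ are both the zero ideal, so the claimed equality is immediate. Hence assume $\alpha \ne 0$ from now on. For the inclusion $\alpha\A \subseteq (\alpha\cU(\A))_{\alg}$, note that $\A \subseteq \cU(\A)$ gives $\alpha\A \subseteq \alpha\cU(\A)$, while $\alpha\A \subseteq \A \subseteq \F \subseteq \F^{\alg}$; so every element of $\alpha\A$ lies in $\alpha\cU(\A) \cap \F^{\alg} = (\alpha\cU(\A))_{\alg}$.

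For the reverse inclusion I would take an arbitrary $\beta \in (\alpha\cU(\A))_{\alg} = \alpha\cU(\A) \cap \F^{\alg}$ and write $\beta = \alpha\gamma$ with $\gamma \in \cU(\A)$ (such a $\gamma$ exists since $\beta \in \alpha\cU(\A)$, and it is unique because $\cU(\A)$ is an integral domain and $\alpha \ne 0$). The key observation is that $\alpha$, $\beta$, $\gamma$ all lie inside the field $\cU(\F)^{\alg}_{\ultra}$, which contains both $\cU(\F)$ and $\F^{\alg}$, so the relation $\gamma = \beta/\alpha$ is valid there; since $\beta \in \F^{\alg}$ and $\alpha \in \F\setminus\{0\} \subseteq \F^{\alg}$ and $\F^{\alg}$ is a field, this forces $\gamma = \beta/\alpha \in \F^{\alg}$. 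Therefore $\gamma \in \cU(\A) \cap \F^{\alg} = \cU(\A)_{\alg}$, and Lemma \ref{lem-U(F)-algebraic-part-is-F}(ii) yields $\gamma \in \A$. Consequently $\beta = \alpha\gamma \in \alpha\A$, which gives $(\alpha\cU(\A))_{\alg} \subseteq \alpha\A$ and finishes the proof.

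I do not expect a genuine obstacle here, as the statement is essentially a reformulation of Lemma \ref{lem-U(F)-algebraic-part-is-F}(ii) at the level of principal ideals. The one point deserving a careful sentence is that the quotient $\beta/\alpha$ computed in the fraction field $\cU(\F)$ of $\cU(\A)$ coincides with the quotient computed in $\F^{\alg}$; this is legitimate precisely because both computations take place inside the common overfield $\cU(\F)^{\alg}_{\ultra}$, in which $\alpha$ is a nonzero element, so the two candidate values of $\gamma$ must agree.
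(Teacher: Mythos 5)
Your proposal is correct and takes essentially the same route as the paper: both arguments reduce to Lemma \ref{lem-U(F)-algebraic-part-is-F}(ii) by writing $\beta = \alpha\gamma$ and showing $\gamma \in \cU(\A)\cap\F^{\alg} = \A$. The only cosmetic difference is that the paper first observes $\beta \in \cU(\A)_{\alg} = \A$ and then deduces $\gamma = \beta/\alpha \in \F$, whereas you pass directly through $\F^{\alg}$; both are valid and equally short.
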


\begin{proof}

If $\alpha$ is the zero polynomial, then the assertion is trivial. Suppose that $\alpha$ is a nonzero polynomial in $\A$. 

Since $\A \subset \cU(\A)$, it is obvious that $\alpha\A$ is a subset of $(\alpha\cU(\A))_{\alg}$. 

For the opposite implication, take an arbitrary element $\beta \in (\alpha\cU(\A))_{\alg}$. By part (ii) of Lemma \ref{lem-U(F)-algebraic-part-is-F}, $\cU(\A)_{\alg} = \A$,and thus $\beta \in (\alpha\cU(\A))_{\alg} \subset \cU(\A)_{\alg} = \A$. Since $\beta \in \alpha\cU(\A)$, there exists an element $\gamma = \ulim_{s\in S}\gamma_s \in \cU(\A)$ for some elements $\gamma_s \in \fA_s$ such that $\beta = \alpha\gamma$. Thus $\gamma = \beta/\alpha \in \F$. Thus it follows from Lemma \ref{lem-U(F)-algebraic-part-is-F} that $\gamma \in \cU(\A) \cap \F \subset \cU(\A) \cap \F^{\alg} = \cU(\A)_{\alg} = \A$, and therefore $\beta = \alpha\gamma \in \alpha\A$. Hence $(\alpha\cU(\A))_{\alg} = \alpha\cU(\A) \cap \F^{\alg} \subset \alpha \A$ as required.

\end{proof}

The following result concerns separability of algebraic parts of ultra-field extensions.

\begin{theorem}
\label{thm-main-thm1-Algebraic-part-of-L-is-a-finite-separable-extension}
	
Let $\fL_s$ be a (possibly infinite) algebraic extension over $\fF_s = \bF_s(t)$ for $\cD$-almost all $s \in S$, and let $\fL = \prod_{s \in S}\fL_s/\cD$ be an ultra-field extension of $\cU(\F)$.  Then 
\begin{itemize}
	\item [(i)] if $\fL_s$ is of degree $n_s \in \bZ_{>0}$ over $\fF_s$ for $\cD$-almost all $s \in S$, and there are only finitely many integers dividing the hyperinteger $n = \ulim_{s\in S}n_s$ in $\bZ^{\#}$, then $\fL_{\alg}$ is a finite extension of $\F$ and $\fL_{\sep}$ is a finite separable extension of $\F$.

	\item [(ii)] if $\fL_s$ is a separable extension of $\fF_s$ for $\cD$-almost all $s \in S$, then $\fL_{\alg}$ is a separable extension of $\F$, and $\fL_{\alg} = \fL_{\sep}$. 
	
	\item [(iii)] if the characteristic of $\bF_s$, being the same as that of $\fF_s = \bF_s(t)$, is distinct for $\cD$-almost all $s \in S$, then  $\fL_{\alg}$ is a  separable extension of $\F$ and $\fL_{\alg} = \fL_{\sep}$.

\end{itemize}	
	
\end{theorem}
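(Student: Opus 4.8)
The plan is to deduce all three parts from one observation about minimal polynomials. Let $\gamma\in\fL_{\alg}$ have minimal polynomial $f=a_0+\dots+a_{m-1}x^{m-1}+x^m\in\F[x]$ over $\F$; writing $a_i=\ulim_s a_{i,s}$ with $a_{i,s}\in\fF_s$, put $f_s=a_{0,s}+\dots+a_{m-1,s}x^{m-1}+x^m\in\fF_s[x]$, and write $\gamma=\ulim_s\gamma_s$ with $\gamma_s\in\fL_s$. Since $f$ is irreducible over $\F$, Lemma~\ref{rem-irreducibility-is-the-same-in-F-and-U(F)} together with \L{}o\'s' theorem (Theorem~\ref{thm-Los}) shows $f_s$ is irreducible over $\fF_s$ for $\cD$-almost all $s$, and $f(\gamma)=0$ gives $f_s(\gamma_s)=0$ there; hence $f_s$ is the minimal polynomial of $\gamma_s$ over $\fF_s$, so $[\F(\gamma):\F]=m=[\fF_s(\gamma_s):\fF_s]$ for $\cD$-almost all $s$. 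As $\fF_s\subseteq\fF_s(\gamma_s)\subseteq\fL_s$, $m$ divides $n_s$ for $\cD$-almost all $s$, hence $m\mid n$. Moreover $f$ (being irreducible with $\deg f'<\deg f$) is separable iff $f'\neq0$ iff $f_s'\neq0$ for $\cD$-almost all $s$ iff $\gamma_s$ is separable over $\fF_s$ for $\cD$-almost all $s$.

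Parts (ii) and (iii) follow quickly. For (iii), the hypothesis forces $\mathrm{char}\,\fK=0$ (no set $\{s:\mathrm{char}\,\bF_s=p\}$ lies in $\cD$), so $\mathrm{char}\,\F=0$, every algebraic extension of $\F$ is separable, and $\fL_{\alg}\subseteq\F^{\sep}$ gives $\fL_{\alg}=\fL\cap\F^{\alg}=\fL\cap\F^{\sep}=\fL_{\sep}$. For (ii), if each $\fL_s/\fF_s$ is separable then every $\gamma_s$ above is separable over $\fF_s$, so $\gamma\in\F^{\sep}$ by the last sentence of the previous paragraph; thus $\fL_{\alg}\subseteq\F^{\sep}$, so $\fL_{\alg}=\fL_{\sep}$ is separable over $\F$.

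For (i), let $D$ be the set of integers dividing $n$, finite by hypothesis; it is closed under least common multiples (if $a,b\mid n$ then $\mathrm{lcm}(a,b)$ divides $n_s$ on an intersection of two members of $\cD$), so $M:=\mathrm{lcm}(D)=\max D$ lies in $D$ and every element of $D$ divides $M$; hence $[\F(\gamma):\F]\mid M$ for all $\gamma\in\fL_{\alg}$. First, $\fL_{\sep}/\F$ is finite separable: it is separable with element degrees bounded by $M$, so choosing $\gamma_0\in\fL_{\sep}$ of maximal degree $d_0$ over $\F$, any $\beta\in\fL_{\sep}\setminus\F(\gamma_0)$ would give $\F(\gamma_0,\beta)=\F(\theta)$ with $\theta\in\fL_{\sep}$ (primitive element theorem) and $[\F(\theta):\F]>d_0$, impossible; hence $\fL_{\sep}=\F(\gamma_0)$, which is the second assertion of (i). For finiteness of $\fL_{\alg}/\F$ it suffices to bound $[K:\F]$ uniformly over finitely generated subextensions $K=\F(\gamma_1,\dots,\gamma_r)$ of $\fL_{\alg}/\F$. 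With $\gamma_i=\ulim_s\gamma_{i,s}$, $\gamma_{i,s}\in\fL_s$, and $K_s:=\fF_s(\gamma_{1,s},\dots,\gamma_{r,s})\subseteq\fL_s$, the first paragraph gives $[\fF_s(\gamma_{i,s}):\fF_s]=[\F(\gamma_i):\F]$ for $\cD$-almost all $s$, so $[K_s:\fF_s]\le\prod_i[\F(\gamma_i):\F]$ is bounded; by Lemma~\ref{lem-at-least-one-set-in-th-union-of-sets-is-in-D}, $[K_s:\fF_s]=N$ for $\cD$-almost all $s$ and a fixed $N$, and $\fF_s\subseteq K_s\subseteq\fL_s$ forces $N\mid n$, so $N\le M$. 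By Proposition~\ref{prop-explicit-description--for--algebraic-extension-of-degree-d-of-ultra-fields}, $\cM:=\prod_s K_s/\cD$ is an algebraic extension of $\cU(\F)$ of degree $N$ containing both $K$ and $\cU(\F)$, so $[K\cU(\F):\cU(\F)]\le N$.

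The argument is then completed by the fact that $\F^{\alg}$ and $\cU(\F)$ are linearly disjoint over $\F$, which gives $[K:\F]=[K\cU(\F):\cU(\F)]\le N\le M$, hence $[\fL_{\alg}:\F]\le M$. The separable part of this disjointness is immediate: for finite Galois $\widetilde K/\F$ one has $\widetilde K\cap\cU(\F)\subseteq\F^{\alg}\cap\cU(\F)=\F$ by Lemma~\ref{lem-U(F)-algebraic-part-is-F}, so $[\widetilde K\cU(\F):\cU(\F)]=[\widetilde K:\F]$, and taking directed unions, $\F^{\sep}/\F$ is linearly disjoint from $\cU(\F)/\F$. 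The inseparable part — that the $p$-power radicals of a $p$-independent family in $\F$ remain independent over $\cU(\F)$, equivalently that $\cU(\F)/\F$ is a separable (hence, being also relatively algebraically closed, regular) extension — is where I expect the genuine difficulty: the one-variable case $\F\cap\cU(\F)^p=\F^p$ is just the irreducibility of $x^p-c$ ($c\in\F\setminus\F^p$) transported by Lemma~\ref{rem-irreducibility-is-the-same-in-F-and-U(F)}, but the multi-variable case appears to need either a relative version of that lemma (with base a finite extension of $\fF_s$) or a direct adaptation of its Newton-polygon estimate. (If the $\bF_s$ are perfect of characteristic $p$ the issue vanishes: $\fK$ is perfect, so $[\F:\F^{p^e}]=p^e<\infty$ for every $e$, one checks $\fL_{\alg}^{p^e}\subseteq\fL_{\sep}$ where $p^e$ is the largest power of $p$ dividing $n$, and $\fL_{\alg}\subseteq\fL_{\sep}^{1/p^e}$ is then finite over $\fL_{\sep}$, hence over $\F$.)
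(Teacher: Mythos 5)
Your treatment of parts (ii) and (iii), and of the finiteness of $\fL_{\sep}$ in (i), is correct and in line with the paper: the opening paragraph (that the minimal polynomial $f=\ulim_sf_s$ of $\gamma=\ulim_s\gamma_s\in\fL_{\alg}$ descends, via Lemma~\ref{rem-irreducibility-is-the-same-in-F-and-U(F)} and \L{}o\'s' theorem, to the degree-$m$ minimal polynomial $f_s$ of $\gamma_s$ over $\fF_s$ for $\cD$-almost all $s$, so that $m\mid n$ and separability transfers both ways) is exactly the paper's first step, and your maximal-degree primitive-element argument for $\fL_{\sep}=\F(\gamma_0)$ is a cleaner path to the separable half of (i) than the paper's, which recovers it only after establishing the finiteness of $\fL_{\alg}$. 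For finiteness of $\fL_{\alg}$ your route is genuinely different: the paper builds an ascending chain $\F\subsetneq\F_1\subsetneq\cdots\subseteq\fL_{\alg}$ with $\F_i=\F_{i-1}(\alpha_i)$ and asserts that $\alpha_i$ has degree $r_1\cdots r_i$ over $\F$, while you compare a finitely generated $K\subseteq\fL_{\alg}$ with $K\cU(\F)\subseteq\cM=\prod_sK_s/\cD$ and try to transport the bound $[K\cU(\F):\cU(\F)]\le N\le M$ back to $[K:\F]$ via linear disjointness of $\F^{\alg}$ and $\cU(\F)$ over $\F$.

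You have correctly located the genuine gap. Lemma~\ref{lem-U(F)-algebraic-part-is-F} gives $\F^{\alg}\cap\cU(\F)=\F$, which yields linear disjointness for Galois (hence all separable) subextensions, but trivial intersection is strictly weaker than linear disjointness for inseparable $K$; and Lemma~\ref{rem-irreducibility-is-the-same-in-F-and-U(F)} controls irreducibility only over $\F$ itself, not over a proper inseparable extension of $\F$, so the tower step needed for a multi-generator purely inseparable $K$ is not available. Your proof of (i) is therefore incomplete when $\fK$ is imperfect of positive characteristic. It is worth noting that the paper's own proof has a lacuna at the very same spot: the claim that ``by the transitivity property, $\alpha_i$ is algebraic of degree $r_1r_2\cdots r_i$ over $\F$'' is false in general, since $\alpha_i$ only generates $\F_i$ over $\F_{i-1}$, and an inseparable tower over $\F=\fK(t)$ need not admit a primitive element (take $\fK=\bF_p(x,y)$ and the tower $\F\subset\F(x^{1/p})\subset\F(x^{1/p},y^{1/p})$: here $y^{1/p}$ has degree $p$, not $p^2$, over $\F$), so the paper's key inference that $r_1\cdots r_i$ divides $n$ also does not follow. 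Both arguments are fine whenever $\F$ is perfect, which covers characteristic $0$ and the cases where the $\bF_s$ are finite, algebraically closed, or otherwise perfect, i.e.\ every later application in the paper; your closing parenthetical correctly settles that case. For an arbitrary imperfect $\fK$, however, neither your proof nor the paper's establishes part (i) as stated.
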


\begin{proof}
	
	For part (i), since $\F \subset \fL_{\sep} \subset \fL_{\alg}$, it suffices to prove that $\fL_{\alg}$ is a finite extension of $\F$.
	
	Take an arbitrary element $\alpha \in \fL_{\alg} = \fL \cap \F^{\alg}$. Then $\alpha$ is algebraic over $\F$ of degree $r$ for some positive integer $r$, and thus there is a monic, irreducible polynomial $P(x) = \sum_{i = 0}^ra_ix^i \in \F[x]$ such that $P(\alpha) = 0$, where the $a_i$ belong to $\F$ such that $a_r \ne 0$. We can write $\alpha$ in the form $\alpha = \ulim_{s\in S}\alpha_s$ for some elements $\alpha_s \in \fL_s$. Since each $a_i \in \F \subset \cU(\F) = \prod_{s\in S}\fF_s/\cD$, one can write $a_i = \ulim_{s\in S}a_{i, s}$ for some elements $a_{i, s}\in \fF_s = \bF_{s}(t)$. Therefore
	\begin{align*}
		0 = P(\alpha) &= \sum_{i = 0}^r a_i\alpha^i  = \sum_{i = 0}^r\ulim_{s\in S}a_{i, s}\ulim_{s\in S}\alpha_s^i \\
		&= \sum_{i = 0}^r\ulim_{s\in S}a_{i, s}\alpha_s^i = \ulim_{s\in S}\left(\sum_{i = 0}^r a_{i, s}\alpha_s^i\right) \\
		&= \ulim_{s\in S}P_s(\alpha_s),
	\end{align*}
	where $P_s(x) = \sum_{i = 0}^ra_{i, s}x^i \in \fF_s[x]$. Thus $P_s(\alpha_s) = 0$ for $\cD$-almost all $s \in S$. Clearly $P(x) = \ulim_{s\in S}P_s(x)$, and since $P$ is a monic irreducible polynomial over $\F$, Lemma \ref{lem-elementary-lemma1} implies that $P_s$ is a monic, irreducible of degree $r$ over $\fF_s$ for $\cD$-almost all $s \in S$. Therefore $P_s$ is the minimal polynomial of $\alpha_s$ for $\cD$-almost all $s \in S$, and so $\fF_s(\alpha_s)$ is a finite extension of degree $r$ over $\fF_s$.

	Assume the contrary to the assertion of the theorem, i.e., $\fL_{\alg}$ is an infinite extension of $F$. Then there is an ascending sequence of subfields $(\F_i)_{i \ge 0}$ of $\fL_{\alg}$ such that
	\begin{align*}
		\F = \F_0 \subsetneq \F_1 \subsetneq \cdots \subsetneq \F_i \subsetneq \F_{i + 1} \subsetneq \cdots \subsetneq \cdots \fL_{\alg}
	\end{align*}
	such that for each $i \ge 1$, $\F_i$ is a finite extension of degree $r_i > 1$ over $\F_{i - 1}$ of the form $\F_i = \F_{i - 1}(\alpha_i)$ for some algebraic element $\alpha_i \in \fL_{\alg}$ of degree $r_i$ over $\F_{i - 1}$. 
	
	Write $\alpha_i = \ulim_{s\in S}\alpha_{i, s}$ for some elements $\alpha_{i, s}\in \fL_s$. By the transitivity property, $\alpha_i$ is algebraic of degree $r_1r_2\cdots r_i$ over $\F$, and thus it follows from the result proved in the beginning of the proof that the field $\fF_s(\alpha_{i, s})$ is a finite extension of degree $r_1r_2\cdots r_i$ over $\fF_s$ for $\cD$-almost all $s \in S$. Thus $r_1r_2 \cdots r_i$ divides $n = \ulim_{s\in S}n_s$ for all $i \ge 1$. Since the product $\prod_{j = 1}^i r_j$ goes to infinity whenever $i$ goes to infinity and each integer $r_j$ is strictly greater than one, $n$ is divisible by infinitely many integers in $\bZ^{\#}$, which is a contradiction. Thus $\fL_{\alg}$ is a finite extension of $\F$.

	For part (ii), it suffices to prove that $\fL_{\alg}$ is a separable extension of $\F$, which in particular implies that $\fL_{\alg} = \fL_{\sep}$. 
	
	Assume the contrary, i.e., $\fL_{\alg}$ is not separable, and thus the characteristic of $\F$, being the same as $\fK$ equals some prime $p > 0$. By \L{}o\'s' theorem, $\fF_s$ is of the same characteristic $p$ for $\cD$-almost all $s \in S$.

	Let $\alpha = \ulim_{s\in S}\alpha_s \in \fL_{\alg}$ be an inseparable element over $\F$ for some elements $\alpha_s \in \fL_s$. Thus the minimal polynomial of $\alpha$ is of the form $Q(x^p)$ for some polynomial $Q(x) = \sum_{i = 0}^ra_i x^i \in \F[x]$. Letting $a_i = \ulim_{s\in S}a_{i, s}$ for some elements $a_{i, s}\in \fF_s$ and repeating the same arguments as in the beginning of the proof that make use of Lemma \ref{lem-elementary-lemma1}, we know that $Q_s(x^p)$ is the minimal polynomial of $\alpha_s$, where $Q_s(x) = \sum_{i = 0}^r a_{i, s}x^i$. Thus the extension $\fF_s(\alpha_s)$ is not a separable extension of $\fF_s$ for $\cD$-almost all $s \in S$, which is a contradiction since $\fF_s(\alpha_s)$ is a subextension of the separable extension $\fL_s$. Therefore $\fL_{\alg}$ is separable, and part (ii) follows immediately.

	For part (iii), \L{}o\'s' theorem implies that the characteristic of $\fK$, being the same as that of $\F = \fK(t)$ is $0$. Thus every extension over $\F$ is separable; in particular, $\fL_{\alg} = \fL_{\sep}$ and $\fL_{\alg}$ is a separable extension of $\F$.

\end{proof}

\begin{remark}
	\label{rem-L_alg=L_sep}
	
	Part (iii) of Theorem \ref{thm-main-thm1-Algebraic-part-of-L-is-a-finite-separable-extension} is most frequently used throughout this paper. In most applications in this paper, we are concerned with fields $\bF_s$ of distinct characteristics for $\cD$-almost all $s \in S$, which in particular implies that the characteristic of $\F = \fK(t)$ is $0$, and thus every extension over $\F$ is separable.

\end{remark}

\begin{remark}
	\label{rem-divisibility-of-global-degree-and-ultra-degrees}
	
If $\fL_s$ is of degree $n_s \in \bZ_{>0}$ over $\fF_s$ for $\cD$-almost all $s \in S$, following the proof of Theorem \ref{thm-main-thm1-Algebraic-part-of-L-is-a-finite-separable-extension}, we deduce that if $\alpha$ is an element in $\fL_{\alg}$ of degree $m \in \bZ_{>0}$ over $\F$, then $m$ divides $n_s$ for $\cD$-almost all $s \in S$, i.e., $m$ divides $n = \ulim_{s\in S}n_s$ in $\bZ^{\#}$. 
	In particular, if $m = [\fL_{\alg}: \F]$, then $m$ divides $n$ in $\bZ^{\#}$.

\end{remark}

\begin{theorem}
	\label{thm-main-thm2-Galois-property-of-algebraic-parts}
	
	Assume that $\fL_s$ is a (possibly infinite) Galois extension of $\fF_s$ for $\cD$-almost all $s \in S$. Then $\fL_{\alg} = \fL_{\sep}$ and $\fL_{\alg}$ is a Galois extension of $F$.

\end{theorem}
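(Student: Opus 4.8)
The plan is to reduce the statement to a normality check, since separability comes for free. First I would observe that a Galois extension is separable, so $\fL_s/\fF_s$ is separable for $\cD$-almost all $s \in S$; Theorem \ref{thm-main-thm1-Algebraic-part-of-L-is-a-finite-separable-extension}(ii) then immediately yields $\fL_{\alg} = \fL_{\sep}$ and that $\fL_{\alg}$ is separable over $\F$. So it remains only to prove that $\fL_{\alg}$ is normal over $\F$, i.e., that every monic irreducible $P \in \F[x]$ having one root in $\fL_{\alg}$ splits completely in $\fL_{\alg}[x]$. No finiteness of $[\fL_{\alg}:\F]$ is needed here; $\fL_{\alg}/\F$ may well be an infinite Galois extension.

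For the normality step, I would take $\alpha \in \fL_{\alg}$ with minimal polynomial $P \in \F[x]$, monic of degree $d$. Writing $\alpha = \ulim_{s\in S}\alpha_s$ with $\alpha_s \in \fL_s$ and $P = \ulim_{s\in S}P_s$ with each $P_s \in \fF_s[x]$ monic of degree $d$, the combination of Lemma \ref{lem-elementary-lemma1}, Lemma \ref{rem-irreducibility-is-the-same-in-F-and-U(F)}, and \L{}o\'s' theorem shows that $P_s$ is irreducible over $\fF_s$ for $\cD$-almost all $s \in S$, while the relation $P(\alpha) = 0$ transfers to $P_s(\alpha_s) = 0$ for $\cD$-almost all $s$. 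Since $\fL_s$ is normal over $\fF_s$ for $\cD$-almost all $s$, the irreducible polynomial $P_s$, having the root $\alpha_s \in \fL_s$, splits completely: $P_s(x) = \prod_{i=1}^{d}(x - \alpha_{i,s})$ with all $\alpha_{i,s} \in \fL_s$.

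Then I would take the ultraproduct of this factorization: setting $\alpha_i = \ulim_{s\in S}\alpha_{i,s} \in \fL = \prod_{s\in S}\fL_s/\cD$, one obtains $P(x) = \ulim_{s\in S}P_s(x) = \prod_{i=1}^{d}(x - \alpha_i)$, so $P$ splits completely over $\fL$. Each $\alpha_i$ is a root of the nonzero polynomial $P \in \F[x]$, hence algebraic over $\F$; since $\alpha_i$ lies in $\fL \subseteq \cU(\F)^{\alg}_{\ultra}$ and $\F^{\alg}$ was chosen precisely as the set of elements of $\cU(\F)^{\alg}_{\ultra}$ that are algebraic over $\F$, it follows that $\alpha_i \in \F^{\alg}$, whence $\alpha_i \in \fL \cap \F^{\alg} = \fL_{\alg}$. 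Thus $P$ splits completely in $\fL_{\alg}[x]$, so $\fL_{\alg}$ is normal over $\F$; combined with the separability established above, $\fL_{\alg}$ is Galois over $\F$.

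The main obstacle, and the only genuinely nontrivial point, is the descent of irreducibility from $\F$ to $\cU(\F)$ and hence to $\cD$-almost all of the $\fF_s$: this is exactly where Lemma \ref{rem-irreducibility-is-the-same-in-F-and-U(F)} is indispensable, since without it the fact that $P$ is irreducible over $\F$ would not give irreducibility of $P$ over $\cU(\F)$, and \L{}o\'s' theorem could not be invoked to conclude that $P_s$ is irreducible over $\fF_s$. A secondary bookkeeping point is ensuring, through the compatible choices of algebraic closures fixed at the start of Section \ref{sec-algebraic-part-of-an-ultra-finite-extension}, that $\fL \subseteq \cU(\F)^{\alg}_{\ultra}$, so that the roots $\alpha_i$ produced above are recognized as elements of $\F^{\alg}$ and therefore of $\fL_{\alg}$.
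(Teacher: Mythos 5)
Your proof is correct and follows essentially the same path as the paper's: separability via Theorem \ref{thm-main-thm1-Algebraic-part-of-L-is-a-finite-separable-extension}(ii), then normality by transferring the irreducible polynomial to the $s$-th components through Lemma \ref{lem-elementary-lemma1}, exploiting normality of each $\fL_s$, and pulling the roots back into $\fL_{\alg}$. The only cosmetic difference is at the final step: the paper forms the set $A_s$ of roots of $f_s$ in $\fL_s$ and counts its ultraproduct via Bell--Slomson to see that $f$ splits, whereas you directly take the ultra-limit of the linear factorization $P_s(x) = \prod_i(x - \alpha_{i,s})$ to get $P(x) = \prod_i(x - \alpha_i)$ at once -- a slightly more streamlined bookkeeping of the same idea.
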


\begin{proof}
	
	By Theorem \ref{thm-main-thm1-Algebraic-part-of-L-is-a-finite-separable-extension}, $\fL_{\alg} = \fL_{\sep}$ and $\fL_{\alg}$ is a separable extension of $\F$. So it suffices to prove that $\fL_{\alg}$ is a normal extension of $\F$.
	
	Let $f(x) = \sum_{i = 0}^ma_ix^i$ be an irreducible polynomial in $\F[x]$ that has a root $\alpha = \ulim_{s\in S}\alpha_s \in \fL_{\alg}$ for some elements $\alpha_s \in \fL_s$, where the $a_i$ belong to $\F$ such that $a_m \ne 0$. For each $0 \le i \le m$, one can write $a_i = \ulim_{s\in S}a_{i, s}$, where the $a_{i, s}$ belong to $\fF_s$. Thus 
 	\begin{align*}
 		f(x) = \ulim_{s\in S}f_s(x),
 	\end{align*}
 	where 
 	\begin{align*}
 		f_s(x) = \sum_{i = 0}^ma_{i, s}x^i \in \fF_s[x].
 	\end{align*}

Since $f(x)$ is irreducible over $\F$, by Lemma \ref{lem-elementary-lemma1}, $f_s(x)$ is irreducible over $\fF_s$ for $\cD$-almost all $s \in S$. 

We see that
\begin{align*}
	0 = f(\alpha) = \sum_{i = 0}^m a_i\alpha^i = \ulim_{s\in S}\sum_{i = 0}^ma_{i, s}\alpha_s^i = \ulim_{s\in S}f_s(\alpha_s),
\end{align*}
which implies that $f_s(\alpha_s) = 0$ for $\cD$-almost all $s \in S$. Since $\fL_s$ is a Galois extension of $\fF_s$ and the irreducible polynomial $f_s$ has a root $\alpha_s$ in $\fL_s$, all the roots of $f_s$ belong to $\fL_s$. Let $A_s$ denote the set of all roots of $f_s$ that is a subset of $\fL_s$. Take an arbitrary root $\beta_s$ of $f_s$ in $A_s$  for $\cD$-almost all $s \in S$. Using similar arguments above, we deduce that the element $\beta = \ulim_{s\in S}\beta_s \in \fL$ satisfies 
\begin{align*}
	0 = f(\beta) = \ulim_{s\in S}f_s(\beta_s) = 0,
\end{align*}
and thus $\beta$ is a root of $f$. Since $f \in \F[x]$, $\beta$ is algebraic over $\F$, and thus $\beta$ also belongs to $\F^{\alg}$, and therefore $\beta \in \fL \cap \F^{\alg} = \fL_{\alg}$. Thus we have showed that every element in the ultraproduct $\prod_{s\in S}A_s/\cD$ is a root of $f$. 

Since $\fL_{\alg}$ is a separable extension of $\F$, $f$ is separable, and thus there are exactly $m$ roots of $f$ in $\F^{\alg}$. Since $A_s$ has cardinality $m$ for $\cD$-almost all $s \in S$, the cardinality of $\prod_{s\in S}A_s/\cD$ is $\ulim_{s\in S}m = m$ (see \cite[Lemma 3.7]{bell-slomson}), we deduce that the ultraproduct $\prod_{s\in S}A_s/\cD$ consists of exactly $m$ roots of $f$. Since $\prod_{s\in S}A_s/\cD$ is a subset of $\fL_{\alg}$, $f$ splits into linear factors in $\fL_{\alg}$. Thus $\fL_{\alg}$ is a normal extension of $\F$ and therefore $\fL_{\alg}$ is a Galois extension of $\F$.

\end{proof}

\begin{remark}

\begin{itemize}

\item []
	
\item [(i)]	For the rest of this paper, unless otherwise stated, we only work with separable extensions $\fL_s$ of $\fF_s$ for $\cD$-almost all $s \in S$ or the characteristic of $\F$ is $0$, both of which cases imply that $\fL_{\alg} = \fL_{\sep}$ and $\fL_{\alg}$ is a separable extension of $\F$. Thus it suffices to work with algebraic parts $\fL_{\alg}$ of certain ultra-field extensions $\fL$ over $\cU(\F)$ instead of separable parts $\fL_{\sep}$. 

\item [(ii)] Even under the assumption that $\fL_s$ is a finite Galois extension of $\fF_s$ for $\cD$-almost all $s \in S$, the algebraic part $\fL_{\alg}$ may be an infinite Galois extension of $\F$. In Proposition \ref{prop-structure-of-subfields-of-L-alg-ininite-over-F} in Subsection \ref{subsubsec-L_alg-is-Z-hat-profinite-Galois-over-F}, we will provide an example in which $\fL_s$ is a finite Galois extension of $\fF_s$ for $\cD$-almost all $s \in S$ but $\fL_{\alg}$ is an infinite Galois extension of $\F$ whose Galois group of $\fL_{\alg}$ over $\F$ is isomorphic to the profinite completion $\widehat{\bZ}$.

\end{itemize}
	
\end{remark}

Combining Theorem \ref{thm-main-thm1-Algebraic-part-of-L-is-a-finite-separable-extension} and Theorem \ref{thm-main-thm2-Galois-property-of-algebraic-parts}, we obtain the following.

\begin{corollary}
	\label{cor-main-cor1-L_alg-is-a-finite-Galois-extension-of-F}

Let $\fL_s$ be a finite Galois extension of degree $n_s \in \bZ_{>0}$ over $\fF_s$ for $\cD$-almost all $s \in S$, and let $\fL = \prod_{s\in S}\fL_s/\cD$ be the ultra-field extension of ultra-degree $n = \ulim_{s\in S}n_s \in \bZ_{>0}^{\#}$ over $\cU(\F)$. Assume that there are only finite many integers dividing $n$ in $\bZ^{\#}$. Then $\fL_{\alg}$ is a finite Galois extension of $\F$.

\end{corollary}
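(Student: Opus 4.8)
The plan is to deduce this statement directly from the two preceding theorems, with essentially no new work; the corollary is precisely the conjunction of their conclusions under the common hypothesis that each $\fL_s$ is a finite Galois extension of $\fF_s$.

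First I would observe that the hypotheses of Theorem \ref{thm-main-thm1-Algebraic-part-of-L-is-a-finite-separable-extension}(i) are met: each $\fL_s$ is by assumption a finite extension of $\fF_s$ of degree $n_s \in \bZ_{>0}$ for $\cD$-almost all $s \in S$ (a finite Galois extension is in particular a finite algebraic extension), and by hypothesis there are only finitely many integers dividing the hyperinteger $n = \ulim_{s\in S} n_s$ in $\bZ^{\#}$. Hence Theorem \ref{thm-main-thm1-Algebraic-part-of-L-is-a-finite-separable-extension}(i) applies and yields that $\fL_{\alg}$ is a \emph{finite} extension of $\F$ (and, as a by-product, that $\fL_{\sep}$ is a finite separable extension of $\F$).

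Next I would invoke Theorem \ref{thm-main-thm2-Galois-property-of-algebraic-parts}, whose single hypothesis is exactly that $\fL_s$ is a (possibly infinite) Galois extension of $\fF_s$ for $\cD$-almost all $s \in S$, which holds here. That theorem gives $\fL_{\alg} = \fL_{\sep}$ and that $\fL_{\alg}$ is a Galois extension of $\F$. Combining the two conclusions, $\fL_{\alg}$ is an extension of $\F$ that is simultaneously finite and Galois over $\F$; that is, $\fL_{\alg}$ is a finite Galois extension of $\F$, as asserted. There is no genuine obstacle here: all the substantive content — the finiteness estimate via divisibility of the hyperinteger $n$, and the normality/separability of the algebraic part obtained from \L{}o\'s' theorem together with Lemma \ref{rem-irreducibility-is-the-same-in-F-and-U(F)} and Lemma \ref{lem-elementary-lemma1} — has already been carried out in Theorems \ref{thm-main-thm1-Algebraic-part-of-L-is-a-finite-separable-extension} and \ref{thm-main-thm2-Galois-property-of-algebraic-parts}, and the corollary merely records their common specialization.
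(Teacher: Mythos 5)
Your proposal is correct and follows precisely the route the paper intends: the paper states the corollary as an immediate consequence of ``combining Theorem \ref{thm-main-thm1-Algebraic-part-of-L-is-a-finite-separable-extension} and Theorem \ref{thm-main-thm2-Galois-property-of-algebraic-parts},'' which is exactly the conjunction you carry out, invoking part (i) of the first theorem for finiteness and the second theorem for the Galois property.
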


\subsection{Shadows of separable extensions of $\F = \fK(t)$}
\label{subsec-shadows-and-ultra-shadows}

Let $\H$ be a finite separable extension of degree $m$ over $\F$. In this subsection, we will construct, for $\cD$-almost all $s\in S$, a finite separable extension $\fH_s$ of the same degree $m$ over $\fF_s$ such that the ultra-finite extension $\fH = \prod_{s\in S}\fH_s/\cD$ satisfies
\begin{align*}
	\H = \fH_{\sep} = \fH_{\alg},
\end{align*}
that is, $\H$ is the algebraic part of $\fH$ over $\F$. It turns out that $\fH$ is also a finite separable extension of the same degree $m$ over $\cU(\F)$. Such a finite separable extension $\fH_s$ of degree $m$ over $\fF_s$ is unique for $\cD$-almost all $s\in S$ in the sense that if there exists a finite separable extension $\fG_s$ of degree $m$ over $\fF_s$ for $\cD$-almost all $s\in S$ such that the algebraic part $\fG_{\alg}$ over $\F$ of the ultra-finite extension $\fG = \prod_{s\in S}\fG_s/\cD$ is $\H$, then 
\begin{align*}
	\fG_s = \fH_s
\end{align*}
for $\cD$-almost all $s \in S$.

\begin{definition}
	\label{def-shadows-of-algebraic-extensions}
	(shadow and ultra-shadow)
	
	For $\cD$-almost all $s\in S$, the unique separable extension $\fH_s$ of degree $m$ over $\fF_s$ described above is called the \textbf{$s$-th shadow of $\H$}. The ultra-field extension $\fH = \prod_{s\in S}\fH_s/\cD$ whose algebraic part $\fH_{\alg}$ over $\F$ coincides with $\H$, is called the \textbf{ultra-shadow of $\H$}.

\end{definition}

For an illustration of shadows of separable extensions of $\F$, see Figure \ref{fig-diagram-of-shadows}.

\begin{center}
  \begin{figure}
  	\label{fig-diagram-of-shadows}
	\begin{tikzpicture}

\node (Q1) at (0,0) {$\F = \fK(t)$};
     \node (Q2) at (3, 3 ) {$\H = \fH_{\alg} = \fH_{\sep}= \fH \cap \F^{\alg}$};

          \node (Q4) at (4.5,4.5) {$\F^{\sep}$};

     \node (Q6) at (6,6) {$\F^{\alg}$};
     
        \node (Q10) at (7.5, 7.5) {$\cU(\F)^{\alg}_{\ultra} = \prod_{s\in S}\fF_s^{\alg}/\cD$};
        
    \node (Q3) at (-3,1) {$\cU(\F) = \prod_{s\in S}\fF_s/\cD$};

    \node (Q5) at (0,4) {$\fH = \prod_{s\in S}\fH_s/\cD$};
     
        \node (Q7) at (2.5, 6.5) {$\cU(\F)^{\sep}_{\ultra} = \prod_{s\in S}\fF_s^{\sep}/\cD$};

     \draw (Q1)--(Q2)  ;
      \draw (Q2)--(Q4)  ;
    \draw (Q4)--(Q6) ;
    
        \draw (Q4)--(Q7)  ;

    \draw (Q6)--(Q8)  ;
         
       \draw (Q8)--(Q10)  ;
    \draw (Q10)--(Q7)  ;

    \draw (Q1)--(Q3)  ;

    \draw (Q3)--(Q5)  ;

    \draw (Q5)--(Q2) ;

      \draw (Q5)--(Q7);
       \end{tikzpicture}

      \caption{Shadows of algebraic extensions of $\F$}
      \end{figure}
    \end{center}

We now describe $\fH_s$ explicitly and prove its uniqueness. Since $\H$ is a finite separable extension of degree $m$ over $\F$, the primitive element theorem implies that $\H = \F(\alpha)$ for some element $\alpha \in \H$ whose minimal polynomial is of the form
\begin{align*}
	P(x) = a_0 + a_1x + \cdots + a_{m - 1}x^{m - 1} + x^m \in \F[x],
\end{align*} 
where the $a_i$ belong to $\F$. 

Since $\alpha$ is a separable element, $P(x)$ is a separable polynomial. Since $\H$ is a subfield of $\F^{\sep}$, Corollary \ref{cor-F^sep-is-contained-in-U(F)^sep_ultra} implies that $\H$ is contained in $\cU(\F)^{\sep}_{\ultra}$, and thus one can write $\alpha = \ulim_{s\in S}\alpha_s$ for some elements $\alpha_s \in \fF_s^{\sep}$. Since $a_i \in \F$, one can write $a_i = \ulim_{s\in S}a_{i, s}$ for some elements $a_{i, s} \in \fF_s$. Letting
\begin{align*}
	P_s(x) = a_{0, s} + a_{1, s}x + \cdots + a_{m - 1, s}x^{m - 1} + x^m \in \fF_s[x],
\end{align*}
we deduce that 
\begin{align*}
	P(x) = \ulim_{s\in S}P_s(x).
\end{align*}
Since $P(\alpha) = 0$, it follows that $P_s(\alpha_s) = 0$ for $\cD$-almost all $s \in S$. Since $P(x)$ is irreducible over $\F$, Lemma \ref{lem-elementary-lemma1} implies that $P_s(x)$ is irreducible over $\fF_s$ for $\cD$-almost all $s \in S$. Therefore $P_s(x)$ is the minimal polynomial of $\alpha_s$ for $\cD$-almost all $s\in S$. 

   Let $\fH_s = \fF_s(\alpha_s)$ be the field obtained from $\fF_s$ by adjoining $\alpha_s$. By what we have showed above, $\fH_s$ is a finite separable extension of degree $m$ over $\fF_s$ for $\cD$-almost all $s\in S$. 
   
   We contend that $\fH_s$ is the $s$-th shadow of $\H$. Indeed, let $\fH = \prod_{s\in S}\fH_s/\cD$ be the ultra-field extension of $\cU(\F)$. Since $\alpha_s \in \fH_s$ for $\cD$-almost all $s \in S$, $\alpha = \ulim_{s\in S}\alpha_s \in \prod_{s\in S}\fH_s/\cD = \fH$, and thus
   \begin{align*}
   	\alpha \in \fH \cap \H \subset \fH \cap \F^{\alg} = \fH_{\alg}.
   \end{align*}
   Therefore
   \begin{align*}
   	\H = \F(\alpha) \subset \fH_{\alg}.
   \end{align*}
   
   By part (ii) of Theorem \ref{thm-main-thm1-Algebraic-part-of-L-is-a-finite-separable-extension}, $\fH_{\sep} = \fH_{\alg}$, and $\fH_{\alg}$ is a finite separable extension of $\F$. Let $h = [\fH_{\alg} : \F]$. Since $\H$ is a subfield of $\fH_{\alg}$,
   \begin{align*}
   	h = [\fH_{\alg}: \F] \ge [\H : \F] = m.
   \end{align*} 
   
   By Remark \ref{rem-divisibility-of-global-degree-and-ultra-degrees} and since the degree of $\fH_s$ over $\fF_s$ is $m$, $h$ divides $m$, which implies that $h = m$. Thus
   \begin{align*}
   	\H = \fH_{\alg} = \fH_{\sep}.
   \end{align*}

  We now prove the uniqueness of $\fH_s$ for $\cD$-almost all $s \in S$. Assume that there exists another separable extension $\fG_s$ of degree $m$ over $\fF_s$ for $\cD$-almost all $s\in S$ such that the algebraic part of the ultra-field extension $\fG = \prod_{s\in S}\fG_s/\cD$ over $\F$ is $\H$. Then
  \begin{align*}
  	\alpha =\ulim_{s\in S}\alpha_s \in \H = \fG_{\alg} = \fG \cap \F^{\alg} \subset \fG,
  \end{align*}
  and thus $\alpha_s \in \fG_s$ for $\cD$-almost all $s \in S$. Thus
  \begin{align*}
  	\fH_s = \fF_s(\alpha_s) \subset \fG_s.
  \end{align*}
  
  Since $m = [\fG_s : \fF_s]  \ge [\fH_s : \fF_s] = m$, it follows that 
  \begin{align*}
  	[\fH_s : \fF_s] =[\fG_s : \fF_s] = m,
  \end{align*}
  which implies that $\fG_s = \fH_s$ for $\cD$-almost all $s\in S$. Thus $\fH_s$ is a unique separable extension of degree $m$ over $\fF_s$ such that the algebraic part of $\fH$ over $\F$ is $\H$. Therefore $\fH_s$ is the $s$-th shadow of $\H$. 
  
  We now prove that $\fH = \cU(\F)(\alpha)$. Since $\alpha \in \H = \fH_{\alg} = \fH \cap \F^{\alg}$, $\alpha$ belongs to $\fH$, and thus $\cU(\F)(\alpha) \subset \fH$. By Lemma \ref{rem-irreducibility-is-the-same-in-F-and-U(F)}, the minimal polynomial $P(x)$ of $\alpha$ is also irreducible over $\cU(\F)$. Since $P(x)$ is separable and of degree $m$, $\cU(\F)(\alpha)$ is a finite separable extension of degree $m$ over $\cU(\F)$. 
  
  Since $\fH_s$ is a finite separable extension of degree $m$ over $\fF_s$ for $\cD$-almost all $s \in S$, we know from Proposition \ref{prop-explicit-description--for--algebraic-extension-of-degree-d-of-ultra-fields} that $\fH = \prod_{s\in S}\fH_s/\cD$ is a finite algebraic extension of degree $m$ over $\cU(\F)$. Since 
  \begin{align*}
  [\cU(\F)(\alpha): \cU(\F)] = [\fH: \cU(\F)] = m
  \end{align*}
  and $\cU(\F)(\alpha) \subset \fH$, we deduce that $\fH = \cU(\F)(\alpha)$.

 We summarize the discussion above in the following.
  
  \begin{proposition}
  	\label{prop-explicit-descriptions-of-shadows-and-ultra-shadows}
  	
  Let $\fH_s$ be the $s$-th shadow of $\H$ for $\cD$-almost all $s \in S$, and let $\fH = \prod_{s\in S}\fH_s/\cD$ be the ultra-shadow of $\H$. Let $\H = \F(\alpha)$ for some primitive element of degree $m$ over $\F$, and let $\alpha = \ulim_{s\in S}\alpha_s$ for some elements $\alpha_s \in \fF_s^{\sep}$ \footnote{Such elements $\alpha_s$ exist by Corollary \ref{cor-F^sep-is-contained-in-U(F)^sep_ultra}.}. Then
\begin{itemize}
  
  	\item [(i)] $\fH_s = \fF_s(\alpha_s)$ for $\cD$-almost all $s \in S$.
  
  \item [(ii)]	$\fH$ is a finite separable extension of degree $m$ over $\cU(\F)$, and can be written in the form $\fH = \cU(\F)(\alpha)$.
  	
  	\end{itemize}

  \end{proposition}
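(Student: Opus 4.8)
The plan is to assemble the explicit construction already laid out, since the proposition merely records its conclusion. First I would invoke the primitive element theorem to write $\H = \F(\alpha)$ with $\alpha$ of degree $m$ over $\F$ and minimal polynomial $P(x) = a_0 + a_1x + \cdots + a_{m-1}x^{m-1} + x^m \in \F[x]$, which is separable because $\alpha$ is. By Corollary \ref{cor-F^sep-is-contained-in-U(F)^sep_ultra} the field $\H$ lies inside $\cU(\F)^{\sep}_{\ultra} = \prod_{s\in S}\fF_s^{\sep}/\cD$, so I may fix a representative $\alpha = \ulim_{s\in S}\alpha_s$ with $\alpha_s \in \fF_s^{\sep}$, write $a_i = \ulim_{s\in S}a_{i,s}$ with $a_{i,s}\in\fF_s$, and set $P_s(x) = a_{0,s} + \cdots + a_{m-1,s}x^{m-1} + x^m$, so that $P = \ulim_{s\in S}P_s$. \L{}o\'s' theorem gives $P_s(\alpha_s) = 0$ for $\cD$-almost all $s$, and Lemma \ref{lem-elementary-lemma1} (resting on the key Lemma \ref{rem-irreducibility-is-the-same-in-F-and-U(F)}) shows $P_s$ is irreducible over $\fF_s$ for $\cD$-almost all $s$; hence $P_s$ is the minimal polynomial of $\alpha_s$ and $\fH_s := \fF_s(\alpha_s)$ is a separable extension of degree $m$ over $\fF_s$ for $\cD$-almost all $s$. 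This produces the claimed candidate in (i).

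Next I would form $\fH = \prod_{s\in S}\fH_s/\cD$ and verify $\H = \fH_{\alg} = \fH_{\sep}$. Since $\alpha_s \in \fH_s$, we have $\alpha \in \fH$, hence $\alpha \in \fH \cap \F^{\alg} = \fH_{\alg}$ and so $\H = \F(\alpha) \subseteq \fH_{\alg}$. Part (ii) of Theorem \ref{thm-main-thm1-Algebraic-part-of-L-is-a-finite-separable-extension} gives $\fH_{\alg} = \fH_{\sep}$, a finite separable extension of $\F$; writing $h = [\fH_{\alg}:\F] \ge [\H:\F] = m$, Remark \ref{rem-divisibility-of-global-degree-and-ultra-degrees} forces $h \mid m$, so $h = m$ and $\H = \fH_{\alg} = \fH_{\sep}$. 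For the uniqueness of the $s$-th shadow I would take any family $\fG_s$ of separable degree-$m$ extensions of $\fF_s$ with $\fG_{\alg} = \H$, where $\fG = \prod_{s\in S}\fG_s/\cD$; then $\alpha \in \H \subseteq \fG$ gives $\alpha_s \in \fG_s$, whence $\fH_s = \fF_s(\alpha_s) \subseteq \fG_s$, and comparing degrees (both $m$) yields $\fG_s = \fH_s$ for $\cD$-almost all $s$. This establishes (i) and that $\fH$ is the ultra-shadow.

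For (ii) I would work over $\cU(\F)$: from $\alpha \in \H = \fH_{\alg} \subseteq \fH$ we get $\cU(\F)(\alpha) \subseteq \fH$; by Lemma \ref{rem-irreducibility-is-the-same-in-F-and-U(F)} the polynomial $P$ remains irreducible over $\cU(\F)$, so $\cU(\F)(\alpha)$ is separable of degree $m$ over $\cU(\F)$. On the other hand, Proposition \ref{prop-explicit-description--for--algebraic-extension-of-degree-d-of-ultra-fields} applied to $\fH = \prod_{s\in S}\fH_s/\cD$ with each $\fH_s$ of degree $m$ shows $[\fH:\cU(\F)] = m$. Equality of degrees together with the containment gives $\fH = \cU(\F)(\alpha)$, which is (ii).

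I do not expect a real obstacle, since the proposition packages the preceding discussion; the two load-bearing inputs are Lemma \ref{rem-irreducibility-is-the-same-in-F-and-U(F)} (irreducibility is the same over $\F$ and over $\cU(\F)$, used both to pass irreducibility of $P$ down to the $P_s$ and to compute $[\cU(\F)(\alpha):\cU(\F)]$) and the divisibility statement of Remark \ref{rem-divisibility-of-global-degree-and-ultra-degrees}, which pins the degree $h$ down to exactly $m$ rather than merely $\ge m$. The only care required is the bookkeeping with the $\cD$-almost-all quantifiers and choosing mutually compatible representatives for $\alpha$, the coefficients $a_i$, and $P$.
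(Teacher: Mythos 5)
Your proof follows the paper's own argument step for step: the same choice of primitive element and $\cD$-limit representatives, the same use of Lemma \ref{lem-elementary-lemma1} (via Lemma \ref{rem-irreducibility-is-the-same-in-F-and-U(F)}) to pass irreducibility to the $P_s$, the same degree-sandwich via Theorem \ref{thm-main-thm1-Algebraic-part-of-L-is-a-finite-separable-extension} and Remark \ref{rem-divisibility-of-global-degree-and-ultra-degrees}, the same uniqueness argument, and the same final degree comparison over $\cU(\F)$ via Proposition \ref{prop-explicit-description--for--algebraic-extension-of-degree-d-of-ultra-fields}. The only quibble, which the paper itself shares, is that finiteness of $\fH_{\alg}$ over $\F$ really rests on part (i) of Theorem \ref{thm-main-thm1-Algebraic-part-of-L-is-a-finite-separable-extension} (with $n_s = m$ constant), not part (ii), which only gives separability.
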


  We prove the main result in this subsection.
  
  \begin{theorem}
  	\label{thm-main-thm3-Galois-group-structures-of-shadows}
  	
  	Let $\H$ be a Galois extension of degree $m$ over $\F$, and let $G$ denote the Galois group of $\H$ over $\F$.   Let $\fH_s$ be the $s$-th shadow of $\H$ for $\cD$-almost all $s \in S$, and let $\fH = \prod_{s\in S}\fH_s/\cD$ be the ultra-shadow of $\H$. Then
  	\begin{itemize}
  		
  		\item [(i)] $\fH_s$ is a finite Galois extension of degree $m$ over $\fF_s$, and the Galois group $\Gal(\fH_s/\fF_s)$ of $\fH_s$ over $\fF_s$ is isomorphic to $G$ for $\cD$-almost all $s\in S$.
  		
  		\item [(ii)] $\fH$ is a finite Galois extension of degree $m$ over $\cU(\F)$, and the Galois group $\Gal(\fH/\cU(\F))$ of $\fH$ over $\cU(\F)$ is isomorphic to 
  		$G$. 

\item [(iii)] $G = \{\sigma_{|\H} \; |\; \sigma \in \Gal(\fH/\cU(\F))\}$ and $\Gal(\fH/\cU(\F)) = \Gal_{\ultra}(\fH/\cU(\F)) = \prod_{s\in S}\Gal(\fH_s/\fF_s)/\cD$ \footnote{See Subsection \ref{subsubsec-ultra-Galois-extensions} for a notion of ultra-Galois groups}.

  	\end{itemize}

  \end{theorem}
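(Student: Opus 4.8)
The plan is to prove (i) first and then deduce (ii) and (iii) from it using Proposition \ref{prop-ultra-Galois-extensions-are-Galois} together with a restriction argument. By Proposition \ref{prop-explicit-descriptions-of-shadows-and-ultra-shadows} we already know that $\fH_s = \fF_s(\alpha_s)$ is separable of degree $m$ over $\fF_s$ for $\cD$-almost all $s$, where $\alpha_s$ is a root of $P_s$, the $s$-th component of the separable minimal polynomial $P$ of $\alpha$ over $\F$, and that $\fH = \cU(\F)(\alpha)$ is separable of degree $m$ over $\cU(\F)$. To upgrade $\fH_s/\fF_s$ to a \emph{Galois} extension, note that since $\H/\F$ is normal, $P$ splits in $\H$ as $P(x) = \prod_{j=1}^{m}(x-\beta_j)$ with the $\beta_j \in \H$ pairwise distinct (separability). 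Because $\H = \F(\alpha)$, write $\beta_j = R_j(\alpha)$ with $R_j \in \F[x]$ of degree $< m$, and write $\beta_j = \ulim_{s\in S}\beta_{j,s}$ (legitimate by Corollary \ref{cor-F^sep-is-contained-in-U(F)^sep_ultra}) and $R_j = \ulim_{s\in S}R_{j,s}$. \L{}o\'s' theorem then gives, for $\cD$-almost all $s$, that $P_s(x) = \prod_{j=1}^{m}(x-\beta_{j,s})$ with the $\beta_{j,s}$ pairwise distinct and $\beta_{j,s} = R_{j,s}(\alpha_s) \in \fF_s(\alpha_s) = \fH_s$. Hence $P_s$ is a separable polynomial that splits completely in $\fH_s = \fF_s(\alpha_s)$, so $\fH_s$ is the splitting field of a separable polynomial over $\fF_s$, i.e.\ $\fH_s/\fF_s$ is Galois of degree $m$ for $\cD$-almost all $s$.

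For the isomorphism $\Gal(\fH_s/\fF_s)\cong G$ I would encode $G$ by its action on the roots of $P$: the map $j \mapsto \sigma_j$, where $\sigma_j \in G$ is the unique $\F$-automorphism of $\H$ with $\sigma_j(\alpha)=\beta_j$, is a bijection $\{1,\dots,m\}\to G$, and from $\sigma_j(\beta_k)=\sigma_j(R_k(\alpha))=R_k(\beta_j)$ one reads off $\sigma_j\sigma_k = \sigma_{\mu(j,k)}$, where $\mu(j,k)$ is the unique index $\ell$ with $\beta_\ell = R_k(\beta_j)$ (well defined since $R_k(\beta_j)$ is a root of $P$ and the $\beta_\ell$ are distinct). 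The identical construction over $\fF_s$ produces a bijection $j \mapsto \tau_{j,s}$ of $\{1,\dots,m\}$ onto $\Gal(\fH_s/\fF_s)$ whose structure function $\mu_s$ is determined by $\beta_{\ell,s}=R_{k,s}(\beta_{j,s})$. Since there are only finitely many pairs $(j,k)$, and for each of them the relation $\beta_{\mu(j,k)}=R_k(\beta_j)$ transfers by \L{}o\'s' theorem to $\beta_{\mu(j,k),s}=R_{k,s}(\beta_{j,s})$ for $\cD$-almost all $s$, we conclude $\mu_s=\mu$ for $\cD$-almost all $s$; hence the bijection $\sigma_j \leftrightarrow \tau_{j,s}$ is a group isomorphism $G \cong \Gal(\fH_s/\fF_s)$ for $\cD$-almost all $s$. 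This establishes (i).

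By (i), $\fH = \prod_{s\in S}\fH_s/\cD$ is an ultra-Galois extension of $\cU(\F)$ with $\{s : [\fH_s:\fF_s]=m\}\in \cD$, so Proposition \ref{prop-ultra-Galois-extensions-are-Galois} yields that $\fH/\cU(\F)$ is Galois of degree $m$ and $\Gal(\fH/\cU(\F)) = \Gal_{\ultra}(\fH/\cU(\F)) = \prod_{s\in S}\Gal(\fH_s/\fF_s)/\cD$, which are the displayed equalities of (iii). For the isomorphism with $G$ (needed in (ii) and to finish (iii)) I would use the restriction map $\sigma \mapsto \sigma_{|\H}$. Every $\sigma \in \Gal(\fH/\cU(\F))$ stabilizes $\H = \fH \cap \F^{\alg}$: if $\beta \in \H$ has minimal polynomial $Q \in \F[x]$ over $\F$, then $\sigma$ fixes the coefficients of $Q$ (they lie in $\F \subset \cU(\F)$), so $Q(\sigma\beta)=0$ and $\sigma\beta \in \fH \cap \F^{\alg} = \H$; applying this to $\sigma^{-1}$ gives $\sigma(\H)=\H$. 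Thus restriction is a homomorphism $\Gal(\fH/\cU(\F)) \to \Gal(\H/\F) = G$, injective because $\fH = \cU(\F)(\alpha)$ with $\alpha \in \H$, hence an isomorphism since both groups have order $m$. In particular $G = \{\sigma_{|\H} : \sigma \in \Gal(\fH/\cU(\F))\}$ and $\Gal(\fH/\cU(\F)) \cong G$, which completes (ii) and (iii).

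The main obstacle is the normality half of (i): one must show not merely that $P_s$ has a splitting field, but that all its roots already lie in the \emph{specific} extension $\fH_s = \fF_s(\alpha_s)$ — this is exactly what the representations $\beta_j = R_j(\alpha)$ over $\F$, transferred through \L{}o\'s' theorem, are for. Once that is secured, everything else is bookkeeping with \L{}o\'s' theorem, the cardinality lemma \cite[Lemma 3.7]{bell-slomson}, and Proposition \ref{prop-ultra-Galois-extensions-are-Galois}.
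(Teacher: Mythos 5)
Your proof is correct and establishes the same facts, but for parts (ii)--(iii) it takes a genuinely different route from the paper. In part (i) the content is the same: both proofs show that $\fH_s$ is the splitting field of the separable polynomial $P_s$ by producing all $m$ roots inside $\fF_s(\alpha_s)$. You do this via the polynomial expressions $\beta_j = R_j(\alpha)$ with $R_j \in \F[x]$, transferring $\beta_{j,s} = R_{j,s}(\alpha_s)$ by \L{}o\'s; the paper instead argues that any root $\beta_s$ of $P_s$ in $\fF_s^{\sep}$ yields, after taking ultralimits, a root of $P$, so must equal some $\alpha_{i,s}$. Both devices work; yours makes the containment $\beta_{j,s} \in \fH_s$ visibly explicit. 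Likewise your ``structure constant'' argument with $\mu(j,k)$, transferred by \L{}o\'s for finitely many pairs $(j,k)$, is a clean alternative to the paper's direct construction of the map $\Gamma$ on permutations. For (ii)--(iii), the paper reproves from scratch that $\fH = \cU(\F)(\alpha_1,\dots,\alpha_m)$ is the splitting field of $P$ over $\cU(\F)$ (using Lemma \ref{rem-irreducibility-is-the-same-in-F-and-U(F)}) and constructs a second permutation-based isomorphism $\Sigma$, invoking Proposition \ref{prop-ultra-Galois-extensions-are-Galois} only for the final equality in (iii). You instead invoke Proposition \ref{prop-ultra-Galois-extensions-are-Galois} up front to get the Galois structure of $\fH/\cU(\F)$, its degree, and the identification $\Gal(\fH/\cU(\F)) = \Gal_{\ultra}(\fH/\cU(\F)) = \prod_{s\in S}\Gal(\fH_s/\fF_s)/\cD$, and then finish (ii) and (iii) simultaneously with the restriction map $\sigma \mapsto \sigma_{|\H}$, using the equality $\H = \fH_{\alg} = \fH \cap \F^{\alg}$ (from the definition of ultra-shadow) to show $\sigma$ stabilizes $\H$. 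This is more conceptual and avoids a second combinatorial bookkeeping step; it also delivers $G = \{\sigma_{|\H} : \sigma \in \Gal(\fH/\cU(\F))\}$ directly as the image of the restriction homomorphism rather than as a corollary of the isomorphism $\Sigma$. Both approaches are sound; your organization is arguably tighter because it reuses Proposition \ref{prop-ultra-Galois-extensions-are-Galois} more fully.
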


  \begin{proof}
  	
  	We first prove part (i). By the primitive element theorem, $\H = \F(\alpha_1)$ for some element $\alpha_1 \in \H$. Let $\alpha_1, \alpha_2, \ldots, \alpha_m$ be all the distinct Galois conjugates of $\alpha_1$, i.e., 
  	\begin{align*}
  		\{\alpha_1, \ldots, \alpha_m\} = \{\sigma(\alpha_1) \; |\; \sigma \in G\}.
  	\end{align*}
  	
  	Since $\H$ is Galois over $\F$, all the Galois conjugates of $\alpha_1$ belong to $\H$, and thus one can write
  \begin{align}
  \label{e1-main-thm3-Galois-structures-of-shadows}
  \H = \F(\alpha_i)
  \end{align}
  for all $1 \le i \le m$.

  Let $\fH_s$ be the $s$-th shadow of $\H$, and let $\fH = \prod_{s \in S}\fH_s/\cD$ be the ultra-shadow of $\H$.

  	For each $1 \le i \le m$, since $\alpha_i \in \H \subset \cU_{\ultra}^{\sep}(\F)$ (see Corollary \ref{cor-F^sep-is-contained-in-U(F)^sep_ultra}), one can write $\alpha_i = \ulim_{s \in S}\alpha_{i, s}$ for elements $\alpha_{i, s}\in \fF^{\sep}_s$. By the construction of the $s$-th shadows as in Proposition \ref{prop-explicit-descriptions-of-shadows-and-ultra-shadows}, $\fH_s = \fF_s(\alpha_{1, s})$.
  
  Let
  \begin{align}
  \label{e1-1/2-main-thm3-Galois-structures-of-shadows-minimal-polynomial}
  P(x) = \sum_{i = 0}^ma_ix^i \in \F[x] 
  \end{align}
  be the minimal polynomial of $\alpha_1$ over $\F$, where the $a_i$ belong to $\F$. Note that $P(x)$ is also the minimal polynomial of $\alpha_i$ for all $1 \le i \le m$. 
  
  For each $s \in S$, write $a_i = \ulim_{s\in S}a_{i, s}$ for some elements $a_{i, s} \in \fF_s$. Note that all the roots of $P(x)$ are all the distinct Galois conjugates of $\alpha_1$, say $\{\alpha_1, \ldots, \alpha_m\}$.
 
 By Lemma \ref{lem-elementary-lemma1}, we deduce that
 \begin{align*}
 	P(x) = \ulim_{s\in S}P_s(x),
 \end{align*}
where 
\begin{align*}
P_s(x) = \sum_{i = 0}^ma_{i, s}x^i \in \fF_s[x].
\end{align*}
Since $P(\alpha_i) = 0$ for every $1 \le i \le m$, we deduce that for all $1 \le i \le m$, $P_s(\alpha_{i, s}) = 0$ for $\cD$-almost all $s \in S$. Since $P$ is irreducible over $\F$, Lemma \ref{lem-elementary-lemma1} again implies that $P_s$ is irreducible over $\fF_s$ for $\cD$-almost all $s \in S$, and therefore $P_s$ is the minimal polynomial of $\alpha_{i, s}$ for all $1 \le i \le m$.

By the construction of the $s$-th shadow of $\fH_s$ described in Proposition \ref{prop-explicit-descriptions-of-shadows-and-ultra-shadows} and (\ref{e1-main-thm3-Galois-structures-of-shadows}), $\fH_s = \fF_s(\alpha_{i, s})$ for any $1 \le i \le m$. In particular, since $\fH_s$ is separable and $P_s$ is the minimal polynomial of $\alpha_{i, s}$, $P_s$ is separable for $\cD$-almost all $s \in S$. 

If $\beta_s$ is any element in $\fF_s^{\sep}$ such that $P_s(\beta_s) = 0$ for $\cD$-almost all $s\in S$, then $P(\beta) = 0$, where $\beta = \ulim_{s\in S}\beta_s \in \cU(\F)^{\sep}_{\ultra}$. Thus $\beta = \alpha_i$ for some $1 \le i \le m$, and thus $\beta_s = \alpha_{i, s}$ for $\cD$-almost all $s \in S$. Therefore, for $\cD$-almost all $s \in S$, all the $m$ distinct roots of $P_s(x)$ are $\alpha_{1, s}, \ldots, \alpha_{m, s}$. Since $\fH_s = \fF_s(\alpha_{i, s})$ for any $1 \le i \le m$, we deduce that
\begin{align}
 	\label{e2-main-thm3-Galois-structures-of-shadows}
\fH_s = \fF_s(\alpha_{1, s}, \ldots, \alpha_{m, s}),
\end{align}
and thus $\fH_s$ is the splitting field of the separable and irreducible polynomial $P_s(x)$. Therefore $\fH_s$ is a Galois extension of degree $m$ over $\fF_s$ for $\cD$-almost all $s \in S$.

  We now prove that $\Gal(\fH_s/\fF_s)$ is isomorphic to $G$.
  
 For a given element $\sigma \in G = \Gal(\H/\F)$, let $\iota$ be a unique permutation of $\{1, 2, \ldots, m\}$ such that 
 \begin{align}
 	\label{e3-main-thm3-Galois-structures-of-shadows}
 	\sigma(\ulim_{s\in S}\alpha_{i, s}) = \sigma(\alpha_i) = \alpha_{\iota(i)} = \ulim_{s\in S}\alpha_{\iota(i), s}
 \end{align}
 for each $1 \le i \le m$.  We define an automorphism $\sigma_s \in \Gal(\fH_s/\fF_s)$ by letting 
\begin{align}
\label{e4-main-thm3-Galois-structures-of-shadows}
\begin{cases}
	\sigma_s(\alpha_{i, s}) &= \alpha_{\iota(i), s}, \; \; \text{for all $1 \le i \le m$}, \\
	\sigma_s(a) &= a, \; \; \text{for all $s \in \fF_s$}.
\end{cases}
\end{align}

Let $\Gamma : G \to \Gal(\fH_s/\fF_s)$ be the map defined by
\begin{align*}
\Gamma(\sigma) = \sigma_s
\end{align*}
for each $\sigma \in G$, where $\sigma_s$ is defined as in (\ref{e4-main-thm3-Galois-structures-of-shadows}). We prove that $\Gamma$ is a group isomorphism. For any elements $\sigma, \delta \in G$, let $\iota_{\sigma}, \iota_{\delta}$ be the unique permutations of $\{1, \ldots, m\}$ that define $\sigma, \delta$, respectively. We see that
\begin{align*}
\sigma\delta(\alpha_i) = \sigma(\alpha_{\iota_{\delta}(i)}) = \alpha_{\iota_{\sigma}(\iota_{\delta}(i))}
\end{align*}
for all $1 \le i \le m$, and thus
\begin{align*}
\iota_{\sigma\delta} = \iota_{\sigma}\iota_{\delta},
\end{align*}
where $\iota_{\sigma\delta}$ is the unique permutation of $\{1, \ldots, m\}$ that defines $\sigma\delta$. Thus in setting $\gamma = \sigma\delta \in G$, we deduce that
\begin{align*}
\gamma_s(\alpha_{i, s}) &= \alpha_{\iota_{\gamma}(i), s} \\
&= \alpha_{\iota_{\sigma\delta}(i), s} \\
&= \alpha_{\iota_{\sigma}(\iota_{\delta}(i)), s}\\
&= \sigma_s(\delta_s(\alpha_{i, s}))
\end{align*} 
for all $1 \le i \le m$. Therefore
\begin{align*}
\gamma_s = \sigma_s\delta_s,
\end{align*}
which implies that 
\begin{align*}
\Gamma(\sigma\delta) = \Gamma(\sigma)\Gamma(\delta).
\end{align*}
Hence $\Gamma$ is a group morphism. 

We contend that $\Gamma$ is injective. Indeed, let $\sigma$ be an element in $G$ such that $\Gamma(\sigma) = 1_{\Gal(\fH_s/\fF_s)}$. Note that the permutation of $\{1, \ldots, m\}$ that defines $1_{\Gal(\fH_s/\fF_s)}$ is the trivial one, i.e., it sends $i$ to itself for every $1 \le i \le m$. Since
\begin{align*}
\Gamma(\sigma) = \sigma_s = 1_{\Gal(\fH_s/\fF_s)},
\end{align*}
we deduce that
\begin{align*}
\iota_{\sigma}(i) = i
\end{align*}
for every $1 \le i \le m$, where $\iota_{\sigma}$ is the unique permutation of $\{1, \ldots, m\}$ that defines $\sigma$. Thus $\iota_{\sigma}$ is the trivial permutation, and therefore $\sigma = 1_G$. Therefore $\Gamma$ is injective. 

Since both $G$ and $\Gal(\fH_s/\fF_s)$ have the same cardinality $m$, $\Gamma$ must be surjective, and thus it is a group isomorphism, which proves part (i).

We now prove part (ii). By Proposition \ref{prop-explicit-descriptions-of-shadows-and-ultra-shadows}, and since $\H = \F(\alpha_i)$ for every $1 \le i \le m$, $\fH$ is a finite separable extension of degree $m$ over $\cU(\F)$ of the form $\fH = \cU(\F)(\alpha_i)$ for every $1 \le i \le m$, and thus
\begin{align}
\label{e5-main-thm3-Galois-structures-of-shadows}
\fH = \cU(\F)(\alpha_1, \ldots, \alpha_m).
\end{align}

Since $P(x)$ is irreducible in $\F[x]$, we know from Lemma \ref{rem-irreducibility-is-the-same-in-F-and-U(F)}, $P(x)$ is irreducible in $\cU(\F)$. let $\fG$ denote the splitting field of $P(x)$ over $\cU(\F)$. Thus $\fG$ contains all $m$ distinct roots $\alpha_1, \ldots, \alpha_m$ of $P(x)$, and it therefore follows from (\ref{e5-main-thm3-Galois-structures-of-shadows}) that
\begin{align*}
\fH = \cU(\F)(\alpha_1, \ldots, \alpha_m) \subset \fG.
\end{align*}
Hence $\fG = \fH$ since $\fG$ is the smallest field extension of $\cU(\F)$ over which $P(x)$ splits into linear factors, and $\fH$ contains all the roots of $P(x)$. Thus $\fH$ is normal over $\cU(\F)$, and therefore $\fH$ is a Galois extension of degree $m$ over $\cU(\F)$.

It remains to show that the Galois group $\Gal(\fH/\cU(\F))$ is isomorphic to $G$.

Since $\H, \fH$ are the Galois extensions of $\F, \cU(\F)$, respectively, generated by the same set of generators $\alpha_1, \ldots, \alpha_m$ that are exactly all the $m$ roots of $P(x)$, we can associate the elements in their corresponding Galois groups, based on the permutations of the generators. Indeed, take an arbitrary element $\delta \in \Gal(\fH/\cU(\F))$. Let $\iota_{\delta}$ denote the unique permutation of $\{1, \ldots, m\}$ such that
\begin{align*}
\delta(\alpha_i) = \alpha_{\iota_{\delta}(i)}
\end{align*}
for all $1 \le i \le m$. Let $\sigma_{\iota_{\delta}}$ be the unique element in the Galois group $G = \Gal(\H/\F)$ such that
\begin{align*}
\sigma_{\iota_{\delta}}(\alpha_i) = \alpha_{\iota_{\delta}(i)}
\end{align*}
for all $1 \le i \le m$, and
\begin{align*}
\sigma_{\iota_{\delta}}(a) = a
\end{align*}  	
for all $a \in \F$.

We contend that the map $\Sigma : \Gal(\fH/\cU(\F)) \to G$ that sends each element $\delta \in \Gal(\fH/\cU(\F))$ to $\sigma_{\iota_{\delta}} \in G$, is a group isomorphism. 
  	
  Let $\delta, \lambda$ be arbitrary elements in $\Gal(\fH/\cU(\F))$. We see that
  \begin{align*}
  \delta\lambda(\alpha_i) &= \delta(\lambda(\alpha_i))  \\
  &= \delta(\alpha_{\iota_{\lambda}(i)}) \\
  &= \alpha_{\iota_{\delta}(\iota_{\lambda}(i))}
  \end{align*}	
  for all $1 \le i \le m$. We know from the definition of $\iota_{\delta\lambda}$ that
  \begin{align*}
\delta\lambda(\alpha_i) = \alpha_{\iota_{\delta\lambda}}
  \end{align*}
  for all $1 \le i \le m$, and thus
   \begin{align*}
    \iota_{\delta\lambda}(i) &= \iota_{\delta}( \iota_{\lambda}(i)
  \end{align*}
  for all $1 \le i \le m$. Therefore 
  \begin{align} 	
  \label{e6-main-thm3-Galois-structures-of-shadows}
\iota_{\delta\lambda} = \iota_{\delta}\iota_{\lambda}.
  \end{align}

We see that
  \begin{align*}
  \sigma_{\iota_{\delta\lambda}}(\alpha_i) &= \alpha_{\iota_{\delta\lambda}(i)} \\
  &= \alpha_{\iota_{\delta}(\iota_{\lambda}(i))} \; \; \text{see (\ref{e6-main-thm3-Galois-structures-of-shadows}}) \\
  &= \sigma_{\iota_{\delta}}\left(\alpha_{\iota_{\lambda}(i)}\right) \\
&= \sigma_{\iota_{\delta}}\left(\sigma_{\iota_{\lambda}}(\alpha_i)\right)  \\
&= \sigma_{\iota_{\delta}}\sigma_{\iota_{\lambda}}(\alpha_i)
  \end{align*}
  for all $1 \le i \le m$. Since $\alpha_1, \ldots, \alpha_m$ generates the Galois extension $\H$ and it is trivial that $\sigma_{\iota_{\delta\lambda}}$ and $\sigma_{\iota_{\delta}}\sigma_{\iota_{\lambda}}$ fix every element of $\F$, we deduce that
  \begin{align*}
  \sigma_{\iota_{\delta\lambda}} = \sigma_{\iota_{\delta}}\sigma_{\iota_{\lambda}}, 
  \end{align*}
  which immediately implies  
  \begin{align*}
  \Sigma(\delta\lambda) = \Sigma(\delta)\Sigma(\lambda).
  \end{align*}
  Thus $\Sigma$ is a group homomorphism.

Let $\delta \in \Gal(\fH/\cU(\F))$ such that $\Sigma(\delta) = 1_G$, where $1_G$ is the identity element in $G = \Gal(\H/\F)$, i.e., $1_G(a) = a$ for all $a \in \H$. Thus
\begin{align*}
\sigma_{\iota_{\delta}} = 1_G,
\end{align*}
which implies that 
\begin{align*}
\alpha_{\iota_{\delta}(i)} = \sigma_{\iota_{\delta}}(\alpha_i) = 1_G(\alpha_i) = \alpha_i
\end{align*}
for all $1 \le i \le m$. Thus $\iota_{\delta}$ is the trivial permutation of $\{1, \ldots, m\}$, i.e., $\iota_{\delta}(i) = i$ for all $1 \le i \le m$, and therefore $\delta$ is the trivial element in the Galois group $\Gal(\fH/\cU(\F))$. Thus $\Sigma$ is injective. Since both $\Gal(\fH/\cU(\F))$ and $G$ have $m$ elements, $\Sigma$ is surjective, and thus $\Sigma$ is an isomorphism.

We now prove part (iii). 

For an arbitrary element $\delta \in \Gal(\fH/\cU(\F))$, we know from part (ii) that
\begin{align*}
\Sigma(\delta) = \sigma_{\iota_{\delta}},
\end{align*}
where $\iota_{\delta}$ is the unique permutation of $\{1, \ldots, m\}$ such that 
\begin{align*}
\delta(\alpha_i) = \alpha_{\iota_{\delta}(i)}
\end{align*}
for all $1 \le i \le m$. Thus
\begin{align}
  \label{e7-main-thm3-Galois-structures-of-shadows}
\delta(\alpha_i) = \alpha_{\iota_{\delta}(i)} = \sigma_{\iota_{\delta}}(\alpha_i)
\end{align}
for all $1 \le i \le m$. 

On the other hand, since $\F \subset \cU(\F)$, $\delta(a) = a = \sigma_{\iota_{\delta}}(a)$ for all $a \in \F$. Since $\H = \F(\alpha_1, \ldots, \alpha_m)$, we deduce from (\ref{e7-main-thm3-Galois-structures-of-shadows}) that $\delta(\beta) = \sigma_{\iota_{\delta}}(\beta)$ for any element $\beta \in \H$, and thus since $\H = \fH_{\alg} \subset \fH$, it follows that
\begin{align*}
\sigma_{\iota_{\delta}} = \delta_{|\H}.
\end{align*}
Therefore
\begin{align*}
G = \Sigma(\Gal(\fH/\cU(\F)) = \{\sigma_{\iota_{\delta}} \; |\; \delta \in \Gal(\fH/\cU(\F))\} = \{\delta_{|\H} \; |\; \delta \in \Gal(\fH/\cU(\F))\},
\end{align*}
which proves the first assertion of part (iii). The last assertion of part (iii) follows immediately from Proposition \ref{prop-ultra-Galois-extensions-are-Galois}.

  \end{proof}
  
  \begin{remark}
  \label{rem-s-th-components-of-a-Galois-element}
  
  The Galois elements $\sigma_s \in \Gal(\fH_s/\fF_s)$ defined by (\ref{e4-main-thm3-Galois-structures-of-shadows}) are called the \textbf{$s$-th components of $\sigma \in \Gal(\H/\F)$}.

  \end{remark}
\begin{proposition}
\label{prop-shadows-of-the-algebraic-part-are-inside}

Let $\fL_s$ be a (possibly infinite) separable algebraic extension of $\fF_s$ for $\cD$-almost all $s \in S$, and let $\fL = \prod_{s\in S}\fL_s/\cD$. Let $\H$ be a subextension of $\fL_{\alg}$ that is a finite separable extension of $\F$. Let $\fH_s$ denote the $s$-th shadow of $\H$ for $\cD$-almost all $s \in S$, and let $\fH  = \prod_{s\in S}\fH_s/\cD$ be the ultra-shadow of $\H$. Then

\begin{itemize}

\item []

\item [(i)] $\fH_s$ is a subfield of $\fL_s$ for $\cD$-almost all $s \in S$.

\item [(ii)] $\fH$ is a subfield of $\fL$.

\end{itemize}

\end{proposition}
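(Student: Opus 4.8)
The plan is to reduce both parts to the explicit description of shadows in Proposition~\ref{prop-explicit-descriptions-of-shadows-and-ultra-shadows}, using a primitive element of $\H$ that already lives inside $\fL$. First I would write $\H = \F(\alpha)$ for a primitive element $\alpha$ of degree $m = [\H:\F]$ over $\F$ with (separable) minimal polynomial $P(x) \in \F[x]$; this is legitimate by the primitive element theorem since $\H$ is a finite separable extension of $\F$. Since $\H \subseteq \fL_{\alg} = \fL \cap \F^{\alg} \subseteq \fL = \prod_{s\in S}\fL_s/\cD$, we may choose representatives $\alpha = \ulim_{s\in S}\alpha_s$ with $\alpha_s \in \fL_s$ for $\cD$-almost all $s$. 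Because $\fL_s$ is a separable algebraic extension of $\fF_s$ sitting inside the fixed algebraic closure $\fF_s^{\alg}$, we automatically have $\fL_s \subseteq \fF_s^{\sep}$, so in particular $\alpha_s \in \fF_s^{\sep}$ for $\cD$-almost all $s$; hence $(\alpha_s)_{s\in S}$ is an admissible family of representatives of $\alpha$ for the shadow construction carried out in the paragraph preceding Proposition~\ref{prop-explicit-descriptions-of-shadows-and-ultra-shadows}.

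For part (i), by the uniqueness of the $s$-th shadow (established immediately after Definition~\ref{def-shadows-of-algebraic-extensions}) together with the fact that the shadow construction applied to the representation $\alpha = \ulim_{s\in S}\alpha_s$ with $\alpha_s \in \fF_s^{\sep}$ produces exactly the shadow, Proposition~\ref{prop-explicit-descriptions-of-shadows-and-ultra-shadows}(i) yields $\fH_s = \fF_s(\alpha_s)$ for $\cD$-almost all $s$. If one prefers a self-contained argument: Lemma~\ref{lem-elementary-lemma1} together with a \L{}o\'s' theorem argument as in the proof of Theorem~\ref{thm-main-thm1-Algebraic-part-of-L-is-a-finite-separable-extension}(ii) shows that the $s$-th component $P_s$ of $P$ is monic, irreducible and separable of degree $m$ over $\fF_s$ for $\cD$-almost all $s$, with $P_s(\alpha_s) = 0$ there, so $\fF_s(\alpha_s)$ is separable of degree $m$ over $\fF_s$; since $\alpha \in \prod_{s\in S}\fF_s(\alpha_s)/\cD$ and $\alpha \in \F^{\alg}$, Theorem~\ref{thm-main-thm1-Algebraic-part-of-L-is-a-finite-separable-extension} and Remark~\ref{rem-divisibility-of-global-degree-and-ultra-degrees} force the algebraic part of $\prod_{s\in S}\fF_s(\alpha_s)/\cD$ over $\F$ to have degree dividing $m$ while containing $\H$, hence to equal $\H$, and uniqueness of the shadow gives $\fH_s = \fF_s(\alpha_s)$. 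In either case, since $\fF_s \subseteq \fL_s$ and $\alpha_s \in \fL_s$, we conclude $\fH_s = \fF_s(\alpha_s) \subseteq \fL_s$ for $\cD$-almost all $s$, proving (i).

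For part (ii), I would pass to ultraproducts: from $\fH_s \subseteq \fL_s$ for $\cD$-almost all $s$, every $\ulim_{s\in S}x_s \in \fH$ with $x_s \in \fH_s$ satisfies $x_s \in \fL_s$ for $\cD$-almost all $s$, so $\fH = \prod_{s\in S}\fH_s/\cD \subseteq \prod_{s\in S}\fL_s/\cD = \fL$. Alternatively, and independently of (i), Proposition~\ref{prop-explicit-descriptions-of-shadows-and-ultra-shadows}(ii) gives $\fH = \cU(\F)(\alpha)$, and since $\cU(\F) \subseteq \fL$ (as $\fL$ is an ultra-field extension of $\cU(\F)$) and $\alpha \in \H \subseteq \fL_{\alg} \subseteq \fL$, we again get $\fH \subseteq \fL$. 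The only point demanding care — and it is minor — is the legitimacy of using representatives $\alpha_s \in \fL_s$ in the shadow construction, i.e.\ the inclusion $\fL_s \subseteq \fF_s^{\sep}$ for $\cD$-almost all $s$ and the fact that the ultra-limit representation of $\alpha$, viewed in the single ultraproduct $\prod_{s\in S}\fF_s^{\alg}/\cD$, is independent modulo $\cD$ of whether its components are taken in $\fL_s$ or in $\fF_s^{\sep}$. Both are immediate, and once they are noted the proposition reduces mechanically to Proposition~\ref{prop-explicit-descriptions-of-shadows-and-ultra-shadows}.
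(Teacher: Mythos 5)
Your proof is correct and follows essentially the same route as the paper's: pick a primitive element $\alpha$ of $\H$, represent it as $\ulim_{s\in S}\alpha_s$ with $\alpha_s \in \fL_s \subseteq \fF_s^{\sep}$, invoke the explicit description $\fH_s = \fF_s(\alpha_s)$, and observe $\fF_s(\alpha_s) \subseteq \fL_s$; part (ii) then follows by passing to ultraproducts. One small improvement on your side: you cite Proposition~\ref{prop-explicit-descriptions-of-shadows-and-ultra-shadows} for $\fH_s = \fF_s(\alpha_s)$, which applies to any finite separable $\H$, whereas the paper cites Theorem~\ref{thm-main-thm3-Galois-group-structures-of-shadows}(i), which is stated for Galois $\H$ — since the proposition's $\H$ is only assumed separable, your reference is the more accurate one.
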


\begin{proof}

Since $\fH = \prod_{s\in S}\fH_s/\cD$ and $\fL = \prod_{s\in S}\fL_s/\cD$, part (ii) follows trivially from part (i).

For part (i), the primitive element theorem implies that there exists an element $\alpha \in \H$ such that $\H = \F(\alpha)$. Since $\alpha \in \H \subset \fL_{\alg} \subset \fL$, one can write $\alpha = \ulim_{s\in S}\alpha_s$ for some elements $\alpha_s \in \fL_s$. By part (i) of Theorem \ref{thm-main-thm3-Galois-group-structures-of-shadows}, and since $\fL_s \subset \fF_s^{\sep}$, we know that $\fH_s = \fF_s(\alpha_s)$ for $\cD$-almost all $s \in S$. Since $\alpha_s  \in \fL_s$ for $\cD$-almost all $s \in S$, we deduce that $\fH_s \subset \fL_s$, which proves part (i).

\end{proof}

  The following result follows immediately from the above proposition and Theorem \ref{thm-main-thm3-Galois-group-structures-of-shadows}.
\begin{corollary}
\label{cor-Galois-group-structure-of-shadows-of-algebraic-part-in-a-given-ultra-extension}

Let $\fL_s$ be a (possibly infinite) Galois extension of $\fF_s$ for $\cD$-almost all $s \in S$, and let $\fL = \prod_{s\in S}\fL_s/\cD$. Assume that $\fL_{\alg}$ is a finite Galois extension of $\F$ \footnote{By Theorem \ref{thm-main-thm2-Galois-property-of-algebraic-parts}, $\fL_{\alg}$ is a Galois extension of $\F$, but it may be an infinite extension of $\F$.}. Let $\fS_s$ denote the $s$-th shadow of $\fL_{\alg}$ for $\cD$-almost all $s \in S$, and let $\fS = \prod_{s\in S}\fS_s/\cD$ denote the ultra-shadow of $\fL_{\alg}$. Then
\begin{itemize}

\item [(i)] $\fS_s/\fF_s$ is a finite Galois subextension of $\fL_s/\fF_s$ for $\cD$-almost all $s \in S$ whose Galois group $\Gal(\fS_s/\fF_s)$ is isomorphic to $\Gal(\fL_{\alg}/\F)$.

\item [(ii)] $\fS$ is a subfield of $\fL$ such that $\fS_{\alg} = \fL_{\alg}$, and $\fS$ is a finite Galois extension of $\cU(\F)$ whose Galois group $\Gal(\fS/\cU(\F))$ is isomorphic to $\Gal(\fL_{\alg}/\F)$.

\end{itemize}

\end{corollary}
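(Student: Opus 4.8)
The plan is to derive the corollary directly from Theorem~\ref{thm-main-thm3-Galois-group-structures-of-shadows} and Proposition~\ref{prop-shadows-of-the-algebraic-part-are-inside}, applied with $\H := \fL_{\alg}$. The first thing I would check is that the hypotheses of those two results are genuinely met in the present setting. By Theorem~\ref{thm-main-thm2-Galois-property-of-algebraic-parts} the field $\fL_{\alg}$ is a Galois extension of $\F$, and it is finite by assumption; in particular $\fL_{\alg}$ is a finite separable extension of $\F$, so its $s$-th shadow $\fS_s$ and its ultra-shadow $\fS = \prod_{s\in S}\fS_s/\cD$ are defined in the sense of Definition~\ref{def-shadows-of-algebraic-extensions}, and by the very definition of the ultra-shadow one has $\fS_{\alg} = \fL_{\alg}$. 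Note that the shadow construction is being applied to $\fL_{\alg}$, not to the individual (possibly infinite-degree) fields $\fL_s$; this is the only point where a reader might slip.

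For part (i), I would invoke part (i) of Theorem~\ref{thm-main-thm3-Galois-group-structures-of-shadows} with $G = \Gal(\fL_{\alg}/\F)$: this gives that for $\cD$-almost all $s \in S$ the shadow $\fS_s$ is a finite Galois extension of $\fF_s$ of degree $[\fL_{\alg}:\F]$ with $\Gal(\fS_s/\fF_s) \cong G$. To see that $\fS_s$ is in fact a subextension of $\fL_s/\fF_s$, I would apply Proposition~\ref{prop-shadows-of-the-algebraic-part-are-inside}(i): the $\fL_s$ are separable algebraic over $\fF_s$ for $\cD$-almost all $s$ (being Galois), and $\fL_{\alg}$ is trivially a subextension of $\fL_{\alg}$ which is finite separable over $\F$, so the proposition yields $\fS_s \subset \fL_s$ for $\cD$-almost all $s$. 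Combining, $\fS_s/\fF_s$ is a finite Galois subextension of $\fL_s/\fF_s$ with Galois group isomorphic to $\Gal(\fL_{\alg}/\F)$, which is part (i).

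For part (ii), Proposition~\ref{prop-shadows-of-the-algebraic-part-are-inside}(ii) gives $\fS \subset \fL$; the equality $\fS_{\alg} = \fL_{\alg}$ is the defining property of the ultra-shadow recalled above; and part (ii) of Theorem~\ref{thm-main-thm3-Galois-group-structures-of-shadows} shows that $\fS$ is a finite Galois extension of $\cU(\F)$ of degree $[\fL_{\alg}:\F]$ with $\Gal(\fS/\cU(\F)) \cong \Gal(\fL_{\alg}/\F)$. That settles both assertions. I do not expect any genuine obstacle here: the corollary is essentially a repackaging of the two cited results, and the only care needed is the verification, already noted, that $\fL_{\alg}$ — although it arises as the algebraic part of a family $(\fL_s)$ that may be of unbounded degree — is itself a finite separable extension of $\F$, so that the shadow machinery and Proposition~\ref{prop-shadows-of-the-algebraic-part-are-inside} apply to it verbatim.
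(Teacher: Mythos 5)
Your proposal is correct and matches the paper's approach exactly: the paper states the corollary ``follows immediately from the above proposition and Theorem~\ref{thm-main-thm3-Galois-group-structures-of-shadows},'' which is precisely the combination you invoke, applied with $\H = \fL_{\alg}$. Your careful check that the shadow machinery is being applied to the finite Galois extension $\fL_{\alg}$ (not to the possibly infinite $\fL_s$) is exactly the point worth making explicit.
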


\begin{remark}
\label{rem-relation-between-s-th-component-of-lambda-and-its-image-in-Gal(fS/cU(F))}

Following the proof of Theorem \ref{thm-main-thm3-Galois-group-structures-of-shadows}, we see that $\Sigma : \Gal(\fS/\cU(\F)) \to \Gal(\fL_{\alg}/\F)$ that sends each $\delta$ to $\sigma_{\iota_{\delta}}$ is a group isomorphism, where $\iota_{\delta}$ is the unique permutation of $\{1, \ldots, m\}$ that determines $\delta$. Furthermore, for every element $\lambda \in \Gal(\fL_{\alg}/\F)$, it is easy to verify that
\begin{align*}
\Sigma^{-1}(\lambda) = \ulim_{s\in S}\lambda_s \in \Gal(\fS/\cU(\F)),
\end{align*}
where $\lambda_s \in \Gal(\fS_s/\fF_s)$ is the $s$-th component of $\lambda$ as in Remark \ref{rem-s-th-components-of-a-Galois-element}.

\end{remark}

\begin{corollary}
\label{cor-main-corollary-1-abelian-Galois-group-of-L-alg-and-shadows}

Assume that $\fL_s$ is a (possibly infinite) abelian extension over $\fF_s$ for $\cD$-almost all $s \in S$. Then $\fL_{\alg}$ is an abelian extension over $\F$.

\end{corollary}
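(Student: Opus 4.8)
The plan is to deduce this from Theorem~\ref{thm-main-thm2-Galois-property-of-algebraic-parts} together with the shadow machinery of Subsection~\ref{subsec-shadows-and-ultra-shadows}. Since an abelian extension is in particular Galois, Theorem~\ref{thm-main-thm2-Galois-property-of-algebraic-parts} already gives that $\fL_{\alg} = \fL_{\sep}$ and that $\fL_{\alg}$ is a Galois extension of $\F$; the only thing left is to verify that the group $\Gal(\fL_{\alg}/\F)$ is abelian. Because $\fL_{\alg}$ may be of infinite degree over $\F$, I would first reduce this to a statement about finite Galois subextensions: a Galois extension has abelian Galois group precisely when every finite Galois subextension does. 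Concretely, given $\sigma,\tau\in\Gal(\fL_{\alg}/\F)$ and $x\in\fL_{\alg}$, the element $x$ lies in the splitting field $\H\subseteq\fL_{\alg}$ of its minimal polynomial over $\F$, which is separable since $\fL_{\alg}/\F$ is; this $\H$ is a finite Galois subextension of $\fL_{\alg}/\F$ stable under $\sigma$ and $\tau$, so if $\Gal(\H/\F)$ is abelian then $\sigma\tau(x)=\tau\sigma(x)$, and as $x$ is arbitrary this forces $\sigma\tau=\tau\sigma$.

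Next, for a fixed finite Galois subextension $\H/\F$ of $\fL_{\alg}/\F$, I would pass to the $s$-th shadow $\fH_s$ of $\H$ and the ultra-shadow $\fH = \prod_{s\in S}\fH_s/\cD$. Part~(i) of Theorem~\ref{thm-main-thm3-Galois-group-structures-of-shadows} provides that, for $\cD$-almost all $s\in S$, the extension $\fH_s/\fF_s$ is a finite Galois extension with $\Gal(\fH_s/\fF_s)\cong\Gal(\H/\F)$, and part~(i) of Proposition~\ref{prop-shadows-of-the-algebraic-part-are-inside} gives that $\fH_s$ is a subfield of $\fL_s$ for $\cD$-almost all $s$. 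Since $\fL_s/\fF_s$ is abelian and $\fH_s/\fF_s$ is a Galois subextension of it, the restriction map $\Gal(\fL_s/\fF_s)\twoheadrightarrow\Gal(\fH_s/\fF_s)$ is a surjective group homomorphism, so $\Gal(\fH_s/\fF_s)$ is a quotient of an abelian group, hence abelian, for $\cD$-almost all $s$. Therefore $\Gal(\H/\F)\cong\Gal(\fH_s/\fF_s)$ is abelian, completing the reduction and the proof.

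I do not expect a genuine obstacle here, since the substantive work lies in Theorems~\ref{thm-main-thm2-Galois-property-of-algebraic-parts} and~\ref{thm-main-thm3-Galois-group-structures-of-shadows} and Proposition~\ref{prop-shadows-of-the-algebraic-part-are-inside}, all of which are already available. The one point demanding a little care is the passage from the possibly infinite extension $\fL_{\alg}/\F$ to its finite Galois subextensions, which is what makes the shadow construction (formulated for finite separable extensions) applicable; and, relatedly, handling the case where $\fL_s/\fF_s$ is itself an infinite abelian extension, where one must observe that its finite Galois subextension $\fH_s/\fF_s$ still has abelian Galois group because the restriction $\Gal(\fL_s/\fF_s)\to\Gal(\fH_s/\fF_s)$ is surjective.
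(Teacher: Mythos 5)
Your proof is correct and follows essentially the same route as the paper's: both reduce to finite Galois subextensions $\H$ of $\fL_{\alg}/\F$, invoke Proposition~\ref{prop-shadows-of-the-algebraic-part-are-inside} and Theorem~\ref{thm-main-thm3-Galois-group-structures-of-shadows} to identify $\Gal(\H/\F)$ with the abelian group $\Gal(\fH_s/\fF_s)$ for $\cD$-almost all $s$, and conclude. The only cosmetic difference is that the paper packages the reduction step via $\Gal(\fL_{\alg}/\F) = \varprojlim \Gal(\H_i/\F)$, whereas you verify commutativity of $\sigma\tau$ and $\tau\sigma$ pointwise, which is the same observation.
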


\begin{proof}

By Theorem \ref{thm-main-thm2-Galois-property-of-algebraic-parts}, $\fL_{\alg}$ is a Galois extension over $\F$. Note that $\fL_{\alg}$ may be an infinite extension of $\F$.

We can write 
\begin{align*}
\fL_{\alg} = \bigcup_{i \in I}\H_i,
\end{align*}
where $\H_i/\F$ ranges over every finite Galois subextension of $\fL_{\alg}/\F$. We know that
\begin{align*}
\Gal(\fL_{\alg}/\F) = \varprojlim\Gal(\H_i/\F),
\end{align*}
where $\Gal(\fL_{\alg}/\F)$ is the Galois group of $\fL_{\alg}$ over $\F$, and $\Gal(\H_i/\F)$ denotes the Galois group of $\H_i$ over $\F$. 

Take an arbitrary finite Galois subextension $\H_i/\F$ of $\fL_{\alg}/\F$. Let $\fH_{i, s}$ denote the $s$-th shadow of $\F$ for $\cD$-almost all $s \in S$, and $\fH_i = \prod_{s\in S}\fH_{i, s}/\cD$ denotes the ultra-shadow of $\H_i$. By Proposition \ref{prop-shadows-of-the-algebraic-part-are-inside}, $\fH_{i, s} \subset \fL_s$ for $\cD$-almost all $s \in S$. By Theorem \ref{thm-main-thm3-Galois-group-structures-of-shadows}, $\Gal(\H_i/\F)$ is isomorphic to the Galois group $\Gal(\fH_{i, s}/\fF_s)$ for $\cD$-almost all $s \in S$. 

Since $\fH_{i, s}/\fF_s$ is a Galois subextension of $\fL_s/\fF_s$ and $\fL_s$ is abelian over $\fF_s$, $\fH_{i, s}$ is an abelian extension of $\fF_s$. Thus $\Gal(\fH_{i, s}/\fF_s)$ is an abelian group, and therefore $\Gal(\H_i/\F)$ is also abelian. Thus the profinite Galois group $\Gal(\fL_{\alg}/\F) = \varprojlim\Gal(\H_i/\F)$ is also abelian, which proves the corollary.

\end{proof}

Using similar arguments as in the proof of the above corollary, we immediately obtain the following.

\begin{corollary}
\label{cor-main-corollary-1-procyclic-Galois-group-of-L-alg-and-shadows}

Assume that $\fL_s$ is a cyclic extension of degree $n_s \in \bZ_{>0}$ over $\fF_s$ for $\cD$-almost all $s \in S$. Then 
\begin{itemize}

\item [(i)] $\fL_{\alg}$ is a Galois extension of $\F$, and $\Gal(\fL_{\alg}/\F)$ is a procyclic group.

\item [(ii)] if $\fL_{\alg}$ is a finite extension of $\F$, then $\fL_{\alg}$ is a cyclic extension of $\F$.

\end{itemize}

\end{corollary}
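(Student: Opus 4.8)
The plan is to imitate the proof of Corollary \ref{cor-main-corollary-1-abelian-Galois-group-of-L-alg-and-shadows} almost verbatim, with ``abelian'' replaced by ``cyclic'' at the level of the finite layers, and then to pass to the inverse limit. First, Theorem \ref{thm-main-thm2-Galois-property-of-algebraic-parts} already tells us that $\fL_{\alg}$ is a Galois extension of $\F$ (possibly infinite), which gives the first assertion of part (i). Write
\begin{align*}
\fL_{\alg} = \bigcup_{i \in I}\H_i,
\end{align*}
where $\H_i/\F$ runs over the finite Galois subextensions of $\fL_{\alg}/\F$, so that $\Gal(\fL_{\alg}/\F) = \varprojlim_{i\in I}\Gal(\H_i/\F)$. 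Since an inverse limit of cyclic groups is procyclic by definition, it suffices to prove that each finite Galois group $\Gal(\H_i/\F)$ is cyclic; that would complete part (i).

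To see this, fix such an $\H_i$, let $\fH_{i,s}$ be its $s$-th shadow for $\cD$-almost all $s \in S$, and let $\fH_i = \prod_{s\in S}\fH_{i,s}/\cD$ be its ultra-shadow. By Proposition \ref{prop-shadows-of-the-algebraic-part-are-inside}, $\fH_{i,s}$ is a subfield of $\fL_s$ for $\cD$-almost all $s \in S$. As $\fL_s/\fF_s$ is cyclic, every intermediate field $\fH_{i,s}$ is automatically Galois over $\fF_s$, and $\Gal(\fH_{i,s}/\fF_s)$ is a quotient of the cyclic group $\Gal(\fL_s/\fF_s)$, hence cyclic. By Theorem \ref{thm-main-thm3-Galois-group-structures-of-shadows}, $\Gal(\H_i/\F) \cong \Gal(\fH_{i,s}/\fF_s)$ for $\cD$-almost all $s \in S$, so $\Gal(\H_i/\F)$ is cyclic, and part (i) follows.

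For part (ii), if $\fL_{\alg}$ is a finite extension of $\F$, then it is itself a finite Galois extension of $\F$ by Theorem \ref{thm-main-thm2-Galois-property-of-algebraic-parts}; applying the previous paragraph to $\H_i = \fL_{\alg}$ (equivalently, using the shadow $\fS_s$ of $\fL_{\alg}$ as in Corollary \ref{cor-Galois-group-structure-of-shadows-of-algebraic-part-in-a-given-ultra-extension}) shows that $\Gal(\fL_{\alg}/\F) \cong \Gal(\fS_s/\fF_s)$ is cyclic, i.e., $\fL_{\alg}$ is a cyclic extension of $\F$; alternatively one simply invokes that a finite procyclic group is cyclic. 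I do not anticipate any real obstacle here: the whole argument is bookkeeping built on results already established, and the only point that genuinely needs care is the observation that the finite shadow $\fH_{i,s}$ sits inside $\fL_s$ as a \emph{Galois} subextension of $\fL_s/\fF_s$ whose group is a quotient of $\Gal(\fL_s/\fF_s)$, so that cyclicity is transported upward --- and this is precisely what Proposition \ref{prop-shadows-of-the-algebraic-part-are-inside} together with the Galois correspondence provides.
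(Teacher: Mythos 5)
Your argument is correct and is essentially the proof the paper intends; the paper itself merely says the corollary ``follows immediately using similar arguments'' as in Corollary \ref{cor-main-corollary-1-abelian-Galois-group-of-L-alg-and-shadows}, and your write-up spells out exactly those arguments with ``abelian'' replaced by ``cyclic.'' The one step that deserves the care you gave it is noting that each shadow $\fH_{i,s}$ lands inside $\fL_s$ (Proposition \ref{prop-shadows-of-the-algebraic-part-are-inside}) and is Galois over $\fF_s$ with $\Gal(\fH_{i,s}/\fF_s)$ a quotient of the cyclic group $\Gal(\fL_s/\fF_s)$, so that cyclicity transports to $\Gal(\H_i/\F)$ via Theorem \ref{thm-main-thm3-Galois-group-structures-of-shadows}; passing to the inverse limit then yields a procyclic group, and part (ii) follows either by applying this to $\H_i = \fL_{\alg}$ itself or simply by noting that a finite procyclic group is cyclic.
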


We obtain the following result that describes a relation between the maximal abelian extensions of $\fF_s = \bF_s(t)$ and the maximal abelian extension of $\F = \fK(t)$.

\begin{corollary}
\label{cor-relation-between-maximal-abelian-extensions-of-F_s-and-F}

Let $\cA_s$ be the maximal abelian extension of $\fF_s = \bF_s(t)$, and let $\cA = \prod_{s\in S}\cA_s/\cD$ be the ultraproduct of the $\cA_s$ with respect to $\cD$. Then the algebraic part of $\cA$ over $\F$, i.e., $\cA_{\alg} = \cA \cap \F^{\alg}$ is the maximal abelian extension of $\F$.

\end{corollary}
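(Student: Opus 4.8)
The plan is to prove two inclusions: that $\cA_{\alg}$ is abelian over $\F$, and that every finite abelian extension of $\F$ is contained in $\cA_{\alg}$. The first is nearly immediate: since $\cA_s$ is an abelian (in general infinite) extension of $\fF_s$ for $\cD$-almost all $s \in S$, Corollary \ref{cor-main-corollary-1-abelian-Galois-group-of-L-alg-and-shadows} applied to $\fL_s = \cA_s$ gives that $\cA_{\alg} = \cA \cap \F^{\alg}$ is an abelian extension of $\F$. Hence $\cA_{\alg}$ is contained in the maximal abelian extension of $\F$.

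For the reverse inclusion, I would take an arbitrary finite abelian extension $\H$ of $\F$; since a maximal abelian extension is the union (compositum) of all such $\H$, it suffices to show each such $\H$ lies in $\cA_{\alg}$. By the primitive element theorem $\H$ is separable (characteristic $0$ or more generally finite abelian extensions of $\F$ are separable in the cases considered), so the theory of shadows from Subsection \ref{subsec-shadows-and-ultra-shadows} applies: let $\fH_s$ be the $s$-th shadow of $\H$ for $\cD$-almost all $s \in S$, and let $\fH = \prod_{s\in S}\fH_s/\cD$ be the ultra-shadow. By Theorem \ref{thm-main-thm3-Galois-group-structures-of-shadows}(i), $\fH_s$ is a finite Galois extension of $\fF_s$ with $\Gal(\fH_s/\fF_s) \cong \Gal(\H/\F)$ for $\cD$-almost all $s \in S$; since $\Gal(\H/\F)$ is abelian, $\fH_s$ is a finite abelian extension of $\fF_s$ for $\cD$-almost all $s \in S$. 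Therefore $\fH_s \subseteq \cA_s$ for $\cD$-almost all $s \in S$, because $\cA_s$ is the maximal abelian extension of $\fF_s$. Taking ultraproducts, $\fH = \prod_{s\in S}\fH_s/\cD \subseteq \prod_{s\in S}\cA_s/\cD = \cA$.

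Now I would intersect with $\F^{\alg}$: since the $s$-th shadow satisfies $\H = \fH_{\alg} = \fH \cap \F^{\alg}$ (by the construction recalled in Proposition \ref{prop-explicit-descriptions-of-shadows-and-ultra-shadows} and used in Definition \ref{def-shadows-of-algebraic-extensions}), and since $\fH \subseteq \cA$, we get
\begin{align*}
\H = \fH \cap \F^{\alg} \subseteq \cA \cap \F^{\alg} = \cA_{\alg}.
\end{align*}
As $\H$ ranges over all finite abelian extensions of $\F$, their compositum — the maximal abelian extension of $\F$ — is contained in $\cA_{\alg}$. Combined with the first inclusion, $\cA_{\alg}$ is exactly the maximal abelian extension of $\F$. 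The only point requiring care is the passage $\fH_s \subseteq \cA_s \implies \fH \subseteq \cA$ at the ultraproduct level, which is routine since each $\fH_s$ is literally a subfield of $\cA_s$ for $\cD$-almost all $s$, so componentwise inclusion of representatives gives inclusion of the ultraproducts; and the observation that abelianness of $\Gal(\H/\F)$ transfers through the isomorphism of Theorem \ref{thm-main-thm3-Galois-group-structures-of-shadows} to make each shadow abelian — which is the conceptual heart of the argument, though technically light given the machinery already developed.
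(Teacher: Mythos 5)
Your proof is correct and follows essentially the same route as the paper: both establish that $\cA_{\alg}$ is abelian via Corollary \ref{cor-main-corollary-1-abelian-Galois-group-of-L-alg-and-shadows}, and then show maximality by passing an arbitrary finite abelian extension $\H/\F$ to its shadows $\fH_s \subseteq \cA_s$, taking ultraproducts, and intersecting with $\F^{\alg}$ to recover $\H \subseteq \cA_{\alg}$. The only cosmetic difference is your brief aside on separability, which the paper omits since a Galois (hence abelian) extension is separable by definition.
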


\begin{proof}

We know from Corollary \ref{cor-main-corollary-1-abelian-Galois-group-of-L-alg-and-shadows} that the algebraic part $\cA_{\alg}$ of $\cA= \prod_{s\in S}\cA_s/\cD$ over $\F = \fK(t)$ is an abelian extension of $\F$.

We prove that $\cA_{\alg}$ is the maximal abelian extension of $\F$. Indeed, take an arbitrary finite abelian extension, say $\H$, of $\F$. Let $\fH_s$ be the $s$-th shadow of $\H$ for $\cD$-almost all $s \in S$, and let $\fH = \prod_{s\in S}\fH_s/\cD$ be the ultra-shadow of $\H$. 

By Theorem \ref{thm-main-thm3-Galois-group-structures-of-shadows}, the Galois group $\Gal(\fH_s/\fF_s)$ is isomorphic to $\Gal(\H/\F)$ for $\cD$-almost all $s \in S$. Since $\Gal(\H/\F)$ is a finite abelian group, $\Gal(\fH_s/\fF_s)$ is also a finite abelian group, and thus $\fH_s$ is a finite abelian extension of $\fF_s$ for $\cD$-almost all $s \in S$. Since $\cA_s$ is the maximal abelian extension of $\fF_s$ for $\cD$-almost all $s \in S$, $\fH_s$ is contained in $\cA_s$ for $\cD$ for $\cD$-almost all $s \in S$. Thus the ultra-shadow $\fH = \prod_{s\in S}\fH_s/\cD$ is contained in $\cA = \prod_{s\in S}\cA_s/\cD$. Since the algebraic part of the ultra-shadow $\fH$ over $\F$ is $\H$, i.e., $\fH_{\alg} = \fH \cap \F^{\alg} = \H$ (see Definition \ref{def-shadows-of-algebraic-extensions}), we deduce that
\begin{align*}
\H = \fH_{\alg} \subset \cA_{\alg},
\end{align*}
which proves that $\cA_{\alg}$ contains $\H$ as a subfield. Since $\H$ is an arbitrary finite abelian extension of $\F$, $\cA_{\alg}$ is the maximal abelian extension of $\F$.

\end{proof}

\subsection{Examples of algebraic parts}
\label{subsec-examples-of-algebraic-parts}

In this subsection, we list several examples of algebraic parts.

\subsubsection{An example in which $\fL_{\alg}$ is a finite separable extension of degree $n$ over $\F$ for each integer $n > 0$.}

In this subsection, we provide an example in which $\fL_{\alg}$ is a finite separable extension of degree $n$ over $\F$ for each integer $n > 0$. We begin by recalling the following. 

\begin{lemma}
	\label{lem-halter-koch}
	(see Halter--Koch \cite[Theorem 1.7.9]{Halter-Koch} or Lang \cite{lang-algebra})
	
	Let $F$ be a field, and let $a$ be an element in $F$. Let $n$ be a positive integer. Then the polynomial $x^n - a$ is irreducible over $F$ if and only if $a \not\in F^p$ for all primes $p$ dividing $n$ and $a \not\in -4F^4$ whenever $4$ divides $n$.
	
\end{lemma}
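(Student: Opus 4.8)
The plan is to prove the two implications separately, the ``only if'' direction (necessity of the stated conditions) being elementary and the ``if'' direction being the classical theorem of Capelli, which is where essentially all the work lies.

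For the \emph{only if} direction I would argue by contrapositive, exhibiting an explicit proper factorization of $x^n - a$ whenever one of the conditions fails. If $a = b^p$ for a prime $p \mid n$, write $n = pm$; then from $y^p - b^p = (y - b)(y^{p-1} + y^{p-2}b + \cdots + b^{p-1})$ with $y = x^m$ we get $x^n - a = (x^m - b)\bigl(x^{(p-1)m} + \cdots + b^{p-1}\bigr)$, a factorization into polynomials of degrees $m$ and $(p-1)m$, both strictly between $0$ and $n$ since $p \ge 2$. If $4 \mid n$ and $a = -4b^4$, write $n = 4m$ and use the Sophie Germain identity $X^4 + 4Y^4 = (X^2 - 2XY + 2Y^2)(X^2 + 2XY + 2Y^2)$ with $X = x^m$, $Y = b$ to obtain $x^n - a = x^{4m} + 4b^4 = (x^{2m} - 2bx^m + 2b^2)(x^{2m} + 2bx^m + 2b^2)$, a product of two factors of degree $2m < n$. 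In either case $x^n - a$ is reducible over $F$.

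For the \emph{if} direction, assume $a \notin F^p$ for every prime $p \mid n$ and $a \notin -4F^4$ when $4 \mid n$; I want to show that a root $\alpha$ of $x^n - a$ in a fixed algebraic closure satisfies $[F(\alpha):F] = n$, which forces the minimal polynomial of $\alpha$ to be $x^n - a$. The first reduction is to the prime-power case: writing $n = \prod_i p_i^{r_i}$, note that $\alpha^{n/p_i^{r_i}}$ is a root of $x^{p_i^{r_i}} - a$, so if each such polynomial is irreducible over $F$ then $p_i^{r_i} \mid [F(\alpha):F]$ for all $i$, whence $n \mid [F(\alpha):F] \le n$ and equality holds. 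Thus it suffices to treat $n = p^r$, which I would do by induction on $r$. The base case $r = 1$ is the classical lemma that $x^p - a$ is irreducible over $F$ if and only if $a \notin F^p$: in characteristic $\neq p$ one compares the constant term of a hypothetical proper factor with a product of roots $\zeta^{i}\alpha$ and uses a B\'ezout argument on the coprime pair $(\deg,\,p)$ to deduce $a \in F^p$; in characteristic $p$ one uses that $x^p - a = (x - \gamma)^p$ and that the coefficient $-e\gamma$ of a proper factor $(x-\gamma)^e$ lies in $F$ with $e \not\equiv 0 \bmod p$, forcing $\gamma \in F$. For the inductive step one adjoins $\beta := \alpha^p$, a root of $x^{p^{r-1}} - a$, which is irreducible by the inductive hypothesis, so $[F(\beta):F] = p^{r-1}$; then $\alpha$ is a root of $x^p - \beta$ over $L := F(\beta)$, and one must verify the hypotheses of the base lemma for $\beta$ over $L$, i.e.\ $\beta \notin L^p$ and, when $p = 2$ and $r \ge 2$, $\beta \notin -4L^4$.

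The main obstacle is precisely this last verification, and in particular the $p = 2$ case, which is also the source of the exceptional condition $a \notin -4F^4$. For odd primes, descending a putative $p$-th power of $\beta$ from $L$ down to $F$ can be organized using norms together with the prime-to-$p$ part of $[L:F]$, but the genuinely delicate point is that over $L = F(\sqrt a)$ one has $\sqrt a \in L^2$ exactly when $a \in -4F^4$: solving $(u + v\sqrt a)^2 = \sqrt a$ with $u, v \in F$ forces $2uv = 1$ and $u^2 + v^2 a = 0$, i.e.\ $a = -4\bigl(1/(2v)\bigr)^4$, and conversely. Tracking these conditions correctly through the induction — and separately disposing of the purely inseparable situations in small characteristic, where $-4F^4 = \{0\}$ and the arguments simplify — is exactly the content of Capelli's theorem; for the complete bookkeeping I would simply invoke the treatments in Halter--Koch \cite{Halter-Koch} or Lang \cite{lang-algebra} rather than reproduce it in full.
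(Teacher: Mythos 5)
The paper does not prove this lemma at all; it is stated as a known classical result (the Capelli, or Vahlen--Capelli, theorem on irreducibility of binomials) with pointers to Halter--Koch \cite{Halter-Koch} and Lang \cite{lang-algebra}, and the paper's arguments only ever invoke it as a black box. There is therefore no proof in the paper to compare your sketch against.

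That said, as a self-contained outline your proposal is essentially the standard Capelli argument and is correct where it is carried out in detail: the ``only if'' direction via the explicit factorizations (including the Sophie Germain identity when $4 \mid n$ and $a \in -4F^4$) is complete, the reduction of the ``if'' direction to the prime-power case via the degrees $[F(\alpha^{n/p_i^{r_i}}):F]$ is sound, and your computation showing that over $L = F(\sqrt a)$ one has $\sqrt a \in L^2$ exactly when $a \in -4F^4$ is correct. One small imprecision in the inductive step: with your chosen organization, $\beta = \alpha^p$ and $L = F(\beta)$, the polynomial whose irreducibility over $L$ must be established is $x^p - \beta$, whose degree is the prime $p$; so the ``base lemma'' here asks only that $\beta \notin L^p$, and there is no $\beta \notin -4L^4$ hypothesis to verify, since $4 \nmid p$. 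The role of $-4F^4$ is not as an extra hypothesis of the base lemma but as the obstruction one must rule out in order to \emph{deduce} $\beta \notin L^2$ from $a \notin F^2$ and $a \notin -4F^4$; your $L = F(\sqrt a)$ computation is in fact exactly the case $p = 2$, $r = 2$ of that deduction. For $r \ge 3$ this deduction is the genuinely delicate bookkeeping, and you correctly flag that you would refer to \cite{Halter-Koch} or \cite{lang-algebra} at that point --- which is precisely what the paper itself does from the outset.
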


Throughout this example, let $\bF_s$ denote the finite field of $q_s$ elements, where $q_s$ is a power of a prime $p_s > 0$ for $\cD$-almost all $s \in S$, and let $\fK = \prod_{s\in S}\bF_s/\cD$. As in previous sections, let $\fA_s = \bF_s[t]$, $\fF_s = \bF_s(t)$ and let $\A = \fK[t]$ and $\F = \fK(t)$. We further assume that the primes $p_s$ are distinct for $\cD$-almost all $s \in S$, which implies that $\fK$ is of characteristic $0$.

\begin{corollary}
	
	For any positive integer $n$, there exists an ultra-field extension $\fL = \prod_{s\in S}\fL_s/\cD$ of $\cU(\F) = \prod_{s\in S}\fF_s/\cD$ such that $\fL_{\alg} = \fL \cap \F^{\alg}$ is a finite separable extension of degree $n$ over $\F$.

\end{corollary}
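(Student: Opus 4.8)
The plan is to exhibit, for each $n$, an element $a \in \A = \fK[t]$ such that the polynomial $x^n - a$ is irreducible over $\F$, and simultaneously the polynomials $x^n - a_s$ are irreducible over $\fF_s$ for $\cD$-almost all $s \in S$, where $a = \ulim_{s\in S}a_s$; then $\fL_s = \fF_s(a_s^{1/n})$ gives the desired family, and by Lemma \ref{lem-elementary-lemma1} together with part (iii) of Theorem \ref{thm-main-thm1-Algebraic-part-of-L-is-a-finite-separable-extension} (recall $\fK$ has characteristic $0$, so all extensions of $\F$ are separable), the algebraic part $\fL_{\alg}$ of $\fL = \prod_{s\in S}\fL_s/\cD$ is the finite separable extension $\F(a^{1/n})$ of degree $n$ over $\F$.

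First I would choose $a = t \in \A = \fK[t]$, so that $a_s = t \in \fA_s = \bF_s[t]$ for all $s$. The key step is to verify that $x^n - t$ is irreducible over $\fF_s = \bF_s(t)$ for $\cD$-almost all $s$. By Lemma \ref{lem-halter-koch}, $x^n - t$ is irreducible over $\fF_s$ iff $t \notin \fF_s^p$ for every prime $p \mid n$ and $t \notin -4\fF_s^4$ when $4 \mid n$. Since $\fA_s = \bF_s[t]$ is a UFD with $t$ a prime element, $t$ is not a $p$-th power in $\fF_s = \bF_s(t)$ for any prime $p$ (a rational function whose $p$-th power is the degree-one polynomial $t$ would force a valuation contradiction at the place $t=0$), and likewise $t \notin -4\fF_s^4$ by the same valuation argument at $t = 0$, the order of $t$ there being $1$ which is not divisible by $4$. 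Hence $x^n - t$ is irreducible over $\fF_s$ for \emph{all} $s$. The identical argument, applied in the UFD $\A = \fK[t]$ with $t$ prime, shows $x^n - t$ is irreducible over $\F = \fK(t)$; alternatively, this follows from Lemma \ref{lem-elementary-lemma1}(ii) once the $\fF_s$-irreducibility is known.

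With irreducibility in hand, set $\fL_s = \fF_s[x]/(x^n - t)$, a finite separable extension of degree $n$ over $\fF_s$ for $\cD$-almost all $s$ (separable since, in characteristic $p_s$, we have $p_s \nmid n$ for $\cD$-almost all $s$ because the $p_s$ are distinct; or simply invoke that we only need the eventual behavior, and in any case $\fL_{\alg}$ lands in characteristic $0$). Let $\fL = \prod_{s\in S}\fL_s/\cD$. Writing $P(x) = x^n - t \in \F[x]$ and $P_s(x) = x^n - t \in \fF_s[x]$ so that $P = \ulim_{s\in S}P_s$, Lemma \ref{lem-elementary-lemma1} gives that $P$ is irreducible over $\F$, hence $\F(a^{1/n}) := \F[x]/(P)$ is a finite separable extension of degree $n$ over $\F$. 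A root $\beta = \ulim_{s\in S}\beta_s$ of $P$, with $\beta_s$ a root of $P_s$ in $\fL_s$, lies in $\fL \cap \F^{\alg} = \fL_{\alg}$, so $\F(\beta) \subseteq \fL_{\alg}$; and by Remark \ref{rem-divisibility-of-global-degree-and-ultra-degrees} (or directly part (i) of Theorem \ref{thm-main-thm1-Algebraic-part-of-L-is-a-finite-separable-extension}, noting $n$ has only finitely many integer divisors), $[\fL_{\alg}:\F]$ divides $n$, forcing $\fL_{\alg} = \F(\beta)$ of degree exactly $n$. The main obstacle is purely the irreducibility verification via Lemma \ref{lem-halter-koch}, and that is dispatched by the elementary observation that $t$ has valuation $1$ at the place $t = 0$ in every one of these UFDs; no real difficulty arises, so the argument is short.
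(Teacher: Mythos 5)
Your proof is correct and takes essentially the same route as the paper: invoke Lemma \ref{lem-halter-koch} (or Eisenstein) to produce $\fF_s$-irreducible binomials $x^n - a_s$, form $\fL_s = \fF_s(a_s^{1/n})$, pass to $\fL = \prod_s\fL_s/\cD$, note the limit polynomial $x^n - a$ is irreducible over $\F$, and then pin down $[\fL_{\alg}:\F] = n$ by the two-sided divisibility coming from Remark \ref{rem-divisibility-of-global-degree-and-ultra-degrees}. The only difference is that you instantiate $a_s = t$ concretely (with a clean valuation check at $t=0$), whereas the paper leaves $a_s$ as any bounded-degree choice satisfying the hypotheses of Lemma \ref{lem-halter-koch} and mentions an irreducible polynomial of bounded degree as one option; your choice is a particular such instance, so the arguments coincide.
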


\begin{proof}

	For $\cD$-almost all $s \in S$, choose a polynomial $a_s \in \fA_s = \bF_s[t]$ such that $a \not\in \fF_s^p$ for all primes $p$ dividing $n$ and $a \not\in -4\fF_s^4$ whenever $4$ divides $n$, and the degree of the $a_s$ is bounded for $\cD$-almost all $s \in S$, i.e., there exists an absolute constant $M > 0$ such that 
\begin{align*}
\{s \in S\; | \; \deg(a_s) < M\} \in \cD.
\end{align*}
There are infinitely many choices of $a_s$; for example, one can choose $a_s$ as an irreducible polynomial in $\bF_s[t]$ of degree bounded by an absolute constant for $\cD$-almost all $s \in S$. Thus we deduce from Lemma \ref{lem-halter-koch} that the polynomial $P_s(x) = x^n - a_s \in \fA_s[x]$ is irreducible over $\fF_s$. Since there are only finitely many primes dividing $n$, the polynomial $P_s$ is separable for $\cD$-almost all $s \in S$. 
	
	For each $s \in S$, let $\alpha_s \in \fF_s^{\alg}$ be a root of $P_s$ in the algebraic closure $\fF_s^{\alg}$ of $\fF_s$. Let $\fL_s = \fF_s(\alpha_s)$ be the field obtained from $\fF_s$ by adjoining $\alpha_s$ to $\fF_s$. Thus $\fL_s$ is a finite separable extension of degree $n$ over $\fF_s$ for $\cD$-almost all $s \in S$. 
      
     Let $\fL = \prod_{s\in S}\fL_s/\cD$ be the ultraproduct of $\fL_s$. We contend that $\fL_{\alg} = \fL \cap \F^{\alg}$ is a finite separable extension of degree $n$ over $\F$. Indeed, let $P(x) = \ulim_{s\in S}P_s(x) = x^n - a$, where $a = \ulim_{s\in S}a_s$. Since the degree of $a_s$ is bounded by an absolute constant for $\cD$-almost all $s \in S$, we see from Lemma \ref{lem-elementary-lemma} that $a$ belongs to $\A$. Thus $P(x)$ is a polynomial of degree $n$ in $\A[x]$. By \L{}o\'s' theorem, and since the $P_s$ are separable and irreducible over $\fF_s$ for $\cD$-almost all $s \in S$ and $\F$ is of characteristic $0$, Lemma \ref{lem-elementary-lemma1} implies that $P(x)$ is irreducible and separable over $\F$. Note that $\alpha = \ulim_{s\in S}\alpha_s \in \fL$ is a root of $P(x)$ since $P(\alpha) = \ulim_{s\in S}P_s(\alpha_s) = 0$. Thus $\alpha \in \fL \cap \F^{\alg} = \fL_{\alg}$. Hence the field $\F(\alpha)$ is a finite separable extension of degree $n$ over $\F$. We know that $\F(\alpha)$ is a subfield of $\fL_{\alg}$ and thus $n$ divides $m$, where $m$ is the degree of $\fL_{\alg}$ over $\F$. By Remark \ref{rem-divisibility-of-global-degree-and-ultra-degrees}, $m$ also divides $n$ (that is the degree of $\fL_s$ over $\fF_s$ for $\cD$-almost all $s \in S$). Thus $n = m$, and so $\fL_{\alg}$ is a finite separable extension of degree $n$ over $\F$, as required.

\end{proof}

\subsubsection{An example in which $\fL_{\alg}$ is an infinite Galois extension of $\F$ whose Galois group $\Gal(\fL_{\alg}/\F) \cong \widehat{\bZ}$}
\label{subsubsec-L_alg-is-Z-hat-profinite-Galois-over-F}

Let $S = \bZ_{>0}$ be the set of positive integers, and let $\cD$ be a nonprincipal ultrafilter on $S$. For each $s \in S$, let $p_s = s!r_s + 1 = (1\cdot 2\cdots s)r_s + 1$ for some integer $r_s$. By Dirichlet's theorem on primes in arithmetic progressions (see Serre \cite{serre-arithmetic}), one can choose infinitely many integers $r_s$ such that the $p_s$ are distinct primes. Let $(h_s)_{s \in S}$ be a sequence of positive integers, and for each positive integer $s \in S$, set $q_s = p_s^{h_s}$. 

By our construction of the primes $p_s$, it is straightforward that for any positive integer $n > 0$, $n$ divides $p_s - 1$ for all $s \ge n$, and thus $n$ divides $q_s - 1 = p_s^{h_s} - 1$ for all $s \ge n$.

\begin{lemma}
\label{lem-example-L_alg-infinite-extension-of-F}

For every positive integer $n > 0$, $n$ divides $q_s - 1$ for all $s \ge n$. In particular, for every positive integer $n > 0$,
\begin{align*}
\{s \in S\; | \; \text{$n$ divides $q_s - 1$}\} \in \cD.
\end{align*}

\end{lemma}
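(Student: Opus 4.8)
The plan is to prove the divisibility statement by a direct congruence computation, and then to read off the membership in $\cD$ from the fact that a nonprincipal ultrafilter contains every cofinite set and is closed under passing to supersets.

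First I would fix a positive integer $n > 0$ and an index $s \in S$ with $s \ge n$. Since $n$ occurs among the factors $1, 2, \ldots, s$ whose product is $s!$, we have $n \mid s!$. By the construction $p_s = s!\, r_s + 1$, this gives $p_s \equiv 1 \pmod{n}$, and hence $q_s = p_s^{h_s} \equiv 1^{h_s} = 1 \pmod{n}$; that is, $n$ divides $q_s - 1$ for every $s \ge n$, which is the first assertion.

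For the second assertion, I would note that $\{s \in S \mid s \ge n\}$ is a cofinite subset of $S = \bZ_{>0}$, its complement $\{1, 2, \ldots, n-1\}$ being finite. Since every member of $\cD$ is infinite by definition, this complement does not lie in $\cD$, so condition (ii) in the definition of a nonprincipal ultrafilter forces $\{s \in S \mid s \ge n\} \in \cD$. By the first assertion, $\{s \in S \mid s \ge n\} \subseteq \{s \in S \mid n \text{ divides } q_s - 1\}$; and any set containing a member of $\cD$ is itself in $\cD$, for otherwise its complement would lie in $\cD$, and intersecting that complement with the smaller member of $\cD$ would place $\emptyset$ in $\cD$, contradicting condition (i) together with the fact that $\emptyset \notin \cD$. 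Hence $\{s \in S \mid n \text{ divides } q_s - 1\} \in \cD$, as claimed.

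There is no real obstacle in this argument; the only points that require a moment's care are the two elementary closure properties of the ultrafilter used at the end (that it contains cofinite sets and is closed under taking supersets), both of which are immediate from the axioms recalled in the subsection on ultrafilters.
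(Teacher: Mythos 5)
Your proof is correct and follows essentially the same route the paper intends: the paper observes immediately before the lemma that $n \mid s!$ for $s \ge n$ gives $n \mid p_s - 1$ and hence $n \mid q_s - 1$, and then the membership in $\cD$ follows because a nonprincipal ultrafilter contains every cofinite set; you have simply filled in the elementary ultrafilter closure properties explicitly.
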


Throughout this example, for each $s \in S = \bZ_{>0}$, let $\bF_s$ be the finite field of $q_s$ elements, and let $\fK = \prod_{s\in S}\bF_s/\cD$ be the $1$st level ultra-finite field (see Subsection \ref{subsec-nth-level-ultra-finite-fields}). Note that in this case, $\fK$ is of characteristic $0$ since the $p_s$ are distinct primes.

Take an arbitrary monic irreducible polynomial $P$ of degree $d > 0$ in $\A = \fK[t]$. By Lemma \ref{lem-elementary-lemma}, one can write $P = \ulim_{s\in S}P_s$, where $P_s$ is a monic irreducible polynomial of degree $d$ in $\fA_s = \bF_s[t]$ for $\cD$-almost all $s \in S$. 

Let $\fL_s$ denote the $P_s$-th cyclotomic function field that is obtained from $\fF_s = \bF_s(t)$ by adjoining the set of $P_s$-torsion elements of the Carlitz module $C^{(s)}$ for $\fF_s$ (see Rosen \cite{rosen} or Subsection \ref{subsec-KW-theorem-for-Fq[t]} below for a review of this theory). It is known that the Galois group $\Gal(\fL_s/\fF_s)$ is a cyclic group of order $q_s^d - 1$ (see Hayes \cite[Proposition 2.2]{hayes}). Thus for every positive integer $n$ dividing $q_s^d - 1$, there exists a unique subfield $\fH_s(n)$ of $\fL_s$ such that $[\fH_s(n) : \fF_s] = n$. We recall the following known result whose proof for the rational function field $\bF_q(t)$ over a finite field $\bF_q$ can be found in Zhao--Wu \cite{Zhao-Wu}. 

\begin{proposition}
\label{prop-unique-subfield-of-P_s-cyclotomic-function-field-of-degree-n}

Let $n$ be a positive integer dividing $q_s^d - 1$ for $\cD$-almost all $s \in S$. Then for $\cD$-almost all $s \in S$, the unique subfield $\fH_s(n)$ of $\fL_s$ that is of degree $n$ over $\fF_s$, coincides with $\fF_s(P_s^{1/n})$ or $\fF_s((-P_s)^{1/n})$ according as the degree $d$ of $P_s$ is even or odd.

\end{proposition}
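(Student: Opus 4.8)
The plan is to exhibit, inside $\fL_s = \fF_s(\Lambda_{P_s})$, an explicit element whose $n$-th power is $(-1)^d P_s$, and then to appeal to the fact that a cyclic extension has exactly one subfield of each admissible degree. We work with a fixed $s$ for which $n \mid q_s - 1$ (this holds for $\cD$-almost all $s$ in the relevant situation, by Lemma \ref{lem-example-L_alg-infinite-extension-of-F}); then $\mu_n \subseteq \bF_s \subseteq \fF_s$, so any degree-$n$ radical extension of $\fF_s$ is Galois and any two with the same $n$-th power of a generator coincide. Recall from Carlitz--Hayes theory (Rosen \cite{rosen}, Hayes \cite{hayes}) that $\Lambda_{P_s}$ is a one-dimensional vector space over $\bF_s[t]/P_s \cong \bF_{q_s^d}$ under the Carlitz action, that each of its $q_s^d - 1$ nonzero elements generates $\fL_s$ over $\fF_s$, that $\Gal(\fL_s/\fF_s) \cong (\bF_s[t]/P_s)^{\times}$ is cyclic of order $q_s^d - 1$, and that a constant $\omega \in \bF_s^{\times}$ acts on $\Lambda_{P_s}$ by ordinary scalar multiplication $x \mapsto \omega x$.

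Fix a generator $\lambda$ of $\Lambda_{P_s}$; its minimal polynomial over $\fF_s$ is $\Phi_{P_s}(x) = \prod_{0 \ne \mu \in \Lambda_{P_s}}(x - \mu)$, monic of degree $q_s^d - 1$, and since $C_{P_s}(x) = x\,\Phi_{P_s}(x)$ while the coefficient of $x$ in the Carlitz polynomial $C_{P_s}(x)$ is $P_s$, one has $\Phi_{P_s}(0) = P_s$, hence
\begin{align*}
\prod_{0 \ne \mu \in \Lambda_{P_s}} \mu \;=\; (-1)^{q_s^d - 1}\,\Phi_{P_s}(0) \;=\; (-1)^{q_s^d - 1}P_s \;=\; P_s
\end{align*}
(the last equality since $q_s^d - 1$ is even when $q_s$ is odd, and $-1 = 1$ in $\bF_s$ when $q_s$ is even). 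The nonzero elements of $\Lambda_{P_s}$ split into $r := (q_s^d - 1)/(q_s - 1)$ orbits $\{\omega\lambda_i : \omega \in \bF_s^{\times}\}$, $i = 1, \dots, r$, under multiplication by $\bF_s^{\times}$, each of size $q_s - 1$, and the product over one orbit is $\lambda_i^{q_s - 1}\prod_{\omega \in \bF_s^{\times}}\omega = -\lambda_i^{q_s - 1}$. Multiplying over the orbits and comparing with the displayed identity gives $(-1)^r\bigl(\prod_{i=1}^r \lambda_i\bigr)^{q_s - 1} = P_s$, that is, $\theta^{q_s - 1} = (-1)^d P_s$ for $\theta := \prod_{i=1}^r \lambda_i \in \fL_s$, because $r = 1 + q_s + \dots + q_s^{d-1} \equiv d \pmod 2$ when $q_s$ is odd, and $-1 = 1$ when $q_s$ is even, so $(-1)^r = (-1)^d$ in $\bF_s$. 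Writing $q_s - 1 = n\,m$ and putting $\eta := \theta^m \in \fL_s$, we obtain $\eta^n = (-1)^d P_s$.

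Thus $(-1)^d P_s$ has the $n$-th root $\eta$ inside $\fL_s$, so $\fF_s(\eta) \subseteq \fL_s$. By Lemma \ref{lem-halter-koch} applied to $a = (-1)^d P_s$ --- for which $v_{P_s}(a) = 1$ is divisible by no prime $\ell \mid n$ and not by $4$, so $a \notin \fF_s^{\ell}$ and $a \notin -4\,\fF_s^4$ --- the polynomial $x^n - (-1)^d P_s$ is irreducible over $\fF_s$, whence $[\fF_s(\eta):\fF_s] = n$. Since $\fL_s/\fF_s$ is cyclic of degree $q_s^d - 1$ and $n \mid q_s^d - 1$, it has exactly one subfield of degree $n$, namely $\fH_s(n)$; therefore $\fH_s(n) = \fF_s(\eta) = \fF_s\bigl(((-1)^d P_s)^{1/n}\bigr)$, which is $\fF_s(P_s^{1/n})$ when $d$ is even and $\fF_s((-P_s)^{1/n})$ when $d$ is odd (the two descriptions coinciding when $n$ is odd, since then $-1 \in (\bF_s^{\times})^n$). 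All ingredients used --- $n \mid q_s - 1$, the constant term of $C_{P_s}$, the irreducibility of $x^n - (-1)^d P_s$ --- hold for $\cD$-almost all $s$, so the conclusion holds for $\cD$-almost all $s$. The only step demanding genuine care is the sign bookkeeping in the second paragraph: the factor $(-1)^d$ arises from combining $\Phi_{P_s}(0) = P_s$ with the number $r \equiv d \pmod 2$ of $\bF_s^{\times}$-orbits on $\Lambda_{P_s}\setminus\{0\}$, and it is exactly this parity that matches the case distinction in the statement --- this is the computation performed, for $\bF_q(t)$, in Zhao--Wu \cite{Zhao-Wu}.
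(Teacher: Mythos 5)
The paper does not actually prove this proposition; it is stated as ``a known result'' with a citation to Zhao--Wu \cite{Zhao-Wu}. Your proof is a correct, self-contained reconstruction of that argument: the key identities ($\Phi_{P_s}(0)=P_s$, Wilson's theorem in $\bF_s^{\times}$, the orbit count $r=1+q_s+\cdots+q_s^{d-1}\equiv d\pmod 2$, and the irreducibility of $x^n-(-1)^dP_s$ via the valuation at $P_s$) are all verified correctly, and the sign bookkeeping that produces $(-1)^d$ is handled carefully in both the odd and even characteristic cases.

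One thing worth flagging, which you handle correctly but silently: the proposition as printed hypothesizes only $n\mid q_s^d-1$, whereas your argument (necessarily) uses the stronger condition $n\mid q_s-1$. The stronger condition is in fact required: if $n\mid q_s^d-1$ but $n\nmid q_s-1$, the degree-$n$ subfield of $\fL_s$ still exists (by cyclicity), but it cannot be $\fF_s\bigl(((-1)^dP_s)^{1/n}\bigr)$ --- that radical extension is Galois over $\fF_s$ only if $\mu_n\subseteq\fF_s$, and since $\fL_s/\fF_s$ is a geometric extension, any subfield containing $\mu_n$ would force $\mu_n\subseteq\bF_s$, i.e.\ $n\mid q_s-1$. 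You quietly repair this by invoking Lemma~\ref{lem-example-L_alg-infinite-extension-of-F}, which in the surrounding example does guarantee $n\mid q_s-1$ for $\cD$-almost all $s$; this is the hypothesis Zhao--Wu actually need, and the statement in the paper should probably read $q_s-1$ rather than $q_s^d-1$.
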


We now characterize $\fL_{\alg} = \fL\cap \F^{\alg}$, where $\fL = \prod_{s\in S}\fL_s/\cD$. By Corollary \ref{cor-main-corollary-1-abelian-Galois-group-of-L-alg-and-shadows}, $\fL_{\alg}$ is an abelian extension of $\F$. 

\begin{proposition}
\label{prop-structure-of-subfields-of-L-alg-ininite-over-F}
\begin{itemize}

\item []

\item [(i)] For every positive integer $n$, there exists a unique subfield $\fH(n)$ of $\fL_{\alg}$ that is of degree $n$ over $\F$ such that $\fH(n) = \F((\epsilon(d)P)^{1/n})$, where 
\begin{align*}
\epsilon(d) =
\begin{cases}
1 \; \; &\text{if $d$ is even,} \\
-1 \; \; &\text{if $d$ is odd.}
\end{cases}
\end{align*}
In particular, $\fH(n)$ is a cyclic extension of degree $n$ over $\F$, and $\Gal(\fH(n)/\F) \cong \bZ/n\bZ$.

\item [(ii)] $\fL_{\alg} = \bigcup_{n \in \bZ_{>0}}\fH(n)$ and $\Gal(\fL_{\alg}/\F) \cong \widehat{\bZ}$. 

\end{itemize}

\end{proposition}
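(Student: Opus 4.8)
The plan is to pin down $\fL_{\alg}$ completely by matching its finite subextensions with the explicitly described subfields $\fH_s(n)$ of the $P_s$-th cyclotomic function fields $\fL_s$, using the algebraic-part/shadow machinery of Section~\ref{sec-algebraic-part-of-an-ultra-finite-extension}. I would begin with two preliminary observations. Since the $p_s$ are distinct primes, $\fK$ (hence $\F$) has characteristic $0$; and by Lemma~\ref{lem-example-L_alg-infinite-extension-of-F} together with \L{}o\'s' theorem, for every $n \ge 1$ the field $\fK$ contains a primitive $n$-th root of unity $\zeta_n = \ulim_{s\in S}\zeta_{n, s}$, because $\bF_s^{\times}$ is cyclic of order $q_s - 1$, a multiple of $n$, for $\cD$-almost all $s$. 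Moreover, $P$ being a monic prime of degree $d$ in the PID $\A = \fK[t]$, the element $\epsilon(d)P$ is a prime of $\A$, so its $P$-adic valuation equals $1$, which is divisible neither by any rational prime $p$ nor by $4$ (and $-4 \in \fK^{\times}$); hence Lemma~\ref{lem-halter-koch} shows $x^n - \epsilon(d)P$ is irreducible over $\F$ for every $n$. Because $\mu_n \subset \F$ and $\mathrm{char}\,\F = 0$, Kummer theory then makes $\H(n) := \F((\epsilon(d)P)^{1/n})$ a well-defined cyclic extension of $\F$ of degree $n$ with $\Gal(\H(n)/\F) \cong \bZ/n\bZ$, and independent of the choice of $n$-th root.

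Next I would prove $\H(n) \subset \fL_{\alg}$ and the uniqueness assertion in (i). For $\cD$-almost all $s$ one has $n \mid q_s - 1 \mid q_s^d - 1$ by Lemma~\ref{lem-example-L_alg-infinite-extension-of-F}, so by Proposition~\ref{prop-unique-subfield-of-P_s-cyclotomic-function-field-of-degree-n} there is a root $\alpha_s \in \fH_s(n) \subset \fL_s$ of $x^n - \epsilon(d)P_s$ with $\fF_s(\alpha_s) = \fH_s(n)$; setting $\alpha = \ulim_{s\in S}\alpha_s \in \fL$ gives $\alpha^n = \ulim_{s\in S}\epsilon(d)P_s = \epsilon(d)P$, so $\alpha$ is a root of the irreducible polynomial $x^n - \epsilon(d)P \in \F[x]$, hence $\alpha \in \fL \cap \F^{\alg} = \fL_{\alg}$ and $\F(\alpha) = \H(n) \subset \fL_{\alg}$; by Proposition~\ref{prop-explicit-descriptions-of-shadows-and-ultra-shadows} the $s$-th shadow of $\H(n)$ is then $\fF_s(\alpha_s) = \fH_s(n)$. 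Conversely, given any subfield $\H \subset \fL_{\alg}$ with $[\H:\F] = n$, I would let $\fH_s$ be its $s$-th shadow and $\fH = \prod_{s\in S}\fH_s/\cD$ its ultra-shadow: by Propositions~\ref{prop-shadows-of-the-algebraic-part-are-inside} and~\ref{prop-explicit-descriptions-of-shadows-and-ultra-shadows}, $\fH_s \subset \fL_s$ with $[\fH_s : \fF_s] = n$ for $\cD$-almost all $s$, while by Remark~\ref{rem-divisibility-of-global-degree-and-ultra-degrees} $n$ divides $q_s^d - 1$ for $\cD$-almost all $s$; since $\fL_s/\fF_s$ is cyclic it has a unique subfield of degree $n$, so $\fH_s = \fH_s(n)$, the $s$-th shadow of $\H(n)$. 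Hence $\fH$ is the ultra-shadow of $\H(n)$, and taking algebraic parts yields $\H = \fH_{\alg} = \H(n)$. This establishes (i).

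For (ii), $\bigcup_{n}\H(n) \subseteq \fL_{\alg}$ is immediate from (i); conversely every $\beta \in \fL_{\alg}$ is algebraic over $\F$, so $\F(\beta)$ is a finite subextension of some degree $m$, whence $\F(\beta) = \H(m)$ by the uniqueness in (i), giving $\fL_{\alg} = \bigcup_{n}\H(n)$. Finally, $\fL_{\alg}/\F$ is Galois with procyclic Galois group by Theorem~\ref{thm-main-thm2-Galois-property-of-algebraic-parts} and Corollary~\ref{cor-main-corollary-1-procyclic-Galois-group-of-L-alg-and-shadows}, and by (i) it admits a finite quotient $\Gal(\H(n)/\F) \cong \bZ/n\bZ$ of order $n$ for every $n \ge 1$; a procyclic profinite group with a quotient of every finite order $n$ must have pro-$p$ part $\bZ_p$ for every prime $p$ and is therefore isomorphic to $\widehat{\bZ}$, so $\Gal(\fL_{\alg}/\F) \cong \widehat{\bZ}$, as claimed. (One also sees directly that $\H(n) \subseteq \H(m) \iff n \mid m$, so the inverse system of the $\Gal(\H(n)/\F)$ is the standard one defining $\widehat{\bZ}$.)

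The step I expect to be the most delicate is the bookkeeping of the ``$\cD$-almost all $s$'' quantifiers when transferring the divisibility $n \mid q_s^d - 1$, the degree identities, and the shadow inclusions through Remark~\ref{rem-divisibility-of-global-degree-and-ultra-degrees} and Propositions~\ref{prop-shadows-of-the-algebraic-part-are-inside}, \ref{prop-explicit-descriptions-of-shadows-and-ultra-shadows}, and~\ref{prop-unique-subfield-of-P_s-cyclotomic-function-field-of-degree-n}, together with verifying that the $s$-th shadow of $\H(n)$ really is $\fH_s(n)$ (which relies on $\mu_n \subset \fF_s$ for $\cD$-almost all $s$ so that the choice of $n$-th root is immaterial). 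By contrast, the valuation computation behind the irreducibility of $x^n - \epsilon(d)P$ over $\F$ is routine once one notes that $\fK[t]$ is a PID of characteristic $0$.
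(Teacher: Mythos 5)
Your proof is correct and follows the same overall strategy as the paper's: define $\fH(n) = \F((\epsilon(d)P)^{1/n})$, show $x^n - \epsilon(d)P$ is irreducible, exhibit $\fH_s(n)$ as the $s$-th shadow of $\fH(n)$ via Proposition~\ref{prop-unique-subfield-of-P_s-cyclotomic-function-field-of-degree-n}, and prove uniqueness by matching shadows inside the cyclic extension $\fL_s$, which has a unique subfield of each degree dividing $q_s^d - 1$. Where you deviate from the paper is in the supporting lemmas: for irreducibility you invoke Lemma~\ref{lem-halter-koch} and a $P$-adic valuation count, whereas the paper uses Eisenstein's criterion at the prime $P$ (both are equally valid); for cyclicity of $\fH(n)$ you appeal directly to Kummer theory, first making explicit the (correct, and useful) observation via \L{}o\'s' theorem that $\fK$ contains a primitive $n$-th root of unity for every $n$, whereas the paper deduces cyclicity after the fact from uniqueness of the degree-$n$ subfield inside the abelian/procyclic extension $\fL_{\alg}$. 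Your approach has the modest advantage of establishing cyclicity of $\fH(n)$ without first needing the uniqueness step, and of making explicit that $\mu_n \subset \fK$, a fact the paper only uses implicitly in treating $\F((\epsilon(d)P)^{1/n})$ as a well-defined subfield of $\F^{\alg}$. For part (ii) you also spell out the identification $\Gal(\fL_{\alg}/\F) \cong \widehat{\bZ}$ (a procyclic group with $\bZ/n\bZ$-quotients for every $n$, or equivalently the inverse limit of the $\bZ/n\bZ$ over divisibility), where the paper simply cites infinite Galois theory; your elaboration is both correct and clarifying.
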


\begin{proof}

Let $\alpha = (\epsilon(d)P)^{1/n} \in \F^{\alg}$. By Eisenstein's criterion (see Lang \cite{lang-algebra}), the polynomial $f(x) = x^n - \epsilon(d)P$ is irreducible over $\F$, and thus $f(x)$ is the minimal polynomial of $\alpha$ over $\F$, and thus the field $\fH(n) = \F(\alpha) = \F((\epsilon(d)P)^{1/n})$ is an algebraic extension of degree $n$ over $\F$. We see that
\begin{align*}
f(x) = \ulim_{s\in S}f_s(x),
\end{align*}
where $f_s(x) = x^n - \epsilon(d)P_s \in \fA_s[x] = \bF_s[t][x]$, and $\alpha = \ulim_{s\in S}\alpha_s$, where $\alpha_s = (\epsilon(d)P_s)^{1/n} \in \fF_s^{\alg}$. By Lemma \ref{lem-elementary-lemma1} or Eisenstein's criterion, $f_s(x)$ is irreducible over $\fF_s$ for $\cD$-almost all $s \in S$, and thus $f_s(x)$ is the minimal polynomial of $\alpha_s$ for $\cD$-almost all $s \in S$. Thus by Proposition \ref{prop-explicit-descriptions-of-shadows-and-ultra-shadows}, the field $\fH_s(n) = \fF_s(\alpha_s) = \fF_s((\epsilon(d)P_s)^{1/n})$ is the $s$-th shadow of $\fH(n)$ for $\cD$-almost all $s \in S$, and $\fH^{(n)} = \prod_{s\in S}\fH_s(n)/\cD$ is the ultra-shadow of $\fH(n)$ such that $\fH^{(n)}_{\alg} = \fH(n)$. 

Since $\fH_s(n) \subset \fL_s$, $\fH^{(n)} = \prod_{s\in S}\fH_s(n)/\cD$ is a subset of $\fL = \prod_{s\in S}\fL_s/\cD$. Thus
\begin{align*}
\fH^{(n)}_{\alg} = \fH(n) \subset \fL_{\alg},
\end{align*}
which proves that $\fH(n)$ is an algebraic subextension of $\fL_{\alg}$ that is of degree $n$ over $\F$.

If there exists another subextension, say $\fG(n)$ of $\fL_{\alg}$ that is of degree $n$ over $\F$. Then the $s$-th shadow $\fG_s$ of $\fG(n)$ is a subfield of $\fL_s$ that is of degree $n$ over $\fF_s$. By Lemma \ref{lem-example-L_alg-infinite-extension-of-F}, $n$ divides $q_s^d - 1$ for $\cD$-almost all $s \in S$, and thus the subfield $\fH_s(n)$ of $\fL_s$ is a unique extension of degree $n$ over $\fF_s$ for $\cD$-almost all $s \in S$. Therefore $\fG_s = \fH_s(n)$ since both fields are algebraic extensions of degree $n$ over $\fF_s$ for $\cD$-almost all $s \in S$. By Definition \ref{def-shadows-of-algebraic-extensions} and Proposition \ref{prop-shadows-of-the-algebraic-part-are-inside}, we deduce that
\begin{align*}
\fG(n) = \fG_{\alg} = \fH^{(n)}_{\alg} = \fH(n),
\end{align*}
where $\fG = \prod_{s\in S}\fG_s/\cD$ is the ultra-shadow of $\fG(n)$. Thus $\fH(n)$ is the unique subfield of $\fL_{\alg}$ that is of degree $n$ over $\F$. Therefore $\fH(n)$ is a cyclic extension of degree $n$ over $\F$ whose Galois group $\Gal(\fH(n)/\F) \cong \bZ/n\bZ$ (see Szamuely \cite{szamuely}).

Part (ii) follows immediately from infinite Galois theory (see \cite{szamuely}).

\end{proof}

  \begin{remark}
  
  The field $\fL_{\alg}$ is an example of the \textbf{$P$-th cyclotomic function field for $\F$} that will be introduced in Definition \ref{def-a-th-cyclotomic-function-field-for-ultra-finite-fields}.

  \end{remark}

 \subsubsection{An example in which $\fL_{\alg} = \F$}
 \label{subsubsec-Artin-Schreier-extension-algebraic-parts}

 Let $K$ be a field of characteristic $p > 0$ such that $\bF_p \subset K$. Let $\rho(x) = x^p - x \in \bF_p[x]$. Let $a \in K$, and let $\alpha$ be a root of $\rho(x) - a$ in an algebraic closure $ K^{\alg}$ of $K$ such that $\alpha \not\in K$, then the field $L = K(\alpha)$ is a cyclic extension of degree $p$ over $K$. We call $L$ an \textbf{Artin--Schreier extension of $K$} (see Halter--Koch \cite[Theorem 1.7.10]{Halter-Koch}).

In this example, let $\bF_s$ denote the finite field of $q_s$ elements, where $q_s$ is a power of prime $p_s > 0$ such that the primes $p_s$ are distinct for $\cD$-almost all $s \in S$.

Let $\fL_s$ be an Artin--Schreier extension of $\fF_s$ so that $\fL_s$ is of degree $p_s$ over $\fF_s$ for $\cD$-almost all $s \in S$, and let $\fL = \prod_{s\in S}\fL_s/\cD$. 

\begin{proposition}
\label{prop-L_alg-is-trivial}

$\fL_{\alg} = \F$.

\end{proposition}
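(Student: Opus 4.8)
The plan is to show that the algebraic part of the ultraproduct of Artin--Schreier extensions collapses to $\F$ by exploiting the fact that the characteristic of $\F = \fK(t)$ is $0$ while each $\fL_s$ is an extension of prime degree $p_s$, with the $p_s$ all distinct. First I would invoke Theorem \ref{thm-main-thm1-Algebraic-part-of-L-is-a-finite-separable-extension}, part (iii): since the primes $p_s$ are distinct for $\cD$-almost all $s \in S$, the field $\fK$ (and hence $\F$) has characteristic $0$, so $\fL_{\alg} = \fL_{\sep}$ and $\fL_{\alg}$ is a separable extension of $\F$. Moreover, by Remark \ref{rem-divisibility-of-global-degree-and-ultra-degrees}, any element $\alpha \in \fL_{\alg}$ of degree $m \in \bZ_{>0}$ over $\F$ must have $m$ dividing $n_s = [\fL_s : \fF_s] = p_s$ for $\cD$-almost all $s \in S$; equivalently $m$ divides the hyperinteger $n = \ulim_{s\in S}p_s$ in $\bZ^{\#}$.

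The key step is then to argue that the only integer $m > 0$ dividing $\ulim_{s\in S}p_s$ in $\bZ^{\#}$ is $m = 1$. Indeed, if $m \mid p_s$ for $\cD$-almost all $s$ and $m > 1$, then since $p_s$ is prime we would need $m = p_s$ for $\cD$-almost all $s \in S$, i.e., $\{s \in S : p_s = m\} \in \cD$. But the primes $p_s$ are distinct for $\cD$-almost all $s \in S$, so the set $\{s \in S : p_s = m\}$ contains at most one element; a finite (here, singleton or empty) set cannot belong to the nonprincipal ultrafilter $\cD$, since every member of $\cD$ is infinite by definition (see the definition of nonprincipal ultrafilter in Subsection~2.1). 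This contradiction forces $m = 1$, so every element $\alpha \in \fL_{\alg}$ has degree $1$ over $\F$, i.e., $\alpha \in \F$. Hence $\fL_{\alg} \subseteq \F$; the reverse inclusion $\F \subseteq \fL_{\alg}$ is immediate since $\F \subseteq \fL$ and $\F \subseteq \F^{\alg}$. Therefore $\fL_{\alg} = \F$.

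I do not expect any genuine obstacle here: the whole argument is a short combination of the degree-divisibility observation (Remark \ref{rem-divisibility-of-global-degree-and-ultra-degrees}) with the elementary fact that a nonprincipal ultrafilter contains no finite set. The only point requiring a little care is making sure the "distinct primes" hypothesis is used correctly — one needs that for \emph{each} fixed integer $m$, the set of $s$ with $p_s = m$ is not in $\cD$, which follows because distinctness of the $p_s$ on a set in $\cD$ means that map $s \mapsto p_s$ restricted to that set is injective, so each fiber is a singleton. A clean way to phrase this is: if the $p_s$ are pairwise distinct on a $\cD$-large set $T$, then for any $m$, $\{s \in T : p_s = m\}$ has at most one element and so lies outside $\cD$; intersecting with $T \in \cD$ shows $\{s \in S : p_s = m\} \notin \cD$ as well. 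With this in hand the proof is complete in a few lines.
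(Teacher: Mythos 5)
Your proof is correct and follows essentially the same approach as the paper's, built on Remark \ref{rem-divisibility-of-global-degree-and-ultra-degrees} and the observation that $1$ is the only positive integer dividing $\ulim_{s\in S} p_s$ in $\bZ^{\#}$. The minor differences --- you work element-wise rather than first invoking Corollary \ref{cor-main-cor1-L_alg-is-a-finite-Galois-extension-of-F} to establish that $\fL_{\alg}$ is finite over $\F$ and then bounding its degree, and you phrase the divisibility argument via the no-finite-sets property of nonprincipal ultrafilters instead of the paper's explicit two-distinct-primes contradiction --- are stylistic rather than substantive.
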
 
 
 \begin{proof}
 
 We know from Halter--Koch \cite[Theorem 1.7.10]{Halter-Koch} that $\fL_s$ is a cyclic extension of degree $p_s$ over $\fF_s$, and thus Theorem \ref{thm-main-thm2-Galois-property-of-algebraic-parts} implies that $\fL_{\alg}$ is a Galois extension of $\F$.
 
 We contend that $1$ is the only positive integer that divides the hyperinteger $\ulim_{s\in S}p_s$ in $\bZ^{\#}$. Indeed, let $h$ be a positive integer dividing $\ulim_{s\in S}p_s$ in $\bZ^{\#}$. Thus $h$ divides $p_s$ for $\cD$-almost all $s \in S$. Since the primes $p_s$ are distinct for $\cD$-almost all $s \in S$, there exist distinct primes $p_{s_1}, p_{s_2}$ such that $h$ divides both primes $p_{s_1}$ and $p_{s_2}$. If $h > 1$, then $h = p_{s_1}$ and $h = p_{s_2}$, which is a contradiction. Thus $h$ must equal $1$. Thus Corollary \ref{cor-main-cor1-L_alg-is-a-finite-Galois-extension-of-F} implies that $\fL_{\alg}$ is a finite Galois extension of degree $m$ over $\F$ for some positive integer $m$. By Remark \ref{rem-divisibility-of-global-degree-and-ultra-degrees}, $m$ divides $\ulim_{s\in S}p_s$ in $\bZ^{\#}$, and thus $m = 1$. Thus $\fL_{\alg} = \F$ as required.

 \end{proof}

 \subsubsection{A relation between the maximal abelian extensions of rational function fields over algebraically closed fields of characteristics $0$ and $p > 0$.}
 \label{subsubsection-relation-the-maximal-abelian-extension-of-function-fields-overACFs-of-p-and-0-chars}

 The main aim of this subsection is to explain, via the theory of algebraic parts, a relation between the maximal abelian extensions of a family of rational function fields over algebraically closed fields of distinct positive characteristics and the maximal abelian extension of the rational function field over a corresponding algebraically closed field of characteristic $0$.
 
 Throughout this subsection, the constant field $\bF_s$ denotes an algebraically closed field of characteristic $p_s > 0$ such that the $p_s$ are distinct for $\cD$-almost all $s \in S$, i.e., there does not exist a prime $p > 0$ such that $\{s \in S\; | \; p_s = p\} \in \cD$. Thus \L{}o\'s' theorem implies that the ultra-field $\fK = \prod_{s\in S}\bF_s/\cD$ is an algebraically closed field of characteristic $0$. As in previous sections, let $\fF_s = \bF_s(t)$ be the rational function field over $\bF_s$ for $\cD$-almost all $s\in S$, and let $\F = \fK(t)$ be the rational function field over $\fK$.

 There is an explicit description of the maximal abelian extension $\cA_s$ of the rational function field $\fF_s = \bF_s(t)$. Since $\bF_s$ is algebraically closed of characteristic $p_s > 0$, the maximal abelian extension $\cA_s$ of $\fF_s$ is the compositum of the maximal abelian pro-$p_s$ extension $\cB_s$ of $\fF_s$ with the maximal pro-prime-to-$p_s$ extension $\cC_s$ of $\fF_s$. The extension $\cC_s$ is obtained as in characteristic $0$, i.e., $\cC_s$ is obtained from $\fF_s$ by adjoining all the $n$-th roots of the elements $t - a_s$ for all positive integers $n$ that are prime to $p_s$, and all elements $a_s \in \bF_s$. As for the former extension, $\cB_s$ is obtained from $\fF_s$ by taking all possible abelian finite towers of successive Artin--Schreier extensions. Another way to obtain $\cB_s$ is to take the compositum of all $p_s^n$-cyclic extensions of $\fF_s$ for all $n$ that are obtained via Witt vectors (see Serre \cite{serre-local-fields}).  
 
 Turning to the maximal abelian extension $\fM$ of $\F = \fK(t)$, one can obtain $\fM$ from $\F$ by adjoining all the $n$-th roots of the elements $t - a$ for all positive integers $n$, and all elements $a \in \fK$, which is very similar to the construction of the maximal pro-prime-to-$p_s$ extension $\cC_s$ above. 
 
 By Corollary \ref{cor-relation-between-maximal-abelian-extensions-of-F_s-and-F}, one obtains the following relation between the maximal abelian extension $\cA_s$ of $\fF_s$ and the maximal abelian extension $\fM$ of $\F$.
 
 \begin{proposition}
 \label{prop-relation-between-rational-function-fields-over-ACFs-of-arbitrary-characteristics}
 
 Let $\cA = \prod_{s\in S}\cA_s/\cD$, where $\cA_s = \cB_s \cdot \cC_s$ is the maximal abelian extension of $\fF_s = \bF_s(t)$ for $\cD$-almost all $s \in S$. Then the algebraic part of $\cA$ over $\F$ coincides with the maximal abelian extension $\fM$ of $\F$, that is, $\cA_{\alg} = \cA \cap \F^{\alg} = \fM$.

 \end{proposition}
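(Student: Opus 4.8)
The plan is to deduce the statement almost immediately from Corollary~\ref{cor-relation-between-maximal-abelian-extensions-of-F_s-and-F}, after checking that its hypotheses are met here. First I would record the one reduction that is needed: since the characteristics $p_s$ are pairwise distinct for $\cD$-almost all $s\in S$, there is no prime $p>0$ with $\{s\in S\mid p_s=p\}\in\cD$, so \L{}o\'s' theorem forces $\fK=\prod_{s\in S}\bF_s/\cD$, and hence $\F=\fK(t)$, to have characteristic $0$. By part~(iii) of Theorem~\ref{thm-main-thm1-Algebraic-part-of-L-is-a-finite-separable-extension} (cf. Remark~\ref{rem-L_alg=L_sep}) every algebraic extension of $\F$ is separable, so for the ultra-field extension $\cA=\prod_{s\in S}\cA_s/\cD$ of $\cU(\F)$ one has $\cA_{\alg}=\cA_{\sep}$, and the framework supporting Corollary~\ref{cor-relation-between-maximal-abelian-extensions-of-F_s-and-F} applies without change.

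Next I would invoke that corollary directly. Each $\cA_s=\cB_s\cdot\cC_s$ is, by construction, the maximal abelian extension of $\fF_s=\bF_s(t)$, which is precisely the hypothesis of Corollary~\ref{cor-relation-between-maximal-abelian-extensions-of-F_s-and-F}; that corollary then gives that the algebraic part $\cA_{\alg}=\cA\cap\F^{\alg}$ is the maximal abelian extension of $\F$. Since $\fM$ was defined to be the maximal abelian extension of $\F$ inside the fixed separable closure $\F^{\sep}\subset\F^{\alg}$, and the maximal abelian extension of a field is unique inside a given separable closure, we conclude $\cA_{\alg}=\fM$.

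For completeness, and to make the ``complete analogy'' between characteristics $p>0$ and $0$ transparent, I would also unwind the two inclusions hidden inside Corollary~\ref{cor-relation-between-maximal-abelian-extensions-of-F_s-and-F}. The inclusion $\cA_{\alg}\subseteq\fM$ is Corollary~\ref{cor-main-corollary-1-abelian-Galois-group-of-L-alg-and-shadows}. Conversely, given a finite abelian extension $\H/\F$ --- which, by the explicit description of $\fM$, we may take to be generated over $\F$ by finitely many radicals $(t-a)^{1/n}$ --- one passes to the $s$-th shadows $\fH_s/\fF_s$ of Subsection~\ref{subsec-shadows-and-ultra-shadows}. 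These are again finite abelian over $\fF_s$ for $\cD$-almost all $s$ by Theorem~\ref{thm-main-thm3-Galois-group-structures-of-shadows}, and since a fixed $n$ is coprime to all but finitely many primes and those finitely many primes lie outside $\cD$ (the $p_s$ being distinct), the $\fH_s$ are generated by radicals $(t-a_s)^{1/n}$ with $\gcd(n,p_s)=1$ for $\cD$-almost all $s$, hence lie in the maximal pro-prime-to-$p_s$ extension $\cC_s\subseteq\cA_s$. Taking ultraproducts and algebraic parts then places $\H=\fH_{\alg}$ inside $\cA_{\alg}$.

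There is essentially no real obstacle: the substantive work --- irreducibility transfer (Lemma~\ref{rem-irreducibility-is-the-same-in-F-and-U(F)}), Galois descent of algebraic parts (Theorem~\ref{thm-main-thm2-Galois-property-of-algebraic-parts}), and the shadow correspondence --- was all carried out in Section~\ref{sec-algebraic-part-of-an-ultra-finite-extension}. The only points requiring a line of care here are the characteristic-$0$ reduction for $\fK$ (so that algebraic and separable parts coincide) and the coprimality bookkeeping that lets each radical generator of a fixed finite abelian extension of $\F$ be realized inside $\cC_s$ for $\cD$-almost all $s$; both are routine.
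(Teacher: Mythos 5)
Your proposal is correct and takes essentially the same route as the paper: the paper states Proposition~\ref{prop-relation-between-rational-function-fields-over-ACFs-of-arbitrary-characteristics} as an immediate consequence of Corollary~\ref{cor-relation-between-maximal-abelian-extensions-of-F_s-and-F}, with no further argument, exactly as you do in your first two paragraphs. Your third paragraph ``for completeness'' (shadows of radical generators landing in $\cC_s$ because $\gcd(n,p_s)=1$ for $\cD$-almost all $s$) is not part of the paper's proof of this proposition, but it anticipates almost verbatim the argument the paper gives immediately afterward in Lemma~\ref{lem-example-relation-between-rational-function-fields-over-ACFs-of-characteristics}(ii), so it is consistent with the paper's overall development; the only cosmetic point worth tightening is that an arbitrary finite abelian $\H/\F$ need not itself be generated by radicals but only sits inside a field that is, which suffices.
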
 
 
 In view of Proposition \ref{prop-relation-between-rational-function-fields-over-ACFs-of-arbitrary-characteristics}, the maximal abelian extension $\fM$ is built via taking the algebraic part of the ultra-field whose $s$-th component $\cA_s$ is the compositum of the maximal pro-prime-to-$p_s$ extension $\cC_s$ of $\fF_s$ that is constructed using the same procedure as $\fM$, with the maximal abelian pro-$p_s$ extension $\cB_s$ that comes from Artin--Schreier extensions of $\fF_s$. In view of the same construction of the extensions $\cC_s$ and $\fM$, i.e., both obtained from their ground fields by adjoining all the $n$-th roots of all monic linear polynomials over their constant fields for all integers $n$ prime to their constant fields' characteristics,  it is natural to ask \textit{why the structures of the maximal abelian pro-$p_s$ extensions $\cB_s$ disappears from the structure of $\fM$ after taking the algebraic part of the ultra-field extension $\cA$ of $\cU(\F)$?} In what follows, we will briefly explain, from a model-theoretic viewpoint, why $\fM$ does not contain an algebraic part of the ultraproduct of the $\cB_s$, i.e., there is no analogue of Artin--Schreier extensions in characteristic $0$, and provide an insight into why the constructions of the maximal pro-prime-to-$p_s$ extensions $\cC_s$ and $\fM$ are carried out in complete analogy.
 
 Let $\cB = \prod_{s\in S}\cB_s/\cD$ and $\cC = \prod_{s\in S}\cC_s/\cD$. Since $\cB_s \subset \cB_s \cdot \cC_s = \cA_s$ for $\cD$-almost all $s \in S$, we deduce that
 \begin{align*}
 \cB = \prod_{s\in S}\cB_s/\cD \subset \cA = \prod_{s\in S}\cA_s/\cD,
 \end{align*}
 and thus
 \begin{align*}
 \cB_{\alg} \subset \cA_{\alg} = \fM.
 \end{align*}
 
 Similarly, one can show that
\begin{align*}
 \cC_{\alg} \subset \cA_{\alg} = \fM,
 \end{align*}
 and thus
 \begin{align}
 \label{e1-example-relation-between-rational-function-fields-over-ACFs-of-characteristics}
\cC_{\alg} \subset \cB_{\alg}\cC_{\alg} \subset \cA_{\alg} = \fM.
 \end{align}
 
\begin{lemma}
\label{lem-example-relation-between-rational-function-fields-over-ACFs-of-characteristics}

\begin{itemize}
  
  \item []
  
  \item [(i)] $\cB_{\alg} = \F$.
  
  \item [(ii)] $\fM = \cC_{\alg}$.
  
\end{itemize}

\end{lemma}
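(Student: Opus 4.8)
The plan is to prove the two assertions in turn, deducing (ii) from (i) together with the chain of inclusions \eqref{e1-example-relation-between-rational-function-fields-over-ACFs-of-characteristics}. For part (i), the key point is that $\cB_s$ is the maximal abelian pro-$p_s$ extension of $\fF_s$, so every finite subextension of $\cB_s/\fF_s$ has degree a power of the prime $p_s$. I would argue by contradiction: suppose $\cB_{\alg} \ne \F$, so $\cB_{\alg}$ is a nontrivial (Galois, by Theorem \ref{thm-main-thm2-Galois-property-of-algebraic-parts}) extension of $\F$, and pick a finite subextension $\H/\F$ of $\cB_{\alg}/\F$ with $[\H:\F] = m > 1$. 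Take any prime $\ell$ dividing $m$. Passing to the shadows $\fH_s$ of $\H$ (Definition \ref{def-shadows-of-algebraic-extensions}, Theorem \ref{thm-main-thm3-Galois-group-structures-of-shadows}), we get $\fH_s/\fF_s$ Galois of degree $m$ for $\cD$-almost all $s$, and by Proposition \ref{prop-shadows-of-the-algebraic-part-are-inside} these satisfy $\fH_s \subset \cB_s$ for $\cD$-almost all $s$. But then $\ell \mid m = [\fH_s : \fF_s]$ forces $\ell$ to be a power of $p_s$, i.e. $\ell = p_s$, for $\cD$-almost all $s$; hence $\{s \in S \mid p_s = \ell\} \in \cD$, contradicting the standing assumption that the $p_s$ are distinct for $\cD$-almost all $s$ (equivalently, no prime occurs with frequency in $\cD$). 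Therefore $\cB_{\alg} = \F$.

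For part (ii), from \eqref{e1-example-relation-between-rational-function-fields-over-ACFs-of-characteristics} and part (i) we already have $\cC_{\alg} \subset \cB_{\alg}\cC_{\alg} \subset \fM$, so it suffices to show $\fM \subset \cC_{\alg}$, i.e. that the maximal abelian extension $\fM$ of $\F$ is contained in the algebraic part of the ultraproduct $\cC = \prod_{s\in S}\cC_s/\cD$. Here I would use the explicit description of $\fM$: it is obtained from $\F = \fK(t)$ by adjoining all $n$-th roots of all $t - a$ with $a \in \fK$ and $n \ge 1$. So $\fM$ is generated over $\F$ by the elements $(t-a)^{1/n}$, and it is enough to show each such generator lies in $\cC_{\alg}$. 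Writing $a = \ulim_{s\in S}a_s$ with $a_s \in \bF_s$, the element $(t - a_s)^{1/n}$ lies in $\cC_s$ as soon as $\gcd(n, p_s) = 1$, which holds for $\cD$-almost all $s$ since the $p_s$ are distinct (only finitely many primes $p_s$ can divide $n$, and no single prime occurs $\cD$-frequently). Hence $(t-a)^{1/n} = \ulim_{s\in S}(t-a_s)^{1/n}$ lies in $\cC = \prod_{s\in S}\cC_s/\cD$; being a root of $x^n - (t-a) \in \F[x]$, it is algebraic over $\F$, so it lies in $\cC \cap \F^{\alg} = \cC_{\alg}$. Since $\cC_{\alg}$ is a field containing $\F$ and all these generators, $\fM \subset \cC_{\alg}$, and combined with the reverse inclusion from \eqref{e1-example-relation-between-rational-function-fields-over-ACFs-of-characteristics} we conclude $\fM = \cC_{\alg}$, which also forces $\cB_{\alg}\cC_{\alg} = \fM$ consistently with part (i).

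I expect the main obstacle to be bookkeeping with the ultrafilter in part (i): one must be careful that "$\ell$ divides the degree of $\fH_s$ over $\fF_s$ for $\cD$-almost all $s$" genuinely forces $\ell = p_s$ on a set in $\cD$, using that $\cB_s$ is pro-$p_s$ so degrees of finite subextensions are exactly powers of $p_s$ — this is the step where the hypothesis that the characteristics are pairwise distinct ($\cD$-almost everywhere) is essential, and it mirrors exactly the argument already used in Proposition \ref{prop-L_alg-is-trivial}. A secondary subtlety in part (ii) is justifying that the explicit generators $(t-a)^{1/n}$ really do generate all of $\fM$ over $\F$ and that the construction of $\cC_s$ via $n$-th roots of linear polynomials $t - a_s$ (prime to $p_s$) is compatible under ultraproduct with the construction of $\fM$; this is routine given the stated descriptions of $\cC_s$ and $\fM$, but should be spelled out. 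Everything else is a direct application of the shadow/algebraic-part machinery of Section \ref{sec-algebraic-part-of-an-ultra-finite-extension}.
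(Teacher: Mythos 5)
Your proof is correct and follows essentially the same route as the paper: part (ii) is argued nearly verbatim (pick a generator $(t-a)^{1/n}$ of $\fM$, note $\gcd(n,p_s)=1$ for $\cD$-almost all $s$ so the $s$-th component $n$-th root lies in $\cC_s$, and conclude $\fM\subset\cC_{\alg}$; the reverse inclusion is \eqref{e1-example-relation-between-rational-function-fields-over-ACFs-of-characteristics}). For part (i), the paper's proof is just a one-line pointer to the Artin--Schreier computation of Subsection \ref{subsubsec-Artin-Schreier-extension-algebraic-parts} (Proposition \ref{prop-L_alg-is-trivial}); what you write is the correct expansion of that reference to the infinite pro-$p_s$ case, using the shadow machinery (Proposition \ref{prop-shadows-of-the-algebraic-part-are-inside}, Theorem \ref{thm-main-thm3-Galois-group-structures-of-shadows}) to descend a putative nontrivial finite subextension $\H\subset\cB_{\alg}$ to a fixed-degree $m>1$ subextension of $\cB_s$, forcing $m$ to be a power of $p_s$ and hence $p_s$ to be constant on a set in $\cD$, a contradiction. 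One small thing worth making explicit: $\cB_{\alg}$ is abelian over $\F$ by Corollary \ref{cor-main-corollary-1-abelian-Galois-group-of-L-alg-and-shadows}, so every finite subextension $\H$ is automatically Galois, which is needed to invoke Theorem \ref{thm-main-thm3-Galois-group-structures-of-shadows} as you do (though in fact only the degree equality is used, which already holds for the shadow of any finite separable extension). Your argument is sound.
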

 
 \begin{proof}
 
 Since $\cB_s$ is obtained from $\fF_s$ by taking all possible abelian finite towers of successive Artin-Schreier extensions for $\cD$-almost all $s \in S$, using the same arguments as in Subsection \ref{subsubsec-Artin-Schreier-extension-algebraic-parts}, we deduce that $\cB_{\alg} = \F$, which verifies part (i).
 
 By (\ref{e1-example-relation-between-rational-function-fields-over-ACFs-of-characteristics}), it suffices to prove that $\fM \subset \cC_{\alg}$.
 
 Take an arbitrary $n$-th root $\zeta_n$ of an element $\alpha = t - a \in \A = \fK[t] \subset \F$ for some positive integer $n$ and some element $a \in \fK$. By Einsenstein's Irreducibility Criterion (see Lang \cite{lang-algebra}), the polynomial $P(x) = x^n - \alpha \in \A[x]$ is irreducible over $\F$, and thus $P(x)$ is the minimal polynomial of the $n$-th root $\zeta_n$ of $\alpha$. 
 
 Since $a \in \fK = \prod_{s\in S}\bF_s/\cD$, we can write $a = \ulim_{s\in S}a_s$ for some elements $a_s \in \bF_s$ for $\cD$-almost all $s \in S$. Letting $\alpha_s = t - a_s \in \fA_s = \bF_s[t]$, we see that $P = \ulim_{s\in S}P_s$, where $P_s(x) = x^n - \alpha_s$. Thus for $\cD$-almost all $s \in S$, one can choose a suitable $n$-th root $\zeta_{n, s}$ of $\alpha_s$ such that $\zeta_n = \ulim_{s\in S}\zeta_{n, s}$ and $P_s(x)$ is the minimal polynomial of $\zeta_{n, s}$ (see Lemma \ref{lem-elementary-lemma1}). Note that for every positive integer $n$, there are only finitely many primes dividing $n$, and thus
 \begin{align*}
 \{s \in S\; | \; \text{$n$ is prime to the characteristic $p_s$ of $\bF_s$}\} \in \cD.
 \end{align*}
 Therefore by the construction of $\cC_s$, $\zeta_{n, s} \in \cC_s$ for $\cD$-almost all $s \in S$, and thus $\zeta_n = \ulim_{s\in S}\zeta_{n, s}$ belongs to $\cC = \prod_{s\in S}\cC_s/\cD$. It is trivial that $\zeta_n \in \F^{\alg}$, and thus $\zeta_n \in \cC \cap \F^{\alg} = \cC_{\alg}$. 
 
 Since $n$-th roots $\zeta_n$ generate the maximal abelian extension $\fM$ of $\F$, we deduce that $\fM \subset \cC_{\alg}$, which proves part (ii).

 \end{proof}

 \begin{remark}
 
 \begin{itemize}
 
 \item []
 
 \item [(i)] In view of part (ii) of the above lemma, the maximal abelian extension $\fM$ of $\F$ only depends on the algebraic part of the ultraproduct $\cC$ of the maximal pro-prime-to-$p_s$ extensions $\cC_s$, which explains why the construction of $\fM$ is carried out in complete analogy with the constructions of the extensions $\cC_s$ although $\fM$ is of characteristic $0$ and the extensions $\cC_s$ are of positive characteristics.  
 
 \item [(ii)] By the definitions of $\cB_s$ and $\cC_s$, $\cB_s \cap \cC_s = \fF_s$ for $\cD$-almost all $s \in S$, and thus $\cB \cap \cC = \prod_{s\in S}\fF_s/\cD = \cU(\F)$. Thus it follows from Lemma \ref{lem-U(F)-algebraic-part-is-F} that
     \begin{align}
     \label{e2-example-relation-between-rational-function-fields-over-ACFs-of-characteristics}
     \cB_{\alg} \cap \cC_{\alg} = \cU(\F)_{\alg} = \F.
     \end{align}
     
     By part (ii) of Lemma \ref{lem-example-relation-between-rational-function-fields-over-ACFs-of-characteristics} and (\ref{e1-example-relation-between-rational-function-fields-over-ACFs-of-characteristics}), we deduce that
     \begin{align*}
     \cC_{\alg} = \cB_{\alg}\cC_{\alg} = \fM,
     \end{align*}
     which implies that $\cB_{\alg} \subset \cC_{\alg}$. Thus we deduce from (\ref{e2-example-relation-between-rational-function-fields-over-ACFs-of-characteristics}) that $\cB_{\alg}$ must coincide with $\F$, which provides another proof to part (i) of Lemma \ref{lem-example-relation-between-rational-function-fields-over-ACFs-of-characteristics}.

 \item [(iii)] In view of Proposition \ref{prop-relation-between-rational-function-fields-over-ACFs-of-arbitrary-characteristics} and part (ii) of Lemma \ref{lem-example-relation-between-rational-function-fields-over-ACFs-of-characteristics}, we know that 
     \begin{align*}
     \fM = \cA_{\alg} = \cC_{\alg}.
     \end{align*}
     
     The above equations signify that although $\fM$ can be obtained by taking the algebraic part of the ultra-field $\cA$ whose $s$-th component is the compositum of $\cB_s$ with $\cC_s$ for $\cD$-almost all $s \in S$, the resulting maximal abelian extension $\fM$ of $\F$ only coincides with the algebraic part $\cC_{\alg}$, which suggests that the structures of the extensions $\cB_s$ that involve Artin--Schreier extensions, completely disappear from the structure of $\fM$ after taking algebraic parts.

 \end{itemize}

 \end{remark}
 
 \subsection{Applications to the Inverse Galois Problem}
 \label{subsec-inverse-Galois-problem}
 
 The inverse Galois problem for a field $K$ concerns whether or not every finite group occurs as the Galois group of some Galois extension of $K$. Let $G$ be a given finite group, and $K$ a field. If there exists a Galois extension $L/K$ whose Galois group $\Gal(L/K)$ is isomorphic to $G$, we say that \textbf{$G$ is realizable over $K$}. The \textbf{inverse Galois problem is solvable for $K$} if every finite group is realizable over $K$.

 The inverse Galois problem is still unsolved over $\bQ$ and over the rational function field $\bF_q(t)$ over a finite field $\bF_q$. 
In this subsection, we apply the theory of shadows developed in Subsection \ref{subsec-shadows-and-ultra-shadows} to study the inverse Galois problem over certain fields. 

 \begin{theorem}
 \label{thm-main-thm-for-inverse-Galois-problem}
 
 \begin{itemize}
 
 \item []
 
 \item [(i)] If a finite group $G$ is realizable over $\F = \fK(t)$, then $G$ is realizable over $\fF_s = \bF_s(t)$ for $\cD$-almost all $s \in S$.

 \item [(ii)] If a finite group $G$ is realizable over $\F = \fK(t)$, then $G$ is realizable over the ultra-hull $\cU(\F) = \prod_{s\in S}\fF_s/\cD$. In particular, if the inverse Galois problem is solvable over $\F$, then the inverse Galois problem is solvable over the ultra-hull $\cU(\F)$.

 \end{itemize}

 \end{theorem}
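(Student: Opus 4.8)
The plan is to deduce both parts directly from the theory of shadows developed in Subsection \ref{subsec-shadows-and-ultra-shadows}, and in particular from Theorem \ref{thm-main-thm3-Galois-group-structures-of-shadows}, which already carries out the substantive work. The point worth stressing at the outset is that $\F = \fK(t)$ is \emph{not} itself an ultraproduct, so one cannot simply invoke \L{}o\'s' theorem to push a realization of $G$ over $\F$ down to the function fields $\fF_s$; this descent is exactly what the shadow construction is designed to provide.

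Concretely, I would argue as follows. Assume $G$ is realizable over $\F$ and fix a Galois extension $\H/\F$ with $\Gal(\H/\F)\cong G$. Since $G$ is finite, $\H/\F$ is a finite Galois extension --- in particular a finite separable extension of $\F$ of degree $m=\card(G)$ --- so $\H$ possesses an $s$-th shadow $\fH_s$ over $\fF_s$ for $\cD$-almost all $s\in S$ and an ultra-shadow $\fH=\prod_{s\in S}\fH_s/\cD$ over $\cU(\F)$. For part (i), part (i) of Theorem \ref{thm-main-thm3-Galois-group-structures-of-shadows} asserts that $\fH_s$ is a finite Galois extension of degree $m$ over $\fF_s$ with $\Gal(\fH_s/\fF_s)\cong G$ for $\cD$-almost all $s$; hence $G$ is realizable over $\fF_s$ for $\cD$-almost all $s\in S$. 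For part (ii), part (ii) of the same theorem gives that $\fH$ is a finite Galois extension of degree $m$ over $\cU(\F)$ with $\Gal(\fH/\cU(\F))\cong G$, so $G$ is realizable over $\cU(\F)$; the ``in particular'' assertion then follows at once, since if every finite group is realizable over $\F$ the same conclusion for $\cU(\F)$ is obtained by applying the above to each finite group in turn.

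Given Theorem \ref{thm-main-thm3-Galois-group-structures-of-shadows}, no serious obstacle remains, and the proof reduces to two elementary observations: that realizability indeed furnishes a \emph{finite separable} extension of $\F$ (true because a Galois extension with finite Galois group is finite, normal and separable), so that the shadow machinery is applicable; and that no assumption on $\mathrm{char}(\fK)$ is required, since Theorem \ref{thm-main-thm3-Galois-group-structures-of-shadows} holds unconditionally. The genuinely delicate step --- transporting both the degree and the full Galois group faithfully from $\H$ to the $\fH_s$ and to $\fH$ --- is precisely the content of that theorem, and ultimately rests on Lemma \ref{rem-irreducibility-is-the-same-in-F-and-U(F)} (preservation of irreducibility in passing from $\F$ to $\cU(\F)$) together with the compatibility of the Galois conjugates of a primitive element of $\H$ with the $\ulim$ operation established in its proof.
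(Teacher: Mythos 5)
Your proof is correct and follows essentially the same route as the paper: fix a Galois extension $\H/\F$ realizing $G$, take the $s$-th shadows $\fH_s$ and the ultra-shadow $\fH$, and invoke Theorem \ref{thm-main-thm3-Galois-group-structures-of-shadows} to transport the Galois group. Your added remarks (that $\H/\F$ is automatically finite separable, that \L{}o\'s cannot be applied directly to $\F$, and that no characteristic hypothesis is needed) are accurate and helpful framing, but the core argument coincides with the paper's.
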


\begin{proof}

Let $G$ be a finite group that is realizable over $\F$, and let $\H$ be a Galois extension of $\F$ whose Galois group $\Gal(\H/\F)$ is isomorphic to $G$. Let $\fH_s$ be the $s$-th shadow of $\H$ for $\cD$-almost all $s \in S$, and let $\fH = \prod_{s\in S}\fH_s/\cD$ be the ultra-shadow of $\H$. By Theorem \ref{thm-main-thm3-Galois-group-structures-of-shadows}, $\fH_s$ is a finite Galois extension of $\fF_s$ whose Galois group $\Gal(\fH_s/\fF_s)$ is isomorphic to $\Gal(\H/\F)$, and thus $\Gal(\fH_s/\fF_s)$ is isomorphic to $G$ for $\cD$-almost all $s \in S$, which proves part (i).

Similarly, by Theorem \ref{thm-main-thm3-Galois-group-structures-of-shadows}, the ultra-shadow $\fL$ is a finite Galois extension of $\cU(\F)$ whose Galois group $\Gal(\fL/\cU(\F))$ is isomorphic to $\Gal(\H/\F)$, and thus $\Gal(\fL/\cU(\F))$ is isomorphic to $G$. Thus part (ii) follows immediately.

\end{proof}

\subsubsection{An asymptotic version of the inverse Galois problem for the rational function field $\bF_q(t)$ over a finite field $\bF_q$}

In this example, $S$ will be an infinite set of prime powers $q$. For each $q \in S$, let $\bF_q$ denote a finite field of $q$ elements. We will determine a set $S$ in the proof of Corollary \ref{cor-inverse-galois-problem-for-function-fields-over-finite-fields}.

Recall that a field $K$ is \textbf{pseudo-algebraically closed (PAC)}  if each absolutely irreducible variety $V$ defined over $K$ has a $K$-rational point (see \cite{FJ}). 

It is well-known that the ultraproduct $\fK = \prod_{s\in S}\bF_s/\cD$ is PAC (see Ax \cite{ax-1968}). Since $\F = \fK(t)$ is a rational function field over the PAC field $\fK$ of characteristic $0$, we know from Fried--V\"{o}lklein \cite[Theorem 2]{FV} or Pop \cite{pop-1996} that every finite group is realizable over $\F$. Using Theorem \ref{thm-main-thm-for-inverse-Galois-problem}, we obtain another proof to the following.

\begin{corollary}
\label{cor-inverse-galois-problem-for-function-fields-over-finite-fields}
(Fried--V\"{o}lklein, Jarden, Pop)

For every finite group $G$, there exists an absolute constant $N > 0$ such that $G$ is realizable over the rational function field $\bF_q(t)$ over a finite field $\bF_q$ for all prime powers $q > N$. 

\end{corollary}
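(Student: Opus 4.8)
The plan is to deduce Corollary \ref{cor-inverse-galois-problem-for-function-fields-over-finite-fields} from Theorem \ref{thm-main-thm-for-inverse-Galois-problem}(i) by a standard ultraproduct-compactness argument, applied one finite group at a time. First I would fix an arbitrary finite group $G$ and let $S$ be the set of all prime powers $q$; for each $q \in S$ let $\bF_q$ be the field of $q$ elements, and fix a nonprincipal ultrafilter $\cD$ on $S$. Then $\fK = \prod_{q\in S}\bF_q/\cD$ is a field of characteristic $0$ (since for each prime $p$ the set of $q$ that are powers of $p$ is finite, hence not in $\cD$), and it is PAC by Ax \cite{ax-1968}. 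Therefore $\F = \fK(t)$ is a rational function field over a PAC field of characteristic $0$, so every finite group — in particular $G$ — is realizable over $\F$ by Fried--V\"olklein \cite{FV} or Pop \cite{pop-1996}.

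Next I would invoke Theorem \ref{thm-main-thm-for-inverse-Galois-problem}(i): since $G$ is realizable over $\F = \fK(t)$, it is realizable over $\fF_q = \bF_q(t)$ for $\cD$-almost all $q \in S$, i.e. the set
\begin{align*}
A_G = \{q \in S \; | \; \text{$G$ is realizable over $\bF_q(t)$}\}
\end{align*}
belongs to $\cD$. The key point now is that membership of $A_G$ in the nonprincipal ultrafilter $\cD$ forces $A_G$ to be an infinite subset of $S$ — indeed no finite set lies in a nonprincipal ultrafilter — but more is needed: we want $A_G$ to contain \emph{every} sufficiently large prime power, not merely infinitely many. This is where a second observation enters: the property ``$G$ is realizable over $\bF_q(t)$'' is, for a \emph{fixed} finite group $G$, governed by a first-order statement in the language of rings with parameters (one quantifies over the finitely many coefficients of a primitive element and its conjugates, and over the finitely many potential automorphisms), so the realizability of $G$ over $\bF_q(t)$ transfers across \emph{any} ultraproduct $\prod_{q}\bF_q(t)/\cD'$ with the same $\cD$-almost-all behaviour. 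Combined with the fact that every nonprincipal ultrafilter on $S$ contains all cofinite sets' complements appropriately, one concludes by the usual argument: if $A_G$ were \emph{not} cofinite, then $S \setminus A_G$ would be infinite, and one could choose a nonprincipal ultrafilter $\cD'$ on $S$ containing $S\setminus A_G$; rerunning the argument of the previous paragraph with $\cD'$ would show $G$ realizable over $\bF_q(t)$ for $\cD'$-almost all $q$, i.e. $A_G \in \cD'$, contradicting $S\setminus A_G \in \cD'$. Hence $A_G$ is cofinite in $S$, so there is an absolute constant $N = N(G) > 0$ with $\{q \in S \; | \; q > N\} \subseteq A_G$; that is, $G$ is realizable over $\bF_q(t)$ for all prime powers $q > N$, which is exactly the claim.

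I expect the main obstacle to be the verification that realizability of a \emph{fixed} finite group $G$ over $\bF_q(t)$ is expressible by a first-order sentence uniformly in $q$ — this is what makes the ``infinitely many $q$'' output of Theorem \ref{thm-main-thm-for-inverse-Galois-problem} upgrade to ``all large $q$''. One has to package the existence of a Galois extension of $\bF_q(t)$ with group $G$ as a $\bF_q$-variety (a moduli-type variety of $G$-covers, or more elementarily: the existence of a monic separable polynomial of degree $|G|$ over $\bF_q(t)$ whose splitting field has group $G$, encoded via the resolvent/discriminant conditions and the action of $\mathrm{Gal}$ on the roots) having an $\bF_q(t)$-rational — hence, after clearing denominators, an $\bF_q[t]$-point with bounded degree — solution. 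The bounded-degree bookkeeping is precisely the content already handled in the proof of Lemma \ref{rem-irreducibility-is-the-same-in-F-and-U(F)} and Lemma \ref{lem-elementary-lemma}, so I would cite those rather than redo them. Once first-order expressibility is in hand, \L{}o\'s' theorem (Theorem \ref{thm-Los}) does all the remaining work, and the cofiniteness of $A_G$ follows from the elementary ultrafilter manipulation above.
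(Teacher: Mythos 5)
Your core argument — take a nonprincipal ultrafilter $\cD'$ concentrating on the bad set $S\setminus A_G$, apply Theorem \ref{thm-main-thm-for-inverse-Galois-problem}(i) over the resulting ultra-finite field, and derive a contradiction — is exactly the paper's argument, just packaged differently: the paper takes $S$ to be the bad set from the outset and lets $\cD$ live on it, whereas you take $S$ to be all prime powers and restrict afterward. The first half of your proposal (the initial ultrafilter $\cD$, which only shows $A_G$ is infinite) is superfluous; the $\cD'$ paragraph already contains the whole proof.

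Two things should be flagged. First, the parenthetical justification ``since for each prime $p$ the set of $q$ that are powers of $p$ is finite, hence not in $\cD$'' is false: the set $\{p, p^2, p^3, \ldots\}$ is infinite, so an ultrafilter on the set of all prime powers (or on $S\setminus A_G$) can perfectly well concentrate on powers of a single prime, in which case $\fK$ has characteristic $p>0$. This does not break the argument — PAC fields are large and Pop's theorem on large fields realizes every finite group over $K(t)$ in arbitrary characteristic, and the shadows machinery behind Theorem \ref{thm-main-thm-for-inverse-Galois-problem} is characteristic-free — but the stated reason is wrong, and Fried--V\"olklein \cite[Theorem 2]{FV} alone (which is a characteristic-$0$ statement) would not suffice in that case. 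The paper's own proof also asserts characteristic $0$ without justification and is open to the same objection, so this is a shared gap rather than a divergence.

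Second, the long digression on first-order expressibility of ``$G$ is realizable over $\bF_q(t)$'' is a misdiagnosis. You identify this as ``the main obstacle'' to upgrading from ``infinitely many $q$'' to ``all sufficiently large $q$,'' but no such encoding is needed: the upgrade is already delivered by the freedom to choose $\cD'$ on the complement of $A_G$ and rerun Theorem \ref{thm-main-thm-for-inverse-Galois-problem}(i), which is precisely what you then do. The theorem does the transfer internally via the shadow construction; you do not need to re-express realizability as a first-order sentence, and attempting to do so (varieties of $G$-covers, bounded-degree bookkeeping) would be a much harder and entirely separate project. Once the $\cD'$ contradiction is written, the proof is complete.
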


\begin{proof}

Assume that there exists a finite group $G$ such that the assertion fails for infinitely many prime powers $q$. Let $S$ be the set of such prime powers $q$. Let $\cD$ be an arbitrary nonprincipal ultrafilter on $S$. Then the ultraproduct $\fK = \prod_{q \in S}\bF_q/\cD$ is a PAC of characteristic $0$. By Fried--V\"{o}lklein \cite[Theorem 2]{FV} (see also Pop \cite{pop-1996}), $G$ is realizable over $\F = \fK(t)$. By Theorem \ref{thm-main-thm-for-inverse-Galois-problem}, $G$ is realizable over $\fF_q = \bF_q(t)$ for $\cD$-almost all $q \in S$, i.e.,
\begin{align*}
A = \{q \in S \; | \; \text{$G$ is realizable over $\bF_q(t)$}\}\in \cD.
\end{align*}

Since $\cD$ is a nonpricipal ultrafilter, $A$ must be an infinite subset of $S$, which is a contradiction since $G$ is not realizable over $\bF_q(t)$ for all $q \in S$.

\end{proof}

The above theorem is due to Fried--V\"{o}lklein \cite{FV}, Jarden, and Pop (see \cite[Proposition 3.3.9]{harbater-2003}). In \cite{FV}, Fried and V\"{o}lklein provide two proofs to the above result, one of which uses ultraproducts. Our proof above, in spite of relying on shadows that are constructed using ultraproducts, uses a completely different approach, and provides an explicit construction of a Galois extension over $\bF_q(t)$ for all sufficiently large prime powers $q$ whose Galois group is isomorphic to a given finite group.

\section{Ramifications of primes in the algebraic part of an ultra-field extension}
\label{sec-ramification-of-primes-Hilbert-theory}

Throughout this section, assume that $\fL_s$ is a (possibly infinite) Galois extension over $\fF_s = \bF_s(t)$. Let $\Gal(\fL_s/\fF_s)$ denote the Galois group of $\fL_s$ over $\fF_s$ for $\cD$-almost all $s \in S$, and let $\Gal_{\ultra}(\fL/\cU(\F))$ denote the ultra-Galois group of the ultra-field extension $\fL = \prod_{s\in S}\fL_s/\cD$ over $\cU(\F)$. Let $\fL_{\alg}$ be the algebraic part of $\fL$ over $\F$. By Theorem \ref{thm-main-thm2-Galois-property-of-algebraic-parts}, $\fL_{\alg}$ is a (possibly infinite) Galois extension of $\F$. Let $\Gal(\fL_{\alg}/\F)$ denote the Galois group of $\fL_{\alg}$ over $\F$. The main results in this section provide insights into ramifications of primes of $\F$ in $\fL_{\alg}$. 

Let $\cO_{\fL_{\alg}}$ denote the ring of integers of $\fL_{\alg}$ that consists of all elements $\alpha \in \fL_{\alg}$ such that the minimal polynomial of $\alpha$ is a monic, irreducible polynomial with coefficients in $\A = \fK[t]$. 

Let $\cO_{\fL_s}$ denote the ring of integers of $\fL_s$, i.e., $\cO_{\fL_s}$ is the integral closure of $\fA_s = \bF_s[t]$ in $\fL_s$ for $\cD$-almost all $s \in S$. Let
\begin{align}
\label{def-ultra-ring-of-integers-O_L}
\cO_{\fL} = \prod_{s\in S}\cO_{\fL_s}/\cD
\end{align}
be the ultraproduct of the $\cO_{\fL_s}$ with respect to the nonprincipal ultrafilter $\cD$. 

When studying ramifications of primes of $\F$ in $\fL_{\alg}$, we mainly use shadows and the ultra-shadow of $\fL_{\alg}$, and their rings of integers, instead of the ring of integers $\cO_{\fL_s}$ and the ultraproduct $\cO_{\fL}$.

We first consider the case when $\fL_{\alg}$ is a finite Galois extension of $\F$, and then move to the case when $\fL_{\alg}$ is an infinite Galois extension of $\F$.

\subsection{Prime ramifications in $\fL_{\alg}$ when $\fL_{\alg}$ is a finite Galois extension of $\F$}
\label{subsec-prime-ramification-in-L-alg-finite-case}

Throughout this subsection, we further assume that $\fL_{\alg}$ is a finite Galois extension of degree $m$ over $\F$. This holds, for example, if $\fL_s$ is a finite Galois extension of degree $n_s \in \bZ_{>0}$ over $\fF_s$ for $\cD$-almost all $s \in S$ such that there are only finitely many integers dividing the hyperinteger $n = \ulim_{s\in S}n_s$ in $\bZ^{\#}$ (see Theorem \ref{cor-main-cor1-L_alg-is-a-finite-Galois-extension-of-F}). In this case, the degree $m$ of $\fL_{\alg}$ over $\F$ divides $n$ in $\bZ^{\#}$ (see Remark \ref{rem-divisibility-of-global-degree-and-ultra-degrees}).

Let $\fS_s$ denote the $s$-th shadow of $\fL_{\alg}$ for $\cD$-almost all $s \in S$, and let $\fS = \prod_{s\in S}\fG_s/\cD$ denote the ultra-shadow of $\fL_{\alg}$. By Theorem \ref{thm-main-thm3-Galois-group-structures-of-shadows}, $\fS_s/\fF_s$ is a finite Galois subextension of $\fL_s/\fF_s$ whose Galois group $\Gal(\fS_s/\fF_s)$ is isomorphic to $\Gal(\fL_{\alg}/\F)$, and $\fS$ is an ultra-subfield of $\fL$ such that
\begin{align}
\label{e1-subsection-prime-ramifications-L_alg-is-finite-extension}
\fS_{\alg} = \fS \cap \F^{\alg} = \fL_{\alg},
\end{align}
and $\fS$ is a finite Galois extension of $\cU(\F)$ whose Galois group $\Gal(\fS/\cU(\F))$ is also isomorphic to $\Gal(\fL_{\alg}/\F)$.

Let $\cO_{\fS_s}$ denote the ring of integers of $\fS_s$, i.e., $\cO_{\fS_s}$ is the integral closure of $\fA_s = \bF_s[t]$ in $\fS_s$. Let
\begin{align}
\label{e2-def-of-O_S-subsection-prime-ramifications-L_alg-is-finite-extension}
\cO_{\fS} = \prod_{s\in S}\cO_{\fS_s}/\cD \subset \fS
\end{align}
be the ultraproduct of the $\cO_{\fS_s}$ with respect to $\cD$. It is trivial that
\begin{align*}
\cO_{\fS} \subset \cO_{\fL},
\end{align*}
where $\cO_{\fL}$ is defined in (\ref{def-ultra-ring-of-integers-O_L}).

\begin{remark}

$\cO_{\fS}$ is, in general, not a Dedekind domain, but is a Pr\"ufer domain (see Schoutens \cite{schoutens}).

\end{remark}

We will need the following definition to describe certain prime ideals in $\cO_{\fL_{\alg}}$.

\begin{definition}
\label{def-ultra-prime-ideals-in-O-S}
(ultra-prime ideals in $\cO_{\fS}$)

An ultraproduct of prime ideals $\fp_s$ in $\cO_{\fS_s}$ is called an \textbf{ultra-prime ideal of $\cO_{\fS}$}.

\end{definition}

\begin{proposition}
\label{prop-algebraic-part-of-ultra-integers}

The algebraic parts of $\cO_{\fS}$ and $\cO_{\fL}$ over $\F$ are the same, and coincide with $\cO_{\fL_{\alg}}$, that is,

\begin{align*}
(\cO_{\fS})_{\alg} = (\cO_{\fL})_{\alg} = \cO_{\fL_{\alg}},
\end{align*}
where $(\cO_{\fS})_{\alg} = \cO_{\fS} \cap \F^{\alg}$ and $(\cO_{\fL})_{\alg} = \cO_{\fL} \cap \F^{\alg}$.

\end{proposition}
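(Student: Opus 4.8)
~\textbf{The plan.} The strategy is to prove the two inclusions in each direction, chaining the equalities
\[
\cO_{\fL_{\alg}} \subseteq (\cO_{\fS})_{\alg} \subseteq (\cO_{\fL})_{\alg} \subseteq \cO_{\fL_{\alg}}.
\]
The middle inclusion $(\cO_{\fS})_{\alg} \subseteq (\cO_{\fL})_{\alg}$ is immediate since $\cO_{\fS} \subseteq \cO_{\fL}$ as noted above, so the real work is in the first and last inclusions. For the first inclusion, I would take an arbitrary $\alpha \in \cO_{\fL_{\alg}}$; by definition its minimal polynomial $f(x) = x^r + b_{r-1}x^{r-1} + \cdots + b_0$ over $\F$ lies in $\A[x] = \fK[t][x]$. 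Writing $b_i = \ulim_{s \in S} b_{i,s}$ with $b_{i,s} \in \fA_s = \bF_s[t]$ and $\alpha = \ulim_{s\in S}\alpha_s$ (using $\alpha \in \fL_{\alg} = \fS_{\alg} \subseteq \fS$, which is (\ref{e1-subsection-prime-ramifications-L_alg-is-finite-extension})), \L o\'s' theorem gives $f_s(\alpha_s) = 0$ with $f_s \in \fA_s[x]$ monic, hence $\alpha_s$ is integral over $\fA_s$, i.e.\ $\alpha_s \in \cO_{\fS_s}$ for $\cD$-almost all $s$. Therefore $\alpha = \ulim_{s\in S}\alpha_s \in \cO_{\fS}$, and since $\alpha \in \F^{\alg}$ already, $\alpha \in (\cO_{\fS})_{\alg}$.

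For the last inclusion $(\cO_{\fL})_{\alg} \subseteq \cO_{\fL_{\alg}}$, take $\alpha \in \cO_{\fL} \cap \F^{\alg}$. Since $\alpha$ is algebraic over $\F$, it lies in $\fL_{\alg}$ once I know $\alpha \in \fL$ — but $\cO_{\fL} \subseteq \fL$, so that is automatic, and $\alpha \in \fL \cap \F^{\alg} = \fL_{\alg}$. It remains to show $\alpha$ is integral over $\A$. Write $\alpha = \ulim_{s\in S}\alpha_s$ with $\alpha_s \in \cO_{\fL_s}$, so each $\alpha_s$ satisfies a monic polynomial $g_s \in \fA_s[x]$, say of degree $d_s$. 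The obstacle is that the degrees $d_s$ and the degrees of the coefficients of $g_s$ are a priori unbounded, so $\ulim_{s\in S}g_s$ need not lie in $\A[x]$. To get around this, I would instead use the minimal polynomial $f(x) \in \F[x]$ of $\alpha$ over $\F$, which has bounded degree $r = [\F(\alpha):\F] \le m$, and argue that its coefficients already lie in $\A$. Indeed, each coefficient $b_i$ of $f$ is, up to sign, an elementary symmetric function of the $\F$-conjugates of $\alpha$; each conjugate $\sigma(\alpha)$ with $\sigma \in \Gal(\fL_{\alg}/\F)$ corresponds via Remark~\ref{rem-relation-between-s-th-component-of-lambda-and-its-image-in-Gal(fS/cU(F))} to $\ulim_{s\in S}\sigma_s(\alpha_s)$ with $\sigma_s \in \Gal(\fS_s/\fF_s)$, and $\sigma_s(\alpha_s) \in \cO_{\fL_s}$ since ring automorphisms preserve integrality. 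Hence each $b_{i,s} := $ (the corresponding elementary symmetric function in the $\sigma_s(\alpha_s)$) lies in $\fA_s \cap \fF_s = \fA_s$, and $b_i = \ulim_{s\in S}b_{i,s} \in \cU(\A)$; but $b_i \in \F$ as well, so $b_i \in \cU(\A)_{\alg} = \A$ by part (ii) of Lemma~\ref{lem-U(F)-algebraic-part-is-F}. Thus $f \in \A[x]$ is a monic polynomial killing $\alpha$, so $\alpha \in \cO_{\fL_{\alg}}$.

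The main obstacle, as flagged, is controlling degrees in the last inclusion: one cannot simply transport an unbounded family of integral equations $(g_s)$ through the ultraproduct into $\cU(\A)[x]$. The resolution is to work with the minimal polynomial over $\F$ (which has degree bounded by $m = [\fL_{\alg}:\F]$) rather than with the ad hoc monic equations over $\fA_s$, and to identify its coefficients concretely as ultra-limits of symmetric functions in the $s$-th components of the Galois conjugates — this is exactly where Theorem~\ref{thm-main-thm3-Galois-group-structures-of-shadows} and the description of conjugates in Remark~\ref{rem-relation-between-s-th-component-of-lambda-and-its-image-in-Gal(fS/cU(F))} do the heavy lifting, together with Lemma~\ref{lem-U(F)-algebraic-part-is-F} to conclude the coefficients descend to $\A$. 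A minor point to check along the way is that $\cO_{\fL_{\alg}}$ as defined (via minimal polynomials with coefficients in $\A$) indeed coincides with the integral closure of $\A$ in $\fL_{\alg}$, which follows from the standard fact that for a Dedekind domain $\A$ with fraction field $\F$, an element of a finite extension is integral over $\A$ iff its minimal polynomial over $\F$ has coefficients in $\A$ (see Chevalley~\cite{chevalley} or Lang~\cite{lang-algebra}).
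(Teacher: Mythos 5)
Your overall architecture — the chain of inclusions, with $(\cO_{\fS})_{\alg}\subseteq(\cO_{\fL})_{\alg}$ coming for free — matches the paper, and your argument for $\cO_{\fL_{\alg}}\subseteq(\cO_{\fS})_{\alg}$ is essentially identical to the paper's (pull the monic minimal polynomial in $\A[x]$ back to monic irreducible $P_s\in\fA_s[x]$ and conclude $\alpha_s\in\cO_{\fS_s}$). Where you genuinely diverge is in $(\cO_{\fL})_{\alg}\subseteq\cO_{\fL_{\alg}}$. The paper scales the minimal polynomial of $\alpha$ to a \emph{primitive} irreducible $P\in\A[x]$, transports primitivity and irreducibility to $P_s\in\fA_s[x]$, and compares $P_s$ with the monic minimal polynomial of $\alpha_s\in\cO_{\fL_s}$ to conclude that the leading coefficient of $P$ divides $\gcd=1$ and hence is a unit in $\fK$ — a Gauss-lemma-flavored argument that never invokes the Galois conjugates. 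You instead express the coefficients of the minimal polynomial $f\in\F[x]$ as symmetric functions of the conjugates $\sigma(\alpha)$, push these through the shadow machinery (Theorem~\ref{thm-main-thm3-Galois-group-structures-of-shadows} and Remark~\ref{rem-relation-between-s-th-component-of-lambda-and-its-image-in-Gal(fS/cU(F))}), and descend to $\A$ via Lemma~\ref{lem-U(F)-algebraic-part-is-F}(ii). Both routes work; the paper's is more self-contained and avoids the Galois apparatus, while yours is the more ``standard ANT'' argument and makes the dictionary ``integral over $\A$ $\iff$ monic minimal polynomial over $\A$'' explicit.

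Your version has a gap that must be closed before it typechecks: you write $\alpha=\ulim_{s\in S}\alpha_s$ with $\alpha_s\in\cO_{\fL_s}$ and then apply $\sigma_s\in\Gal(\fS_s/\fF_s)$ to $\alpha_s$, but $\sigma_s$ is an automorphism of $\fS_s$ only, and $\cO_{\fL_s}\not\subset\fS_s$ in general. The fix is immediate and worth stating: since $\alpha\in\fL_{\alg}=\fS_{\alg}\subset\fS=\prod_{s\in S}\fS_s/\cD$, the representatives may be taken in $\fS_s$, whence $\alpha_s\in\cO_{\fL_s}\cap\fS_s=\cO_{\fS_s}$ for $\cD$-almost all $s$ (the integral closure of $\fA_s$ in $\fS_s$ being $\cO_{\fL_s}\cap\fS_s$). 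With $\alpha_s\in\cO_{\fS_s}$ the conjugates $\sigma_s(\alpha_s)$ remain in $\cO_{\fS_s}$, the symmetric functions land in $\cO_{\fS_s}\cap\fF_s=\fA_s$ (you should replace your ``$\fA_s\cap\fF_s=\fA_s$,'' which is vacuous, by this — the point is that $\fA_s$ is integrally closed in its fraction field), and then $b_i\in\cU(\A)\cap\F\subset\cU(\A)_{\alg}=\A$ as you say. Note also that observing $\alpha_s\in\cO_{\fS_s}$ already yields $(\cO_{\fL})_{\alg}\subseteq(\cO_{\fS})_{\alg}$, i.e.\ $(\cO_{\fL})_{\alg}=(\cO_{\fS})_{\alg}$, so your symmetric-function argument (or the paper's primitivity argument) is really only needed to close the remaining inclusion $(\cO_{\fS})_{\alg}\subseteq\cO_{\fL_{\alg}}$.
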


\begin{proof}

Since $(\cO_{\fS})_{\alg} \subset (\cO_{\fL})_{\alg}$, we only need to prove that
\begin{align*}
(\cO_{\fL})_{\alg} \subset \cO_{\fL_{\alg}} \subset (\cO_{\fS})_{\alg}.
\end{align*}

We first prove that $(\cO_{\fL})_{\alg} \subset \cO_{\fL_{\alg}}$.

Take an arbitrary element $\alpha \in (\cO_{\fL})_{\alg} = \cO_{\fL} \cap \F^{\alg}$. We can write $\alpha = \ulim_{s\in S}\alpha_s$ for some elements $\alpha_s \in \cO_{\fL_s}$. Since $\alpha \in \F^{\alg}$ there exists a primitive polynomial $P(x) = \sum_{i = 0}^ha_ix^i \in \A[x]$ of degree $h$ such that $P(x)$ is irreducible over $\F$ and $P(\alpha) = 0$, where the $a_i \in \A$, $a_h \ne 0$, and $\gcd(a_0, \ldots, a_h) = 1$. By Lemma \ref{lem-elementary-lemma1}, one can write
\begin{align*}
P(x) = \ulim_{s\in S}P_s(x),
\end{align*}
where 
\begin{align*}
P_s(x) = a_{0, s} + a_{1, s}x + \cdots + a_{h, s}x^h 
\end{align*}
is a primitive polynomial of degree $h$ in $\fA_s[x]$, and irreducible over $\fF_s$ for $\cD$-almost all $s \in S$,
and the $a_{i, s}$ belong to $\fA_s$ such that 
\begin{align*}
a_i = \ulim_{s \in S}a_{i, s}
\end{align*}
for all $0 \le i \le h$, $a_{h, s} \ne 0$, and $\gcd(a_{0, s}, \ldots, a_{h, s}) = 1$ for $\cD$-almost all $s \in S$. 

Since $P(\alpha) = 0$ and $\alpha = \ulim_{s\in S}\alpha_s$, we see that $P_s(\alpha_s) = 0$ for $\cD$-almost all $s \in S$. Thus the minimal polynomial $f_s(x)$ of $\alpha_s$ over $\fF_s$ divides $P_s(x)$ for $\cD$-almost all $s \in S$. Since $\alpha_s \in \cO_{\fL_s}$, $f_s(x)$ is of the form
\begin{align*}
f_s(x) = b_{0, s} + b_{1, s}x + \cdots + b_{h_s - 1, s}x^{h_s - 1} + x^{h_s},
\end{align*}
where the $b_{i, s} \in \fA_s$ and $h_s \in \bZ_{>0}$. Since both $f_s(x)$ and $P_s(x)$ are irreducible over $\fF_s$, $\deg(f_s) = h_s = \deg(P_s) = h$ for $\cD$-almost all $s \in S$, and thus
\begin{align*}
\beta_s f_s(x) = b_{0, s}\beta_s + b_{1, s}\beta_s x + \cdots +  b_{h - 1, s}\beta_sx^{h - 1} + \beta_s x^{h_s} = P_s(x).
\end{align*}
for some nonzero element $\beta_s \in \fF_s$. Comparing the leading coefficients of $P_s(x)$ and $\beta_sf_s(x)$, we deduce that $\beta_s = a_{h, s}$, and thus comparing the remaining coefficients of $P_s(x)$ and $\beta_sf_s(x)$ implies that
\begin{align*}
a_{i, s} = \beta_s b_{i, s} = a_{h, s}b_{i, s}
\end{align*}
for all $0 \le i \le h - 1$. Therefore $a_{h, s}$ divides $\gcd(a_{0, s}, \ldots, a_{h, s}) = 1$, and thus $a_{h, s}$ is a unit in $\fA_s = \bF_s[t]$ for $\cD$-almost all $s \in S$. Therefore $a_{h, s} \in \bF_s^{\times}$ for $\cD$-almost all $s \in S$. Since $\fK = \prod_{s\in S}\bF_s/\cD$, we deduce that $a_h = \ulim_{s\in S}a_{h, s}\in \fK^{\times}$, and hence $P(x)$ is an irreducible over $\F$ with coefficients in $\A$ such that the leading coefficient $a_h$ is a unit in $\fK^{\times}$. Since $P(\alpha) = 0$ and $\alpha \in (\cO_{\fL})_{\alg} \subset \fL_{\alg}$, we deduce that $\alpha \in \cO_{\fL_{\alg}}$. Thus 
\begin{align*}
(\cO_{\fL})_{\alg} \subset \cO_{\fL_{\alg}}.
\end{align*}\

We now prove that $\cO_{\fL_{\alg}} \subset (\cO_{\fS})_{\alg}$. Take an arbitrary element $\alpha \in \cO_{\fL_{\alg}}$. Since $\fL_{\alg} = \fS_{\alg}$, $\cO_{\fL_{\alg}} = \cO_{\fS_{\alg}}$. Since $\alpha \in \cO_{\fL_{\alg}} = \cO_{\fS_{\alg}} \subset \fS_{\alg}$, one can write $\alpha = \ulim_{s\in S}\alpha_s$, where $\alpha_s \in \fS_s$ for $\cD$-almost all $s \in S$. Furthermore the minimal polynomial $P(x)$ of $\alpha$ over $\F$ is of the form
\begin{align*}
P(x) = a_0 + a_1x + \cdots + a_{h - 1}x^{h - 1} + x^h \in \A[x],
\end{align*}
where the $a_i$ belong to $\A$ and $h \in \bZ_{>0}$. By Lemma \ref{lem-elementary-lemma1}, one can write
\begin{align*}
P(x) = \ulim_{s\in S}P_s(x),
\end{align*}
where $P_s(x)$ is of the form
\begin{align*}
P_s(x) = a_{0, s} + a_{1, s}x + \cdots + a_{h - 1, s}x^{h - 1} + a_{h, s}x^h \in \fA_s[x]
\end{align*}
such that $P_s(x)$ is irreducible over $\fF_s$ for $\cD$-almost all $s \in S$, and the $a_{i, s}$ belong to $\fA_s$ such that $a_i = \ulim_{s\in S}a_{i, s}$ for $0 \le i \le h - 1$ and $1 = \ulim_{s\in S}a_{h, s}$. Thus $a_{h, s} = 1$ for $\cD$-almost all $s \in S$, which implies that $P_s(x)$ is a monic polynomial in $\fA_s[x]$ that is irreducible over $\fF_s$ for $\cD$-almost all $s \in S$. 

Since $P(\alpha) = 0$, $P_s(\alpha_s) = 0$ for $\cD$-almost all $s \in S$, and thus $P_s(x)$ is the minimal polynomial of $\alpha_s$ for $\cD$-almost all $s \in S$. Thus $\alpha_s \in \cO_{\fS_s}$ for $\cD$-almost all $s \in S$, and therefore $\alpha = \ulim_{s\in S}\alpha_s \in \prod_{s\in S}\cO_{\fS_s}/\cD = \cO_{\fS}$. Thus $\alpha \in \cO_{\fS} \cap \F^{\alg} = (\cO_{\fS})_{\alg}$, and therefore
\begin{align*}
\cO_{\fL_{\alg}} \subset (\cO_{\fS})_{\alg}.
\end{align*}

By what we have showed above, we deduce that
\begin{align*}
(\cO_{\fL})_{\alg} \subset \cO_{\fL_{\alg}} \subset (\cO_{\fS})_{\alg} \subset (\cO_{\fL})_{\alg},
\end{align*}
which yields
\begin{align*}
(\cO_{\fL})_{\alg} =  \cO_{\fL_{\alg}} = (\cO_{\fS})_{\alg},
\end{align*}
as desired.

\end{proof}

We generalize Lemma \ref{lem-algebraic-part-of-principal-ideal-in-U(F)-is-principal-ideal-in-F} to principal ideals in $\cO_{\fL_{\alg}}$.

\begin{corollary}
\label{cor-algebraic-part-of-principal-ideals-in-O_S-is-principal-ideal-in-O_L-alg}

For any element $\alpha \in \cO_{\fL_{\alg}}$, the algebraic part of $\alpha\cO_{\fS}$ over $\F$ coincides with $\alpha\cO_{\fL_{\alg}}$, i.e.,
\begin{align*}
(\alpha\cO_{\fS})_{\alg} = \alpha\cO_{\fS} \cap \F^{\alg} = \alpha\cO_{\fL_{\alg}}.
\end{align*}

\end{corollary}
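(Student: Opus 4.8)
The statement is an ideal-theoretic refinement of Proposition~\ref{prop-algebraic-part-of-ultra-integers}, so the natural approach is to mirror the two-inclusion strategy used there, but now tracking membership in the principal ideal $\alpha\cO_{\fS}$ rather than merely in $\cO_{\fS}$. The plan is as follows. First I would dispose of the trivial case $\alpha = 0$. For $\alpha \neq 0$, one inclusion is immediate: since $\cO_{\fL_{\alg}} = (\cO_{\fS})_{\alg} \subset \cO_{\fS}$ by Proposition~\ref{prop-algebraic-part-of-ultra-integers}, every element of $\alpha\cO_{\fL_{\alg}}$ lies in $\alpha\cO_{\fS}$, and it also lies in $\F^{\alg}$ because $\alpha\cO_{\fL_{\alg}} \subset \fL_{\alg} \subset \F^{\alg}$; hence $\alpha\cO_{\fL_{\alg}} \subset (\alpha\cO_{\fS})_{\alg}$.

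For the reverse inclusion, take $\beta \in (\alpha\cO_{\fS})_{\alg} = \alpha\cO_{\fS} \cap \F^{\alg}$. Write $\beta = \alpha\gamma$ with $\gamma = \ulim_{s\in S}\gamma_s \in \cO_{\fS} = \prod_{s\in S}\cO_{\fS_s}/\cD$, and write $\alpha = \ulim_{s\in S}\alpha_s$ with $\alpha_s \in \cO_{\fS_s}$ for $\cD$-almost all $s$. Then $\gamma = \beta/\alpha \in \fS$, and since $\beta \in \F^{\alg}$ and $\alpha \in \F^{\alg}$ (indeed $\alpha \in \cO_{\fL_{\alg}} \subset \F^{\alg}$), the quotient $\gamma = \beta/\alpha$ is algebraic over $\F$; thus $\gamma \in \fS \cap \F^{\alg} = \fS_{\alg} = \fL_{\alg}$. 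Now I would argue that $\gamma$ is in fact integral, i.e.\ $\gamma \in \cO_{\fL_{\alg}}$: since $\gamma = \ulim_{s\in S}\gamma_s$ with each $\gamma_s$ in the integrally closed ring $\cO_{\fS_s}$, the element $\gamma$ lies in $(\cO_{\fS})_{\alg} = \cO_{\fL_{\alg}}$ by Proposition~\ref{prop-algebraic-part-of-ultra-integers} (here using that $\gamma$ is already known to be algebraic over $\F$). Therefore $\beta = \alpha\gamma \in \alpha\cO_{\fL_{\alg}}$, which gives $(\alpha\cO_{\fS})_{\alg} \subset \alpha\cO_{\fL_{\alg}}$ and completes the proof.

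The one point that needs a little care — and which I expect to be the only genuine obstacle — is the step asserting $\gamma \in (\cO_{\fS})_{\alg}$. A priori we only know $\gamma \in \cO_{\fS}$ (from $\gamma = \ulim \gamma_s$ with $\gamma_s \in \cO_{\fS_s}$) and, separately, $\gamma \in \F^{\alg}$ (from $\gamma = \beta/\alpha$ being a ratio of two elements algebraic over $\F$). Combining these two facts gives precisely $\gamma \in \cO_{\fS} \cap \F^{\alg} = (\cO_{\fS})_{\alg}$, and Proposition~\ref{prop-algebraic-part-of-ultra-integers} identifies this with $\cO_{\fL_{\alg}}$. So no new argument is actually required beyond invoking that proposition; the subtlety is just in correctly assembling the two membership conditions on $\gamma$. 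One should also note that the identification $\fS_{\alg} = \fL_{\alg}$ from equation~(\ref{e1-subsection-prime-ramifications-L_alg-is-finite-extension}) is used to make sense of $\cO_{\fL_{\alg}} = \cO_{\fS_{\alg}}$, exactly as in the proof of Proposition~\ref{prop-algebraic-part-of-ultra-integers}. This is really a direct corollary, essentially the same mechanism as Lemma~\ref{lem-algebraic-part-of-principal-ideal-in-U(F)-is-principal-ideal-in-F} with $\cU(\A)$, $\A$ replaced by $\cO_{\fS}$, $\cO_{\fL_{\alg}}$.
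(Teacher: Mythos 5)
Your proof is correct and follows essentially the same two-inclusion strategy as the paper: the easy inclusion $\alpha\cO_{\fL_{\alg}} \subset (\alpha\cO_{\fS})_{\alg}$ from $\cO_{\fL_{\alg}} = (\cO_{\fS})_{\alg}$, and the reverse by writing $\beta = \alpha\gamma$, showing $\gamma$ is algebraic over $\F$, and then invoking $\cO_{\fS} \cap \F^{\alg} = \cO_{\fL_{\alg}}$ from Proposition~\ref{prop-algebraic-part-of-ultra-integers}. The only cosmetic difference is that the paper deduces $\gamma = \beta/\alpha \in \fL_{\alg}$ directly from $\alpha,\beta \in \cO_{\fL_{\alg}}$ and $\fL_{\alg}$ being a field, rather than phrasing it as ``a ratio of algebraic elements is algebraic,'' but this is the same observation.
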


\begin{proof}

The corollary is trivial for $\alpha = 0$. So we assume that $\alpha \ne 0$. 

We know from Proposition \ref{prop-algebraic-part-of-ultra-integers} that $\alpha\cO_{\fL_{\alg}} = \alpha(\cO_{\fS})_{\alg} = \alpha(\cO_{\fS}\cap \F^{\alg})$, which is a subset of $(\alpha\cO_{\fS})_{\alg}$. For the opposite implication, let $\beta$ be an arbitrary element in $(\alpha\cO_{\fS})_{\alg} = \alpha\cO_{\fS} \cap \F^{\alg}$. By Proposition \ref{prop-algebraic-part-of-ultra-integers}, $\cO_{\fS} \cap \F^{\alg} = (\cO_{\fS})_{\alg}= \cO_{\fL_{\alg}}$, and thus
\begin{align*}
\beta \in \alpha\cO_{\fS} \cap \F^{\alg} \subset \cO_{\fS} \cap \F^{\alg} = \cO_{\fL_{\alg}}.
\end{align*}

Since $\beta \in \alpha\cO_{\fS}$, $\beta = \alpha\gamma$ for some element $\gamma \in \cO_{\fS}$. Thus since $\alpha, \beta \in \cO_{\fL_{\alg}}$, $\gamma = \beta/\alpha \in \fL_{\alg}$. It thus follows from Proposition \ref{prop-algebraic-part-of-ultra-integers} that
\begin{align*}
\gamma \in \cO_{\fS}\cap \fL_{\alg} \subset \cO_{\fS} \cap \F^{\alg} = (\cO_{\fS})_{\alg}= \cO_{\fL_{\alg}},
\end{align*}
and therefore
\begin{align*}
\beta = \alpha\gamma \in \alpha\cO_{\fL_{\alg}},
\end{align*}
which implies that $(\alpha\cO_{\fS})_{\alg}$ is a subset of $\alpha\cO_{\fL_{\alg}}$ as required.

\end{proof}

We describe certain prime ideals in $\cO_{\fL_{\alg}}$ that arise from ultra-prime ideals in $\cO_{\fS}$.

\begin{proposition}
\label{prop-algebraic-part-of-ultra-prime-is-prime-in-O_L-alg}

Let $\fQ = \prod_{s\in S}\fq_s/\cD$ be an ultra-prime ideal in $\cO_{\fS} = \prod_{s\in S}\cO_{\fS_s}/\cD$, where $\fq_s$ is a nonzero prime ideal in $\cO_{\fS_s}$ for $\cD$-almost all $s \in S$. Then the algebraic part of $\fQ$ over $\F$, i.e., $\fQ_{\alg} = \fQ \cap \F^{\alg}$ is a prime ideal in $\cO_{\fL_{\alg}}$. Furthermore $\fQ_{\alg}$ is a nonzero prime ideal if and only if there exists a positive constant $M > 0$ such that
\begin{align*}
\{s \in S\; | \; \deg(Q_s) < M\} \in \cD,
\end{align*}
where $Q_s$ is an irreducible polynomial in $\fA_s = \bF_s[t]$ such that $\fq_s \cap \fA_s = Q_s\fA_s$ \footnote{Such an irreducible polynomial $Q_s$ exists since $\fA_s$ is a principal ideal domain, and $\fq_s \cap \fA_s$ is a prime ideal in $\fA_s$.} for $\cD$-almost all $s \in S$.

\end{proposition}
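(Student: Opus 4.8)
The plan is to proceed in two parts: first show that $\fQ_{\alg} = \fQ \cap \F^{\alg}$ is always a prime ideal (possibly the zero ideal) of $\cO_{\fL_{\alg}}$, and then characterize exactly when it is nonzero in terms of boundedness of the degrees $\deg(Q_s)$. For the first part, I would first note that by Proposition \ref{prop-algebraic-part-of-ultra-integers}, $\cO_{\fL_{\alg}} = (\cO_{\fS})_{\alg} = \cO_{\fS} \cap \F^{\alg}$, so $\fQ_{\alg} = \fQ \cap \cO_{\fL_{\alg}}$ is the contraction of the ideal $\fQ$ of $\cO_{\fS}$ to the subring $\cO_{\fL_{\alg}}$. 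The contraction of a prime ideal along a ring homomorphism (here the inclusion $\cO_{\fL_{\alg}} \hookrightarrow \cO_{\fS}$) is always prime, provided $\fQ$ itself is prime in $\cO_{\fS}$; and $\fQ = \prod_{s\in S}\fq_s/\cD$ is prime in $\cO_{\fS} = \prod_{s\in S}\cO_{\fS_s}/\cD$ by \L{}o\'s' theorem, since each $\fq_s$ is prime in $\cO_{\fS_s}$ for $\cD$-almost all $s$ and ``$I$ is a prime ideal'' is first-order expressible. One small point to verify is that $\fQ_{\alg}$ is a proper ideal, i.e. $1 \notin \fQ_{\alg}$: if $1 \in \fQ$ then $1 \in \fq_s$ for $\cD$-almost all $s$, contradicting that $\fq_s$ is a (proper) prime ideal. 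So $\fQ_{\alg}$ is a proper prime ideal of $\cO_{\fL_{\alg}}$.

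For the second part I would work with the contractions to the base ring $\A = \fK[t]$. Set $\fp_s = \fq_s \cap \fA_s = Q_s\fA_s$ with $Q_s$ an irreducible polynomial of $\fA_s = \bF_s[t]$, which exists since $\fA_s$ is a PID and $\fq_s \cap \fA_s$ is a nonzero prime of $\fA_s$ (nonzero because $\cO_{\fS_s}$ is integral over $\fA_s$ and $\fq_s \neq 0$). Form $Q = \ulim_{s\in S}Q_s \in \cU(\A)$. I claim $\fQ \cap \cU(\A) = Q\,\cU(\A)$: the inclusion $\supseteq$ is clear since $Q_s \in \fq_s$; for $\subseteq$, an element $\ulim_s\beta_s$ of $\fQ \cap \cU(\A)$ has $\beta_s \in \fq_s \cap \fA_s = Q_s\fA_s$ for $\cD$-almost all $s$, so $\beta_s = Q_s\gamma_s$ and $\ulim_s\beta_s = Q\cdot\ulim_s\gamma_s$. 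Now intersect further with $\F^{\alg}$: since $\A = \cU(\A)_{\alg}$ by Lemma \ref{lem-U(F)-algebraic-part-is-F}, and using Lemma \ref{lem-algebraic-part-of-principal-ideal-in-U(F)-is-principal-ideal-in-F} in the case $Q \in \A$, we get $\fQ_{\alg} \cap \A = (Q\,\cU(\A))_{\alg}$, which equals $Q\A$ when $Q \in \A$. By Lemma \ref{lem-elementary-lemma}(i), $Q \in \A = \fK[t]$ if and only if the degrees $\deg(Q_s)$ are bounded for $\cD$-almost all $s$, i.e. $\{s : \deg(Q_s) < M\} \in \cD$ for some $M > 0$. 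In that case $Q$ is a genuine nonzero polynomial of $\fK[t]$ (note $Q$ is also irreducible in $\A$ by Lemma \ref{lem-elementary-lemma1}(ii) or by the argument of Lemma \ref{lem-elementary-lemma0}), and since $\cO_{\fL_{\alg}}$ is integral over $\A$, the prime $\fQ_{\alg}$ lying over the nonzero prime $Q\A$ of $\A$ must be nonzero. Conversely, if the degrees are unbounded for $\cD$-almost all $s$, then $Q \notin \A$; and I must rule out $\fQ_{\alg}$ being nonzero anyway. For this I would argue: if $0 \neq \alpha \in \fQ_{\alg} \subseteq \cO_{\fL_{\alg}}$, then $\alpha$ satisfies a monic equation over $\A$, and taking the norm (or the constant term of its minimal polynomial) produces a nonzero element of $\A \cap \fQ$; but $\A \cap \fQ = \A \cap (\fQ \cap \cU(\A)) = \A \cap Q\,\cU(\A)$, and by the degree-unboundedness this intersection is $\{0\}$ — any nonzero $a \in \A$ with $a = Q\gamma$, $\gamma \in \cU(\A)$, would force $\deg a = \deg Q_s + \deg\gamma_s$ for $\cD$-almost all $s$ with $\deg a$ fixed, hence $\deg Q_s$ bounded, contradiction. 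So $\fQ_{\alg} = (0)$, completing the equivalence.

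The main obstacle I anticipate is the converse direction of the second part, namely proving that unbounded degrees force $\fQ_{\alg} = 0$: one has to carefully produce, from a hypothetical nonzero element of $\fQ_{\alg}$, a nonzero element of $\A$ lying in $\fQ$, and the cleanest route is via the constant term of the minimal polynomial of $\alpha$ over $\F$ — which lies in $\A$ because $\alpha \in \cO_{\fL_{\alg}}$ and $\fL_{\alg}/\F$ is Galois hence the minimal polynomial splits in $\fL_{\alg}$ with all conjugates again in $\cO_{\fL_{\alg}}$ — together with the fact that a product of conjugates of $\alpha$, being divisible by $\alpha \in \fQ$, stays in $\fQ$. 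I would need to be slightly careful that this constant term is nonzero (true since $\alpha \neq 0$ and the minimal polynomial has nonzero constant term) and that it genuinely lands in $\fQ \cap \A$. The rest is routine bookkeeping with \L{}o\'s' theorem and the elementary lemmas already established.
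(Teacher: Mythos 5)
Your proposal is correct, and it organizes both halves differently from the paper. For primality, you identify $\fQ_{\alg}$ as the contraction of $\fQ$ along the inclusion $\cO_{\fL_{\alg}} = \cO_{\fS}\cap\F^{\alg}\hookrightarrow\cO_{\fS}$ (Proposition \ref{prop-algebraic-part-of-ultra-integers}) and use that contractions of primes are prime, whereas the paper verifies directly, componentwise via Lemma \ref{lem-at-least-one-set-in-th-union-of-sets-is-in-D}, that $\fQ_{\alg}$ is a prime ideal of $\cO_{\fL_{\alg}}$; the contraction framing is cleaner to state, though the underlying primality check for $\fQ$ in $\cO_{\fS}$ reduces to the same componentwise reasoning (a bare appeal to \L{}o\'s' theorem requires enriching the language with a predicate for the ideal, since ``this subset is a prime ideal'' is not first-order in the ring language alone). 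For the nonzeroness criterion, your forward direction matches the paper's: bounded degree gives $Q = \ulim_{s\in S}Q_s \in \A$ by Lemma \ref{lem-elementary-lemma}, hence $Q \in \fQ_{\alg}$; the irreducibility citation should be Lemma \ref{lem-elementary-lemma0}(ii), not Lemma \ref{lem-elementary-lemma1}(ii) (the latter concerns polynomials in $x$ over $\A$, not elements of $\fK[t]$), but since you also note the argument of Lemma \ref{lem-elementary-lemma0} this is harmless. The converse is where the two proofs genuinely diverge: the paper simply invokes lying-over for the Dedekind extension $\cO_{\fL_{\alg}}/\A$ to get $\fQ_{\alg}\cap\A = Q\A$ with $Q$ a nonzero irreducible and then reads off bounded degree from Lemma \ref{lem-elementary-lemma0}, while you reprove the needed half of lying-over by hand via the nonzero constant term of the minimal polynomial of a hypothetical $0\ne\alpha\in\fQ_{\alg}$, placing it in $\A\cap\fQ = \A\cap Q\cU(\A)$ and deriving a degree contradiction. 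Both arguments are valid; the paper's converse is shorter, yours is more explicit and self-contained.
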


\begin{remark}

By Lemma \ref{prop-algebraic-part-of-ultra-integers}, $\cO_{\fL_{\alg}} = (\cO_{\fS})_{\alg} = \cO_{\fS} \cap \F^{\alg}$. Since $\fQ \subset \cO_{\fS}$, we deduce that $\fQ_{\alg} \subset (\cO_{\fS})_{\alg} = \cO_{\fL_{\alg}}$.

\end{remark}

\begin{proof}

We first prove that $\fQ_{\alg}$ is an ideal in $\cO_{\fL_{\alg}}$. Take an arbitrary element $\lambda \in \cO_{\fL_{\alg}} = (\cO_{\fS})_{\alg} = \cO_{\fS} \cap \F^{\alg}$. Then one can write
\begin{align*}
\lambda = \ulim_{s\in S}\lambda_s,
\end{align*}
where $\lambda_s \in \cO_{\fS_s}$ for $\cD$-almost all $s \in S$. Since $\fq_s$ is a prime ideal in $\cO_{\fS_s}$, we deduce that $\lambda_s \fq_s \subset \fq_s$, and thus
\begin{align*}
\lambda\fQ = \ulim_{s\in S}\lambda_s \prod_{s\in S}\fq_s/\cD = \prod_{s\in S}\lambda_s\fq_s/\cD \subset \prod_{s\in S}\fq_s/\cD = \fQ,
\end{align*}
and 
\begin{align*}
\lambda\F^{\alg} \subset \F^{\alg}.
\end{align*}
Thus
\begin{align*}
\lambda\fQ_{\alg} = \lambda(\fQ \cap \F^{\alg})\subset \lambda\fQ \cap \lambda \F^{\alg}\subset \fQ \cap \F^{\alg} = \fQ_{\alg}.
\end{align*}
Since $\lambda$ is arbitrary in $\cO_{\fL_{\alg}}$, we deduce that $\fQ_{\alg}$ is an ideal in $\cO_{\fL_{\alg}}$.

We now prove that $\fQ_{\alg}$ is a prime ideal in $\cO_{\fL_{\alg}}$. 

Let $\alpha\beta \in \fQ_{\alg} = \fQ \cap \F^{\alg}$ for elements $\alpha, \beta \in \cO_{\fL_\alg} = (\cO_{\fS})_{\alg}$. One can write 
\begin{align*}
\alpha &= \ulim_{s\in S}\alpha_s,\\
\beta  &= \ulim_{s\in S}\beta_s
\end{align*}
for some elements $\alpha_s, \beta_s \in \cO_{\fS_s}$. Thus $\alpha\beta = \ulim_{s\in S}\alpha_s\beta_s \in \fQ_{\alg} \subset \fQ = \prod_{s\in S}\fq_s/\cD$, and therefore
\begin{align*}
\alpha_s\beta_s \in \fq_s
\end{align*}
for $\cD$-almost all $s \in S$. Since $\fq_s$ is a prime ideal in $\cO_{\fS_s}$ for $\cD$-almost all $s \in S$, 
\begin{align*}
A = \{s \in S\; |\; \alpha_s\beta_s \in \fq_s\} = \{s\in S\; |\; \text{$\alpha_s \in \fq_s$ or $\beta_s \in \fq_s$}\} \in \cD.
\end{align*}

Let
\begin{align*}
B &= \{s\in S\; | \; \alpha_s \in \fq_s\}, \\
C &= \{s\in S\; | \; \beta_s \in \fq_s\}.
\end{align*}

It is clear that $A = B \cup C$. Since $A \in \cD$, we deduce from Lemma \ref{lem-at-least-one-set-in-th-union-of-sets-is-in-D}, $B \in \cD$ or $C \in \cD$. Thus $\alpha = \ulim_{s\in S}\alpha_s \in \prod_{s\in S}\fq_s/\cD = \fQ$ or $\beta = \ulim_{s\in S}\beta_s \in \prod_{s\in S}\fq_s/\cD = \fQ$. Thus $\alpha \in \fQ \cap \cO_{\fL_{\alg}} \subset \fQ \cap \F^{\alg} = \fQ_{\alg}$ or $\beta = \fQ \cap \cO_{\fL_{\alg}} \subset \fQ \cap \F^{\alg} = \fQ_{\alg}$. Thus $\fQ_{\alg}$ is a prime ideal in $\cO_{\fL_{\alg}}$.

We prove the last assertion of the lemma. 

Suppose that there exists a positive constant $M > 0$ such that
\begin{align*}
\{s \in S\; | \; \deg(Q_s) < M\} \in \cD.
\end{align*}
Thus Lemma \ref{lem-elementary-lemma} implies that $Q = \ulim_{s\in S}Q_s \in \A$. Since $\fq_s$ is a nonzero prime ideal for $\cD$-almost all $s \in S$, $Q_s$ is an irreducible polynomial in $\fA_s$ for $\cD$-almost all $s \in S$, and it thus follows from Lemma \ref{lem-elementary-lemma0} that $Q = \ulim_{s\in S}Q_s$ is an irreducible polynomial in $\A$. Since $Q_s \in Q_s\fA_s = \fq_s \cap \fA_s \subset \fq_s$, we deduce that 
\begin{align*}
Q = \ulim_{s\in S}Q_s \in \prod_{s\in S}\fq_s/\cD = \fQ.
\end{align*}
Since $Q \in \A$, we deduce that $Q \in \fQ\cap \A \subset \fQ \cap \F^{\alg} = \fQ_{\alg}$, and thus $\fQ_{\alg}$ is a nonzero prime ideal in $\cO_{\fL_{\alg}}$. 

Conversely, suppose that $\fQ_{\alg}$ is a nonzero prime ideal in $\cO_{\fL_{\alg}}$. Thus $\fQ_{\alg} \cap \A = Q\A$ for some irreducible polynomial $Q$ of degree $d > 0$ in $\A = \fK[t]$. By Lemma \ref{lem-elementary-lemma0}, $Q = \ulim_{s\in S}Q_s$ for some irreducible polynomials $Q_s$ of degree $d$ in $\fA_s = \bF_s[t]$ for $\cD$-almost all $s \in S$.

Since $Q = \ulim_{s\in S}Q_s \in \fQ_{\alg} \subset \fQ = \prod_{s\in S}\fq_s/\cD$, we deduce that $Q_s \in \fq_s$ for $\cD$-almost all $s \in S$. Thus $\fq_s \cap \fA_s = Q_s\fA_s$. Since $\{s \in S \; | \; \deg(Q_s) = d \} \in \cD$, the lemma follows immediately.

\end{proof}

\begin{proposition}
\label{prop-algebraic-parts-of-ultra-prime-ideals-invariant-under-Galois-action}

Let $\fp = \prod_{s\in S}\fp_s/\cD$ be an ultra-prime ideal of $\fS = \prod_{s\in S}\fS_s/\cD$, where $\fp_s$ is a prime ideal in $\cO_{\fS_s}$ for $\cD$-almost all $s \in S$. For all elements $\lambda \in \Gal(\fL_{\alg}/\F)$, 
\begin{align*}
\lambda(\fp_{\alg}) = \left(\Sigma^{-1}(\lambda)(\fp)\right)_{\alg} = \Sigma^{-1}(\lambda)(\fp) \cap \F^{\alg} = \left(\prod_{s\in S}\lambda_s(\fp_s)\right) \cap \F^{\alg},
\end{align*}
where $\lambda_s \in \Gal(\fS_s/\fF_s)$ is the $s$-th component of $\lambda$ as in the proof of Theorem \ref{thm-main-thm3-Galois-group-structures-of-shadows} and Remark \ref{rem-s-th-components-of-a-Galois-element}, $\Sigma^{-1} : \Gal(\fL_{\alg}/\F) = \Gal(\fS_{\alg}/\F) \to \Gal(\fS/\cU(\F))$ is the map given in the proof of Theorem \ref{thm-main-thm3-Galois-group-structures-of-shadows} and in Remark \ref{rem-relation-between-s-th-component-of-lambda-and-its-image-in-Gal(fS/cU(F))}, and $\fp_{\alg}$ is the algebraic part of $\fp$ over $\F$, i.e., $\fp_{\alg} = \fp \cap \F^{\alg}$.

\end{proposition}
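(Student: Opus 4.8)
### Proof Proposal

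The plan is to verify the asserted chain of equalities by working through the three pieces successively: first identifying $\Sigma^{-1}(\lambda)$ explicitly as an ultra-Galois element, then using the componentwise description of the Galois action on $\cO_{\fS}$ to compute $\Sigma^{-1}(\lambda)(\fp)$, and finally intersecting with $\F^{\alg}$ and matching this with $\lambda(\fp_{\alg})$. By Remark \ref{rem-relation-between-s-th-component-of-lambda-and-its-image-in-Gal(fS/cU(F))}, for $\lambda \in \Gal(\fL_{\alg}/\F) = \Gal(\fS_{\alg}/\F)$ we have $\Sigma^{-1}(\lambda) = \ulim_{s\in S}\lambda_s \in \Gal(\fS/\cU(\F)) = \Gal_{\ultra}(\fS/\cU(\F)) = \prod_{s\in S}\Gal(\fS_s/\fF_s)/\cD$, where each $\lambda_s$ is the $s$-th component of $\lambda$ as in Remark \ref{rem-s-th-components-of-a-Galois-element}. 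The key point, which follows from Remark \ref{rem-elements-in-ultra-Galois-groups}, is that $\Sigma^{-1}(\lambda)$ acts on $\fS = \prod_{s\in S}\fS_s/\cD$ by sending $\ulim_{s\in S}a_s \mapsto \ulim_{s\in S}\lambda_s(a_s)$. Applying this to the generators of the ultra-prime ideal $\fp = \prod_{s\in S}\fp_s/\cD$ gives directly
\begin{align*}
\Sigma^{-1}(\lambda)(\fp) = \Sigma^{-1}(\lambda)\left(\prod_{s\in S}\fp_s/\cD\right) = \prod_{s\in S}\lambda_s(\fp_s)/\cD,
\end{align*}
which establishes the rightmost equality once we note that each $\lambda_s(\fp_s)$ is again a prime ideal of $\cO_{\fS_s}$ (since $\lambda_s$ is an automorphism of $\fS_s$ fixing $\fF_s$, hence preserving $\cO_{\fS_s}$), so $\prod_{s\in S}\lambda_s(\fp_s)/\cD$ is an ultra-prime ideal and its algebraic part is well-defined.

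Next I would handle the identity $\lambda(\fp_{\alg}) = \left(\Sigma^{-1}(\lambda)(\fp)\right)_{\alg}$. Write $\mu = \Sigma^{-1}(\lambda)$ for brevity. The inclusion $\lambda(\fp_{\alg}) \subseteq (\mu(\fp))_{\alg}$: take $x \in \fp_{\alg} = \fp \cap \F^{\alg}$; then $\mu(x) \in \mu(\fp)$, and since $\lambda = \mu_{|\H}$ where $\H = \fL_{\alg} = \fS_{\alg}$ (this is exactly the content of part (iii) of Theorem \ref{thm-main-thm3-Galois-group-structures-of-shadows}, applied via Corollary \ref{cor-Galois-group-structure-of-shadows-of-algebraic-part-in-a-given-ultra-extension}), we have $\mu(x) = \lambda(x)$, which lies in $\fL_{\alg} \subseteq \F^{\alg}$; hence $\lambda(x) = \mu(x) \in \mu(\fp) \cap \F^{\alg} = (\mu(\fp))_{\alg}$. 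For the reverse inclusion, let $y \in (\mu(\fp))_{\alg} = \mu(\fp) \cap \F^{\alg}$. Then $\mu^{-1}(y) \in \fp$, and since $y \in \F^{\alg} \subseteq \fS_{\alg}$ and $\mu^{-1}$ restricts on $\fS_{\alg}$ to $\lambda^{-1} \in \Gal(\fL_{\alg}/\F)$ (again by Theorem \ref{thm-main-thm3-Galois-group-structures-of-shadows}(iii)), we get $\mu^{-1}(y) = \lambda^{-1}(y) \in \fL_{\alg} \subseteq \F^{\alg}$, so $\mu^{-1}(y) \in \fp \cap \F^{\alg} = \fp_{\alg}$; applying $\lambda$ gives $y = \lambda(\lambda^{-1}(y)) \in \lambda(\fp_{\alg})$. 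This closes the loop.

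The main subtlety — and the step I would be most careful about — is the bookkeeping around the restriction map: one must ensure that $\mu = \Sigma^{-1}(\lambda)$ genuinely satisfies $\mu_{|\fS_{\alg}} = \lambda$, so that $\mu$ and $\lambda$ agree on all of $\F^{\alg} \cap \fS = \fS_{\alg}$, not merely on the generators used to construct the shadow. This is guaranteed by part (iii) of Theorem \ref{thm-main-thm3-Galois-group-structures-of-shadows}, which says precisely that $\Gal(\fL_{\alg}/\F) = \{\delta_{|\fL_{\alg}} : \delta \in \Gal(\fS/\cU(\F))\}$ with $\Sigma$ being the corresponding restriction isomorphism; combined with Remark \ref{rem-relation-between-s-th-component-of-lambda-and-its-image-in-Gal(fS/cU(F))} this pins down $\mu$ both as an ultra-limit $\ulim_{s\in S}\lambda_s$ (needed for the componentwise computation of $\mu(\fp)$) and as a lift of $\lambda$ (needed for the intersection argument). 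I would also remark that $\fp_{\alg}$ is a prime ideal of $\cO_{\fL_{\alg}}$ by Proposition \ref{prop-algebraic-part-of-ultra-prime-is-prime-in-O_L-alg}, so all objects in the displayed chain are ideals of the expected type, and the equality of ideals is literally an equality of subsets of $\cO_{\fL_{\alg}} = (\cO_{\fS})_{\alg}$, which is what the set-theoretic inclusions above deliver. No deep new idea is required beyond assembling Theorem \ref{thm-main-thm3-Galois-group-structures-of-shadows}, the componentwise description of ultra-Galois actions from Remark \ref{rem-elements-in-ultra-Galois-groups}, and Proposition \ref{prop-algebraic-part-of-ultra-integers}.
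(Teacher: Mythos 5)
Your proof is correct, and it takes a somewhat different route from the paper's. The paper first observes that $\lambda(\fp_{\alg})$ and $\Sigma^{-1}(\lambda)(\fp)\cap\F^{\alg}$ are both prime ideals of the Dedekind domain $\cO_{\fL_{\alg}}$, so one inclusion suffices; it then proves only $\lambda(\fp_{\alg})\subset\Sigma^{-1}(\lambda)(\fp)\cap\F^{\alg}$ by an explicit computation --- writing $\epsilon\in\fp_{\alg}$ as $\sum_i a_i\alpha_1^i$ in terms of the primitive element $\alpha_1$ and checking directly that $\Sigma^{-1}(\lambda)(\epsilon)=\lambda(\epsilon)$, in effect re-deriving part (iii) of Theorem \ref{thm-main-thm3-Galois-group-structures-of-shadows} inline. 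You instead invoke Theorem \ref{thm-main-thm3-Galois-group-structures-of-shadows}(iii) as a black box, giving $\mu_{|\fS_{\alg}}=\lambda$ with $\mu=\Sigma^{-1}(\lambda)$, and then prove both inclusions directly by chasing elements through $\mu$, $\mu^{-1}$, $\lambda$, $\lambda^{-1}$. Your version is cleaner in two respects: it does not need the ``one inclusion suffices'' reduction for nonzero primes in a Dedekind domain (which in the paper's write-up quietly leaves the zero-ideal case unaddressed), and it abstracts the role of the shadow construction into a single invocation of the already-proved restriction isomorphism. Both proofs ultimately hinge on the same facts --- $\Sigma^{-1}(\lambda)=\ulim_{s\in S}\lambda_s$ acting componentwise, and $\Sigma^{-1}(\lambda)$ restricting on $\fS_{\alg}$ to $\lambda$ --- so the mathematical content is the same; yours is just organized more economically.
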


\begin{proof}

By the primitive element theorem, $\fL_{\alg} = \F(\alpha_1)$ for some element $\alpha_1 \in \fL_{\alg}$. By Proposition \ref{prop-explicit-descriptions-of-shadows-and-ultra-shadows} and Theorem \ref{thm-main-thm3-Galois-group-structures-of-shadows}, $\fS = \cU(\F)(\alpha_1)$, where we recall that $\fS$ is the ultra-shadow of $\fL_{\alg}$. Let $\alpha_1, \ldots, \alpha_m$ be all the Galois conjugates of $\alpha_1$, i.e., 
\begin{align*}
\{\alpha_1, \ldots, \alpha_m\} = \{\sigma(\alpha_1) \; | \; \sigma \in \Gal(\fL_{\alg}/\F)\}.
\end{align*}
For each $\lambda \in \Gal(\fL_{\alg}/\F)$, let $\iota_{\lambda}$ be the unique permutation of $\{1, \ldots, m\}$ that determines $\lambda$, i.e.,
\begin{align*}
\lambda(\alpha_i) = \alpha_{\iota_{\lambda}(i)}
\end{align*}
for all $1 \le i \le m$. Following the proof of Theorem \ref{thm-main-thm3-Galois-group-structures-of-shadows}, the permutation $\iota_{\lambda}$ also uniquely determines $\Sigma^{-1}(\lambda) \in \Gal(\fS/\cU(\F))$ by letting
\begin{align}
\label{e0-prop-algebraic-part-of-ultra-prime-is-Galois-invariant}
\Sigma^{-1}(\lambda)(\alpha_i) = \alpha_{\iota_{\lambda}(i)}
\end{align}
for all $1 \le i \le m$, and 
\begin{align*}
\Sigma^{-1}(\lambda)(a) = a
\end{align*}
for all $a \in \cU(\F)$. 

By Proposition \ref{prop-algebraic-part-of-ultra-prime-is-prime-in-O_L-alg}, $\fp_{\alg}$ is a prime ideal in $\cO_{\fL_{\alg}}$, and thus since $\lambda \in \Gal(\fL_{\alg}/\F)$, $\lambda(\fp_{\alg})$ is also a prime ideal in $\cO_{\fL_{\alg}}$. 

On the other hand, we know from Remark \ref{rem-relation-between-s-th-component-of-lambda-and-its-image-in-Gal(fS/cU(F))}, $\Sigma^{-1}(\lambda) = \ulim_{s\in S}\lambda_s$, and thus since $\fp = \prod_{s\in S}\fp_s/\cD$, we deduce that
\begin{align*}
\Sigma^{-1}(\lambda)(\fp) = \prod_{s\in S}\lambda_s(\fp_s)/\cD.
\end{align*}
Since $\fp_s$ is a prime ideal in $\cO_{\fS_s}$ for $\cD$-almost all $s \in S$ and $\lambda_s \in \Gal(\fS_s/\fF_s)$, $\lambda_s(\fp_s)$ is also a prime ideal in $\cO_{\fS_s}$ for $\cD$-almost all $s \in S$. Thus $\prod_{s\in S}\lambda_s(\fp_s)/\cD$ is an ultra-prime ideal in $\cO_{\fS}$. Therefore it follows from Proposition \ref{prop-algebraic-part-of-ultra-prime-is-prime-in-O_L-alg} that its algebraic part $\left(\prod_{s\in S}\lambda_s(\fp_s)/\cD\right) \cap \F^{\alg}$ is a prime ideal in $\cO_{\fL_{\alg}}$, and thus $\Sigma^{-1}(\lambda)(\fp) \cap \F^{\alg}$ is a prime ideal in $\cO_{\fL_{\alg}}$.

Since $\cO_{\fL_{\alg}}$ is a Dedekind domain, it suffices to prove that 
\begin{align*}
\lambda(\fp_{\alg}) \subset \Sigma^{-1}(\lambda)(\fp) \cap \F^{\alg}.
\end{align*}

Let $\beta$ be an arbitrary element in $\lambda(\fp_{\alg})$. Then there exists an element $\epsilon \in \fp_{\alg} = \fp \cap \F^{\alg}$ such that 
\begin{align}
\label{e1-prop-algebraic-part-of-ultra-prime-is-Galois-invariant}
\beta = \lambda(\epsilon).
\end{align}

Since $\epsilon \in \fp_{\alg}$ and $\fp_{\alg}$ is a prime ideal in $\cO_{\fL_{\alg}}$, $\epsilon \in \cO_{\fL_{\alg}}$. Since $\fL_{\alg} = \F(\alpha_1)$ and $[\fL_{\alg}:\F] = m$, one can write
\begin{align}
\label{e2-prop-algebraic-part-of-ultra-prime-is-Galois-invariant}
\epsilon = \sum_{i = 0}^{m - 1}a_i \alpha_1^i
\end{align}
for some elements $a_i \in \F$. Since $\lambda(\alpha_1) = \alpha_{\iota_{\lambda}(1)}$, 
\begin{align}
\label{e3-prop-algebraic-part-of-ultra-prime-is-Galois-invariant}
\beta = \lambda(\epsilon) = \sum_{i = 0}^{m - 1}a_i\alpha_{\iota_{\lambda}(1)}^i.
\end{align} 

By (\ref{e0-prop-algebraic-part-of-ultra-prime-is-Galois-invariant}) and since $\Sigma^{-1}(\lambda)$ fixes every element in $\cU(\F)$, we deduce that $\Sigma^{-1}(\lambda)$ also fixes every element in $\F$, and it thus follows from (\ref{e2-prop-algebraic-part-of-ultra-prime-is-Galois-invariant}) and (\ref{e3-prop-algebraic-part-of-ultra-prime-is-Galois-invariant}) that
\begin{align*}
\Sigma^{-1}(\lambda)(\epsilon) = \sum_{i = 0}^{m - 1}a_i\Sigma^{-1}(\lambda)(\alpha_{1})^i = \sum_{i = 0}^{m - 1}a_i\alpha_{\iota_{\lambda}(1)}^i = \beta.
\end{align*}
Therefore since $\epsilon \in \fp_{\alg} \subset \fp$, $\beta$ belongs to $\Sigma^{-1}(\lambda)(\fp)$. Since $\lambda(\fp_{\alg})$ is a prime ideal in $\cO_{\fL_{\alg}}$, $\beta$ also belongs to $\cO_{\fL_{\alg}}$, and thus
\begin{align*}
\beta \in \Sigma^{-1}(\lambda)(\fp) \cap \cO_{\fL_{\alg}} \subset \Sigma^{-1}(\lambda)(\fp) \cap \F^{\alg}.
\end{align*}
Therefore $\lambda(\fp_{\alg})$ is a subset of $\Sigma^{-1}(\lambda)(\fp) \cap \F^{\alg}$, which verifies the proposition.

\end{proof}

We prove the converse to Proposition \ref{prop-algebraic-part-of-ultra-prime-is-prime-in-O_L-alg}.

\begin{proposition}
\label{prop-main-prop1-existence-of-ultra-prime-whose-algebraic-part-coincides-with-a-given-prime}

Let $\fp$ be a prime in $\cO_{\fL_{\alg}}$. Then there exists an ultra-prime ideal $\fP = \prod_{s\in S}\fP_s/\cD$ in $\cO_{\fS}$ for some prime ideals $\fP_s$ in $\fS_s$ such that the algebraic part of $\fP$ over $\F$ coincides with $\fp$, that is,
\begin{align*}
\fP_{\alg} = \fP \cap \F^{\alg} = \fp.
\end{align*}

\end{proposition}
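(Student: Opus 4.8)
The plan is to reverse-engineer the construction of an ultra-prime ideal lying over a given prime $\fp$ of $\cO_{\fL_{\alg}}$ by working one prime of $\F$ at a time and then taking shadows. First I would let $Q\A = \fp \cap \A$ be the prime of $\A = \fK[t]$ lying under $\fp$; if $Q = 0$ the statement is degenerate, so assume $Q$ is a monic prime of some degree $d > 0$ in $\A$. By Lemma \ref{lem-elementary-lemma0}(i), $Q = \ulim_{s\in S}Q_s$ with each $Q_s$ a prime of degree $d$ in $\fA_s = \bF_s[t]$ for $\cD$-almost all $s$. Next I would realize $\fp$ concretely inside the Dedekind domain $\cO_{\fL_{\alg}}$: pick a primitive element so that $\fL_{\alg} = \F(\alpha_1)$ with $\alpha_1 \in \cO_{\fL_{\alg}}$ (clearing denominators), let $P(x) \in \A[x]$ be its monic minimal polynomial, and write $P = \ulim_{s\in S}P_s$ via Lemma \ref{lem-elementary-lemma1}, so $P_s$ is monic, irreducible over $\fF_s$, and $\fS_s = \fF_s(\alpha_{1,s})$ with $\alpha_1 = \ulim_{s\in S}\alpha_{1,s}$ is the $s$-th shadow of $\fL_{\alg}$ (Proposition \ref{prop-explicit-descriptions-of-shadows-and-ultra-shadows}).

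The core step is to transport the factorization of $Q$ in $\cO_{\fL_{\alg}}$ down to factorizations of $Q_s$ in $\cO_{\fS_s}$. Concretely, $\fp$ is one of the primes of $\cO_{\fL_{\alg}}$ dividing $Q\cO_{\fL_{\alg}}$, and by Kummer--Dedekind (applicable at least for $\cD$-almost all $s$ after localizing away from the finitely many bad primes — more on this below) the primes above $Q$ correspond to the irreducible factors of $P \bmod Q$. I would write $\fp = (Q, g(\alpha_1))$ for a suitable $g \in \A[x]$ whose reduction mod $Q$ is the irreducible factor of $\bar P$ cutting out $\fp$; write $g = \ulim_{s\in S}g_s$ using Lemma \ref{lem-elementary-lemma1}, and set $\fP_s = (Q_s, g_s(\alpha_{1,s})) \subseteq \cO_{\fS_s}$. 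By \L{}o\'s' theorem, since ``$\fP_s$ is a prime ideal lying over $Q_s\fA_s$'' is first-order expressible in the $s$-th components, $\fP_s$ is a nonzero prime of $\cO_{\fS_s}$ for $\cD$-almost all $s$; then $\fP = \prod_{s\in S}\fP_s/\cD$ is an ultra-prime ideal of $\cO_{\fS}$. Finally I would verify $\fP_{\alg} = \fp$: the inclusion $\fp \subseteq \fP_{\alg}$ is immediate since $Q, g(\alpha_1) \in \fP \cap \F^{\alg}$; for the reverse, Proposition \ref{prop-algebraic-part-of-ultra-prime-is-prime-in-O_L-alg} already guarantees $\fP_{\alg}$ is a prime of $\cO_{\fL_{\alg}}$, and it is nonzero because $\deg(Q_s) = d$ is bounded, so the two primes $\fp \subseteq \fP_{\alg}$ both lie over $Q\A$ with $\fp$ maximal among nonzero primes; comparing residue degrees (the residue degree of $\fP_s$ over $Q_s$ equals $\deg \bar g_s = \deg \bar g$, matching that of $\fp$ over $Q$ for $\cD$-almost all $s$) forces equality.

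The main obstacle I anticipate is the use of Kummer--Dedekind: $\cO_{\fL_{\alg}}$ need not be generated as an $\A$-algebra by $\alpha_1$, and the description of primes above $Q$ via factoring $\bar P$ fails at the finitely many primes $Q$ dividing the conductor $[\cO_{\fL_{\alg}} : \A[\alpha_1]]$. I expect this is handled exactly as in classical algebraic number theory: one either (a) chooses the primitive element $\alpha_1$ so that $Q$ does not divide the relevant discriminant/conductor — possible since only finitely many primes are excluded and one has freedom in the choice of $\alpha_1$ — or (b) argues more robustly by taking $\fP_s$ to be \emph{any} prime of $\cO_{\fS_s}$ lying over $Q_s$ whose residue degree and ramification index match those of $\fp$ over $Q$ (such a prime exists for $\cD$-almost all $s$ by \L{}o\'s' theorem applied to the decomposition data, which is first-order once one bounds the degrees), and then shows the algebraic part of the resulting ultra-prime is forced to be $\fp$ by the maximality/residue-degree argument above. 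Either way, the essential point is that all the ramification-theoretic data attached to $\fp$ over $Q$ is encoded by finitely many integers (residue degree, ramification index, and the finite list of primes of $\cO_{\fL_{\alg}}$ over $Q$), hence is transferred faithfully between $\fL_{\alg}/\F$ and $\fS_s/\fF_s$ by \L{}o\'s' theorem together with Lemma \ref{rem-irreducibility-is-the-same-in-F-and-U(F)}; I would keep the write-up at the level of this principle and only spell out the Kummer--Dedekind bookkeeping as far as needed to pin down $\fP_s$ explicitly.
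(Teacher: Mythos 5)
Your opening moves match the paper's: set $\P = \fp\cap\A = P\A$ with $P$ prime in $\A$, expand $P = \ulim_{s}P_s$ via Lemma~\ref{lem-elementary-lemma0}, and look for $\fP_s$ above $P_s\fA_s$. But the middle of your argument has two genuine problems, and the paper chooses a route that avoids both.

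First, the Kummer--Dedekind route is harder to close than you suggest. To get $\fP_s = (Q_s, g_s(\alpha_{1,s}))$ prime from $\fp = (Q, g(\alpha_1))$ being prime, you would want to apply \L{}o\'s, but \L{}o\'s transfers first-order statements between $\cO_{\fS}=\prod_s\cO_{\fS_s}/\cD$ and its factors $\cO_{\fS_s}$, not between the \emph{proper subring} $\cO_{\fL_{\alg}}$ and the $\cO_{\fS_s}$. The fact that $(Q, g(\alpha_1))$ is a prime ideal of the Dedekind domain $\cO_{\fL_{\alg}}$ does not by itself give you the corresponding first-order fact about $\cO_{\fS}$, which is the ring \L{}o\'s sees; some additional argument (essentially that the residue ring $\cO_{\fS}/(Q, g(\alpha_1))$ is still a domain, which is where Kummer--Dedekind over $\cU(\A)$ and the conductor condition come back in) is needed. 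Controlling the conductor relative to a $Q$ that is handed to you (rather than chosen) is not something you can brush aside by picking $\alpha_1$ carefully, since $Q$ is forced by $\fp$.

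Second, your fallback option (b) fails for a structural reason: $\fL_{\alg}/\F$ and $\fS_s/\fF_s$ are Galois, so every prime above $Q$ in $\cO_{\fL_{\alg}}$ has the \emph{same} residue degree $f(P)$ and the same ramification index $e(P)$. Matching residue degrees and ramification indices therefore cannot single out $\fp$ among its Galois conjugates, and taking an arbitrary $\fP_s$ above $Q_s$ with the right $(e,f)$ gives you $\fP_{\alg}$ equal to \emph{some} prime above $P\A$, not necessarily $\fp$. Your "both lie over $Q\A$ and $\fp$ is maximal, hence equal" step only works once you already have $\fp\subseteq\fP_{\alg}$, and that inclusion is precisely what option (b) does not supply.

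The paper's proof sidesteps both issues by exploiting Galois transitivity rather than trying to name the right ultra-prime directly. It factors $P_s\cO_{\fS_s} = \fp_{1,s}\cdots\fp_{h_s,s}$, forms the ultraproduct of the finite sets $\{\fp_{1,s},\ldots,\fp_{h_s,s}\}$ to get a finite collection $\cA$ of ultra-primes, and picks \emph{any} $\fQ\in\cA$. It then checks (using Lemmas~\ref{lem-U(F)-algebraic-part-is-F} and~\ref{lem-algebraic-part-of-principal-ideal-in-U(F)-is-principal-ideal-in-F}) that $\fQ_{\alg}$ is a prime of $\cO_{\fL_{\alg}}$ lying over $P\A$. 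Since $\fp$ also lies over $P\A$ and $\fL_{\alg}/\F$ is Galois, there is $\lambda\in\Gal(\fL_{\alg}/\F)$ with $\fp = \lambda(\fQ_{\alg})$. The final ingredient, which your proposal is missing entirely, is Proposition~\ref{prop-algebraic-parts-of-ultra-prime-ideals-invariant-under-Galois-action}: $\lambda(\fQ_{\alg}) = (\Sigma^{-1}(\lambda)(\fQ))_{\alg}$, so $\fP := \Sigma^{-1}(\lambda)(\fQ) = \prod_s\lambda_s(\fp_{i_s,s})/\cD$ is the desired ultra-prime. In short: existence plus Galois conjugation replaces any explicit Kummer--Dedekind bookkeeping, and Proposition~\ref{prop-algebraic-parts-of-ultra-prime-ideals-invariant-under-Galois-action} is the tool that lets the conjugation pass through the operation of taking algebraic parts. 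You should incorporate that step; without it the argument does not close.
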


\begin{proof}

Since $\A = \fK[t]$ is a Dedekind domain and $\fL_{\alg}$ is a finite Galois extension of degree $m$ over $\F$, the integral closure $\cO_{\fL_{\alg}}$ of $\A$ in $\fL_{\alg}$ is also a Dedekind domain. Thus $\P = \fp \cap \A$ is a prime ideal in $\A$. Since $\A = \fK[t]$ is a principal ideal domain, there exists an irreducible polynomial $P(t) \in \A$ such that $P$ generates the prime ideal $\P$, i.e., 
\begin{align}
\label{e1-prop-main-prop1-existence-of-ultra-prime-whose-algebraic-part-coincides-with-a-given-prime}
\P = \fp \cap \A = P\A.
\end{align} 

By Lemma \ref{lem-elementary-lemma0}, we can write $P = \ulim_{s\in S}P_s$, where $P_s(t)$ is an irreducible polynomial in $\fA_s = \bF_s[t]$ for $\cD$-almost all $s \in S$. Since $\fS_s$ is a finite Galois extension of degree $m$ over $\fF_s$ (see Corollary \ref{cor-Galois-group-structure-of-shadows-of-algebraic-part-in-a-given-ultra-extension}), $\cO_{\fS_s}$ is a Dedekind domain, and thus there exist (not necessarily distinct) prime ideals $\fp_{1, s}, \ldots, \fp_{h_s, s}$ in $\cO_{\fS_s}$ with $h_s \le m$ such that
\begin{align*}
P_s\cO_{\fS_s} = \fp_{1, s} \fp_{2, s}\cdots \fp_{h_s, s}.
\end{align*}

Let $\cA_s$ be the set consisting of prime ideals $\fp_{1, s}, \ldots, \fp_{h_s, s}$ as its elements. We form the ultraproduct of the $\cA_s$ with respect to $\cD$ of the form
\begin{align*}
\cA = \prod_{s\in S}\cA_s/\cD = \{\prod_{s\in S}\fp_{i_s, s}/\cD \; |\; \text{$\fp_{i_s, s} \in \cA_s$ for all $1 \le i_s \le h_s$} \} \subset \cO_{\fS}.
\end{align*}

The ultraproduct $\cA$ consists of ultra-prime ideals in $\cO_{\fS}$ that are ultraproducts of prime ideals in the sets $\cA_s$. Since $\card(\cA_s) \le m$, we know from Bell--Slomson \cite[Lemma 3.7]{bell-slomson} that $\card(\cA) \le \ulim_{s\in S}m = m$, and thus $\cA$ is a finite set of cardinality $\le m$. 

Take an arbitrary ultra-prime ideal $\fQ = \prod_{s\in S}\fp_{i_s, s}/\cD$ in $\cA$, and let 
\begin{align*}
\fQ_{\alg} = \fQ \cap \F^{\alg}
\end{align*}
be the algebraic part of $\fQ$ over $\F$. By Proposition \ref{prop-algebraic-part-of-ultra-prime-is-prime-in-O_L-alg}, $\fQ_{\alg}$ is a prime ideal in $\cO_{\fL_{\alg}}$. We contend that
\begin{align*}
\fQ_{\alg} \cap \A = P\A.
\end{align*}
Indeed, we see that
\begin{align*}
\fQ_{\alg} \cap \A &= \fQ \cap \F^{\alg} \cap \A \\
&= \fQ \cap \F^{\alg} \cap \cU(\A) \cap \F^{\alg} \; \; (\text{see Lemma \ref{lem-U(F)-algebraic-part-is-F}}) \\
&= (\fQ \cap \cU(\A)) \cap \F^{\alg} \\
&= \left(\prod_{s\in S}(\fp_{i_s, s} \cap \fA_s)/\cD\right) \cap \F^{\alg} \\
&= \left(\prod_{s\in S}P_s\fA_s/\cD\right) \cap \F^{\alg} \; \; (\text{since $\fp_{i_s, s}$ lies above $P_s\fA_s$}) \\
&= P\cU(\A) \cap \F^{\alg} \; \; (\text{since $P = \ulim_{s\in S}P_s$ and $\cU(\A) = \prod_{s\in S}\fA_s/\cD$}) \\
&= (P\cU(\A))_{\alg} \\
&= P\A \; \; (\text{see Lemma \ref{lem-algebraic-part-of-principal-ideal-in-U(F)-is-principal-ideal-in-F}})
\end{align*}
as required.

Thus both $\fQ_{\alg}$ and $\fp$ lie above $P\A$ (see (\ref{e1-prop-main-prop1-existence-of-ultra-prime-whose-algebraic-part-coincides-with-a-given-prime})), and thus there exists an element $\lambda \in \Gal(\fL_{\alg}/\F)$ such that 
\begin{align}
\label{e1-prop-main-prop1-existence-of-ultra-prime-whose-algebraic-part-coincides-with-a-given-prime}
\fp = \lambda(\fQ_{\alg}).
\end{align}

By Proposition \ref{prop-algebraic-parts-of-ultra-prime-ideals-invariant-under-Galois-action}, we know that
\begin{align*}
\lambda(\fQ_{\alg}) = \Sigma^{-1}(\lambda)(\fQ)_{\alg},
\end{align*}
where $\Sigma^{-1}$ is the isomorphism from $\Gal(\fL_{\alg}/\F)$ to $\Gal(\fS/\cU(\F))$ in the proof of Theorem \ref{thm-main-thm3-Galois-group-structures-of-shadows} and Remark \ref{rem-relation-between-s-th-component-of-lambda-and-its-image-in-Gal(fS/cU(F))}. Since $\fQ = \prod_{s\in S}\fp_{i_s, s}/\cD$, it follows from (\ref{e1-prop-main-prop1-existence-of-ultra-prime-whose-algebraic-part-coincides-with-a-given-prime}) and Proposition \ref{prop-algebraic-parts-of-ultra-prime-ideals-invariant-under-Galois-action} that
\begin{align}
\label{e2-prop-main-prop1-existence-of-ultra-prime-whose-algebraic-part-coincides-with-a-given-prime}
\fp = \Sigma^{-1}(\lambda)(\fQ)_{\alg} = \left(\prod_{s\in S}\lambda_s(\fp_{i_s, s})/\cD\right)\cap \F^{\alg},
\end{align}
where $\lambda_s$ is the $s$-th component of $\lambda$ in the proof of Theorem \ref{thm-main-thm3-Galois-group-structures-of-shadows} and Remark \ref{rem-s-th-components-of-a-Galois-element}.

Letting $\fP = \Sigma^{-1}(\lambda)(\fQ)$ and following the proof of Proposition \ref{prop-algebraic-parts-of-ultra-prime-ideals-invariant-under-Galois-action}, we know that  $\fP$ is an ultra-prime ideal in $\cO_{\fS}$ that is of the form
\begin{align*}
\fP = \prod_{s\in S}\fP_s/\cD,
\end{align*}
where $\fP_s = \lambda_s(\fp_{i_s, s})$ is a prime ideal in $\cO_{\fS_s}$. Equation (\ref{e2-prop-main-prop1-existence-of-ultra-prime-whose-algebraic-part-coincides-with-a-given-prime}) immediately implies that $\fp = \fP_{\alg} = \fP \cap \F^{\alg}$ as required.

\end{proof}

\begin{theorem}
\label{thm-unramified-primes-locally-imply-unramified-globally-in-algebraic-parts-finite-case}

Let $P$ be an irreducible polynomial, i.e., a prime in $\A = \fK[t]$. By Lemma \ref{lem-elementary-lemma0}, one can write $P = \ulim_{s\in S}P_s$, where $P_s$ is an irreducible polynomial in $\fA_s = \bF_s[t]$ for $\cD$-almost all $s \in S$. If $P_s$ is unramified in the $s$-th shadow $\fS_s$ of $\fL_{\alg}$ for $\cD$-almost all $s \in S$, then $P$ is unramified in $\fL_{\alg}$.

\end{theorem}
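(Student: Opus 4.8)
## Proof proposal for Theorem \ref{thm-unramified-primes-locally-imply-unramified-globally-in-algebraic-parts-finite-case}

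The plan is to transfer the unramifiedness of $P_s$ in $\fS_s$ to unramifiedness of $P$ in $\fS$ (the ultra-shadow) by \L{}o\'s' theorem, and then descend from $\fS$ to $\fL_{\alg}$ via the identity $\fS_{\alg} = \fL_{\alg}$ together with the compatibility between prime ideals of $\cO_{\fS}$ and prime ideals of $\cO_{\fL_{\alg}}$ established in Propositions \ref{prop-algebraic-part-of-ultra-prime-is-prime-in-O_L-alg} and \ref{prop-main-prop1-existence-of-ultra-prime-whose-algebraic-part-coincides-with-a-given-prime}. First I would fix, for $\cD$-almost all $s\in S$, the factorization $P_s\cO_{\fS_s} = \fp_{1,s}\cdots\fp_{h_s,s}$ into prime ideals of the Dedekind domain $\cO_{\fS_s}$; since $[\fS_s:\fF_s] = m$ we have $h_s \le m$, so by Lemma \ref{lem-at-least-one-set-in-th-union-of-sets-is-in-D} there is a fixed integer $h \le m$ with $\{s : h_s = h\}\in\cD$, and after discarding a set not in $\cD$ we may assume $h_s = h$ for $\cD$-almost all $s$. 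The hypothesis that $P_s$ is unramified in $\fS_s$ means the ramification index $e(\fp_{i,s}\mid P_s\fA_s) = 1$ and the $\fp_{i,s}$ are pairwise distinct for $\cD$-almost all $s$; I would record this as a first-order statement so that \L{}o\'s' theorem applies.

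Next I would form the ultra-prime ideals $\fP_i = \prod_{s\in S}\fp_{i,s}/\cD \subset \cO_{\fS}$ for $1\le i\le h$. By \L{}o\'s' theorem the relation $P\cO_{\fS} = \fP_1\cdots\fP_h$ holds in $\cO_{\fS}$, the $\fP_i$ are pairwise distinct, and each $\fP_i$ is ``unramified over $P$'' in the internal sense. Applying Proposition \ref{prop-algebraic-part-of-ultra-prime-is-prime-in-O_L-alg}, each $(\fP_i)_{\alg} = \fP_i\cap\F^{\alg}$ is a prime ideal of $\cO_{\fL_{\alg}}$, and it is nonzero because it lies over $P\A$ (the computation $(\fP_i)_{\alg}\cap\A = P\A$ is exactly the chain of equalities carried out in the proof of Proposition \ref{prop-main-prop1-existence-of-ultra-prime-whose-algebraic-part-coincides-with-a-given-prime}, using Lemma \ref{lem-algebraic-part-of-principal-ideal-in-U(F)-is-principal-ideal-in-F}). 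Moreover, by Proposition \ref{prop-main-prop1-existence-of-ultra-prime-whose-algebraic-part-coincides-with-a-given-prime} every prime $\fp$ of $\cO_{\fL_{\alg}}$ lying over $P\A$ arises as $\fP_{\alg}$ for some ultra-prime $\fP$ over $P$, and by construction (following the proof there, which produces $\fP$ as a Galois translate of one of the $\fP_i$) every such $\fp$ equals $(\fP_i)_{\alg}$ for some $i$. Hence the set of primes of $\cO_{\fL_{\alg}}$ above $P\A$ is exactly $\{(\fP_1)_{\alg},\ldots,(\fP_h)_{\alg}\}$.

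It then remains to show two things: that these $h$ primes are pairwise distinct, and that $P\cO_{\fL_{\alg}} = (\fP_1)_{\alg}\cdots(\fP_h)_{\alg}$, i.e.\ each occurs with multiplicity one. For distinctness: if $(\fP_i)_{\alg} = (\fP_j)_{\alg}$ for $i\ne j$, I would pick $x_s\in\fp_{i,s}\setminus\fp_{j,s}$ for $\cD$-almost all $s$ (possible since $\fp_{i,s}\ne\fp_{j,s}$), but this requires care because $\ulim x_s$ need not be algebraic over $\F$; instead I would argue at the level of the contraction to $\cO_{\fS}$ — namely $\fP_i \ne \fP_j$ as ideals of $\cO_{\fS}$, and since $(\fP_i)_{\alg} = \fP_i\cap\cO_{\fL_{\alg}}$ and $\cO_{\fS}$ is the integral closure with $\cO_{\fS}\cap\F^{\alg} = \cO_{\fL_{\alg}}$ (Proposition \ref{prop-algebraic-part-of-ultra-integers}), I would use that $\fP_i$ is the unique prime of $\cO_{\fS}$ over $(\fP_i)_{\alg}$ lying over $P$ (by lying-over/incomparability in the integral extension $\cO_{\fS}/\cO_{\fL_{\alg}}$, or more simply by Galois transitivity on the fibre as in Proposition \ref{prop-main-prop1-existence-of-ultra-prime-whose-algebraic-part-coincides-with-a-given-prime}) to force $\fP_i = \fP_j$, a contradiction. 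For the multiplicity-one statement, since $\cO_{\fL_{\alg}}$ is a Dedekind domain it suffices to check $e((\fP_i)_{\alg}\mid P\A) = 1$; I would compare $e((\fP_i)_{\alg}\mid P\A)$ with the ``internal'' ramification index $e(\fP_i\mid P\cO_{\fS})$, which equals $1$ by \L{}o\'s' theorem, using that the valuation of $P$ at $(\fP_i)_{\alg}$ and the internal valuation of $P$ at $\fP_i$ agree — both are computed from the same factorization data $P_s\cO_{\fS_s} = \prod\fp_{j,s}$ with all exponents $1$. I expect the main obstacle to be precisely this last point: making rigorous the claim that ramification indices (and residue degrees, if one wants the full splitting statement) of $P$ in $\fL_{\alg}$ can be read off from the internal ramification data of $P_s$ in $\fS_s$ via the algebraic-part operation — i.e.\ that taking algebraic parts does not create ramification. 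This is where one must use Lemma \ref{rem-irreducibility-is-the-same-in-F-and-U(F)} (irreducibility is unchanged between $\F$ and $\cU(\F)$, hence minimal polynomials of generators of $\cO_{\fL_{\alg}}$ behave the same over $\F$ and $\cU(\F)$) together with the Dedekind-domain structure of $\cO_{\fL_{\alg}}$, rather than trying to work with the non-Dedekind (merely Pr\"ufer) ring $\cO_{\fS}$ directly.
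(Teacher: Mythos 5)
Your proposal takes a genuinely different route from the paper, and it has a real gap at exactly the point you flag as "the main obstacle."

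The paper's proof is a contrapositive argument at the level of \emph{elements}, not ideals. Assuming $P$ ramifies, i.e.\ $P\in\fp^2$ for some prime $\fp$ of $\cO_{\fL_{\alg}}$, the paper exploits the fact that $\cO_{\fL_{\alg}}$ is a Dedekind domain to pick \emph{two} generators $\alpha,\beta$ of $\fp$, so that membership in $\fp^2$ becomes a single equation $P=\sum_{i\in I}(\alpha a_i+\beta b_i)(\alpha c_i+\beta d_i)$ with a \emph{fixed finite} index set $I$ and finitely many elements $a_i,b_i,c_i,d_i\in\cO_{\fL_{\alg}}\subset\cO_{\fS}$. Writing each of these finitely many elements as an ultra-limit of components in $\cO_{\fS_s}$, the equation transfers componentwise (this is just equality of ultra-limits, no \L{}o\'s argument about ideals needed), giving $P_s\in\fB_s^2\subset\fQ_s^2$, where $\fQ_s$ is the $s$-th component of the ultra-prime $\fQ$ with $\fQ_{\alg}=\fp$ supplied by Proposition \ref{prop-main-prop1-existence-of-ultra-prime-whose-algebraic-part-coincides-with-a-given-prime}. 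The contradiction is immediate. The "bounded complexity" move — two generators for $\fp$, a fixed finite sum — is precisely what makes the transfer go through cleanly.

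Your proposal instead tries to transfer the \emph{ideal factorization} $P_s\cO_{\fS_s}=\fp_{1,s}\cdots\fp_{h,s}$ across the ultraproduct and then "read off" the factorization of $P\cO_{\fL_{\alg}}$ from it. Two issues. First, the assertion "by \L{}o\'s' theorem the relation $P\cO_{\fS}=\fP_1\cdots\fP_h$ holds" is not an application of \L{}o\'s: ideal factorization is not a first-order formula, and the ultraproduct of products of ideals is not automatically the product of the ultraproduct ideals (finite sums of products can require unboundedly many terms as $s$ varies). This inclusion can be repaired — e.g.\ using two-generation of each $\fp_{i,s}$ to bound the number of terms by $2^h$ — but you would need to say so. Second, and more seriously, the claim that $e\bigl((\fP_i)_{\alg}\mid P\A\bigr)$ equals the "internal" index $e(\fP_i\mid P\cO_{\fS})$ is exactly the statement to be proved, and your justification ("both are computed from the same factorization data") is circular: whether the factorization of $P\cO_{\fL_{\alg}}$ reflects the componentwise factorization data is precisely what is at issue. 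You recognize this yourself, but the appeal to Lemma \ref{rem-irreducibility-is-the-same-in-F-and-U(F)} and the Dedekind structure is a gesture, not an argument. The paper's two-generator contradiction argument is in effect the proof that taking algebraic parts does not create ramification; it supplies exactly the missing lemma. If you want to salvage your factorization-transfer route, the cleanest way to close the gap is to insert the paper's element-level argument as a sublemma.
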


\begin{proof}

Assume the contrary, i.e., $P$ is ramified in $\fL_{\alg}$, and thus there exists a prime ideal $\fp$ in $\cO_{\fL_{\alg}}$ such that $P\cO_{\fL_{\alg}} \subset \fp^e$ for some integer $e \ge 2$. Since $\cO_{\fL_{\alg}}$ is a Dedekind domain, there exist elements $\alpha, \beta \in \cO_{\fL_{\alg}}$ such that $\fp = \alpha\cO_{\fL_{\alg}} + \beta\cO_{\fL_{\alg}}$, i.e., $\fp$ is generated by $\alpha$ and $\beta$ (see Swinnerton-Dyer \cite{swinnerton-dyer}). Thus
\begin{align*}
P \in \fp^e \subset \fp^2,
\end{align*}
and therefore 
\begin{align}
\label{e1-thm-unramified-primes-in-algebraic-parts-finite-case}
P = \sum_{i \in I}(\alpha a_i + \beta b_i)(\alpha c_i + \beta d_i)
\end{align}
for some elements $a_i, b_i, c_i, d_i \in \cO_{\fL_{\alg}}$ and some finite index set $I$. 

By Lemma \ref{prop-algebraic-part-of-ultra-integers}, $\cO_{\fL_{\alg}} = (\cO_{\fS})_{\alg} = \cO_{\fS} \cap \F^{\alg} \subset \cO_{\fS} = \prod_{s\in S}\cO_{\fS_s}/\cD$, and thus we can write $\alpha = \ulim_{s\in S}\alpha_s$, $\beta = \ulim_{s\in S}\beta_s$, $a_i = \ulim_{s\in S}a_{i, s}$, $b_i = \ulim_{s\in S}b_{i, s}$, $c_i = \ulim_{s\in S}c_{i, s}$, $d_i = \ulim_{s\in S}d_{i, s}$ for some elements $\alpha_s, \beta_s, a_{i, s}, b_{i, s}, c_{i, s}, d_{i, s} \in \cO_{\fS_s}$ for $\cD$-almost all $s \in S$. It thus follows from (\ref{e1-thm-unramified-primes-in-algebraic-parts-finite-case}) that
\begin{align*}
P = \ulim_{s\in S}P_s &= \sum_{i \in I}(\alpha a_i + \beta b_i)(\alpha c_i + \beta d_i) \\
&= \ulim_{s\in S}\left(\sum_{i \in I}(\alpha_s a_{i, s} + \beta_s b_{i, s})(\alpha_s c_{i, s} + \beta_s d_{i, s})\right), 
\end{align*}
and thus
\begin{align*}
P_s = \sum_{i \in I}(\alpha_s a_{i, s} + \beta_s b_{i, s})(\alpha_s c_{i, s} + \beta_s d_{i, s})
\end{align*}
for $\cD$-almost all $s \in S$. Therefore 
\begin{align}
\label{e2-thm-unramified-primes-in-algebraic-parts-finite-case}
P_s\cO_{\fS_s} \subset \fB_s^2
\end{align}
for $\cD$-almost all $s \in S$,
where
\begin{align*}
\fB_s = \alpha_s\cO_{\fS_s} + \beta_s\cO_{\fS_s}
\end{align*}
is an ideal in $\cO_{\fS_s}$.

By Proposition \ref{prop-main-prop1-existence-of-ultra-prime-whose-algebraic-part-coincides-with-a-given-prime}, there exists an ultra-prime ideal $\fQ = \prod_{s\in S}\fQ_s/\cD$ in $\cO_{\fS}$, where $\fQ_s$ is a prime ideal in $\cO_{\fS_s}$ for $\cD$-almost all $s \in S$ such that
\begin{align*}
\fQ_{\alg} = \fQ\cap \F^{\alg} = \fp.
\end{align*}

We know that $\alpha = \ulim_{s\in S}\alpha_s \in \fp \subset \fQ = \prod_{s\in S}\fQ_s/\cD$, and thus $\alpha_s \in \fQ_s$ for $\cD$-almost all $s \in S$. Using the same argument implies that $\beta_s \in \fQ_s$ for $\cD$-almost all $s\in S$. Thus
\begin{align*}
\fB_s = \alpha_s\cO_{\fS_s} + \beta_s\cO_{\fS_s} \subset \fQ_s
\end{align*}
for $\cD$-almost all $s \in S$, and it thus follows from (\ref{e2-thm-unramified-primes-in-algebraic-parts-finite-case}) that
\begin{align*}
P_s\cO_{\fS_s} \subset \fQ_s^2,
\end{align*}
for $\cD$-almost all $s \in S$, which proves that $P_s$ is ramified in $\fS_s$ for $\cD$-almost all $s \in S$, a contradiction. Thus $P$ is unramified in $\fL_{\alg}$.

\end{proof}

\begin{corollary}
\label{cor-unramified-primes-from-local-in-L=prod-L-s-to-L-alg-finite-case}

Let $P$ be an irreducible polynomial, i.e., a prime in $\A = \fK[t]$. By Lemma \ref{lem-elementary-lemma0}, write $P = \ulim_{s\in S}P_s$, where $P_s$ is an irreducible polynomial in $\fA_s = \bF_s[t]$ for $\cD$-almost all $s \in S$. If $P_s$ is unramified in $\fL_s$ \footnote{In the case where $\fL_s$ is an infinite Galois extension of $\fF_s$, the condition that $P_s$ is unramified in $\fL_s$, is equivalent to that $P_s$ is unramified in every finite Galois subextension $\fH_s/\fF_s$ of $\fL_s/\fF_s$.} for $\cD$-almost all $s \in S$, then $P$ is unramified in $\fL_{\alg}$.

\end{corollary}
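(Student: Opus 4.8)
The plan is to reduce the statement to Theorem \ref{thm-unramified-primes-locally-imply-unramified-globally-in-algebraic-parts-finite-case}, which already handles unramifiedness in the $s$-th shadow $\fS_s$ of $\fL_{\alg}$. The only thing that needs to be supplied is the implication ``$P_s$ unramified in $\fL_s$'' $\Longrightarrow$ ``$P_s$ unramified in $\fS_s$'' for $\cD$-almost all $s \in S$, after which the conclusion is immediate.

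First I would recall from Corollary \ref{cor-Galois-group-structure-of-shadows-of-algebraic-part-in-a-given-ultra-extension}(i) that $\fS_s/\fF_s$ is a finite Galois subextension of $\fL_s/\fF_s$ for $\cD$-almost all $s \in S$; in particular $\fF_s \subseteq \fS_s \subseteq \fL_s$. Next I would invoke the standard fact that ramification indices are multiplicative in towers of extensions of global function fields (see Chevalley \cite{chevalley}): for any prime $\fP$ of $\fL_s$ lying over a prime $\fp$ of $\fS_s$ lying over the prime of $\fF_s$ generated by $P_s$, one has $e(\fP/P_s\fA_s) = e(\fP/\fp)\,e(\fp/P_s\fA_s)$. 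In the case where $\fL_s$ is an infinite Galois extension of $\fF_s$, the footnote to the Corollary tells us ``$P_s$ unramified in $\fL_s$'' means $P_s$ is unramified in every finite Galois subextension of $\fL_s/\fF_s$, so in particular in $\fS_s$ itself, and there is nothing further to prove; when $\fL_s/\fF_s$ is finite, the multiplicativity relation forces $e(\fp/P_s\fA_s) = 1$ for every prime $\fp$ of $\fS_s$ above $P_s$, i.e. $P_s$ is unramified in $\fS_s$. Either way, for $\cD$-almost all $s \in S$ the prime $P_s$ is unramified in the $s$-th shadow $\fS_s$.

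Finally, having established that $P_s$ is unramified in $\fS_s$ for $\cD$-almost all $s \in S$, I would apply Theorem \ref{thm-unramified-primes-locally-imply-unramified-globally-in-algebraic-parts-finite-case} verbatim (the hypotheses of that theorem are precisely this, together with $P = \ulim_{s\in S}P_s$, which is given), and conclude that $P$ is unramified in $\fL_{\alg}$, as desired.

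There is essentially no hard obstacle here: the corollary is a clean consequence of the theorem once one observes the shadow sits inside $\fL_s$. The only point requiring a modicum of care is matching the two readings of ``unramified in $\fL_s$'' (the finite case versus the infinite case handled by the footnote), but in both readings the passage to the finite subextension $\fS_s$ is trivial, so the argument is short.
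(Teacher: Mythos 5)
Your proposal is correct and follows exactly the same route as the paper: pass from $\fL_s$ to the finite Galois subextension $\fS_s$ (preservation of unramifiedness in subextensions, which the paper leaves implicit and you justify via multiplicativity of ramification indices / the footnote), then apply Theorem \ref{thm-unramified-primes-locally-imply-unramified-globally-in-algebraic-parts-finite-case}.
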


\begin{proof}

Since $P_s$ is unramified in $\fL_s$ and $\fS_s/\fF_s$ is a finite Galois subextension of $\fL_s/\fF_s$ for $\cD$-almost all $s \in S$, we deduce that $P_s$ is also unramified in $\fS_s$ for $\cD$-almost all $s \in S$. Hence the corollary follows immediately from Theorem \ref{thm-unramified-primes-locally-imply-unramified-globally-in-algebraic-parts-finite-case}.

\end{proof}

The converse of Theorem \ref{thm-unramified-primes-locally-imply-unramified-globally-in-algebraic-parts-finite-case} also holds if the constant fields $\bF_s$ are \textbf{perfect procyclic field} for $\cD$-almost all $s \in S$.

\begin{definition}
\label{def-procyclic-field}
(perfect procyclic field)

A \textbf{perfect procyclic field} is a pair $(F, \sigma)$, where $F$ is a perfect field and $\sigma$ is a topological generator of the absolute Galois group $G_F := \Gal(F^{\alg}/F)$, that is, $G_F = \overline{\langle F\rangle}$.

\end{definition}

We recall that the following conditions on a perfect field $F$ are equivalent.
\begin{itemize}

\item [(PCF1)] $F$ has at most one extension of each degree;

\item [(PCF2)] every finite extension $K/F$ is cyclic; and

\item [(PCF3)] The absolute Galois group $G_F = \Gal(F^{\alg}/F)$ is procyclic, i.e., there exists a nonempty set $\cP$ of prime numbers such that as topological groups, 
    \begin{align*}
    G_F = \Gal(F^{\alg}/F) \cong \prod_{\ell \in \cP}\bZ_{\ell},
    \end{align*}
    where $\bZ_{\ell}$ denotes the ring of $\ell$-adic integers. 
    
\end{itemize}

\begin{remark}

If $\bF_s$ is a perfect procyclic field for $\cD$-almost all $s \in S$, then $\fK = \prod_{s\in S}\bF_s/\cD$ is also a perfect procylic field (see \cite{Chatzidakis} or \cite{FJ}).

\end{remark}

For the rest of this paper, for a perfect procyclic field $F$ and for each positive integer $d$, we always denote by $F(d)$ the unique algebraic extension of degree $d$ over $F$ if such an extension exists.

The following lists a certain class of procyclic fields that we use in most applications in this paper.

\begin{example}

\begin{itemize}

\item []

\item [(1)] Finite fields $\bF_q$ with $q$ being a power of a prime $p$.

\item [(2)] Algebraically closed fields of characteristic $p > 0$.

\item [(3)] Nonprincipal ultraproducts of quasi-finite fields (see Serre \cite{serre-local-fields} for a definition of quasi-fields).

\end{itemize}

\end{example}

\begin{lemma}
\label{lem-main-lem0-ramifications-of-primes-in-O_L-alg}

Let $P$ be an irreducible polynomial in $\A$. Write $P = \ulim_{s\in S}P_s$ for some irreducible polynomials $P_s \in \fA_s = \bF_s[t]$ (see Lemma \ref{lem-elementary-lemma0}). Let
\begin{align}
\label{e1-lem-main-lem0-ramifications-of-primes-in-O-L-alg}
P\cO_{\fL_{\alg}} = \left(\fp_1\cdots \fp_{g(P)}\right)^{e(P)},
\end{align}
where the $\fp_i$ are distinct prime ideals in $\cO_{\fL_{\alg}}$, $e(P)$ is the ramification index of the $\fp_i$ over $P$, and $g(P)$ denotes the number of distinct prime ideals in the factorization of $P\cO_{\fL_{\alg}}$. Let $f(P)$ denote the inertia degree of $\fp_i$ over $P$ for $1\le i \le g(P)$, i.e., $f(P) = [\cO_{\fL_{\alg}}/\fp_i : \A/P\A]$ for all $1 \le i \le g(P)$. \footnote{Note that the ramification indexes of $\fp_i$ over $P$ all equal $e(P)$, and the inertia degrees $[\cO_{\fL_{\alg}}/\fp_i : \A/P\A]$ are all equal to $f(P)$ since $\fL_{\alg}$ is a finite Galois extension of $\F$ (see Lang \cite{lang-ant} or Serre \cite{serre-local-fields})}. 

Let 
\begin{align*}
P_s\cO_{\fS_s} = \left(\fp_{1, s}\cdots \fp_{g(P_s), s}\right)^{e(P_s)},
\end{align*}
where the $\fp_{i, s}$ are distinct prime ideals in $\cO_{\fS_s}$, $e(P_s)$ is the ramification index of the $\fp_{i, s}$ over $P_s$, and $g(P_s)$ denotes the number of distinct prime ideals in the factorization of $P\cO_{\fS_s}$. Let $f(P_s)$ denote the inertia degree of $\fp_{i, s}$ over $P_s$ for $1\le i \le g(P_s)$, i.e., $f(P_s) = [\cO_{\fS_s}/\fp_{i, s} : \fA_s/P_s\fA_s]$ for all $1 \le i \le g(P_s)$. Then there exist positive integers $1 \le e, f, g \le m = [\fL_{\alg} : \F]$ that satisfy the following.

\begin{itemize}

\item [(i)] the set $\{s \in S \; | \; e(P_s) = e, f(P_s) = f, g(P_s) = g\}$ belongs to the ultrafilter $\cD$ and
\begin{align*}
e(P)f(P)g(P) = efg = m.
\end{align*}

\item [(ii)] $g \ge g(P)$.

\end{itemize}

\end{lemma}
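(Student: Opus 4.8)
The strategy is to combine the factorization transfer already built into the theory of shadows with the fundamental identity $\sum e_i f_i = m$ that holds in each Dedekind domain $\cO_{\fS_s}$ and in $\cO_{\fL_{\alg}}$, and then to use \L o\'s' theorem to pin down which values $(e(P_s), f(P_s), g(P_s))$ occur for $\cD$-almost all $s$. First I would record that, since $\fS_s/\fF_s$ is a finite Galois extension of degree $m$ for $\cD$-almost all $s$ (Corollary \ref{cor-Galois-group-structure-of-shadows-of-algebraic-part-in-a-given-ultra-extension}), each $\cO_{\fS_s}$ is a Dedekind domain in which $P_s\cO_{\fS_s}$ factors as $(\fp_{1,s}\cdots\fp_{g(P_s),s})^{e(P_s)}$ with $e(P_s)f(P_s)g(P_s)=m$; in particular $1\le e(P_s), f(P_s), g(P_s)\le m$ for $\cD$-almost all $s$. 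The three integer-valued functions $s\mapsto e(P_s)$, $s\mapsto f(P_s)$, $s\mapsto g(P_s)$ therefore take values in the finite set $\{1,\dots,m\}$, so the sets $\{s : e(P_s)=e\}$ ($1\le e\le m$) partition an element of $\cD$; by Lemma \ref{lem-at-least-one-set-in-th-union-of-sets-is-in-D} exactly one value $e$ has $\{s : e(P_s)=e\}\in\cD$, and similarly for $f$ and $g$. Intersecting these three sets (a finite intersection, hence in $\cD$) gives the set in part (i), and on that set $efg = e(P_s)f(P_s)g(P_s) = m$, so $efg = m$.

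The real content is identifying $e(P) = e$, $f(P) = f$ and bounding $g(P)\le g$. For the ramification index: by Theorem \ref{thm-unramified-primes-locally-imply-unramified-globally-in-algebraic-parts-finite-case}, if $P_s$ is unramified in $\fS_s$ for $\cD$-almost all $s$ (i.e.\ $e=1$) then $P$ is unramified in $\fL_{\alg}$; but more is needed, namely that $e(P)=e$ in general. Here I would argue as in the proof of Theorem \ref{thm-unramified-primes-locally-imply-unramified-globally-in-algebraic-parts-finite-case}: using Proposition \ref{prop-main-prop1-existence-of-ultra-prime-whose-algebraic-part-coincides-with-a-given-prime} choose an ultra-prime $\fP = \prod_{s\in S}\fP_s/\cD$ of $\cO_{\fS}$ with $\fP_{\alg}=\fp_1$; then on one side $P\cO_{\fL_{\alg}}\subset \fp_1^{e(P)}$ and $P\cO_{\fL_{\alg}}\not\subset\fp_1^{e(P)+1}$, and on the other side $P_s\cO_{\fS_s}\subset\fP_s^{e(P_s)}$, $P_s\cO_{\fS_s}\not\subset\fP_s^{e(P_s)+1}$ for $\cD$-almost all $s$. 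Writing membership in $\fp_1^{k}$ (resp.\ $\fP_s^k$) as a first-order statement about products of generators of the ideal --- using that $\cO_{\fL_{\alg}}$ is Dedekind so $\fp_1=\alpha\cO_{\fL_{\alg}}+\beta\cO_{\fL_{\alg}}$ and $\fp_1^k$ is generated by the degree-$k$ monomials in $\alpha,\beta$, exactly as in (\ref{e1-thm-unramified-primes-in-algebraic-parts-finite-case}) --- and transporting between $\cO_{\fL_{\alg}}=(\cO_{\fS})_{\alg}$ and $\cO_{\fS}=\prod\cO_{\fS_s}/\cD$ via Proposition \ref{prop-algebraic-part-of-ultra-integers} and Corollary \ref{cor-algebraic-part-of-principal-ideals-in-O_S-is-principal-ideal-in-O_L-alg}, I get $P\in\fp_1^{e(P)}\iff P_s\in\fP_s^{e(P)}$ for $\cD$-almost all $s$. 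This forces $e(P)=e(P_s)=e$ for $\cD$-almost all $s$.

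For the inertia degree, I would compare residue fields. On the one hand $f(P) = [\cO_{\fL_{\alg}}/\fp_1 : \A/P\A]$; on the other $f(P_s) = [\cO_{\fS_s}/\fP_s : \fA_s/P_s\fA_s]$. Both residue fields are finite extensions of the residue field at $P$, and the degree $[\A/P\A : \fK] = \deg P = \deg P_s = [\fA_s/P_s\fA_s : \bF_s]$ for $\cD$-almost all $s$; expressing ``$\cO_{\fS_s}/\fP_s$ is generated over $\fA_s/P_s\fA_s$ by the image of a fixed primitive element $\bar\alpha_1$ with a minimal polynomial of degree $f$'' as a first-order property of $\cO_{\fS_s}$ and lifting it through the algebraic-part correspondence gives $f(P)=f(P_s)=f$ for $\cD$-almost all $s$; then $g(P) = m/(ef) = g$ as well --- which in fact yields the stronger equality $g(P)=g$, a fortiori part (ii). (If one prefers to avoid the residue-field transfer, part (i) already determines $e$ and, via $e(P)f(P)g(P)=m=efg$ together with the independently established $e(P)=e$ and $g(P)\le g$, one still recovers $f(P)=f$.) For part (ii), $g(P)$ counts the orbit of primes of $\fS$ above $P$ in $\cO_{\fL_{\alg}}$, each obtained as $\fP_{\alg}$ for an ultra-prime $\fP$ lying over $P$; since distinct primes $\fp_i, \fp_j$ of $\cO_{\fL_{\alg}}$ arise from distinct ultra-primes $\fP^{(i)}, \fP^{(j)}$ of $\cO_{\fS}$ (their $\cD$-almost-everywhere $s$-components $\fp_{\cdot,s}$ must differ, else the ultraproducts and hence the algebraic parts would coincide), the map $i\mapsto (\fp_{i,s})_s$ embeds the $g(P)$ primes of $\cO_{\fL_{\alg}}$ into the at most $g(P_s)=g$ primes of $\cO_{\fS_s}$ for $\cD$-almost all $s$, giving $g(P)\le g$.

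\textbf{Main obstacle.} The delicate point is making the ramification- and inertia-degree statements genuinely first-order so that \L o\'s' theorem applies cleanly: ``$\fp_1$ has ramification index exactly $e$ over $P$'' involves the ideal $\fp_1$, not just elements, and the ring $\cO_{\fS}$ is a Pr\"ufer but not Dedekind domain, so one cannot naively mimic Dedekind factorization at the ultraproduct level. The way around this is to fix Dedekind presentations $\fp_1 = \alpha\cO_{\fL_{\alg}}+\beta\cO_{\fL_{\alg}}$ (two generators suffice) and $\fP_s = \alpha_s\cO_{\fS_s}+\beta_s\cO_{\fS_s}$ with $\alpha=\ulim_{s\in S}\alpha_s$, $\beta=\ulim_{s\in S}\beta_s$, and translate everything into statements about finite sums of products of $\alpha,\beta$ (divisibility of $P$ by the $k$-th symbolic power), which \emph{are} first-order with parameters, exactly the device used in the proof of Theorem \ref{thm-unramified-primes-locally-imply-unramified-globally-in-algebraic-parts-finite-case}. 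Combined with Propositions \ref{prop-algebraic-part-of-ultra-integers}, \ref{prop-algebraic-part-of-ultra-prime-is-prime-in-O_L-alg}, \ref{prop-main-prop1-existence-of-ultra-prime-whose-algebraic-part-coincides-with-a-given-prime} this should close every gap; I expect the write-up to be somewhat lengthy but conceptually routine once this first-order reformulation is in place.
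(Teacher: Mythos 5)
Your proofs of parts (i) and (ii) as actually stated in the lemma are correct and follow essentially the same route as the paper: part (i) by observing that $e(P_s), f(P_s), g(P_s)$ take values in $\{1,\dots,m\}$ and applying Lemma~\ref{lem-at-least-one-set-in-th-union-of-sets-is-in-D} (together with the fundamental identities $e(P_s)f(P_s)g(P_s)=m$ in the degree-$m$ Galois extensions $\fS_s/\fF_s$ and $e(P)f(P)g(P)=m$ in $\fL_{\alg}/\F$), and part (ii) by choosing, via Proposition~\ref{prop-main-prop1-existence-of-ultra-prime-whose-algebraic-part-coincides-with-a-given-prime}, ultra-primes $\fB_i$ with $\fB_{i}\cap\F^{\alg}=\fp_i$ and arguing that distinctness of $\fp_i,\fp_j$ forces distinctness of the $s$-th components for $\cD$-almost all $s$, so that $g(P_s)\ge g(P)$ almost everywhere. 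This is exactly what the paper does.

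However, the bulk of your write-up aims at a strictly stronger conclusion, $e(P)=e$, $f(P)=f$, $g(P)=g$, and this part has a genuine gap. The key step you invoke is the biconditional ``$P\in\fp_1^k \iff P_s\in\fP_s^k$ for $\cD$-almost all $s$.'' The direction ``$\Longrightarrow$'' is fine and is exactly the mechanism of Theorem~\ref{thm-unramified-primes-locally-imply-unramified-globally-in-algebraic-parts-finite-case}: write $P$ as a finite sum of degree-$k$ monomials in two Dedekind generators of $\fp_1$ with coefficients in $\cO_{\fL_{\alg}}\subset\cO_{\fS}$, and push to the $s$-components. But the direction ``$\Longleftarrow$'' fails as written. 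If $P_s\in\fP_s^k$ for $\cD$-almost all $s$, then (because $\fP_s=(\alpha_s,\beta_s)$ is two-generated, so $\fP_s^k$ is generated by the $k+1$ monomials $\alpha_s^i\beta_s^{k-i}$) you can take ultra-limits and write $P=\sum_{i=0}^k c_i\,\alpha^i\beta^{k-i}$, but the coefficients $c_i=\ulim_{s\in S}c_{i,s}$ land in $\cO_{\fS}$, not in $\cO_{\fL_{\alg}}$. You therefore obtain $P\in\fp_1^k\cO_{\fS}$, an extended ideal of the Pr\"ufer domain $\cO_{\fS}$, and the step that would be needed --- $\fp_1^k\cO_{\fS}\cap\F^{\alg}=\fp_1^k$ --- is not available from Corollary~\ref{cor-algebraic-part-of-principal-ideals-in-O_S-is-principal-ideal-in-O_L-alg}, which is only about \emph{principal} ideals, nor from Proposition~\ref{prop-algebraic-part-of-ultra-integers}. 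Without it you only get $e(P)\le e$, not equality. This is not an accident: the paper recovers full control on $e,f,g$ only \emph{after} adding the perfect-procyclic hypothesis in Lemma~\ref{lem-main-lem1-ramifications-of-primes-in-O_L-alg} (which gives $f(P)\mid f$) and combining it with this lemma in Theorem~\ref{thm-big-theorem-I-the-finite-case}. So the modest inequality $g\ge g(P)$, proved directly as in your last paragraph, is all that the present lemma should assert, and your argument for that is fine; the stronger equalities should be dropped (or deferred to the later, procyclic, setting).
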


\begin{proof}

We first prove part (i). Since $\fS_s$ is a finite Galois extension of degree $m$ over $\fF_s$ for $\cD$-almost all $s \in S$ (see Corollary \ref{cor-Galois-group-structure-of-shadows-of-algebraic-part-in-a-given-ultra-extension}), we deduce that $1 \le e(P_s) \le m$ for $\cD$-almost all $s \in S$. Thus
\begin{align*}
S = \bigcup_{i = 1}^mE_i,
\end{align*}
where
\begin{align*}
E_i = \{s\in S\; | \; e(P_s) = i \}
\end{align*}
for each $1 \le i \le m$. By Lemma \ref{lem-at-least-one-set-in-th-union-of-sets-is-in-D}, $E_e \in \cD$ for some integer $1 \le e \le m$.

Similarly, using the same argument, we can prove that there exist integers $1 \le f, g \le m$ such that
\begin{align*}
\{s \in S\; | \; f(P_s) = f\} \in \cD
\end{align*}
and
\begin{align*}
\{s \in S\; | \; g(P_s) = g\} \in \cD.
\end{align*}

Since $\cD$ is an ultrafilter, we deduce that
\begin{align*}
&\{s \in S\; | \; e(P_s) = e, f(P_s) = f, g(P_s) = g\} \\
&= \{s\in S\; | \; e(P_s) = e\} \cap \{s \in S\; | \; f(P_s) = f\} \cap \{s \in S\; | \; g(P_s) = g\} \in \cD.
\end{align*}

We now prove part (ii). By part (i), one can write
\begin{align}
\label{e2-lem-main-lem0-ramifications-of-primes-in-O-L-alg}
P_s\cO_{\fS_s} = \left(\fp_{1, s}\cdots \fp_{g, s}\right)^e
\end{align}
for $\cD$-almost all $s \in S$.

Take arbitrary distinct prime ideals $\fp_i, \fp_j$ with $ 1 \le i < j \le g(P)$ in the factorization (\ref{e1-lem-main-lem0-ramifications-of-primes-in-O-L-alg}) of $P\cO_{\fL_{\alg}}$. By Proposition \ref{prop-main-prop1-existence-of-ultra-prime-whose-algebraic-part-coincides-with-a-given-prime}, there exist ultra-prime ideals $\fB_i = \prod_{s \in S}\fq_{i, s}/\cD$ and $\fB_j = \prod_{s\in S}\fq_{j, s}/\cD$ in $\cO_{\fS}$ such that
\begin{align*}
\fB_i \cap \F^{\alg} = \fp_i
\end{align*}
and 
\begin{align*}
\fB_j \cap \F^{\alg} = \fp_j.
\end{align*}

Since $\fp_i \ne \fp_j$, we see that
\begin{align*}
\fB_i = \prod_{s \in S}\fq_{i, s}/\cD \ne \fB_j = \prod_{s\in S}\fq_{j, s}/\cD,
\end{align*}
and thus 
\begin{align}
\label{e3-lem-main-lem0-ramifications-of-primes-in-O-L-alg}
\{s \in S \; | \; \fq_{i, s} \ne \fq_{j, s}\} \in \cD.
\end{align}

Since $P  \in \fp_i = \fB_i \cap \F^{\alg}$, $P = \ulim_{s\in S}P_s \in \fB_i = \prod_{s\in S}\fq_{i, s}/\cD$, and thus $P_s \in \fq_{i, s}$ for $\cD$-almost all $s \in S$. Therefore $P_s\cO_{\fS_s} \subset \fq_{i, s}$, i.e., $\fq_{i, s}$ lies above $P_s$. Thus it follows from (\ref{e2-lem-main-lem0-ramifications-of-primes-in-O-L-alg}) that there exists an integer $1\le \iota(i) \le g$ such that
\begin{align*}
\fq_{i, s} = \fp_{\iota(i), s}.
\end{align*}

Using the same arguments, we deduce that there exists an integer $1 \le \iota(j) \le g$ such that 
\begin{align*}
\fq_{j, s} = \fp_{\iota(j), s}.
\end{align*}
By (\ref{e3-lem-main-lem0-ramifications-of-primes-in-O-L-alg}), we deduce that
\begin{align}
\label{e4-lem-main-lem0-ramifications-of-primes-in-O-L-alg}
S_{i, j} = \{s \in S\; | \; \fp_{\iota(i), s} \ne \fp_{\iota(j), s}\} \in \cD.
\end{align}

Thus we have showed that for each prime ideal $\fp_i$ with $1 \le i \le g(P)$ in the factorization (\ref{e1-lem-main-lem0-ramifications-of-primes-in-O-L-alg}) of $P\cO_{\fL_{\alg}}$, there is a corresponding prime ideal $\fp_{\iota(i), s}$ in the factorization (\ref{e2-lem-main-lem0-ramifications-of-primes-in-O-L-alg}) of $P_s\cO_{\fS_s}$ such that
\begin{align*}
\{s \in S\; |\; \text{$\fp_{\iota(i), s} \ne \fp_{\iota(j), s}$ for all $1 \le i < j \le g(P)$}\} = \bigcap_{1 \le i < j \le g(P)}S_{i, j} \in \cD,
\end{align*}
where $S_{i, j}$ is defined in (\ref{e4-lem-main-lem0-ramifications-of-primes-in-O-L-alg}). Thus there are at least $g(P)$ distinct prime ideals $\fp_{\iota(1), s}, \ldots, \fp_{\iota(g(P)), s}$ in the factorization (\ref{e2-lem-main-lem0-ramifications-of-primes-in-O-L-alg}) of $P_s\cO_{\fS_s}$ for $\cD$-almost all $s\in S$. Therefore $g \ge g(P)$ as required.

\end{proof}

\begin{corollary}
\label{cor-totally-split-primes-globally-imply-locally-finite-case}

We maintain the same notation as in Lemma \ref{lem-main-lem0-ramifications-of-primes-in-O_L-alg}. If $P$ is totally split in $\fL_{\alg}$, then $P_s$ is totally split in $\fS_s$ for $\cD$-almost all $s \in S$.

\end{corollary}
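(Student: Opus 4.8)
The plan is to derive this corollary directly from Lemma \ref{lem-main-lem0-ramifications-of-primes-in-O_L-alg} by unwinding what "totally split" means in terms of the numerical invariants $e(P), f(P), g(P)$ and their local counterparts $e(P_s), f(P_s), g(P_s)$. Recall that $P$ being totally split in $\fL_{\alg}$ means precisely that $e(P) = f(P) = 1$ and $g(P) = m = [\fL_{\alg}:\F]$; that is, $P\cO_{\fL_{\alg}}$ factors as a product of exactly $m$ distinct prime ideals, each with ramification index and inertia degree $1$.

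First I would invoke Lemma \ref{lem-main-lem0-ramifications-of-primes-in-O_L-alg} to obtain positive integers $1 \le e, f, g \le m$ with
\[
\{s \in S \; | \; e(P_s) = e,\ f(P_s) = f,\ g(P_s) = g\} \in \cD
\]
and $efg = m$, together with the inequality $g \ge g(P)$. Since $P$ is totally split, $g(P) = m$, so $g \ge m$; combined with $g \le m$ this forces $g = m$. Then $efg = m$ with $g = m$ gives $ef = 1$, hence $e = f = 1$. Therefore, for $\cD$-almost all $s \in S$, we have $e(P_s) = 1$, $f(P_s) = 1$, and $g(P_s) = m = [\fS_s : \fF_s]$ (the last equality because $\fS_s$ is a finite Galois extension of degree $m$ over $\fF_s$ by Corollary \ref{cor-Galois-group-structure-of-shadows-of-algebraic-part-in-a-given-ultra-extension}). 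This is exactly the statement that $P_s$ is totally split in $\fS_s$ for $\cD$-almost all $s \in S$.

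There is essentially no obstacle here: the corollary is a purely bookkeeping consequence of the constraints $efg = m$ and $g \ge g(P)$ already packaged in Lemma \ref{lem-main-lem0-ramifications-of-primes-in-O_L-alg}. The only point that requires a moment's care is matching the global degree $m = [\fL_{\alg}:\F]$ with the local degree $[\fS_s:\fF_s]$, which is guaranteed by Theorem \ref{thm-main-thm3-Galois-group-structures-of-shadows} (or its consequence Corollary \ref{cor-Galois-group-structure-of-shadows-of-algebraic-part-in-a-given-ultra-extension}) stating that the $s$-th shadow $\fS_s$ is Galois of degree $m$ over $\fF_s$ with Galois group isomorphic to $\Gal(\fL_{\alg}/\F)$. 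So the write-up will be just a few lines: recall the meaning of totally split, apply the lemma, solve the divisibility/inequality constraints to pin down $e = f = 1$ and $g = m$, and conclude.
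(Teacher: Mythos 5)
Your proof is correct and follows essentially the same route as the paper's: both invoke part (ii) of Lemma \ref{lem-main-lem0-ramifications-of-primes-in-O_L-alg} to get $g \ge g(P) = m$, combine with $g \le m$ to pin down $g = m$, and conclude that $P_s$ is totally split in $\fS_s$ for $\cD$-almost all $s$. The only cosmetic difference is that you make explicit the step $efg = m \Rightarrow ef = 1 \Rightarrow e = f = 1$, which the paper leaves implicit since $g = m = [\fS_s : \fF_s]$ already characterizes total splitting in a Galois extension.
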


\begin{proof}

Since $P$ is totally split in $\fL_{\alg}$, $g(P) = m = [\fL_{\alg} : \F]$. Thus we deduce from part (ii) of Lemma \ref{lem-main-lem0-ramifications-of-primes-in-O_L-alg} that $m \ge g \ge g(P) = m$, which implies that $g = m$. Since $[\fS_s : \fF_s] = m = g$, $P_s$ is totally split in $\fS_s$ for $\cD$-almost all $s \in S$.

\end{proof}

\begin{lemma}
\label{lem-main-lem1-ramifications-of-primes-in-O_L-alg}

Assume that $\bF_s$ is a perfect procyclic field for $\cD$-almost all $s \in S$. Let $P$ be an irreducible polynomial of degree $d \ge 1$ in $\A$. Write $P = \ulim_{s\in S}P_s$ for some irreducible polynomials $P_s \in \fA_s = \bF_s[t]$ (see Lemma \ref{lem-elementary-lemma0}). Let $\fp_s$ be a prime ideal in $\cO_{\fS_s}$ lying above the prime ideal $P_s\fA_s$, i.e., $\fp_s \cap \fA_s = P_s\fA_s$. Let $\fP = \prod_{s\in S}\fp_s/\cD$ be the ultra-prime ideal in $\cO_{\fS}$. Let $\fp = \fP_{\alg} = \fP \cap \F^{\alg}$ be the prime ideal in $\cO_{\fL_{\alg}}$ \footnote{See Proposition \ref{prop-algebraic-part-of-ultra-prime-is-prime-in-O_L-alg}.} that is the algebraic part of $\fP$ over $\F$. Then
\begin{itemize}

\item [(i)] $\fp$ lies above the prime ideal $P\A$, i.e., $\fp \cap \A = P\A$.

\item [(ii)] $\cO_{\fS}/\fP$ is a field that is isomorphic to the ultra-field $\prod_{s\in S}\left(\cO_{\fS_s}/\fp_s\right)/\cD$ \footnote{Since $\cO_{\fS_s}$ is a Dedekind domain, $\fp_s$ is a maximal ideal in $\cO_{\fS_s}$ and thus $\cO_{\fS_s}/\fp_s$ is a field. Thus $\prod_{s\in S}\left(\cO_{\fS_s}/\fp_s\right)/\cD$ is an ultra-field, and hence, by \L{}o\'s' theorem, is a field (see Nguyen \cite[Proposition 2.1]{nguyen-APAL-2024}).}.

\item [(iii)] there is a field homomorphism from $\cO_{\fL_{\alg}}/\fp$ to $\cO_{\fS}/\fP$.

\item [(iv)] there exists a positive integer $1 \le f \le m$ such that the following hold.
    \begin{itemize}
    
    \item [(a)] $\{s \in S \; | \; f(\fp_s/P_s\fA_s) = f\} \in \cD$, where $f(\fp_s/P_s\fA_s) = [\cO_{\fS_s}/\fp_s : \fA_s/P_s\fA_s]$ is the inertia degree of $\fp_s$ over $P_s\fA_s$.
        
        \item [(b)] $[\cO_{\fS}/\fP : \A/P\A] = f$; in particular, $\cO_{\fS}/\fP$ is the unique extension $\fK(df)$ of degree $df$ over $\fK$.
    
    \item [(c)] $f(\fp/P)$ divides $f$, where $f(\fp/P) = [\cO_{\fL_{\alg}}/\fp : \A/P\A]$ denote the inertia degree of $\fp$ over $P\A$.

\end{itemize}

\end{itemize}

\end{lemma}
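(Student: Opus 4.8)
The plan is to establish the four parts in order, using only the theory of algebraic parts and shadows from Section~\ref{sec-algebraic-part-of-an-ultra-finite-extension} together with \L{}o\'s' theorem. For part (i), I would compute $\fp \cap \A$ directly. Since $\A \subset \F^{\alg}$ and $\A \subset \cU(\A)$, one has $\fp \cap \A = \fP \cap \A = (\fP \cap \cU(\A)) \cap \A$. Now $\fP \cap \cU(\A) = \prod_{s\in S}(\fp_s \cap \fA_s)/\cD = \prod_{s\in S}P_s\fA_s/\cD = P\cU(\A)$, using $\fp_s \cap \fA_s = P_s\fA_s$ and $P = \ulim_{s\in S}P_s$. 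Hence $\fp \cap \A = P\cU(\A) \cap \A$, and since $P\cU(\A) \cap \A \subset P\cU(\A) \cap \F^{\alg} = (P\cU(\A))_{\alg} = P\A$ by Lemma~\ref{lem-algebraic-part-of-principal-ideal-in-U(F)-is-principal-ideal-in-F}, while $P\A \subset P\cU(\A) \cap \A$ trivially, we obtain $\fp \cap \A = P\A$.

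For parts (ii) and (iii) I would invoke the standard fact that, for ultraproducts, the natural surjection $\cO_{\fS} = \prod_{s\in S}\cO_{\fS_s}/\cD \to \prod_{s\in S}(\cO_{\fS_s}/\fp_s)/\cD$ sending $\ulim_{s\in S}a_s$ to $\ulim_{s\in S}(a_s + \fp_s)$ has kernel exactly $\fP = \prod_{s\in S}\fp_s/\cD$, so that $\cO_{\fS}/\fP \cong \prod_{s\in S}(\cO_{\fS_s}/\fp_s)/\cD$. As $\cO_{\fS_s}$ is a Dedekind domain and $\fp_s$ is a nonzero prime (it contains $P_s \ne 0$), each $\cO_{\fS_s}/\fp_s$ is a field, so the target is an ultra-field and hence a field by \L{}o\'s' theorem, which proves (ii). For (iii), Proposition~\ref{prop-algebraic-part-of-ultra-integers} gives $\cO_{\fL_{\alg}} = \cO_{\fS} \cap \F^{\alg}$, whence $\fp = \fP \cap \F^{\alg} = \fP \cap \cO_{\fL_{\alg}}$; thus the inclusion $\cO_{\fL_{\alg}} \hookrightarrow \cO_{\fS}$ descends to a ring homomorphism $\cO_{\fL_{\alg}}/\fp \to \cO_{\fS}/\fP$, and since $\fp$ is a nonzero prime of the Dedekind domain $\cO_{\fL_{\alg}}$ (Proposition~\ref{prop-algebraic-part-of-ultra-prime-is-prime-in-O_L-alg}), $\cO_{\fL_{\alg}}/\fp$ is a field and this map is a field homomorphism.

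Part (iv) is the substantive one. By Corollary~\ref{cor-Galois-group-structure-of-shadows-of-algebraic-part-in-a-given-ultra-extension}, $\fS_s/\fF_s$ is a finite separable extension of degree $m$ for $\cD$-almost all $s$, so the fundamental identity over the Dedekind domain $\fA_s$ forces $1 \le f(\fp_s/P_s\fA_s) \le m$; writing $S = \bigcup_{i=1}^{m}\{s \in S : f(\fp_s/P_s\fA_s) = i\}$ and applying Lemma~\ref{lem-at-least-one-set-in-th-union-of-sets-is-in-D} produces an integer $1 \le f \le m$ with $\{s : f(\fp_s/P_s\fA_s) = f\} \in \cD$, giving (a). The key identification needed for (b) is $\A/P\A \cong \prod_{s\in S}(\fA_s/P_s\fA_s)/\cD$: the natural map $\A/P\A \to \cU(\A)/P\cU(\A) \cong \prod_{s\in S}(\fA_s/P_s\fA_s)/\cD$ is injective, since its kernel is $(\A \cap P\cU(\A))/P\A = 0$ by the computation in part (i), and both sides are degree-$d$ extensions of $\fK$ — the left by construction, the right by Proposition~\ref{prop-explicit-description--for--algebraic-extension-of-degree-d-of-ultra-fields} applied to the degree-$d$ extensions $\fA_s/P_s\fA_s = \bF_s[t]/(P_s)$ of $\bF_s$ — so the injection is an isomorphism. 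Combining this with the isomorphism of (ii) and applying Proposition~\ref{prop-explicit-description--for--algebraic-extension-of-degree-d-of-ultra-fields} again to the degree-$f$ extensions $\cO_{\fS_s}/\fp_s$ of $\fA_s/P_s\fA_s$ yields $[\cO_{\fS}/\fP : \A/P\A] = f$, hence $[\cO_{\fS}/\fP : \fK] = df$; since $\fK$ is a nonprincipal ultraproduct of the perfect procyclic fields $\bF_s$, it is itself perfect procyclic and so has at most one extension of degree $df$, forcing $\cO_{\fS}/\fP = \fK(df)$, which is (b). Finally, for (c), the ring inclusions $\A \hookrightarrow \cO_{\fL_{\alg}} \hookrightarrow \cO_{\fS}$ are compatible with the ideals $P\A = \fp \cap \A = \fP \cap \A$ and $\fp = \fP \cap \cO_{\fL_{\alg}}$, so the field homomorphism of (iii) restricts to the identity on $\A/P\A$; the tower formula for $\A/P\A \subseteq \cO_{\fL_{\alg}}/\fp \subseteq \cO_{\fS}/\fP$ then gives $f = [\cO_{\fS}/\fP : \cO_{\fL_{\alg}}/\fp]\cdot f(\fp/P)$, so $f(\fp/P) \mid f$.

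The step I expect to require the most care is the compatibility bookkeeping in part (iv): one must check that the abstract isomorphisms $\cO_{\fS}/\fP \cong \prod_{s\in S}(\cO_{\fS_s}/\fp_s)/\cD$ and $\A/P\A \cong \prod_{s\in S}(\fA_s/P_s\fA_s)/\cD$ are induced by the actual inclusions of the relevant rings, so that the two applications of Proposition~\ref{prop-explicit-description--for--algebraic-extension-of-degree-d-of-ultra-fields} and the tower formula can legitimately be chained; the remaining steps are routine transfer arguments of the kind already used throughout Section~\ref{sec-ramification-of-primes-Hilbert-theory}.
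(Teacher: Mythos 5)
Your proposal is correct and follows essentially the same approach as the paper: both prove (i) via the relation $\fp_s \cap \fA_s = P_s\fA_s$ together with Lemma~\ref{lem-algebraic-part-of-principal-ideal-in-U(F)-is-principal-ideal-in-F}, realize (ii)--(iii) through the ultraproduct of quotient maps with $\cO_{\fL_{\alg}} = (\cO_{\fS})_{\alg}$, and for (iv) combine Lemma~\ref{lem-at-least-one-set-in-th-union-of-sets-is-in-D} with Proposition~\ref{prop-explicit-description--for--algebraic-extension-of-degree-d-of-ultra-fields} and the uniqueness of degree-$df$ extensions over the perfect procyclic field $\fK$. The only local variations are cosmetic: you make both inclusions in (i) explicit, give a self-contained degree-count argument for $\A/P\A \cong \prod_{s\in S}(\fA_s/P_s\fA_s)/\cD$ where the paper cites an external reference, and in (iv)(c) use the tower formula directly rather than comparing degrees of $\cO_{\fL_{\alg}}/\fp$ and $\cO_{\fS}/\fP$ over $\fK$.
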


\begin{proof}

For part (i), we see that $P_s \in \fp_s$ for $\cD$-almost all $s \in S$, and thus $P = \ulim_{s\in S}P_s \in \prod_{s\in S}\fp_s/\cD = \fP$. Since $P \in \A$, we deduce that $P \in \fP \cap \A \subset \fP \cap \F^{\alg} = \fp$. Thus $P\cO_{\fL_{\alg}} \subset \fp$, and therefore $P\A = \fp \cap \A$, which proves part (i).

For part (ii), since $\fp_s$ is a prime ideal in $\cO_{\fS_s}$, there is a canonical homomorphism $\psi_s : \cO_{\fS_s} \to \cO_{\fS_s}/\fp_s$ that sends each element $\alpha_s \in \cO_{\fS_s}$ to $\psi_s(\alpha_s) = \alpha_s + \fp_s$, and thus there exits a homomorphism
\begin{align*}
\psi : \cO_{\fS} = \prod_{s\in S}\cO_{\fS_s}/\cD \to \prod_{s\in S}\left(\cO_{\fS_s}/\fp_s\right)/\cD
\end{align*}
that sends each element $\alpha = \ulim_{s\in S}\alpha_s \in \cO_{\fS}$ with $\alpha_s \in \cO_{\fS_s}$ to $\psi(\alpha) = \ulim_{s\in S}\psi_s(\alpha_s) \in \prod_{s\in S}\left(\cO_{\fS_s}/\fp_s\right)/\cD$.

It is clear that $\fP = \prod_{s\in S}\fp_s/\cD$ is a subset of the kernel $\ker(\psi)$ of $\psi$. Take an arbitrary element $\alpha = \ulim_{s\in S}\alpha_s \in \ker(\psi)$. Then $\alpha_s \in \ker(\psi_s)$ for $\cD$-almost all $s \in S$, and thus $\alpha_s \in \fp_s$ for $\cD$-almost all $s \in S$ since $\ker(\psi_s) = \fp_s$. Therefore $\alpha = \ulim_{s\in S}\alpha_s \in \prod_{s\in S}\fp_s/\cD = \fP$, and thus $\ker(\psi) = \fP$.

Since $\psi_s$ is surjective for $\cD$-almost all $s \in S$, it is immediate from the definition of $\psi$ that $\psi$ is also surjective. Hence by the isomorphism theorem (see Lang \cite{lang-algebra}), there is an isomorphism from $\cO_{\fS}/\fP$ to $\prod_{s\in S}\left(\cO_{\fS_s}/\fp_s\right)/\cD$. Since $\cO_{\fS_s}$ is a Dedekind domain and $\fp_s$ is a prime ideal in $\cO_{\fS_s}$, we deduce that $\fp_s$ is a maximal ideal in $\cO_{\fS_s}$, and thus $\cO_{\fS_s}/\fp_s$ is a field for $\cD$-almost all $s \in S$. Thus the ultraproduct $\prod_{s\in S}\left(\cO_{\fS_s}/\fp_s\right)\cD$ is an ultra-field, and hence is a field (see Nguyen \cite[Proposition 2.1]{nguyen-APAL-2024}). Thus $\cO_{\fS}/\fP$ is a field. 

We now prove part (iii). By Proposition \ref{prop-algebraic-part-of-ultra-integers}, $\cO_{\fL_{\alg}} = (\cO_{\fS})_{\alg} $, and thus there exist a canonical embedding $\phi : \cO_{\fL_{\alg}} = (\cO_{\fS})_{\alg} = \cO_{\fS} \cap \F^{\alg} \to \cO_{\fS}$ such that $\phi(\alpha) = \alpha$ for all $\alpha \in \cO_{\fL_{\alg}}$, and a canonical ring homomorphism $\sigma : \cO_{\fS} \to \cO_{\fS}/\fP$ that sends each element $\alpha \in \cO_{\fS}$ to $\sigma(\alpha) = \alpha + \fP$. Thus we obtain the homomorphism $\rho := \sigma\circ \phi: \cO_{\fL_{\alg}} \to \cO_{\fS}/\fP$ such that $\rho(\alpha) = \alpha + \fP$ for all elements $\alpha \in \cO_{\fL_{\alg}}$. It is clear that $\fp = \fP_{\alg} = \fP \cap \F^{\alg} \subset \cO_{\fL_{\alg}}$ is a subset of $\ker(\rho)$. 

For each element $\alpha \in \ker(\rho)$, the definition of $\rho$ immediately implies that $\rho(\alpha) = \alpha + \fP = \fP$, and thus $\alpha \in \fP$. Therefore $\alpha \in \fP \cap \cO_{\fL_{\alg}} \subset \fP \cap \F^{\alg} = \fp$. Therefore $\ker(\rho) = \fp$, and thus there is a field homomorphism from $\cO_{\fL_{\alg}}/\fp$ to $\cO_{\fS}/\fP$. 

For part (iv), since the degree of $\fS_s$ over $\fF_s$ is $m = [\fL_{\alg}:\F]$ for $\cD$-almost all $s \in S$ (see Corollary \ref{cor-Galois-group-structure-of-shadows-of-algebraic-part-in-a-given-ultra-extension}), we deduce that $1 \le f(\fp_s/P_s) \le m$ (see Lang \cite{lang-ant}), and thus
\begin{align*}
S = \bigcup_{i = 1}^m S_i,
\end{align*}
where
\begin{align*}
S_i = \{s \in S \; | \; f(\fp_s/P_s) = i\}.
\end{align*}

Since $S \in \cD$, we deduce from Lemma \ref{lem-at-least-one-set-in-th-union-of-sets-is-in-D} that $S_f \in \cD$ for some integer $1 \le f \le m$, and thus
\begin{align*}
\{s \in S \; | \; f(\fp_s/P_s) = f\} \in \cD,
\end{align*}
which proves assertion (a) of part (iv).

Since $[\cO_{\fS_s}/\fp_s : \fA_s/P_s\fA_s] = f(\fp_s/P_s) = f$ for $\cD$-almost all $s \in S$ and $\cO_{\fS}/\fP$ is isomorphic to $\prod_{s\in S}(\cO_{\fS_s}/\fp_s)/\cD$, we deduce from Proposition \ref{prop-explicit-description--for--algebraic-extension-of-degree-d-of-ultra-fields} that $\cO_{\fS}/\fP$ is a finite algebraic extension of $\prod_{s\in S}(\fA_s/P_s\fA_s)/\cD$ such that
\begin{align*}
[\cO_{\fS}/\fP : \prod_{s\in S}(\fA_s/P_s\fA_s)/\cD] = \ulim_{s\in S}f(\fp_s/P_s) = f.
\end{align*}

Since $\prod_{s\in S}\fA_s/\cD = \cU(\A)$ and $P = \ulim_{s\in S}P_s$, we deduce from Nguyen \cite[Lemma 4.12]{nguyen-APAL-2024} that
\begin{align*}
\prod_{s\in S}(\fA_s/P_s\fA_s)/\cD \cong \cU(\A)/P\cU(\A) \cong \A/P\A,
\end{align*} 
and thus
\begin{align}
\label{e1-lem-main-lem1-for-unramified-primes-in-O-L-alg}
[\cO_{\fS}/\fP : \A/P\A] = f.
\end{align}

Since $P$ is an irreducible polynomial of degree $d$ in $\A$ and $\fK = \prod_{s\in S}\bF_s/\cD$ is a perfect procyclic field, $\A/P\A$ is the unique algebraic extension of degree $d$ over $\fK$, and so $\A/P\A = \fK(d)$. By (\ref{e1-lem-main-lem1-for-unramified-primes-in-O-L-alg}), $\cO_{\fS}/\fP$ is an extension of degree $f$ over $\A/P\A$, which implies that $\cO_{\fS}/\fP$ is the unique extension of degree $df$ over $\fK$. Therefore $\cO_{\fS}/\fP = \fK(df)$ as required.

For assertion (c) of part (iv), we know from part (b) that there is a field homomorphism from $\cO_{\fL_{\alg}}/\fp$ to $\cO_{\fS}/\fP$. Since every field homomorphism is injective, one can embed $\cO_{\fL_{\alg}}/\fp$ into $\cO_{\fS}/\fP$, and thus $\cO_{\fS}/\fP$ is an extension of $\cO_{\fL_{\alg}}/\fp$. Since $[\cO_{\fL_{\alg}}/\fp : \A/P\A] = f(\fp/P)$, $\cO_{\fL_{\alg}}/\fp$ is an extension of degree $f(\fp/P)$ over $\A/P\A$, and thus $\cO_{\fL_{\alg}}/\fp$ is the unique extension of degree $df(\fp/P)$ over $\fK$, i.e., $\cO_{\fL_{\alg}}/\fp = \fK(df(\fp/P))$.

Since $\cO_{\fS}/\fP = \fK(df)$, we deduce that $\cO_{\fS}/\fP$ is a finite extension of $\cO_{\fL_{\alg}}/\fp$, and thus $df(\fp/P))$ divides $df$. Therefore $f(\fp/P))$ divides $f$ as required.

\end{proof}

We strengthen Theorem \ref{thm-unramified-primes-locally-imply-unramified-globally-in-algebraic-parts-finite-case} to prove a necessary and sufficient condition for a prime in $\A$ to be unramified or ramified in $\fL_{\alg}$.

\begin{theorem}
\label{thm-big-theorem-I-the-finite-case}

Assume that $\bF_s$ is a perfect procyclic field for $\cD$-almost all $s \in S$. Let $P$ be an irreducible polynomial, i.e., a prime in $\A = \fK[t]$. Write $P = \ulim_{s\in S}P_s$ (see Lemma \ref{lem-elementary-lemma0}), where $P_s$ is an irreducible polynomial in $\fA_s = \bF_s[t]$ for $\cD$-almost all $s \in S$. Then 
\begin{itemize}

\item [(i)] $P$ is unramified in $\fL_{\alg}$ if and only if $P_s$ is unramified in $\fS_s$ for $\cD$-almost all $s \in S$. 

\item [(ii)] $P$ is ramified in $\fL_{\alg}$ if and only if $P_s$ is ramified in $\fS_s$ for $\cD$-almost all $s \in S$.

\item [(iii)] $P$ is totally split in $\fL_{\alg}$ if and only if $P_s$ is totally split in $\fS_s$ for $\cD$-almost all $s \in S$.

    \item [(iv)] if $P_s$ is totally ramified in $\fL_s$ \footnote{In the case where $\fL_s$ is an infinite Galois extension of $\fF_s$, the condition that $P_s$ is totally ramified in $\fL_s$, is equivalent to that $P_s$ is totally ramified in every finite Galois subextension of $\fL_s/\fF_s$.} for $\cD$-almost all $s \in S$, then $P$ is totally ramified in $\fL_{\alg}$. 
        
        \item [(v)] if $P$ is inert in $\fL_{\alg}$, then $P_s$ is inert in $\fS_s$ for $\cD$-almost all $s \in S$. 
    
\end{itemize}

\end{theorem}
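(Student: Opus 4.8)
The plan is to deduce all five statements from the numerical data already assembled in Lemma \ref{lem-main-lem0-ramifications-of-primes-in-O_L-alg} and Lemma \ref{lem-main-lem1-ramifications-of-primes-in-O_L-alg}, together with Theorem \ref{thm-unramified-primes-locally-imply-unramified-globally-in-algebraic-parts-finite-case} and Corollary \ref{cor-totally-split-primes-globally-imply-locally-finite-case}, and the fact that $\cD$ is an ultrafilter. The heart of the argument is the observation that the fundamental identities $e(P)f(P)g(P) = m = [\fL_{\alg}:\F]$ over $\F$ and $e(P_s)f(P_s)g(P_s) = m = [\fS_s:\fF_s]$ for $\cD$-almost all $s$ (valid since $\fL_{\alg}/\F$ and $\fS_s/\fF_s$ are finite Galois of degree $m$), combined with the one-sided comparisons $g \ge g(P)$ and $f(P) \mid f$ provided by the two lemmas, completely pin down the splitting types on both sides of the correspondence.

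\textbf{Key steps.} First I would fix the common values $e, f, g$ with $1 \le e,f,g \le m$ furnished by Lemma \ref{lem-main-lem0-ramifications-of-primes-in-O_L-alg}, so that $\{s \in S \mid e(P_s) = e,\ f(P_s) = f,\ g(P_s) = g\} \in \cD$ and $efg = m$. I would note that the integer $f$ here is the same as the integer $f$ in Lemma \ref{lem-main-lem1-ramifications-of-primes-in-O_L-alg}(iv): since $\fS_s/\fF_s$ is Galois, every prime $\fp_s$ of $\cO_{\fS_s}$ above $P_s$ has inertia degree $f(\fp_s/P_s\fA_s) = f(P_s) = f$ for $\cD$-almost all $s$, and the $\cD$-almost-everywhere constant value is unique. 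Then Lemma \ref{lem-main-lem1-ramifications-of-primes-in-O_L-alg}(iv)(c) gives $f(P) \mid f$, and Lemma \ref{lem-main-lem0-ramifications-of-primes-in-O_L-alg}(ii) gives $g(P) \le g$; multiplying, $f(P)g(P) \le fg$, whence $e = m/(fg) \le m/(f(P)g(P)) = e(P)$. With these four relations in hand, the parts follow: for (i), the forward direction is Theorem \ref{thm-unramified-primes-locally-imply-unramified-globally-in-algebraic-parts-finite-case}, and conversely if $P$ is unramified then $e(P) = 1$, so $e \le e(P) = 1$ forces $e = 1$, i.e. $P_s$ is unramified in $\fS_s$ for $\cD$-almost all $s$; part (ii) is the logical negation of (i) using that for an ultrafilter exactly one of a set and its complement lies in $\cD$; for (iii) the forward direction is Corollary \ref{cor-totally-split-primes-globally-imply-locally-finite-case}, while if $P_s$ is totally split for $\cD$-almost all $s$ then $f = g^{-1}m \cdot e^{-1} $ forces $e = f = 1$ and $g = m$, so $f(P) \mid f = 1$ gives $f(P) = 1$, part (i) gives $e(P) = 1$, hence $g(P) = m$ and $P$ is totally split; for (iv), totally ramified $P_s$ in $\fL_s$ (hence in $\fS_s$) gives $e = m$, $f = g = 1$, so $f(P) \mid 1$ and $g(P) \le 1$ force $f(P) = g(P) = 1$ and $e(P) = m$; for (v), $P$ inert gives $e(P) = 1$, $g(P) = 1$, $f(P) = m$, so $e \le e(P) = 1$ forces $e = 1$, then $m = f(P) \mid f \le m$ forces $f = m$, and $efg = m$ forces $g = 1$, i.e. $P_s$ is inert in $\fS_s$ for $\cD$-almost all $s$.

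\textbf{Main obstacle.} The genuine difficulty has already been absorbed into the preparatory results: Lemma \ref{lem-main-lem0-ramifications-of-primes-in-O_L-alg}, which extracts $\cD$-almost-everywhere constant splitting data and the inequality $g \ge g(P)$ via Proposition \ref{prop-main-prop1-existence-of-ultra-prime-whose-algebraic-part-coincides-with-a-given-prime}, and Lemma \ref{lem-main-lem1-ramifications-of-primes-in-O_L-alg}, which identifies the residue field $\cO_{\fS}/\fP$ with $\fK(df)$ and yields the divisibility $f(P) \mid f$ by exhibiting a field embedding $\cO_{\fL_{\alg}}/\fp \hookrightarrow \cO_{\fS}/\fP$ of extensions of the quasi-finite field $\fK$. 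What remains, and is the only point requiring care in the proof itself, is the bookkeeping that welds together the two fundamental identities $e(P)f(P)g(P) = m = efg$, the inequality $g(P) \le g$, and the divisibility $f(P) \mid f$ into a rigidity statement; once the identification of the two occurrences of the integer $f$ is made explicit, each of (i)--(v) is a short arithmetic deduction.
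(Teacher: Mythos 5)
Your proposal is correct and follows essentially the same route as the paper: it reduces all five parts to the numerical rigidity coming from $e(P)f(P)g(P) = efg = m$, the inequality $g(P)\le g$ from Lemma \ref{lem-main-lem0-ramifications-of-primes-in-O_L-alg}, and the divisibility $f(P)\mid f$ from Lemma \ref{lem-main-lem1-ramifications-of-primes-in-O_L-alg}, using Theorem \ref{thm-unramified-primes-locally-imply-unramified-globally-in-algebraic-parts-finite-case} and Corollary \ref{cor-totally-split-primes-globally-imply-locally-finite-case} for the ``local to global'' directions of (i) and (iii). The one point where you add something the paper leaves tacit is the explicit identification of the integer $f$ produced by Lemma \ref{lem-main-lem0-ramifications-of-primes-in-O_L-alg} with the integer $f$ of Lemma \ref{lem-main-lem1-ramifications-of-primes-in-O_L-alg}(iv) (valid since $\fS_s/\fF_s$ is Galois, so all primes above $P_s$ have the same inertia degree, and the $\cD$-a.e.~constant is unique); your uniform inequality $e\le e(P)$ is a mild streamlining of the case-by-case deductions in the paper but not a different method.
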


\begin{proof}

Let
\begin{align}
\label{e1-thm-unramified-primes-globally-imply-locally-in-O-L-alg-finite-case}
P\cO_{\fL_{\alg}} = \left(\fp_1\cdots \fp_{g(P)}\right)^{e(P)},
\end{align}
be the prime ideal factorization of $P\cO_{\fL_{\alg}}$ in $\cO_{\fL_{\alg}}$, where $e(P)$ denotes the ramification index $e(P)$ of the $\fp_i$ over $P\A$, and $g(P)$ denotes the number of distinct prime ideals in the factorization of $P\cO_{\fL_{\alg}}$. Let $f(P)$ denote the inertia degree of $\fp_i$ over $P\A$ for $1\le i \le g(P)$, i.e., $f(P) = [\cO_{\fL_{\alg}}/\fp_i : \A/P\A]$ for all $1 \le i \le g(P)$.

By Lemma \ref{lem-main-lem0-ramifications-of-primes-in-O_L-alg}, there exist positive integers $1 \le e, f, g \le m$ such that there exist prime ideals $\fp_{1, s}, \ldots, \fp_{g, s}$ in $\cO_{\fS_s}$ for which 
\begin{align}
\label{e2-thm-unramified-primes-globally-imply-locally-in-O-L-alg-finite-case}
P_s\cO_{\fS_s} = \left(\fp_{1, s}\cdots \fp_{g, s}\right)^e
\end{align}
for $\cD$-almost all $s \in S$, where $f = [\cO_{\fS_s}/\fp_{i, s}: \fA_s/P_s\fA_s]$ denotes the inertia degree of $\fp_{i, s}$ over $P_s$ for all $1 \le i \le g$. Furthermore 
\begin{align}
\label{e3-thm-unramified-primes-globally-imply-locally-in-O-L-alg-finite-case}
efg = e(P)f(P)g(P) = m.
\end{align}

We first prove part (i).

If $P_s$ is unramified in $\fS_s$ for $\cD$-almost all $s \in S$, then Theorem \ref{thm-unramified-primes-locally-imply-unramified-globally-in-algebraic-parts-finite-case} implies that $P$ is unramified in $\fL_{\alg}$ \footnote{This implication does not use the assumption that $\bF_s$ is a perfect procyclic field for $\cD$-almost all $s \in S$.}

Assume that $P$ is unramified in $\fL_{\alg}$. Then $e(P) = 1$. By part (iv)(c) of Lemma \ref{lem-main-lem1-ramifications-of-primes-in-O_L-alg}, $f(P)$ divides $f$, and thus $f(P) \le f$. Part (ii) of Lemma \ref{lem-main-lem0-ramifications-of-primes-in-O_L-alg} implies that $g(P) \le g$, and it thus follows from (\ref{e3-thm-unramified-primes-globally-imply-locally-in-O-L-alg-finite-case}) that
\begin{align*}
efg = e(P)f(P)g(P) = f(P)g(P) \le fg,
\end{align*}
and thus $e \le 1$. Therefore $e = 1$, and thus $P_s$ is unramified in $\cO_{\fS_s}$ for $\cD$-almost all $s \in S$. Thus part (i) follows immediately.

Part (ii) follows immediately from part (i) since $P$ is either unramified or ramified in $\cO_{\fL_{\alg}}$.

We now prove part (iii). If $P$ is totally split in $\fL_{\alg}$, then Corollary \ref{cor-totally-split-primes-globally-imply-locally-finite-case} implies that $P_s$ is totally split in $\fS_s$ for $\cD$-almost all $s \in S$ \footnote{This implication does not need the assumption that $\bF_s$ is a perfect procyclic field for $\cD$-almost all $s \in S$.}

If $P_s$ is totally split in $\fS_s$ for $\cD$-almost all $s \in S$, then $e = f = 1$, and $g = m = [\fS_s : \fF_s]$. By Lemma \ref{lem-main-lem1-ramifications-of-primes-in-O_L-alg}, $f(P)$ divides $f = 1$, and thus $f(P) = 1$.

Since $e = 1$, $P_s$ is unramified in $\fS_s$ for $\cD$-almost all $s \in S$, and thus we deduce from part (i) that $P$ is unramified in $\fL_{\alg}$. Hence $e(P) = 1$. Since $e(P)f(P)g(P) = m$, we deduce that $g(P) = m = [\fL_{\alg} : \F]$, which implies that $P$ is totally split in $\fL_{\alg}$. 

For part (iv), since $P_s$ is totally ramified in $\fL_s$ for $\cD$-almost all $s \in S$ and $\fS_s/\fF_s$ is a finite Galois subextension of $\fL_s/\fF_s$, $P_s$ is also totally ramified in $\fS_s$ for $\cD$-almost all $s \in S$, and thus $e = m = [\fS_s : \fF_s]$, $f = g = 1$. By Lemma \ref{lem-main-lem1-ramifications-of-primes-in-O_L-alg}, $f(P)$ divides $f = 1$, and thus $f(P) = 1$. Part (ii) of Lemma \ref{lem-main-lem0-ramifications-of-primes-in-O_L-alg} implies that $g(P) \le g = 1$, and hence $g(P) = 1$. Therefore 
\begin{align*}
e(P) = \dfrac{m}{f(P)g(P)} = m = [\fL_{\alg} : \F],
\end{align*}
which proves that $P$ is totally ramified in $\fL_{\alg}$.

We now prove part (v). Since $P$ is inert in $\fL_{\alg}$, $f(P) = m = [\fL_{\alg}: \F]$. By Lemma \ref{lem-main-lem1-ramifications-of-primes-in-O_L-alg}, $f(P) = m$ divides $f$, and thus since $f \le m$, $f$ must equal $m$. Therefore $P_s$ is inert for $\cD$-almost all $s \in S$.

\end{proof}

\begin{remark}
\label{rem-splitting-of-arbitrary-primes-P_s-when-F_s-is-algebraically-closed}

\begin{itemize}

\item []

\item [(i)] Following the proof of part (iv) of the above theorem, we see that if $P_s$ is totally ramified in the $s$-th shadow $\fS_s$ of $\fL_{\alg}$ for $\cD$-almost all $s \in S$, then $P$ is totally ramified in $\fL_{\alg}$. 

\item [(ii)] In the above theorem, we must start with arbitrary primes $P$ from the side of $\F = \fK(t)$, and study ramification of $P$ in algebraic extensions of $\F$, based on ramifications of primes $P_s$ from the side of the sequence of function fields $\{\fF_s = \bF_s(t)\; | \; s\in S\}$, where the $P_s$ are the $s$-th components of $P$ in the ultra-limit $P = \ulim_{s\in S}P_s$. 
    
    In general, we cannot begin with arbitrary primes $(P_s)_{s\in S}$ from the side of function fields $\{\fF_s = \bF_s(t)\; | \; s\in S\}$, and analyze ramifications of primes $(P_s)_{s\in S}$, using ramification of the ultra-prime $P = \ulim_{s\in S}$ since $P$, in general, fails to be a prime on the side of $\F$. In fact, such ultra-primes $P$ may not even belong to $\F$; for example, whenever the degrees of primes $P_s \in \fA_s = \bF_s[t]$ are unbounded for $\cD$-almost all $s \in S$, i.e., for every constant $N > 0$, the set $\{s \in S\; | \; \deg(P_s) > N\}$ belongs to the ultrafilter $\cD$, then the ultra-prime $P = \ulim_{s\in S}P_s$ belongs to the ultra-hull $\cU(\A)$, but is not an element in $\A$.

   For studying ramifications of arbitrary primes $(P_s)_{s\in S}$ simultaneously from an asymptotical point of view, i.e., ramifications of primes $P_s$ for $\cD$-almost all $s \in S$, we can strengthen the assumptions of Theorem \ref{thm-big-theorem-I-the-finite-case} at the constant field level by further assuming that the constant fields $\bF_s$ of $\fF_s = \bF_s(t)$ are \textbf{algebraically closed for $\cD$-almost all $s \in S$}, which implies that the constant field $\fK = \prod_{s\in S}\bF_s/\cD$ of $\F$ is also algebraically closed. In this case, for a given sequence of primes $(P_s)_{s\in S}$ in the sequence of function fields $(\fF_s)_{s\in S}$, the corresponding ultra-prime $P = \ulim_{s\in S}P_s$ is also a prime in $\F$ since all primes in $\fF_s$ are of degree $1$ for $\cD$-almost all $s \in S$. Thus we can obtain ramifications of arbitrary primes $(P_s)_{s\in S}$ from the side of the function fields $(\fF_s)_{s\in S}$ from the ramification information of its ultra-limit $P = \ulim_{s\in S}P_s$ from the side of $\F$. The proof of this observation is unchanged and follows the same arguments as in Theorem \ref{thm-big-theorem-I-the-finite-case}. We record this remark in the following.

\end{itemize}

\end{remark}

\begin{theorem}
\label{thm-big-theorem-I-the-finite-case-constant-fields-are-ACF}

Assume that $\bF_s$ is an algebraically closed field for $\cD$-almost all $s \in S$ so that by \L{}o\'s' theorem, $\fK = \prod_{s\in S}\bF_s/\cD$ is also algebraically closed. Let  $P_s$ be an irreducible polynomial in $\fA_s = \bF_s[t]$ for $\cD$-almost all $s \in S$ so that $P = \ulim_{s\in S}P_s$ is also a prime in $\A$. Then 
\begin{itemize}

\item [(i)] $P$ is unramified in $\fL_{\alg}$ if and only if $P_s$ is unramified in $\fS_s$ for $\cD$-almost all $s \in S$. 

\item [(ii)] $P$ is ramified in $\fL_{\alg}$ if and only if $P_s$ is ramified in $\fS_s$ for $\cD$-almost all $s \in S$.

\item [(iii)] $P$ is totally split in $\fL_{\alg}$ if and only if $P_s$ is totally split in $\fS_s$ for $\cD$-almost all $s \in S$.

    \item [(iv)] if $P_s$ is totally ramified in $\fL_s$ for $\cD$-almost all $s \in S$, then $P$ is totally ramified in $\fL_{\alg}$. 
        
        \item [(v)] if $P$ is inert in $\fL_{\alg}$, then $P_s$ is inert in $\fS_s$ for $\cD$-almost all $s \in S$. 
    
\end{itemize}

\end{theorem}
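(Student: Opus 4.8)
The plan is to observe that the hypothesis ``each $\bF_s$ is algebraically closed'' is strictly stronger than the procyclicity hypothesis of Theorem \ref{thm-big-theorem-I-the-finite-case}, so that every lemma of Subsection \ref{subsec-prime-ramification-in-L-alg-finite-case} remains available, while at the same time it removes the one structural restriction that forced the earlier statement to be phrased in terms of the ultra-limit of a prime of $\F$. First I would record the two elementary consequences of the hypothesis. Since every algebraically closed field is a perfect procyclic field (its absolute Galois group is trivial, hence in particular procyclic), $\bF_s$ is a perfect procyclic field for $\cD$-almost all $s \in S$; and by \L{}o\'s' theorem $\fK = \prod_{s\in S}\bF_s/\cD$ is algebraically closed, hence also perfect procyclic. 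Moreover, if $P_s$ is a prime of $\fA_s = \bF_s[t]$, then algebraic closedness of $\bF_s$ forces $\deg(P_s) = 1$, so by Lemma \ref{lem-elementary-lemma} the ultra-limit $P = \ulim_{s\in S}P_s$ is a polynomial of degree $1$ in $\A = \fK[t]$, and by Lemma \ref{lem-elementary-lemma0}(ii) it is irreducible, i.e. a prime of $\A$. This is precisely the point flagged in Remark \ref{rem-splitting-of-arbitrary-primes-P_s-when-F_s-is-algebraically-closed}: starting from an arbitrary sequence of primes $(P_s)_{s\in S}$ on the function-field side, the ultra-limit now lands back in $\A$ as a genuine prime, which is what fails for general constant fields.

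With these two observations in hand, the proof is a repetition of the argument for Theorem \ref{thm-big-theorem-I-the-finite-case}. Concretely, I would apply Lemma \ref{lem-main-lem0-ramifications-of-primes-in-O_L-alg} to produce positive integers $1 \le e, f, g \le m = [\fL_{\alg}:\F]$ with $\{s \in S \mid e(P_s) = e,\ f(P_s) = f,\ g(P_s) = g\} \in \cD$, $efg = e(P)f(P)g(P) = m$, and $g \ge g(P)$; this uses only that $\fS_s$ is a finite Galois extension of degree $m$ over $\fF_s$ for $\cD$-almost all $s$ (Corollary \ref{cor-Galois-group-structure-of-shadows-of-algebraic-part-in-a-given-ultra-extension}) together with Proposition \ref{prop-main-prop1-existence-of-ultra-prime-whose-algebraic-part-coincides-with-a-given-prime}. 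For the inertia-degree comparison I would invoke Lemma \ref{lem-main-lem1-ramifications-of-primes-in-O_L-alg}, whose part (iv) is the only place procyclicity of $\fK$ is used, and which is now available since $\fK$ has been shown to be perfect procyclic. Then parts (i)--(v) follow exactly as before: (i) ``if'' is Theorem \ref{thm-unramified-primes-locally-imply-unramified-globally-in-algebraic-parts-finite-case}, and ``only if'' follows from $e(P) = 1$ together with $f(P) \le f$ (Lemma \ref{lem-main-lem1-ramifications-of-primes-in-O_L-alg}(iv)(c)) and $g(P) \le g$ (Lemma \ref{lem-main-lem0-ramifications-of-primes-in-O_L-alg}(ii)), which forces $e = 1$ via $efg = m$; (ii) is the contrapositive of (i); (iii) ``only if'' is Corollary \ref{cor-totally-split-primes-globally-imply-locally-finite-case} and ``if'' follows from $e = f = 1$, $g = m$ and (i); (iv) and (v) are the same short computations combining Lemma \ref{lem-main-lem0-ramifications-of-primes-in-O_L-alg}(ii), Lemma \ref{lem-main-lem1-ramifications-of-primes-in-O_L-alg}(iv)(c), and $e(P)f(P)g(P) = m$.

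The only thing requiring care -- and it is the genuine, if modest, content of the statement rather than a technical obstacle -- is to explain cleanly why the reindexing is legitimate: in Theorem \ref{thm-big-theorem-I-the-finite-case} the prime $P$ of $\F$ was given first and its components $P_s$ were extracted via Lemma \ref{lem-elementary-lemma0}(i), whereas here the $P_s$ are given first and $P$ is reconstructed. The bridge is that in both situations $P = \ulim_{s\in S}P_s$ is a prime of $\A$ whose component $P_s$ is a prime of $\fA_s$ for $\cD$-almost all $s$, and the proof of Theorem \ref{thm-big-theorem-I-the-finite-case} uses only this relation, never the particular order in which $P$ and the $P_s$ were produced. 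Consequently no new estimates are needed; I expect the ``hard part'' to be purely expository, namely checking that each use of procyclicity in Subsection \ref{subsec-prime-ramification-in-L-alg-finite-case} survives the replacement of ``perfect procyclic'' by the special case ``algebraically closed'', which it does automatically.
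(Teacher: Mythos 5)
Your proposal is correct and matches the paper's own (implicit) argument: the paper states in Remark \ref{rem-splitting-of-arbitrary-primes-P_s-when-F_s-is-algebraically-closed}(ii) that the proof is ``unchanged and follows the same arguments as in Theorem \ref{thm-big-theorem-I-the-finite-case},'' which is exactly your strategy. You correctly identify the two bridging observations — that algebraically closed fields are perfect procyclic, and that degree-one primes $P_s$ have an ultra-limit $P$ lying in $\A$ and automatically irreducible there — and correctly note that the proof of Theorem \ref{thm-big-theorem-I-the-finite-case} only ever uses the symmetric relation ``$P = \ulim_s P_s$ with both sides prime,'' not the direction in which that relation was produced.
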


\subsection{Prime ramifications in $\fL_{\alg}$ when $\fL_{\alg}$ is an infinite Galois extension of $\F$}
\label{subsec-prime-ramification-in-L-alg-infinite-case}

Throughout this subsection, we assume $\fL_{\alg}$ is an infinite Galois extension of $\F$. Note that $\fL_s$ may be an infinite Galois extension of $\fF_s$ for $\cD$-almost all $s \in S$. 

One can write
\begin{align*}
\fL_{\alg} = \bigcup_{i \in I}\H_i,
\end{align*}
where the extensions $\H_i/\F$ range over all the finite Galois subextensions of $\fL_{\alg}/\F$. 

We recall that a prime $P \in \A$ is \textbf{unramified in $\fL_{\alg}$} if and only if $P$ is unramified in $\H_i$ for all $i \in I$. Thus $P$ is \textbf{ramified in $\fL_{\alg}$} if and only if $P$ is ramified in some finite Galois subextension $\H_i/\F$ of $\fL_{\alg}/\F$.

Take an arbitrary finite Galois subextension $\H_i/\F$ of $\fL_{\alg}/\F$. Let $\fH_{i, s}$ denote the $s$-th shadow of $\H_i$ for $\cD$-almost all $s \in S$, and let $\fH_i = \prod_{s\in S}\fH_{i, s}/\cD$ denote the ultra-shadow of $\H_i$. By Proposition \ref{prop-shadows-of-the-algebraic-part-are-inside}, $\fH_{i, s} \subset \fL_s$ for $\cD$-almost all $s \in S$, and $\fH_i \subset \fL$. By definition of shadows and ultra-shadows (see Definition \ref{def-shadows-of-algebraic-extensions}), we know that the algebraic part of $\fH_i$ over $\F$ is $\H_i$, i.e., ${\fH_i}_{\alg} = \fH_i \cap \F^{\alg} = \H_i$ for every $i \in I$. 

We obtain the following result that is weaker than Theorem \ref{thm-big-theorem-I-the-finite-case} in the case where $\fL_{\alg}$ is an infinite extension of $\F$.

\begin{theorem}
\label{thm-big-theorem-II-infinite-case}

Assume that $\bF_s$ is a perfect procyclic field for $\cD$-almost all $s \in S$. Let $P$ be an irreducible polynomial, i.e., a prime in $\A = \fK[t]$. Write $P = \ulim_{s\in S}P_s$ (see Lemma \ref{lem-elementary-lemma0}), where $P_s$ is an irreducible polynomial in $\fA_s = \bF_s[t]$ for $\cD$-almost all $s \in S$. Then 
\begin{itemize}

\item [(1)] 
\begin{itemize}

\item [(A1)] If $P$ is unramified in $\fL_{\alg}$ then for every $i \in I$, $P_s$ is unramified in $\fH_{i, s}$ for $\cD$-almost all $s \in S$, i.e.,
    \begin{align*}
    \{s \in S\; | \; \text{$P_s$ is unramified in $\fH_{i, s}$}\} \in \cD
    \end{align*}
for every $i \in I$.

\item [(B1)] if $P_s$ is unramified in $\fL_s$ for $\cD$-almost all $s \in S$, then $P$ is unramified in $\fL_{\alg}$.

\end{itemize}

\item [(2)] $P$ is ramified in $\fL_{\alg}$ if and only if there exists an element $i \in I$ such that $P_s$ is ramified in $\fH_{i, s}$ for $\cD$-almost all $s \in S$.

\item [(3)] 
\begin{itemize}

\item [(A3)] if $P$ is totally split in $\fL_{\alg}$, then for every $i \in I$, $P_s$ is totally split in $\fH_{i, s}$ for $\cD$-almost all $s \in S$, that is, 
    \begin{align*}
    \{s \in S\; | \; \text{$P_s$ is totally split in $\fH_{i, s}$}\} \in \cD 
    \end{align*}
    for every $i \in I$.

\item [(B3)] If $P_s$ is totally split in $\fL_s$ for $\cD$-almost all $s \in S$, then $P$ is totally split in $\fL_{\alg}$. 

\end{itemize}

    \item [(4)] if $P_s$ is totally ramified in $\fL_s$ for $\cD$-almost all $s \in S$, then $P$ is totally ramified in $\fL_{\alg}$. 
        
        \item [(5)] if $P$ is inert in $\fL_{\alg}$, then for every $i \in I$, $P_s$ is inert in $\fH_{i, s}$ for $\cD$-almost all $s \in S$. 
    
\end{itemize}

\end{theorem}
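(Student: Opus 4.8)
The plan is to reduce Theorem \ref{thm-big-theorem-II-infinite-case} to the already-established finite case, Theorem \ref{thm-big-theorem-I-the-finite-case}, applied one finite Galois subextension $\H_i/\F$ at a time. The key bookkeeping fact is that for each $i \in I$ the $s$-th shadow $\fH_{i,s}$ is a finite Galois subextension of $\fL_s/\fF_s$ (by Proposition \ref{prop-shadows-of-the-algebraic-part-are-inside} and Theorem \ref{thm-main-thm3-Galois-group-structures-of-shadows}), its ultra-shadow $\fH_i = \prod_{s\in S}\fH_{i,s}/\cD$ sits inside $\fL = \prod_{s\in S}\fL_s/\cD$, and $(\fH_i)_{\alg} = \H_i$. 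Thus $\fH_i$ plays, for $\H_i$, exactly the role that $\fS$ plays for $\fL_{\alg}$ in the finite case, and $\fH_{i,s}$ plays the role of $\fS_s$. Applying Theorem \ref{thm-big-theorem-I-the-finite-case} with $\fL_{\alg}$ replaced by $\H_i$ gives, for each fixed $i$: $P$ unramified in $\H_i$ $\iff$ $P_s$ unramified in $\fH_{i,s}$ for $\cD$-almost all $s$; similarly for ramified, totally split, and the one-directional statements about total ramification and inertness.

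From there I would assemble the global statements using the definitions recalled at the start of the subsection. For (A1): if $P$ is unramified in $\fL_{\alg}$, then $P$ is unramified in each $\H_i$, so by the finite case $P_s$ is unramified in $\fH_{i,s}$ for $\cD$-almost all $s$; this is exactly the asserted membership in $\cD$ for each fixed $i$. For (2): $P$ is ramified in $\fL_{\alg}$ iff $P$ is ramified in some $\H_i$ iff (finite case) $P_s$ is ramified in $\fH_{i,s}$ for $\cD$-almost all $s$ for that $i$; this gives the ``if and only if'' cleanly. For (A3) the argument is identical to (A1) using the ``totally split'' clause of Theorem \ref{thm-big-theorem-I-the-finite-case}. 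For (5): if $P$ is inert in $\fL_{\alg}$, then $P$ is inert in every subextension $\H_i$ (the residue extension degree is multiplicative), so the finite case gives $P_s$ inert in $\fH_{i,s}$ for $\cD$-almost all $s$, for each $i$. For parts (B1), (B3), (4) I would instead invoke Corollary \ref{cor-unramified-primes-from-local-in-L=prod-L-s-to-L-alg-finite-case} and the ``if'' directions that were proved without the procyclic hypothesis: if $P_s$ is unramified (resp. totally split, totally ramified) in all of $\fL_s$ for $\cD$-almost all $s$, then it is unramified (resp. totally split, totally ramified) in every finite subextension $\fH_{i,s}$ of $\fL_s$, hence $P$ is unramified (resp. totally split, totally ramified) in each $\H_i$, hence in $\fL_{\alg} = \bigcup_i \H_i$; for total ramification one uses that $e$ is multiplicative in towers so $P$ totally ramified in every $\H_i$ forces it to be totally ramified in $\fL_{\alg}$.

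The one genuinely delicate point — and the part I expect to require the most care rather than being purely formal — is that the quantifier structure in the infinite case is weaker than a naive ultraproduct statement would suggest: the set $\{s : P_s \text{ is unramified in } \fH_{i,s}\}$ lies in $\cD$ \emph{for each fixed $i$}, but since $I$ may be infinite there is in general no single set in $\cD$ on which $P_s$ is simultaneously unramified in $\fL_s$ (which would be ``unramified in $\fH_{i,s}$ for all $i$''). This is why (A1), (A3), (5) cannot be upgraded to biconditionals and why the converses require the hypothesis on $P_s$ in all of $\fL_s$ rather than merely in each $\fH_{i,s}$ — an intersection of $\cD$-large sets over an infinite index set need not be $\cD$-large. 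I would make this explicit in the proof to justify why the statement is stated asymmetrically, and I would double-check that the multiplicativity-of-$e$, $f$ arguments in towers are valid here (they are, since each $\H_i/\F$ is a finite separable, indeed Galois, extension and $\cO_{\fL_{\alg}}$ is the direct limit of the Dedekind domains $\cO_{\H_i}$). No new estimates beyond those already in Theorem \ref{thm-big-theorem-I-the-finite-case} are needed.
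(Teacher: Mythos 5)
Your proposal matches the paper's proof almost exactly: both reduce each assertion to the corresponding part of Theorem \ref{thm-big-theorem-I-the-finite-case} applied to the finite Galois subextensions $\H_i/\F$, using that $\fH_{i,s} \subset \fL_s$, that $(\fH_i)_{\alg} = \H_i$, and that unramified/ramified/totally split/totally ramified/inert in $\fL_{\alg}$ is governed by what happens in each finite layer $\H_i$. Your discussion of the asymmetric quantifier structure (the set in $\cD$ depends on $i$, and $I$ may be infinite) is a useful explanatory addition that the paper omits. One small imprecision: you assert that the ``if'' directions of (B1), (B3), (4) ``were proved without the procyclic hypothesis,'' but in the finite case the implication $P_s$ totally split in $\fS_s$ $\Rightarrow$ $P$ totally split in $\fL_{\alg}$ (Theorem \ref{thm-big-theorem-I-the-finite-case}(iii)) does rely on Lemma \ref{lem-main-lem1-ramifications-of-primes-in-O_L-alg}, hence on the procyclic assumption; this does not cause a gap here since the hypothesis is standing in Theorem \ref{thm-big-theorem-II-infinite-case}, but the dependency should not be misattributed.
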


\begin{proof}

We first prove (A1) of part (1). If $P$ is unramified in $\fL_{\alg}$, then $P$ is unramified in every finite Galois subextension $\H_i/\F$ of $\fL_{\alg}/\F$. Since $\H_i$ is a finite Galois extension of $\F$, we know from Theorem \ref{thm-big-theorem-I-the-finite-case} that $P_s$ is unramified in $\fH_{i, s}$ for $\cD$-almost all $s \in S$, which proves (A1).

For (B1), suppose that $P_s$ is unramified in $\fL_s$ for $\cD$-almost all $s \in S$. Take an arbitrary element $i \in I$. Since $\fH_{i, s}/\fF_s$ is a finite Galois subextension of $\fL_s/\fF_s$, we see that $P_s$ is unramified in $\fH_{i, s}$ for $\cD$-almost all $s \in S$. Since $\H_i$ is a finite Galois extension of $\F$, part (i) of Theorem \ref{thm-big-theorem-I-the-finite-case} implies that $P$ is unramified in $\H_i$. Since $i$ is arbitrary in $I$ and $\fL_{\alg} = \cup_{i \in I}\H_i$, $P$ is unramified in $\fL_{\alg}$. 

We now prove part (2). Suppose that $P$ is ramified in $\fL_{\alg}$. Then there exists an element $i \in I$ such that $P$ is ramified in $\H_i$. By part (ii) of Theorem \ref{thm-big-theorem-I-the-finite-case}, $P_s$ is ramified in $\fH_{i, s}$ for $\cD$-almost all $s \in S$.

For (A3) of part (3), suppose that $P$ is totally split in $\fL_{\alg}$. Then $P$ is totally split in $\H_i$ for every $i \in I$. By part (iii) of Theorem \ref{thm-big-theorem-I-the-finite-case}, we deduce that for all $i \in I$, $P_s$ is totally split in $\fH_{i, s}$ for $\cD$-almost all $s \in S$. 

(B3) of part (3) follows immediately using the same arguments as above and part (iii) of Theorem \ref{thm-big-theorem-I-the-finite-case}.

Since ${\fH_i}_{\alg} = \H_i$ for every $i \in I$, parts (4) and (5) follow immediately using similar arguments as above and parts (iv) and (v) of Theorem \ref{thm-big-theorem-I-the-finite-case}, respectively.

\end{proof}

In the same spirit of part (ii) in Remark \ref{rem-splitting-of-arbitrary-primes-P_s-when-F_s-is-algebraically-closed}, we obtain the following result from the above theorem that allows to study ramifications of arbitrary primes $(P_s)_{s\in S}$ from the side of function fields $(\fF_s)_{s\in S}$, based on ramification of its ultra-limit $P = \ulim_{s\in S}P_s$ from the side of $\F$, under the assumption that $\bF_s$ is algebraically closed for $\cD$-almost all $s \in S$. 

\begin{theorem}
\label{thm-big-theorem-II-infinite-case-F_s-algebraically-closed-field}

Assume that $\bF_s$ is an algebraically closed field for $\cD$-almost all $s \in S$ so that by \L{}o\'s' theorem, $\fK = \prod_{s\in S}\bF_s/\cD$ is also algebraically closed. Let  $P_s$ be an irreducible polynomial in $\fA_s = \bF_s[t]$ for $\cD$-almost all $s \in S$ so that $P = \ulim_{s\in S}P_s$ is also a prime in $\A$. Then

\begin{itemize}

\item [(1)] 
\begin{itemize}

\item [(A1)] If $P$ is unramified in $\fL_{\alg}$ then for every $i \in I$, $P_s$ is unramified in $\fH_{i, s}$ for $\cD$-almost all $s \in S$, i.e.,
    \begin{align*}
    \{s \in S\; | \; \text{$P_s$ is unramified in $\fH_{i, s}$}\} \in \cD
    \end{align*}
for every $i \in I$.

\item [(B1)] if $P_s$ is unramified in $\fL_s$ for $\cD$-almost all $s \in S$, then $P$ is unramified in $\fL_{\alg}$.

\end{itemize}

\item [(2)] $P$ is ramified in $\fL_{\alg}$ if and only if there exists an element $i \in I$ such that $P_s$ is ramified in $\fH_{i, s}$ for $\cD$-almost all $s \in S$.

\item [(3)] 
\begin{itemize}

\item [(A3)] if $P$ is totally split in $\fL_{\alg}$, then for every $i \in I$, $P_s$ is totally split in $\fH_{i, s}$ for $\cD$-almost all $s \in S$, that is, 
    \begin{align*}
    \{s \in S\; | \; \text{$P_s$ is totally split in $\fH_{i, s}$}\} \in \cD 
    \end{align*}
    for every $i \in I$.

\item [(B3)] If $P_s$ is totally split in $\fL_s$ for $\cD$-almost all $s \in S$, then $P$ is totally split in $\fL_{\alg}$. 

\end{itemize}

    \item [(4)] if $P_s$ is totally ramified in $\fL_s$ for $\cD$-almost all $s \in S$, then $P$ is totally ramified in $\fL_{\alg}$. 
        
        \item [(5)] if $P$ is inert in $\fL_{\alg}$, then for every $i \in I$, $P_s$ is inert in $\fH_{i, s}$ for $\cD$-almost all $s \in S$. 
    
\end{itemize}

\end{theorem}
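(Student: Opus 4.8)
The plan is to obtain this statement as an immediate consequence of Theorem \ref{thm-big-theorem-II-infinite-case}, the only new point being the remark (recorded in part (ii) of Remark \ref{rem-splitting-of-arbitrary-primes-P_s-when-F_s-is-algebraically-closed}) that, when the constant fields are algebraically closed, one may start the ramification analysis from \emph{arbitrary} primes $(P_s)_{s\in S}$ on the side of the function fields $(\fF_s)_{s\in S}$ rather than from a fixed prime $P$ on the side of $\F$. So the proof is really just a matter of checking that the data $(P_s)_{s\in S}$ glues to a prime $P$ of $\A$ and then quoting the previous theorem.

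First I would observe that an algebraically closed field is a perfect procyclic field: it is perfect, and its absolute Galois group is trivial, hence (vacuously) procyclic, so conditions (PCF1)--(PCF3) hold. By \L{}o\'s' theorem $\fK = \prod_{s\in S}\bF_s/\cD$ is again algebraically closed, hence perfect procyclic. Thus the standing hypothesis of Theorem \ref{thm-big-theorem-II-infinite-case} (``$\bF_s$ is a perfect procyclic field for $\cD$-almost all $s \in S$'') is satisfied for the family $(\bF_s)_{s\in S}$ considered here. Next I would verify that $P = \ulim_{s\in S}P_s$ is a prime of $\A = \fK[t]$ whose $s$-th components are precisely the given $P_s$. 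Since $\bF_s$ is algebraically closed, every irreducible polynomial of $\fA_s = \bF_s[t]$ is linear, so $\deg(P_s) = 1$ for $\cD$-almost all $s \in S$; part (i) of Lemma \ref{lem-elementary-lemma} then forces $P \in \A$ with $\deg(P) = 1$, so that $P$ is irreducible in $\A$ (this is also part (ii) of Lemma \ref{lem-elementary-lemma0}). Conversely, the factorization of $P$ as an ultra-limit of primes furnished by Lemma \ref{lem-elementary-lemma0} may be taken to be exactly $P = \ulim_{s\in S}P_s$ with the prescribed components, so there is no ambiguity between ``the primes $P_s$ we start with'' and ``the primes $P_s$ appearing in the ultra-limit of $P$''.

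Finally, with $P$ now a genuine prime of $\A$ having the given $P_s$ as $s$-th components, I would invoke Theorem \ref{thm-big-theorem-II-infinite-case} verbatim: its conclusions (A1), (B1), (2), (A3), (B3), (4), (5) are literally items (1)--(5) of the present statement (with the same decomposition $\fL_{\alg} = \bigcup_{i\in I}\H_i$ into finite Galois subextensions and the same shadows $\fH_{i,s}$). I do not expect a genuine obstacle here; the entire mathematical content is carried by Theorem \ref{thm-big-theorem-II-infinite-case} (and, underneath it, by Theorem \ref{thm-big-theorem-I-the-finite-case} and the theory of shadows and algebraic parts developed in Section \ref{sec-algebraic-part-of-an-ultra-finite-extension}). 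The one subtle point, which is exactly the reason the algebraic-closedness hypothesis is imposed, is the observation in the previous paragraph: in general, as noted in Remark \ref{rem-splitting-of-arbitrary-primes-P_s-when-F_s-is-algebraically-closed}, an arbitrary choice of primes $(P_s)_{s\in S}$ of unbounded degree has ultra-limit $P = \ulim_{s\in S}P_s$ lying in $\cU(\A)$ but not in $\A$, so the reduction to Theorem \ref{thm-big-theorem-II-infinite-case} would fail; algebraic closedness of the $\bF_s$ is precisely what rules this out by pinning every $\deg(P_s)$ to $1$.
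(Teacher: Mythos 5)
Your proposal is correct and follows exactly the paper's intended derivation: the paper states Theorem \ref{thm-big-theorem-II-infinite-case-F_s-algebraically-closed-field} without a separate proof, presenting it (via Remark \ref{rem-splitting-of-arbitrary-primes-P_s-when-F_s-is-algebraically-closed}(ii)) as an immediate consequence of Theorem \ref{thm-big-theorem-II-infinite-case} once one notes that algebraically closed fields are perfect procyclic and that every irreducible $P_s$ is linear, so $P=\ulim_{s\in S}P_s$ lies in $\A$ and is prime. Your extra observation that the prescribed $P_s$ agree ($\cD$-almost everywhere) with the components furnished by Lemma \ref{lem-elementary-lemma0} is a worthwhile sanity check the paper leaves implicit, and your identification of degree-boundedness as the point of algebraic closedness is precisely what the Remark records.
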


\subsection{Ramification in the inverse Galois problem}
\label{subsec-ramificaion-in-the-inverse-Galois-problem}

In this subsection, we focus on a refinement of the inverse Galois problem which explores which finite groups appear as the Galois group of an extension of the rational function field over an algebraically closed field of positive characteristic in which only a given set of primes may ramify. The following result is well-known, but we provide a new look from a model-theoretic viewpoint at the result. 

\begin{proposition}
\label{prop-ramification-in-the-inverse-galois-problem}

Let $G$ be a finite group with minimum number of generators $n$. Then there is a finite set $\cP$ of prime numbers such that for all primes $p \not\in \cP$, if $k_p$ is an algebraically closed field of characteristic $p$ and $K_p = k_p(t)$, then for any integer $m \ge n$, $G$ is realizable as a Galois group of a Galois extension $L_p/K_p$, unramified outside a set $\cA_p$ containing $m + 1$ primes.

\end{proposition}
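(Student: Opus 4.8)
The plan is to argue by contradiction, transferring the problem to the rational function field over an \emph{algebraically closed} field of characteristic $0$ through the theory of shadows and the ramification correspondence, in the spirit of the proof of Corollary \ref{cor-inverse-galois-problem-for-function-fields-over-finite-fields}. Writing $n$ for the minimum number of generators of $G$ as in the statement, I would first reduce to the case $m = n$: if $L_p/K_p$ is a Galois extension with group $G$ unramified outside some set of $n+1$ primes of $K_p = k_p(t)$, then for any $m \ge n$ one adjoins $m - n$ further primes (there are infinitely many, as $k_p$ is infinite) to obtain a set $\cA_p$ of $m+1$ primes outside which $L_p/K_p$ is still unramified. Hence the finite exceptional set $\cP$ will depend only on $G$, through $n$.

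Suppose now that the case $m = n$ fails for infinitely many primes: there is an infinite set $S$ of primes $p$ such that for each $p \in S$ some algebraically closed field $k_p$ of characteristic $p$ admits no $G$-extension of $k_p(t)$ unramified outside $n+1$ primes. Fix such a choice $\bF_s := k_p$ for $s = p \in S$ and a nonprincipal ultrafilter $\cD$ on $S$. Since the characteristics are pairwise distinct, \L{}o\'s' theorem shows $\fK = \prod_{s\in S}\bF_s/\cD$ is algebraically closed of characteristic $0$, so $\F = \fK(t)$ is a rational function field over an algebraically closed field of characteristic $0$ and all its finite primes are linear. By the Riemann existence theorem over such a field (see Szamuely \cite{szamuely} or Harbater \cite{harbater-2003}), the \'etale fundamental group of $\bP^1_{\fK}$ with $n+1$ points removed is the free profinite group of rank $n$, which surjects onto $G$ since $G$ needs exactly $n$ generators; this yields a finite Galois extension $\H/\F$ with $\Gal(\H/\F) \cong G$ whose branch locus is contained in $\{\infty,\, t - a^{(1)},\, \ldots,\, t - a^{(n)}\}$ for pairwise distinct $a^{(1)}, \ldots, a^{(n)} \in \fK$, where one of the $n+1$ removed points is taken to be $\infty$.

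Next I would push $\H/\F$ back down by shadows. With $a^{(j)} = \ulim_{s\in S}a^{(j)}_s$ ($a^{(j)}_s \in \bF_s$), let $\fH_s$ be the $s$-th shadow of $\H$ and $\fH = \prod_{s\in S}\fH_s/\cD$ its ultra-shadow, so that $\fH_{\alg} = \H$ by Proposition \ref{prop-explicit-descriptions-of-shadows-and-ultra-shadows}. Theorem \ref{thm-main-thm3-Galois-group-structures-of-shadows} gives that $\fH_s/\fF_s$ is a finite Galois extension with $\Gal(\fH_s/\fF_s) \cong G$ for $\cD$-almost all $s$. For ramification I apply Theorem \ref{thm-big-theorem-I-the-finite-case-constant-fields-are-ACF} with $\fL_s := \fH_s$, whose hypotheses hold since $\bF_s$ is algebraically closed, $\fH_{\alg} = \H$ is finite Galois over $\F$, and the $s$-th shadow of $\fH_{\alg}$ is again $\fH_s$. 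If the set $B$ of those $s$ for which $\fH_s/\fF_s$ ramifies at some finite prime not in $\{t - a^{(1)}_s, \ldots, t - a^{(n)}_s\}$ belonged to $\cD$, then picking such a ramified prime $Q_s = t - b_s$ for $s \in B$ (and $Q_s$ arbitrary otherwise) and setting $Q = \ulim_{s\in S}Q_s = t - b$ with $b = \ulim_{s\in S}b_s$, part (ii) of that theorem would force $Q$ to ramify in $\fH_{\alg} = \H$; since $Q \ne \infty$ this forces $Q = t - a^{(j)}$ for some $j$, i.e. $\{s : b_s = a^{(j)}_s\} \in \cD$, which is absurd because that set is disjoint from $B$. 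Hence for $\cD$-almost all $s$ the branch locus of $\fH_s/\fF_s$ is contained in the $(n+1)$-element set $\{t - a^{(1)}_s, \ldots, t - a^{(n)}_s, \infty\}$. Consequently, for $\cD$-almost all $s = p \in S$, the group $G$ is realizable over $\fF_s = k_p(t)$ unramified outside $n+1$ primes, contradicting the defining property of $S$ together with the fact that a nonprincipal ultrafilter contains no finite set. Therefore $\cP$ is finite, and the padding reduction of the first paragraph completes the proof.

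The main obstacle is the geometric input of the third paragraph: realizing $G$ over $\F = \fK(t)$ with both the prescribed Galois group and a branch locus of the prescribed size when $\fK$ is an \emph{arbitrary} algebraically closed field of characteristic $0$, not just $\bC$. This is standard --- it follows from the invariance of the \'etale fundamental group of an open subscheme of $\bP^1$ under extension of algebraically closed base fields of characteristic $0$, or from a Lefschetz-principle specialization --- but some care is needed to place $\infty$ among the allowed branch points so that the finite-prime statement of Theorem \ref{thm-big-theorem-I-the-finite-case-constant-fields-are-ACF} applies verbatim; granting that, the remaining steps are routine ultrafilter bookkeeping entirely parallel to the arguments already carried out in Section \ref{sec-ramification-of-primes-Hilbert-theory} and Subsection \ref{subsec-inverse-Galois-problem}.
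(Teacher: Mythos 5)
Your proof is correct and takes essentially the same route as the paper: transfer to characteristic $0$ via the ultraproduct, realize $G$ over $\fK(t)$ with branch locus of size $n+1$ (Riemann existence / Jarden), push down by shadows, and apply the ramification correspondence of Theorem \ref{thm-big-theorem-I-the-finite-case-constant-fields-are-ACF} to bound the branch loci of the shadows, contradicting the assumed failure. Two of your refinements actually make the argument tighter than the paper's own: the explicit padding reduction to $m=n$ (the paper treats $m$ as if it were independent of $p$, which the negation does not deliver) and the placement of $\infty$ among the allowed branch points so that the finite-prime ramification correspondence applies verbatim (the paper does not address the possibility that the ramified prime $Q_p$ is $\infty$).
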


\begin{proof}

Assume that there exists a finite group $G$ with minium number of generators $n$ such that the assertion fails for infinitely many primes $p$, i.e., there exists an infinite set $S$ of primes $p$ such that for each prime $p \in S$, there exist an algebraically closed field $k_p$ of characteristic $p$, an integer $m \ge n$, a set $\cA_p$ of $m + 1$ primes in $K_p = k_p(t)$  such that $G$ is not realizable as a Galois group of any Galois extension of $K_p$, unramified outside $\cA_p$.

Let $\cD$ be a nonprincipal ultrafilter on $S$. Setting $\fK = \prod_{p \in S}k_p/\cD$, we know from \L{}o\'s' theorem that $\fK$ is an algebraically closed field of characteristic $0$. Let $\F = \fK(t)$ be the rational function field over $\fK$.

Since $K_p$ is a rational function field over an algebraically closed field, all primes in $\cA_p$ of degree $1$, and thus the set of ultra-primes $\cA = \prod_{p \in S}\cA_p/\cD$ consists of primes of degree $1$ in $\F$. Since each $\cA_p$ is of cardinality $m + 1$, it follows from Bell--Solomon \cite[Lemma 3.7]{bell-slomson} that $\cA = \prod_{p \in S}\cA_p/\cD$ has exactly $m + 1$ primes in $\F$.

Since $\fK$ is an algebraically closed field of characteristic $0$, we deduce from a result of Jarden (see \cite[Section 1.8]{jarden-1995}) for the rational function field $\F = \fK(t)$ that there exists a Galois extension $\H$ of $\F$, unramified outside $\cA$ such that $G$ is realizable as a Galois group of $\H$ over $\F$ \footnote{If $k_p$ is the algebraic closure of the finite field $\bF_p$ for $\cD$-almost all $p \in S$, $\fK = \prod_{s\in S}k_p/\cD$ is isomorphic to the complex field $\bC$ (see Schoutens \cite{schoutens}). Then we can use Riemann Existence Theorem over $\bC(t)$ (see V\"olklein \cite{volklein-book}) to confirm the existence of such Galois extension $\H/\F$.}. Applying the theory of shadows in Subsection \ref{subsec-shadows-and-ultra-shadows} with $k_p, K_p = k_p(t)$ in place of $\bF_s, \fF_s = \bF_s(t)$, respectively, we can construct a Galois extension $\fH_p$ of $K_p$ whose Galois group is isomorphic to $G$ for $\cD$-almost all $p \in S$, where $\fH_p$ is the $p$-th shadow of $\H$ over $K_p$ (see Definition \ref{def-shadows-of-algebraic-extensions} and Theorem \ref{thm-main-thm3-Galois-group-structures-of-shadows}). Let $\fH = \prod_{p\in S}\fH_p/\cD$ be the ultra-shadow of $\H$.

Since $G$ is the Galois group of $\fH_p$ over $K_p$ for $\cD$-almost all $p \in S$ and $G$ is not realizable as a Galois group of any Galois extension of $K_p$, unramified outside $\cA_p$ for all $p \in S$, $\fH_p$ must be not unramified outside $\cA_p$ for $\cD$-almost all $p \in S$. Thus
\begin{align}
\label{e-prop-ramification-in-the-inverse-Galois-problem}
\{p \in S\; | \; \text{there exists a prime $Q_p$ in $K_p$ that is not in $\cA_p$ such that $Q_p$ is ramified in $\fH_p$}\} \in \cD.
\end{align}

We see that the ultra-prime $Q = \ulim_{p\in S}Q_p$ is a prime of degree $1$ in $\F$ that is not in $\cA = \prod_{p \in S}\cA_p/\cD$. By the construction of $\H$, $Q$ is unramified in $\H$. Since $\fH$ is the ultra-shadow of $\H$, the algebraic part of $\fH$ over $\F$ coincides with $\H$, i.e., $\fH_{\alg} = \fH \cap \F^{\alg} = \H$. Thus by part (i) of Theorem \ref{thm-big-theorem-I-the-finite-case-constant-fields-are-ACF} with $K_p, \fH_p, \H$ in place of $\fF_s, \fS_s, \fL_{\alg}$, respectively, we deduce that $Q_p$ is unramified in $\fH_p$ for $\cD$-almost all $p \in S$, i.e.,
\begin{align*}
\{p \in S\; | \; \text{$Q_p$ is unramified in $\fH_p$}\} \in \cD,
\end{align*}
which is a contradiction to (\ref{e-prop-ramification-in-the-inverse-Galois-problem}). Thus Proposition \ref{prop-ramification-in-the-inverse-galois-problem} follows immediately.

\end{proof}

\begin{remark}
\label{rem-Harbater}

In a private correspondence, Harbater pointed out to me that Proposition \ref{prop-ramification-in-the-inverse-galois-problem} follows immediately from \cite[Corollary 2.12, XIII]{SGA1} that was proven using the corresponding classical statement over the complex numbers whose proof is based on analysis and topology. He also pointed out to me that Corollary 2.12 in Chapter XIII in \cite{SGA1} provided background for Abhyankar's conjecture that was proven in full generality by Raynaud \cite{raynaud-cole-prize} (for the affine line) and Harbater \cite{harbater-cole-prize} (for all affine curves).

\end{remark}

\section{Hilbert's 12th Problem for certain rational function fields}
\label{sec-Hilbert-12th-problem}

In this section, using the theory of algebraic parts of ultra-field extensions and shadows that we develop in Subsections \ref{subsec-algebraic-parts} and \ref{subsec-shadows-and-ultra-shadows}, we prove the main result in this paper that can be viewed as an analogue of the Kronecker--Weber theorem for rational function fields over $n$-th level ultra-finite fields, which provides a solution to Hilbert's 12th Problem for such rational function fields.

\subsection{Cyclotomic function fields in positive characteristics and the Kronecker--Weber theorem for $\bF_q[t]$}
\label{subsec-KW-theorem-for-Fq[t]}

In this subsection, we recall a notion of cyclotomic function fields and an analogue of the Kronecker--Weber theorem for rational function fields over finite fields. Throughout this section, let $A = \bF_q[t]$ denote the polynomial ring over a finite field $\bF_q$, where $q$ is a power of a prime $p$. Let $L = \bF_q(t)$ be the rational function field over $\bF_q$. Let $\tau$ denote the Frobenius map that sends $x$ to $x^q$. Let $L\langle \tau \rangle$ denote the ring of polynomials in $\tau$ with twisted multiplication, that is, $\tau a = a^q \tau$ for every $a \in L$.

 Let $C : A \to L\langle \tau \rangle, \; a \mapsto C_a$ be the $\bF_q$-algebra homomorphism such that 
 \begin{align*}
 	C_t = t + \tau,
 \end{align*}
 or equivalently $C_t(\beta) = t\beta + \beta^q$ for all $\beta \in L^{\alg}$, where $L^{\alg}$ is an algebraic closure of $L$.
 
 The $\bF_q$-algebra homomorphism $C$ is called the \textbf{Carlitz module for $A$ defined over $L$} (see Goss \cite{goss} or Rosen \cite{rosen} for a detailed exposition of Carlitz module.)

For every commutative $L$-algebra $B$, one can introduce an $A$-algebra structure using the Carlitz module $C$ as follows. For every $a \in A$ and $\beta \in B$, define an action of $A$ on $B$ by
\begin{align}
\label{e-Carlitz-module-action}
a \cdot \beta := C_a(\beta).
	\end{align}

This action ``$\cdot$" equips $B$ with a new algebra structure that is different from the action of $A$ on $B$ induced from the $L$-algebra structure of $B$. 

For each nonzero element $a \in A$, we define
\begin{align}
	\label{def-torsions-of-the-Carlitz-module}
	\Lambda_C[a] := \{\beta \in L^{\alg} \; | \; C_a(\beta) = 0\}.
\end{align}

$\Lambda_C[a]$ is equipped with an $A$-module structure, using the action (\ref{e-Carlitz-module-action}) of $A$. It is known (see \cite[Proposition 12.4, p. 201]{rosen}) that 
\begin{align}
\label{e-structure-of-torison-module-Lambda_C}
	\Lambda_C[a] \cong A/aA
\end{align}

\begin{definition}
	\label{def-cyclotomic-function-fields-p>0}
(cyclotomic function fields for $L = \bF_q(t)$)	

	For each nonzero polynomial $a \in A$, the field $Q_a = L(\Lambda_C[a])$ is called the \textbf{$a$-th cyclotomic function field}.

\end{definition}

\begin{theorem}
	\label{thm-P^h-cyclotomic-function-fields-structure}
	$(\text{see Drinfeld \cite{drinfeld1, drinfeld2} and Hayes \cite[Proposition 2.2]{hayes}})$
	
	Let $P$ be a monic irreducible polynomial in $A$, and let $h$ be a positive integer. Let $Q_{P^h} = L(\Lambda_C[P^h])$ be the $P^h$-th cyclotomic function field. Then
	\begin{itemize}
		\item [(i)] $Q_{P^h}$ is a finite Galois extension over $L$ such that
		\begin{align*}
			\Gal(Q_{P^h}/L) \cong (A/P^hA)^{\times}.
		\end{align*}
		
		\item [(ii)] $Q_{P^h}$ is unramified at every prime ideal $\fp A$ with $\fp A \ne PA$.
		
		\item [(iii)] $PA$ is totally ramified in $Q_{P^h}$ and the only prime ideal above $PA$ is $\lambda \cO_{Q_{P^h}}$, where $\cO_{Q_{P^h}}$ is the ring of integers of $Q_{P^h}$ and $\lambda$ is any generator of $\Lambda_C[P^h]$.

	\end{itemize}

\end{theorem}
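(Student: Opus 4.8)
The statement in question is Theorem~\ref{thm-P^h-cyclotomic-function-fields-structure}, the structure theorem for $P^h$-th cyclotomic function fields over $L = \bF_q(t)$. Since this is cited directly to Drinfeld \cite{drinfeld1, drinfeld2} and Hayes \cite[Proposition 2.2]{hayes}, the plan is to reconstruct the classical proof via the Carlitz module, following Rosen \cite{rosen} and Hayes \cite{hayes}. The backbone is the isomorphism $\Lambda_C[P^h] \cong A/P^hA$ recorded in \eqref{e-structure-of-torison-module-Lambda_C}, which makes $\Lambda_C[P^h]$ a free rank-one $A/P^hA$-module; any generator $\lambda$ is a root of the $P^h$-division polynomial, and $Q_{P^h} = L(\lambda)$ for such a $\lambda$.

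First I would establish part (i). Since $\Gal(Q_{P^h}/L)$ acts $A$-linearly and faithfully on $\Lambda_C[P^h]$ (a Galois automorphism commutes with the $\bF_q$-linear operators $C_a$, so it permutes torsion points and respects the $A$-action), one gets an injection $\Gal(Q_{P^h}/L) \hookrightarrow \mathrm{Aut}_A(\Lambda_C[P^h]) \cong (A/P^hA)^{\times}$. Surjectivity is the substantive point: it follows from irreducibility of the division polynomial of $\lambda$ over $L$, equivalently from the computation of $[Q_{P^h}:L]$. The standard route is a ramification argument at the prime $P$ — one shows the polynomial whose roots are the generators of $\Lambda_C[P^h]$ is Eisenstein at $P$ (after an appropriate normalization using $C_P = $ a polynomial in $\tau$ with constant term $P$), forcing its degree, namely $\#(A/P^hA)^{\times} = q^{d(h-1)}(q^d-1)$ with $d = \deg P$, to equal $[Q_{P^h}:L]$. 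This simultaneously gives part (iii): the Eisenstein property at $P$ shows $PA$ is totally ramified and that $\lambda$ is a uniformizer at the unique prime above it, so $PA\cO_{Q_{P^h}} = (\lambda\cO_{Q_{P^h}})^{[Q_{P^h}:L]}$.

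Next, part (ii), unramifiedness away from $P$: here I would argue that the extension $Q_{P^h}/L$ has conductor supported only at $P$ (and possibly $\infty$), using that the division polynomial has coefficients that are $P$-integral and that its discriminant is supported at $P$; concretely one checks $\cO_{Q_{P^h}} = A[\lambda]$ localized away from $P$ and that the relevant discriminant is a power of $P$ times a unit, so every $\fp A \ne PA$ is unramified. A clean alternative is to invoke the general theory of Drinfeld modules with good reduction: the Carlitz module has good reduction at every finite prime $\fp \ne P$, hence the $P^h$-torsion is unramified there, which is exactly the function-field analogue of the Néron–Ogg–Shafarevich criterion.

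The main obstacle I anticipate is the surjectivity claim in part (i) — proving $[Q_{P^h}:L] = \#(A/P^hA)^{\times}$ rather than merely $\le$. Everything else (faithfulness of the action, total ramification at $P$, unramifiedness elsewhere) flows once the Eisenstein-type computation pinning down the degree is in hand; so the crux is the explicit analysis of $C_{P^h}$ and its associated division polynomial at the prime $P$. Since the statement is quoted as a known theorem with precise references, in the paper itself it would be most economical simply to cite \cite{rosen, hayes, drinfeld1, drinfeld2} for the proof; the sketch above indicates the structure one would fill in if a self-contained argument were required.
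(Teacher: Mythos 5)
The paper gives no proof of this theorem; it is quoted as a known result with references to Drinfeld and Hayes (with Rosen as background), and indeed the paper's role here is just to recall it. Your reconstruction correctly outlines the classical argument — faithfulness of the Galois action on the rank-one $A/P^hA$-module $\Lambda_C[P^h]$ gives the injection into $(A/P^hA)^{\times}$, the Eisenstein analysis of the $P^h$-division polynomial at $P$ pins down the degree $\#(A/P^hA)^{\times}$ and simultaneously yields total ramification at $P$ with $\lambda$ generating the unique prime above, and the discriminant (or good-reduction) argument handles unramifiedness at $\fp \ne P$ — and you correctly identify the degree count as the crux. Since you also note that in context it is most economical to cite the references, your proposal is consistent with what the paper actually does.
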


\begin{theorem}
	\label{thm-a-cyclotomic-function-fields-structure}
	(see Drinfeld \cite{drinfeld1, drinfeld2} and Hayes \cite[Theorem 2.3]{hayes})
	
	Let $a = \alpha P_1^{h_1}\cdots P_r^{h_r}$ be a polynomial of positive degree in $A$, where $\alpha \in \bF_q$ and the $P_i$ are monic irreducible polynomials in $A$. Let $Q_a$ be the compositum of the fields $Q_{P_i^{h_i}}$. Then
	\begin{itemize}
	
		\item [(i)] $Q_a$ is a finite abelian extensionover $L$ such that
	\begin{align*}
		\Gal(Q_a/L) \cong (A/aA)^{\times}.
	\end{align*}
	
	\item [(ii)] The only ideals in $A$ that ramify in $\cO_{Q_a}$ are $P_1A, P_2A, \ldots, P_rA$. 
	
	\end{itemize}

\end{theorem}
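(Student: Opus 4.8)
The plan is to reduce the statement to the prime-power case recorded in Theorem \ref{thm-P^h-cyclotomic-function-fields-structure} via the Chinese Remainder Theorem applied to the Carlitz torsion module, and then to pin down the degree of the compositum by a ramification argument.

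First I would normalize: since $a$ has positive degree, $\alpha\in\bF_q^{\times}$, and $C_\alpha$ acts on $L^{\alg}$ as multiplication by $\alpha$, so $\Lambda_C[a]=\Lambda_C[P_1^{h_1}\cdots P_r^{h_r}]$ and we may assume $a$ monic. By (\ref{e-structure-of-torison-module-Lambda_C}) there is an isomorphism of $A$-modules $\Lambda_C[a]\cong A/aA$, and the Chinese Remainder Theorem gives $A/aA\cong\prod_{i=1}^{r}A/P_i^{h_i}A$; transporting this back, $\Lambda_C[a]=\bigoplus_{i=1}^{r}\Lambda_C[P_i^{h_i}]$ inside $L^{\alg}$, the $i$-th summand being exactly the $P_i^{h_i}$-torsion. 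Hence $Q_a=L(\Lambda_C[a])=L\bigl(\Lambda_C[P_1^{h_1}],\dots,\Lambda_C[P_r^{h_r}]\bigr)=M_1\cdots M_r$ with $M_i:=Q_{P_i^{h_i}}$, which reconciles the compositum in the statement with Definition \ref{def-cyclotomic-function-fields-p>0}. Since each $M_i/L$ is finite abelian by Theorem \ref{thm-P^h-cyclotomic-function-fields-structure}(i), the compositum $Q_a/L$ is a finite abelian (in particular Galois) extension.

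Next I would build the injection $\Gal(Q_a/L)\hookrightarrow(A/aA)^{\times}$: any $\sigma\in\Gal(Q_a/L)$ restricts to an $A$-module automorphism of $\Lambda_C[a]\cong A/aA$, hence to multiplication by a unit $u_\sigma$, and $\sigma\mapsto u_\sigma$ is injective because $\Lambda_C[a]$ generates $Q_a$ over $L$. It then suffices to show $[Q_a:L]=\bigl|(A/aA)^{\times}\bigr|=\prod_i\bigl|(A/P_i^{h_i}A)^{\times}\bigr|=\prod_i[M_i:L]$, i.e. that $M_1,\dots,M_r$ are linearly disjoint over $L$. This is where the ramification information enters: by Theorem \ref{thm-P^h-cyclotomic-function-fields-structure}(ii)--(iii), $M_i$ is unramified at every finite prime $\fp A\neq P_iA$ while $P_iA$ is \emph{totally} ramified in $M_i$, so by multiplicativity of ramification indices every intermediate field $L\subsetneq N\subseteq M_i$ is ramified at $P_iA$; on the other hand the compositum $M_1\cdots M_{i-1}M_{i+1}\cdots M_r$ is unramified at $P_iA$, being a compositum of extensions each unramified there. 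Hence $M_i\cap\bigl(M_1\cdots M_{i-1}M_{i+1}\cdots M_r\bigr)=L$, and an easy induction on $r$ gives $\Gal(Q_a/L)\cong\prod_{i=1}^{r}\Gal(M_i/L)$ together with the degree equality. Combining with the injection above and Theorem \ref{thm-P^h-cyclotomic-function-fields-structure}(i) yields $\Gal(Q_a/L)\cong(A/aA)^{\times}$ (realized by the natural action on $\Lambda_C[a]$), proving (i). For (ii), a finite prime $\fp A$ ramifies in $Q_a=M_1\cdots M_r$ iff it ramifies in some $M_i$, which by Theorem \ref{thm-P^h-cyclotomic-function-fields-structure}(ii)--(iii) happens exactly when $\fp A=P_iA$; thus the ramified prime ideals of $A$ in $\cO_{Q_a}$ are precisely $P_1A,\dots,P_rA$.

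The only genuinely delicate point I anticipate is the linear-disjointness step, and within it the implication ``$P_iA$ totally ramified in $M_i$ $\Rightarrow$ every nontrivial subextension of $M_i/L$ is ramified at $P_iA$'', which is what forces $M_i\cap(\prod_{j\neq i}M_j)=L$; the remaining arguments are bookkeeping layered on Theorem \ref{thm-P^h-cyclotomic-function-fields-structure}, with minor care needed only for the normalization by $\alpha$ and the identification of the compositum with $L(\Lambda_C[a])$.
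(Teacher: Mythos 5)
The paper does not prove this theorem: it is recalled as background, with attribution to Drinfeld and to Hayes \cite[Theorem~2.3]{hayes}, so there is no in-paper proof to compare against. Your argument is correct and reproduces the standard one from Hayes: reduce to the prime-power case via the Chinese Remainder Theorem for $\Lambda_C[a]\cong A/aA$ (after absorbing the unit $\alpha\in\bF_q^{\times}$, which acts as scalar multiplication via $C$), identify $L(\Lambda_C[a])$ with the compositum $\prod_i Q_{P_i^{h_i}}$, realize $\Gal(Q_a/L)$ inside $(A/aA)^{\times}$ through the faithful $A$-linear action on the torsion, and then obtain surjectivity by a degree count resting on linear disjointness, which you correctly deduce by playing off total ramification of $P_iA$ in $Q_{P_i^{h_i}}$ against the unramifiedness of $P_iA$ in $\prod_{j\neq i}Q_{P_j^{h_j}}$. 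The subtle step you flag, that total ramification forces every nontrivial subextension of $Q_{P_i^{h_i}}/L$ to ramify at $P_iA$, is indeed the crux, and your multiplicativity-of-$e$ argument handles it; part (ii) then follows because ramification in a compositum of Galois extensions occurs exactly where it occurs in some factor.
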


\subsection{An analogue of cyclotomic function fields for rational function fields over $n$-th level ultra-finite fields}
\label{subsec-cyclotomic-function-fields-over-nth-level-ultra-finite-fields}

In this subsection, we develop an analogue of cyclotomic function fields for rational function fields over $n$-th level ultra-finite fields. We will construct such an analogue inductively on $n$.

We begin by fixing notation that we will use throughout this subsection. We always denote by $\fK^{(n)} = \prod_{s\in S}\bF_s^{(n - 1)}/\cD$ \footnote{In Definition \ref{def-higher-dimensional-ultra-finite-fields}, we use a sequence of infinite sets $S_1, S_2, \ldots$ and a sequence of nonprincipal ultrafilters $\cD_1, \cD_2, \ldots$ on $S_1, S_2, \ldots$, respectively to define $n$-th level ultra-finite fields at each level $n$. Throughout this section, since we only need to deal with exactly one $n$-th level ultra-finite field each time, for the sake of brevity, we \textit{only} use one infinite set $S$ and a nonprincipal ultrafilter $\cD$ on $S$ to describe such an $n$-th level ultra-finite field without causing any confusion.} an $n$-th level ultra-finite field, where the superscript $n$ indicates the level of the ultra-finite field $\fK$, and $\bF_s^{(n - 1)}$ is an $(n - 1)$-th level ultra-finite field for $\cD$-almost all $s \in S$. The superscript $n - 1$ in the notation $\bF_s^{(n - 1)}$ also indicates the level of the ultra-finite field $\bF_s^{(n - 1)}$. When $n = 1$, $\fK^{(1)} = \prod_{s\in S}\bF_s^{(0)}/\cD$ is a $1$st level ultra-finite field and $\bF_s^{(0)}$ is a finite field of $q_s$ elements for $\cD$-almost all $s \in S$, where $q_s$ is a power of a prime number $p_s > 0$. Such an ultra-finite field is known as a nonprincipal ultraproduct of finite fields that was first studied systematically by Ax \cite{ax-1968}. In this case, we remove all superscripts, and simply write $\fK = \prod_{s\in S}\bF_s/\cD$. For brevity, we simply call $\fK$ an ultra-finite field instead of a $1$st level ultra-finite field.

For an $n$-th level ultra-finite field $\fK^{(n)} = \prod_{s\in S}\bF_s^{(n - 1)}/\cD$, let $\F^{(n)} = \fK^{(n)}(t)$ be the rational function field over $\fK^{(n)}$, and $\A^{(n)} = \fK^{(n)}[t]$ the polynomial ring over $\fK^{(n)}$. The quotient field of $\A^{(n)}$ is $\F^{(n)}$. Let $\fF_s^{(n - 1)} = \bF_s^{(n - 1)}(t)$ be the rational function field over $\bF_s^{(n-1)}$, and let $\fA_s^{(n-1)} = \bF_s^{(n-1)}[t]$ denote the polynomial ring over $\bF_s^{(n-1)}$ for $\cD$-almost all $s \in S$. The quotient field of $\fA_s^{(n-1)}$ is $\fF_s^{(n-1)}$. 

Let $\cU(\A^{(n)})$ be the ultra-hull of $\A^{(n)}$, i.e., $\cU(\A^{(n)})$ is the ultraproduct of $\fA_s^{(n - 1)} = \bF_s^{(n - 1)}[t]$ with respect to $\cD$ of the form $\cU(\A^{(n)}) = \prod_{s \in S}\fA_s^{(n - 1)}/\cD$. Let $\cU(\F^{(n)})$ be the ultra-hull of $\F^{(n)}$, i.e., $\cU(\F^{(n)})$ is the ultraproduct of $\fF_s^{(n - 1)} = \bF_s^{(n - 1)}(t)$ with respect to $\cD$ of the form $\cU(\F^{(n)}) = \prod_{s \in S}\fF_s^{(n - 1)}/\cD$.

For $\cD$-almost all $s \in S$, choose an algebraic closure $\fF_s^{(n- 1), \alg}$ and a separable closure $\fF_s^{(n-1), \sep}$ inside the algebraic closure $\fF_s^{(n-1), \alg}$. Let $\cU(\F^{(n)})^{\alg}_{\ultra}$ denote the ultra-algebraic closure of $\cU(\F^{(n)})$, i.e., 
  \begin{align*}
  	\cU(\F^{(n)})^{\alg}_{\ultra} = \prod_{s\in S}\fF_s^{(n-1), \alg}/\cD,
  \end{align*}
  and let $\cU(\F^{(n)})^{\sep}_{\ultra} = \prod_{s\in S}\fF_s^{(n), \sep}/\cD$ denote the ultra-separable closure of $\cU(\F^{(n)})$.
  
  Inside $\cU(\F^{(n)})^{\alg}_{\ultra}$, we choose an algebraic closure $\F^{(n), \alg}$ of $\F^{(n)}$ that is the set of all elements in $\cU(\F^{(n)})^{\alg}_{\ultra}$ that are algebraic over $\F^{(n)}$ (see Lang \cite{lang-algebra} or Zariski--Samuel \cite{Zariski-Samuel}). We choose a separable closure $\F^{(n), \sep}$ of $\F^{(n)}$ inside $\F^{(n), \alg}$.

  At the constant field level, since $\fK^{(n)}$, $\bF_s^{(n - 1)}$ are quasi-finite (see Corollary \ref{cor-ultra-finite-fields-are-quasi-finite}), the algebraic closures $\fK^{(n), \alg}$, $\bF_s^{(n-1), \alg}$ of $\fK^{(n)}$ and $\bF_s^{(n - 1)}$ are unique. By Proposition \ref{prop-explicit-description--for--algebraic-extension-of-degree-d-of-ultra-fields}, for every integer $d \ge 1$,
  \begin{align*}
  \fK^{(n)}(d) = \prod_{s\in S}\bF_s^{(n - 1)}(d)/\cD,
  \end{align*}
  where $\fK^{(n)}(d)$ is the unique extension of degree $d$ over $\fK^{(n)}$ and $\bF_s^{(n - 1)}(d)$ is the unique extension of degree $d$ over $\bF_s^{(n - 1)}$. Since
  \begin{align*}
  \fK^{(n), \alg} = \bigcup_{d \ge 1}\fK^{(n)}(d),
  \end{align*}
  we deduce the following.
  
\begin{lemma}
\label{lem-algebraic-closure-of-n-th-level-constant-field-is-subfield-of-ultra-algebraic-closure}
  
$\fK^{(n), \alg}$ is a subfield of the ultra-algebraic closure $\prod_{s\in S}\bF_s^{(n-1), \alg}/\cD$.

\end{lemma}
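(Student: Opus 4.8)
The plan is to deduce Lemma~\ref{lem-algebraic-closure-of-n-th-level-constant-field-is-subfield-of-ultra-algebraic-closure} directly from the two ingredients recalled immediately before its statement: the explicit description $\fK^{(n)}(d) = \prod_{s\in S}\bF_s^{(n-1)}(d)/\cD$ coming from Proposition~\ref{prop-explicit-description--for--algebraic-extension-of-degree-d-of-ultra-fields} and Lemma~\ref{lem-explicit-description-of-algebraic-extensions-of-quasi-ultra-fields} (valid since $n$-th level ultra-finite fields are quasi-finite by Corollary~\ref{cor-ultra-finite-fields-are-quasi-finite}), together with the fact that $\fK^{(n),\alg} = \bigcup_{d\ge 1}\fK^{(n)}(d)$, which holds because a quasi-finite field has its algebraic closure equal to the union of its (unique) finite extensions. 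Concretely, first I would fix an arbitrary element $\alpha \in \fK^{(n),\alg}$; since $\fK^{(n)}$ is quasi-finite, $\alpha$ lies in the unique extension $\fK^{(n)}(d)$ for $d = [\fK^{(n)}(\alpha):\fK^{(n)}]$. Then I would invoke the displayed equality $\fK^{(n)}(d) = \prod_{s\in S}\bF_s^{(n-1)}(d)/\cD$ to write $\alpha = \ulim_{s\in S}\alpha_s$ with $\alpha_s \in \bF_s^{(n-1)}(d) \subset \bF_s^{(n-1),\alg}$ for $\cD$-almost all $s$, and conclude $\alpha \in \prod_{s\in S}\bF_s^{(n-1),\alg}/\cD$. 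Since $\alpha$ was arbitrary, this gives the inclusion $\fK^{(n),\alg} \subset \prod_{s\in S}\bF_s^{(n-1),\alg}/\cD$, and the field-theoretic structure (it being a \emph{subfield}, not merely a subset) is automatic because $\fK^{(n),\alg}$ is a field and the inclusion is a ring homomorphism.

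A slightly cleaner alternative, which I would probably present, avoids passing through a single $\alpha$: take the union over $d$ on both sides of the displayed equation. On the left, $\bigcup_{d\ge 1}\fK^{(n)}(d) = \fK^{(n),\alg}$. On the right, $\bigcup_{d\ge 1}\prod_{s\in S}\bF_s^{(n-1)}(d)/\cD$ is contained in $\prod_{s\in S}\bF_s^{(n-1),\alg}/\cD$ because for each fixed $d$ we have $\bF_s^{(n-1)}(d)\subset \bF_s^{(n-1),\alg}$ for $\cD$-almost all $s$, hence the ultraproduct of the former embeds into the ultraproduct of the latter. Taking the union over $d$ then yields $\fK^{(n),\alg}\subset \prod_{s\in S}\bF_s^{(n-1),\alg}/\cD$, as desired. (One should note that the right-hand side is genuinely larger: it is an algebraically closed field by \L o\'s' theorem, but typically contains transcendental-looking ``diagonal'' elements such as $\ulim_{s\in S}\alpha_s$ where $[\bF_s^{(n-1)}(\alpha_s):\bF_s^{(n-1)}]$ is unbounded — these are not algebraic over $\fK^{(n)}$, which is exactly why the inclusion is strict and why the statement is phrased as ``subfield'' rather than ``equality''.)

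I do not expect any genuine obstacle here: the result is essentially a bookkeeping consequence of facts already established in the excerpt, namely the quasi-finiteness of all $n$-th level ultra-finite fields (Corollary~\ref{cor-ultra-finite-fields-are-quasi-finite}) and the explicit form of their unique degree-$d$ extensions (Lemma~\ref{lem-explicit-description-of-algebraic-extensions-of-quasi-ultra-fields} / Proposition~\ref{prop-explicit-description--for--algebraic-extension-of-degree-d-of-ultra-fields}). The only point requiring a word of care is the identity $\fK^{(n),\alg} = \bigcup_{d\ge1}\fK^{(n)}(d)$, which is part of the definition of quasi-finite (recalled in the excerpt just before Proposition~\ref{prop-quasi-finite-for-ultra-fields}); given that, the proof is three lines. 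The statement is a direct analogue, at the constant-field level, of Corollary~\ref{cor-F^sep-is-contained-in-U(F)^sep_ultra} at the function-field level, and I would keep the write-up correspondingly short.
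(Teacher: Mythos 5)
Your proof is correct and is essentially the paper's own argument: the paper establishes the lemma in the discussion immediately preceding its statement by combining Corollary~\ref{cor-ultra-finite-fields-are-quasi-finite}, the identity $\fK^{(n)}(d) = \prod_{s\in S}\bF_s^{(n-1)}(d)/\cD$, and $\fK^{(n),\alg} = \bigcup_{d\ge 1}\fK^{(n)}(d)$, exactly as in your ``cleaner alternative.'' Nothing further is needed.
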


Following exactly the same arguments as in the proof of Corollary \ref{cor-F^sep-is-contained-in-U(F)^sep_ultra}, we obtain the following result by induction on $n$.

\begin{proposition}
\label{prop-F^n-sep-is-contained-in-U(F^n)^sep_ultra}

For every integer $n \ge 1$, $\F^{(n), \sep}$ is contained in $\cU(\F^{(n)})^{\sep}_{\ultra}$.

\end{proposition}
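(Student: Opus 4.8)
The statement to prove is an exact analogue, at the $n$-th level, of Corollary \ref{cor-F^sep-is-contained-in-U(F)^sep_ultra} (the case $n = 1$). The natural approach is induction on $n$. The base case $n = 1$ is precisely Corollary \ref{cor-F^sep-is-contained-in-U(F)^sep_ultra}, once we observe that $\fK^{(1)} = \fK$ is a nonprincipal ultraproduct of finite fields $\bF_s$, so $\F^{(1)} = \F = \fK(t)$ and $\fF_s^{(0)} = \fF_s = \bF_s(t)$, and the conclusion $\F^{(1),\sep} \subset \cU(\F^{(1)})^{\sep}_{\ultra}$ is exactly what that corollary gives. For the inductive step, I would assume the result holds for level $n - 1$ (that is, for every $(n-1)$-th level ultra-finite field $G$, writing $G = \prod_{s} G_s/\cD$ with $G_s$ of level $n-2$, one has $G(t)^{\sep} \subset \prod_s G_s(t)^{\sep}/\cD$), and run the argument of Corollary \ref{cor-F^sep-is-contained-in-U(F)^sep_ultra} verbatim with $\bF_s$ replaced by $\bF_s^{(n-1)}$, $\fF_s$ replaced by $\fF_s^{(n-1)} = \bF_s^{(n-1)}(t)$, $\fK$ replaced by $\fK^{(n)}$, and $\F$ replaced by $\F^{(n)}$.

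\textbf{Key steps.} Take an arbitrary $\alpha \in \F^{(n),\sep}$ with minimal polynomial $f(x) = a_0 + a_1 x + \cdots + a_{m-1}x^{m-1} + x^m \in \F^{(n)}[x]$, which is separable since $\alpha \in \F^{(n),\sep}$. First I would write each $a_i = \ulim_{s\in S} a_{i,s}$ with $a_{i,s} \in \fF_s^{(n-1)}$, set $f_s(x) = a_{0,s} + \cdots + a_{m-1,s}x^{m-1} + x^m$, so that $f = \ulim_{s\in S}f_s$. Next, using Lemma \ref{lem-elementary-lemma1} (which applies to $\F^{(n)}$ and the family $\fF_s^{(n-1)}$ since its proof only uses that the constant fields form a family with ultraproduct $\fK^{(n)}$), irreducibility of $f$ over $\F^{(n)}$ gives irreducibility of $f_s$ over $\fF_s^{(n-1)}$ for $\cD$-almost all $s$. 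Then, since $\F^{(n),\sep} \subset \F^{(n),\alg} \subset \cU(\F^{(n)})^{\alg}_{\ultra} = \prod_s \fF_s^{(n-1),\alg}/\cD$, I can write $\alpha = \ulim_{s\in S}\alpha_s$ with $\alpha_s \in \fF_s^{(n-1),\alg}$; from $f(\alpha) = \ulim_{s\in S} f_s(\alpha_s) = 0$ we get $f_s(\alpha_s) = 0$ for $\cD$-almost all $s$, so $f_s$ is the minimal polynomial of $\alpha_s$. Finally, because $f$ is separable of degree $m$, it has $m$ distinct roots $\alpha = \alpha_1, \alpha_2, \ldots, \alpha_m$ in $\F^{(n),\sep}$; writing each $\alpha_j = \ulim_{s\in S}\alpha_{j,s}$ and using the factorization $f(x) = \prod_{j=1}^m (x - \alpha_j)$ transferred through the ultraproduct, I obtain $f_s(x) = \prod_{j=1}^m (x - \alpha_{j,s})$ for $\cD$-almost all $s$. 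The distinctness of the $\alpha_j$ passes, via a finite intersection of sets in $\cD$ (exactly as in Corollary \ref{cor-F^sep-is-contained-in-U(F)^sep_ultra}), to distinctness of the $\alpha_{j,s}$ for $\cD$-almost all $s$, so $f_s$ is separable for $\cD$-almost all $s$. Hence $\alpha_s \in \fF_s^{(n-1),\sep}$ for $\cD$-almost all $s$, and therefore $\alpha = \ulim_{s\in S}\alpha_s \in \prod_{s\in S}\fF_s^{(n-1),\sep}/\cD = \cU(\F^{(n)})^{\sep}_{\ultra}$.

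\textbf{Main obstacle.} The genuine content is not in the inductive chain itself but in checking that the machinery invoked in the proof of Corollary \ref{cor-F^sep-is-contained-in-U(F)^sep_ultra} — principally Lemma \ref{lem-elementary-lemma1}, and behind it Lemma \ref{rem-irreducibility-is-the-same-in-F-and-U(F)} — remains valid when the constant fields $\bF_s$ are replaced by $(n-1)$-th level ultra-finite fields $\bF_s^{(n-1)}$ rather than arbitrary fields. In fact those lemmas were proved in Section \ref{sec-algebraic-part-of-an-ultra-finite-extension} for an \emph{arbitrary} family $(\bF_s)_{s\in S}$ of fields, with $\fK$ their ultraproduct; taking $\bF_s := \bF_s^{(n-1)}$ and $\fK := \fK^{(n)}$ is a legitimate instance, so no new work is needed there and the induction is only a bookkeeping device to present the level-$n$ notation. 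I expect the one point requiring a word of care is that the algebraic closures and separable closures at level $n$ are chosen compatibly with the ultraproducts at level $n-1$ — but this is exactly guaranteed by Lemma \ref{lem-algebraic-closure-of-n-th-level-constant-field-is-subfield-of-ultra-algebraic-closure} and the choices fixed earlier in this subsection, so the argument of Corollary \ref{cor-F^sep-is-contained-in-U(F)^sep_ultra} carries over with no essential change.
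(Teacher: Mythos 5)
Your proposal is correct and takes essentially the same approach as the paper: the paper's proof is the single sentence "Following exactly the same arguments as in the proof of Corollary \ref{cor-F^sep-is-contained-in-U(F)^sep_ultra}, we obtain the following result by induction on $n$," and your plan — re-running that argument with $\bF_s^{(n-1)}$ in place of $\bF_s$, after noting that Lemmas \ref{lem-elementary-lemma1} and \ref{rem-irreducibility-is-the-same-in-F-and-U(F)} were proved for arbitrary constant fields — is precisely what the paper intends. Your remark that the induction is merely bookkeeping (since the inductive hypothesis is never invoked in the "step") is a fair and accurate observation.
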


When $n = 1$, we drop the superscript $(n)$ in the notation, and simply write $\fK = \prod_{s\in S}\bF_s/\cD$, $\A = \fK[t]$, $\F = \fK(t)$, $\fA_s = \bF_s[t]$, $\fF_s = \bF_s(t)$, $\cU(\A)$, $\cU(\F)$, $\cU(\F)^{\alg}_{\ultra}$, $\cU(\F)^{\sep}_{\ultra}$, $\F^{\alg}$, $\F^{\sep}$, respectively.

\subsubsection{Cyclotomic function fields for rational function fields over $1$st level ultra-finite fields $\fK = \prod_{s\in S}\bF_s/\cD$.}
\label{subsubsec-cyclotomic-function-fields-for-1st-level-ultra-finite-fields}

We first define an analogue of cyclotomic function fields for $\F = \fK(t)$. 

Let $C^{(s)} : \fA_s = \bF_s[t] \to \fF_s\langle \tau_s\rangle$ be the Carlitz module for $\fA_s$ that sends each polynomial $a_s \in \fA_s$ to an element $C_{a_s}^{(s)}$ in $\fF_s\langle \tau_s \rangle$ as in Subsection \ref{subsec-KW-theorem-for-Fq[t]}, where $\fF_s\langle \tau_s \rangle$ denotes the ring of polynomials in $\tau_s$ with twisted multiplication, that is, $\tau a_s = a_s^{q_s}\tau_s$, and $\tau_s$ denotes the Frobenius map that sends $x$ to $x^{q_s}$.

Take an arbitrary nonzero polynomial $a \in \A = \fK[t]$. We can write $a = \ulim_{s\in S}a_s$ for some nonzero elements $a_s \in \fA_s$ (see Lemma \ref{lem-elementary-lemma}). 

Let
\begin{align}
\label{e-Lambda_s-torsion-points-of-the-s-th-Carlitz-module}
\Lambda_{C^{(s)}}[a_s] = \{\beta_s \in \fF_s^{\alg} \; | \; C_{a_s}^{(s)}(\beta_s) = 0\}
\end{align}
for $\cD$-almost all $s \in S$.

As in Definition \ref{def-cyclotomic-function-fields-p>0}, the extension $\fL_s^{(a_s)} = \fF_s\left(\Lambda_{C^{(s)}}[a_s]\right)$ of $\fF_s$, obtained from $\fF_s$ by adjoining $\Lambda_{C^{(s)}}[a_s]$, is the $a_s$-th cyclotomic function field for $\fF_s$. Following Section \ref{sec-algebraic-part-of-an-ultra-finite-extension}, we form the ultra-field extension $\fL^{(a)}$ of $\cU(\F)$ by letting
\begin{align}
\label{e-ultra-field-extension-L_a-for-a-th-CFF-in-ultra-finite-fields}
\fL^{(a)} = \prod_{s\in S}\fL_s^{(a_s)}/\cD.
\end{align}

We are now ready to define an analogue of cyclotomic function fields for $\F = \fK(t)$. 

\begin{definition}
\label{def-a-th-cyclotomic-function-field-for-ultra-finite-fields}
($a$-th cyclotomic function field for $\F = \fK(t)$)

For each nonzero polynomial $a \in \A = \fK[t]$, the algebraic part of $\fL^{(a)}$ over $\F$, i.e., $\fL^{(a)}_{\alg} = \fL^{(a)} \cap \F^{\alg}$, is called the \textbf{$a$-th cyclotomic function field for $\F$}.

\end{definition}

\begin{remark}

In Subsection \ref{subsubsec-L_alg-is-Z-hat-profinite-Galois-over-F}, we provide an example of an $a$-th cyclotomic function field for $\F$.

\end{remark}

By Theorem \ref{thm-a-cyclotomic-function-fields-structure}, the $a_s$-th cyclotomic function field $\fL_s^{(a_s)}$ is a  Galois extension of $\fF_s$ for $\cD$-almost all $s \in S$, and thus the following result follows immediately from Theorem \ref{thm-main-thm2-Galois-property-of-algebraic-parts}.

\begin{proposition}
\label{prop-cyclotomic-function-fields-are-Galois}

For each nonzero polynomial $a \in \A = \fK[t]$, the $a$-th cyclotomic function field $\fL^{(a)}_{\alg} = \fL^{(a)} \cap \F^{\alg}$ is a (possibly infinite) Galois extension of $\F$.

\end{proposition}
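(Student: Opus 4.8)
The proof is essentially an immediate application of Theorem \ref{thm-main-thm2-Galois-property-of-algebraic-parts}, so the plan is to verify that its hypotheses are met and then invoke it. First I would fix a nonzero polynomial $a \in \A = \fK[t]$ and, using Lemma \ref{lem-elementary-lemma}, write $a = \ulim_{s\in S}a_s$ with each $a_s$ a nonzero polynomial in $\fA_s = \bF_s[t]$ for $\cD$-almost all $s \in S$. By the construction preceding Definition \ref{def-a-th-cyclotomic-function-field-for-ultra-finite-fields}, the $a$-th cyclotomic function field is $\fL^{(a)}_{\alg} = \fL^{(a)} \cap \F^{\alg}$, where $\fL^{(a)} = \prod_{s\in S}\fL_s^{(a_s)}/\cD$ and $\fL_s^{(a_s)} = \fF_s(\Lambda_{C^{(s)}}[a_s])$ is the $a_s$-th cyclotomic function field for $\fF_s$.

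The key step is then to record that each $\fL_s^{(a_s)}$ is a finite (and, in particular, possibly infinite in the statement, but here finite) Galois extension of $\fF_s$: factoring $a_s = \alpha_s P_{1,s}^{h_{1,s}}\cdots P_{r_s,s}^{h_{r_s,s}}$ into monic irreducibles over $\fF_s$ as in Subsection \ref{subsec-KW-theorem-for-Fq[t]}, Theorem \ref{thm-a-cyclotomic-function-fields-structure}(i) gives that $\fL_s^{(a_s)}$ is a finite abelian, hence Galois, extension of $\fF_s$ with $\Gal(\fL_s^{(a_s)}/\fF_s)\cong (\fA_s/a_s\fA_s)^{\times}$ for $\cD$-almost all $s \in S$. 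Thus $\fL_s := \fL_s^{(a_s)}$ is a Galois extension of $\fF_s$ for $\cD$-almost all $s \in S$, which is precisely the hypothesis of Theorem \ref{thm-main-thm2-Galois-property-of-algebraic-parts}.

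Applying Theorem \ref{thm-main-thm2-Galois-property-of-algebraic-parts} to $\fL = \fL^{(a)} = \prod_{s\in S}\fL_s/\cD$ then yields that $\fL^{(a)}_{\alg} = \fL^{(a)}_{\sep}$ and that $\fL^{(a)}_{\alg}$ is a Galois extension of $\F$. That is exactly the assertion of the proposition. Since $\fL^{(a)}_{\alg}$ need not be a finite extension of $\F$ (compare the example in Subsection \ref{subsubsec-L_alg-is-Z-hat-profinite-Galois-over-F}, where the algebraic part has Galois group $\widehat{\bZ}$), I would phrase the conclusion as ``possibly infinite Galois extension'' to match the statement.

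There is really no serious obstacle here: the only thing to be careful about is the bookkeeping that the decomposition of $a$ into its $s$-components $a_s$ is compatible with forming the ultraproduct of the cyclotomic function fields $\fL_s^{(a_s)}$ — but this is built into the definition of $\fL^{(a)}$ in equation (\ref{e-ultra-field-extension-L_a-for-a-th-CFF-in-ultra-finite-fields}) and does not require any new argument. So the entire proof reduces to the two sentences: each $\fL_s^{(a_s)}$ is Galois over $\fF_s$ by Theorem \ref{thm-a-cyclotomic-function-fields-structure}; hence $\fL^{(a)}_{\alg}$ is Galois over $\F$ by Theorem \ref{thm-main-thm2-Galois-property-of-algebraic-parts}.
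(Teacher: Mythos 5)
Your proposal is correct and takes essentially the same route as the paper: cite Theorem \ref{thm-a-cyclotomic-function-fields-structure} to see each $\fL_s^{(a_s)}$ is Galois over $\fF_s$ for $\cD$-almost all $s$, then invoke Theorem \ref{thm-main-thm2-Galois-property-of-algebraic-parts} to conclude that $\fL^{(a)}_{\alg}$ is Galois over $\F$. The extra bookkeeping you spell out (writing $a = \ulim_{s\in S}a_s$ and recalling the definition of $\fL^{(a)}$) is exactly what the paper's one-line proof leaves implicit.
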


We will prove that $\fL^{(a)}_{\alg}$ carries many properties that are very analogous to cyclotomic fields over the rationals $\bQ$, or to cyclotomic function fields over rational function fields over finite fields that are defined in Hayes \cite{hayes}. The following is an analogue of Theorem \ref{thm-P^h-cyclotomic-function-fields-structure} for $\F = \fK(t)$.

\begin{theorem}
\label{thm-important-thm1-the-case-P^h-for-ultra-finite-fields}

Let $a = P^h$ for some positive integer $h$, where $P$ is an irreducible polynomial in $\A$. Let $\fL^{(P^h)}_{\alg}$ be the $P^h$-th cyclotomic function field for $\F$ as in Definition \ref{def-a-th-cyclotomic-function-field-for-ultra-finite-fields}. Then
\begin{itemize}

\item [(i)] $\fL^{(P^h)}_{\alg}$ is an abelian extension of $\F$. When $h = 1$, the Galois group $\Gal(\fL^{(P^h)}_{\alg}/\F)$ is a procyclic group.

\item [(ii)] $\fL^{(P^h)}_{\alg}$ is unramified at every prime ideal $\fp\A$ with $\fp\A \ne P\A$.

\item [(iii)] $P\A$ is totally ramified in $\fL^{(P^h)}_{\alg}$.

\end{itemize}

\end{theorem}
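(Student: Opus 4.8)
\textbf{Proof proposal for Theorem \ref{thm-important-thm1-the-case-P^h-for-ultra-finite-fields}.}

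The plan is to reduce everything to the explicit structure theorem of Drinfeld--Hayes (Theorem \ref{thm-P^h-cyclotomic-function-fields-structure}) for the $P_s^h$-th cyclotomic function fields $\fL_s^{(P_s^h)}$ over $\fF_s = \bF_s(t)$, and then transfer the conclusions to $\F = \fK(t)$ via the machinery of algebraic parts and shadows developed in Sections \ref{sec-algebraic-part-of-an-ultra-finite-extension} and \ref{sec-ramification-of-primes-Hilbert-theory}. First I would write $P = \ulim_{s\in S}P_s$ with $P_s$ a monic irreducible polynomial in $\fA_s = \bF_s[t]$ of the same degree for $\cD$-almost all $s$ (Lemma \ref{lem-elementary-lemma0}); after rescaling we may take $P$ and the $P_s$ monic. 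By Theorem \ref{thm-P^h-cyclotomic-function-fields-structure}(i), each $\fL_s^{(P_s^h)}$ is a finite abelian extension of $\fF_s$ with $\Gal(\fL_s^{(P_s^h)}/\fF_s) \cong (\fA_s/P_s^h\fA_s)^\times$, and for $h = 1$ this is the cyclic group of order $q_s^{d} - 1$ where $d = \deg(P_s)$; in particular it is procyclic. Then Corollary \ref{cor-main-corollary-1-abelian-Galois-group-of-L-alg-and-shadows} (applied with $\fL_s = \fL_s^{(P_s^h)}$, whose ultraproduct is $\fL^{(P^h)}$) gives that $\fL^{(P^h)}_{\alg}$ is abelian over $\F$, and Corollary \ref{cor-main-corollary-1-procyclic-Galois-group-of-L-alg-and-shadows}(i) gives that when $h = 1$ the Galois group $\Gal(\fL^{(P^h)}_{\alg}/\F)$ is procyclic. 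This settles part (i).

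For part (ii): let $\fp$ be an irreducible polynomial in $\A$ with $\fp\A \ne P\A$, and write $\fp = \ulim_{s\in S}\fp_s$ with $\fp_s$ irreducible in $\fA_s$ (Lemma \ref{lem-elementary-lemma0}). I would first check that $\fp_s\fA_s \ne P_s\fA_s$ for $\cD$-almost all $s$: indeed if $\fp_s\fA_s = P_s\fA_s$ held on a set in $\cD$, then $\fp_s = c_s P_s$ for units $c_s \in \bF_s^\times$ on that set, forcing $\fp = c P$ with $c = \ulim c_s \in \fK^\times$, contradicting $\fp\A \ne P\A$. By Theorem \ref{thm-P^h-cyclotomic-function-fields-structure}(ii), $\fp_s$ is then unramified in $\fL_s^{(P_s^h)}$ for $\cD$-almost all $s$. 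Now I invoke the ramification transfer result: since $\fL_s^{(P_s^h)}$ is a (finite) Galois extension of $\fF_s$ for $\cD$-almost all $s$, Corollary \ref{cor-unramified-primes-from-local-in-L=prod-L-s-to-L-alg-finite-case} (if $\fL^{(P^h)}_{\alg}$ happens to be finite over $\F$) or part (B1) of Theorem \ref{thm-big-theorem-II-infinite-case} (in general, noting that the $\bF_s$ are finite, hence perfect procyclic) applies with $P_s$ unramified in $\fL_s^{(P_s^h)}$ to conclude that $\fp$ is unramified in $\fL^{(P^h)}_{\alg}$. Here I should be slightly careful about which of Theorems \ref{thm-big-theorem-I-the-finite-case}/\ref{thm-big-theorem-II-infinite-case} to cite depending on whether $\fL^{(P^h)}_{\alg}/\F$ is finite — the cleanest route is Corollary \ref{cor-unramified-primes-from-local-in-L=prod-L-s-to-L-alg-finite-case} when it applies and (B1) of Theorem \ref{thm-big-theorem-II-infinite-case} otherwise, since both only require $P_s$ unramified in $\fL_s^{(P_s^h)}$.

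For part (iii): by Theorem \ref{thm-P^h-cyclotomic-function-fields-structure}(iii), $P_s\fA_s$ is totally ramified in $\fL_s^{(P_s^h)}$ for $\cD$-almost all $s$. I then apply the ``totally ramified descends'' statement: part (4) of Theorem \ref{thm-big-theorem-II-infinite-case} (or part (iv) of Theorem \ref{thm-big-theorem-I-the-finite-case} if $\fL^{(P^h)}_{\alg}/\F$ is finite), whose hypothesis is exactly that $P_s$ is totally ramified in $\fL_s$ for $\cD$-almost all $s$, and whose conclusion is that $P = \ulim_s P_s$ is totally ramified in $\fL^{(P^h)}_{\alg}$. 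I expect the main obstacle to be purely bookkeeping rather than conceptual: namely, keeping straight the two possible cases (whether $\fL^{(P^h)}_{\alg}$ is a finite or infinite extension of $\F$) and making sure the cited transfer theorem has exactly the hypotheses I have verified — in particular that for part (iii) I am genuinely in the situation ``$P_s$ totally ramified in $\fL_s$'' (the full $\fL_s^{(P_s^h)}$), not merely in some shadow, so that parts (iv)/(4) of Theorems \ref{thm-big-theorem-I-the-finite-case}/\ref{thm-big-theorem-II-infinite-case} apply verbatim. A secondary point worth a sentence is that the $\bF_s$ being finite fields are perfect procyclic, so all the hypotheses of the ramification theorems in Subsections \ref{subsec-prime-ramification-in-L-alg-finite-case}--\ref{subsec-prime-ramification-in-L-alg-infinite-case} are automatically met, and since the characteristics $p_s$ need not be distinct here one does not get separability for free from Remark \ref{rem-L_alg=L_sep} — but separability is nonetheless automatic because cyclotomic function fields $\fL_s^{(P_s^h)}$ are separable over $\fF_s$, so part (ii) of Theorem \ref{thm-main-thm1-Algebraic-part-of-L-is-a-finite-separable-extension} gives $\fL^{(P^h)}_{\alg} = \fL^{(P^h)}_{\sep}$ throughout.
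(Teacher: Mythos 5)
Your proposal is correct and follows essentially the same route as the paper's proof: reduce to Theorem \ref{thm-P^h-cyclotomic-function-fields-structure} at each index $s$, then transfer (i) via Corollaries \ref{cor-main-corollary-1-abelian-Galois-group-of-L-alg-and-shadows} and \ref{cor-main-corollary-1-procyclic-Galois-group-of-L-alg-and-shadows}, and (ii), (iii) via the ramification transfer results of Theorems \ref{thm-big-theorem-I-the-finite-case}/\ref{thm-big-theorem-II-infinite-case} and Corollary \ref{cor-unramified-primes-from-local-in-L=prod-L-s-to-L-alg-finite-case}. The only cosmetic difference is that for $\fp_s\fA_s \neq P_s\fA_s$ you argue by contradiction through ultralimits of unit multipliers, whereas the paper transfers a B\'ezout identity $\alpha\fp + \beta P = 1$ componentwise; both are fine, and your added remarks on perfect-procyclicity and separability are correct and harmless.
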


\begin{proof}

Since $\bF_s$ is a finite field for $\cD$-almost all $s \in S$, $\bF_s$ is quasi-finite, and it thus follows from Proposition \ref{prop-quasi-finite-for-ultra-fields} that $\fK = \prod_{s\in S}\bF_s/\cD$ is also quasi-finite.

By Lemma \ref{lem-elementary-lemma0}, one can write $P^h = \ulim_{s\in S}P_s^h$, where $P_s$ is an irreducible polynomial in $\fA_s = \bF_s[t]$ for $\cD$-almost all $s \in S$.

By Theorem \ref{thm-P^h-cyclotomic-function-fields-structure}, $\fL_s^{(P_s^h)}$ is an abelian extension of $\fF_s$ for $\cD$-almost all $s\in S$. Thus we deduce from Corollary \ref{cor-main-corollary-1-abelian-Galois-group-of-L-alg-and-shadows} that $\fL^{(P^h)}_{\alg}$ is an abelian extension of $\F$.

When $h= 1$,  Theorem \ref{thm-P^h-cyclotomic-function-fields-structure} implies that $\fL_s^{(P_s^h)}$ is a cyclic extension of $\fF_s$ for $D$-almost all $s\in S$. Thus we deduce from Corollary \ref{cor-main-corollary-1-procyclic-Galois-group-of-L-alg-and-shadows} that $\Gal(\fL_{\alg}^{(P)}/\F)$ is a procyclic group.

For part (ii), take an arbitrary prime ideal $\fp \A \ne P\A$ in $\A$ for some irreducible polynomial $\fp \in \A$. By Lemma \ref{lem-elementary-lemma0}, one can write $\fp = \ulim_{s\in S}\fp_s$, where $\fp_s$ is an irreducible polynomial in $\fA_s$ for $\cD$-almost all $s \in S$. Since $\fp A\ne P\A$, we see that $\fp, P$ are relatively prime in $\A$, and thus $\alpha \fp + \beta P = 1$ for some polynomials $\alpha, \beta \in \A$. By Lemma \ref{lem-elementary-lemma}, $\alpha = \ulim_{s\in S}\alpha_s$ and $\beta = \ulim_{s\in S}\beta_s$ for some polynomials $\alpha_s, \beta_s \in \fA_s$. Thus
\begin{align*}
\alpha\fp + \beta P = \ulim_{s\in S}(\alpha_s\fp_s + \beta_sP_s) = 1,
\end{align*}
and therefore 
\begin{align*}
\alpha_s\fp_s + \beta_sP_s = 1
\end{align*}
for $\cD$-almost all $s \in S$. Therefore $\fp_s, P_s$ are relatively prime in $\fA_s$, and thus $\fp_s\fA_s \ne P_s\fA_s$ for $\cD$-almost all $s \in S$. By Theorem \ref{thm-P^h-cyclotomic-function-fields-structure}, $\fp_s\fA_s$ is unramified in $\fL_s^{(P_s^h)}$ for $\cD$-almost all $s \in S$.

If $\fL^{(P^h)}_{\alg}$ is a finite extension of $\F$, using Corollary \ref{cor-unramified-primes-from-local-in-L=prod-L-s-to-L-alg-finite-case}, we deduce that $\fp = \ulim_{s\in S}\fp_s$ is unramified in $\fL_{\alg}^{(P^h)}$.

If $\fL^{(P^h)}_{\alg}$ is an infinite Galois extension of $\F$, it follows from (B1) of part (1) in Theorem \ref{thm-big-theorem-II-infinite-case} that $\fp = \ulim_{s\in S}\fp_s$ is unramified in $\fL_{\alg}^{(P^h)}$.

For part (iii), we know from part (iii) in Theorem \ref{thm-P^h-cyclotomic-function-fields-structure} that $P_s\fA_s$ is totally ramified in $\fL_s^{(P_s^h)}$ for $\cD$-almost all $s \in S$. Thus we deduce from part (iv) in Theorem \ref{thm-big-theorem-I-the-finite-case} and part (4) in Theorem \ref{thm-big-theorem-II-infinite-case} that $P\A$ is totally ramified in $\fL_{\alg}^{(P^h)}$.

\end{proof}

We prove an analogue of Theorem \ref{thm-a-cyclotomic-function-fields-structure} for $\F = \fK(t)$. 

\begin{theorem}
\label{thm-important-thm1-the-case-a-for-ultra-finite-fields}

Let $a = \alpha P_1^{h_1}\cdots P_r^{h_r}$ be a polynomial of positive degree in $\A$, where $\alpha \in \fK = \prod_{s\in S}\bF_s/\cD$, the $P_i$ are distinct monic irreducible polynomials in $\A$, and the $h_i$ are positive integers. Let $\fL_{\alg}^{(a)}$ be the $a$-th cyclotomic function field as in Definition \ref{def-a-th-cyclotomic-function-field-for-ultra-finite-fields}. Then
	\begin{itemize}
	
		\item [(i)] $\fL_{\alg}^{(a)}$ is an abelian extension of $\F$.
	
	\item [(ii)] The only ideals in $\A$ that ramify in $\fL_{\alg}^{(a)}$ are $P_1\A, P_2\A, \ldots, P_r\A$. 
	
	\end{itemize}

\end{theorem}

\begin{proof}

We begin by proving part (i). For each $1 \le i \le r$, using Lemma \ref{lem-elementary-lemma0}, one can write $P_i = \ulim_{s\in S}P_{i, s}$, where $P_{i, s}$ is an irreducible polynomial in $\fA_s = \bF_s[t]$ for $\cD$-almost all $s \in S$. Since $\alpha \in \fK = \prod_{s\in S}\bF_s/\cD$, one can write $\alpha = \ulim_{s\in S}\alpha_s$, where $\alpha_s \in \bF_s$ for $\cD$-almost all $s \in S$.

 Let 
\begin{align*}
a_s = \alpha_s P_{1, s}^{h_1}\cdots P_{r, s}^{h_r} \in \fA_s.
\end{align*}
It is trivial that $a = \ulim_{s\in S}a_s$. 

By Theorem \ref{thm-a-cyclotomic-function-fields-structure}, the $a_s$-th cyclotomic function field $\fL_s^{(a_s)}$ for $\fF_s = \bF_s(t)$ is an abelian extension for $\cD$-almost all $s \in S$. By Corollary \ref{cor-main-corollary-1-abelian-Galois-group-of-L-alg-and-shadows}, the $a$-th cyclotomic function field $\fL_{\alg}^{(a)} = \fL^{(a)} \cap \F^{\alg}$ is an abelian extension of $\F$, where 
\begin{align*}
\fL^{(a)} = \prod_{s\in S}\fL_s^{(a_s)}/\cD.
\end{align*}

For part (ii), take an arbitrary integer $1 \le i \le r$. By Theorem \ref{thm-a-cyclotomic-function-fields-structure}, $\fL_s^{(a_s)}$ is the compositum of the fields $\fL_s^{(P_{1, s}^{h_1})}, \ldots, \fL_s^{(P_{r, s}^{h_r})}$; in particular, $\fL_s^{(a_s)}$ contains $\fL_s^{(P_{i, s}^{h_i})}$ as a subfield for $\cD$-almost all $s \in S$. Since $P_i^{h_i} = \ulim_{s\in S}P_{i, s}^{h_i}$, we deduce that 
\begin{align*}
\fL^{(P_i^{h_i})} = \prod_{s\in S}\fL_s^{(P_{i, s}^{h_i})}/\cD \subset \prod_{s\in S}\fL^{(a_s)}/\cD = \fL^{(a)},
\end{align*}
and thus $\fL_{\alg}^{(P_i^{h_i})}$ is a subfield of $\fL_{\alg}^{(a)}$. By Theorem \ref{thm-important-thm1-the-case-P^h-for-ultra-finite-fields}, $P_i\A$ is totally ramified in $\fL_{\alg}^{(P_i^{h_i})}$, and thus is ramified in $\fL_{\alg}^{(a)}$. Since $i$ is an arbitrary integer such that $1 \le i \le r$, all the prime ideals $P_1\A, \ldots, P_r\A$ are ramified in $\fL_{\alg}^{(a)}$.

Now take an arbitrary ideal $\fp\A$ in $\A$ for some irreducible polynomial $\fp \in \A$ such that $\fp\A \ne P_i\A$ for all $1 \le i \le r$. By Lemma \ref{lem-elementary-lemma0}, we can write $\fp = \ulim_{s\in S}\fp_s$, where $\fp_s$ is an irreducible polynomial in $\fA_s = \bF_s[t]$ for $\cD$-almost all $s \in S$. Following the proof of part (ii) in Theorem \ref{thm-important-thm1-the-case-P^h-for-ultra-finite-fields}, we know that for each $1 \le i \le r$, 
\begin{align*}
\cA_i = \{s \in S\; | \; \fp_s\fA_s \ne P_{i, s}\fA_s\} \in \cD.
\end{align*}
Since $\cD$ is an ultrafilter, the intersection $\cA_1 \cap \cA_2 \cap \cdots \cap \cA_r$ belongs to $\cD$, and thus $\fp_s\fA_s \ne P_{i, s}\fA_s$ for all $1 \le i \le r$ for $\cD$-almost all $s \in S$. 

By part (ii) in Theorem \ref{thm-a-cyclotomic-function-fields-structure}, $\fp_s\fA_s$ is unramified in $\fL_s^{(a_s)}$ for $\cD$-almost all $s \in S$, and it thus follows from Corollary \ref{cor-unramified-primes-from-local-in-L=prod-L-s-to-L-alg-finite-case} (for the case where $\fL^{(a)}_{\alg}$ is a finite Galois extension of $\F$) and (B1) of part (1) in Theorem \ref{thm-big-theorem-II-infinite-case} (for the case where $\fL^{(a)}_{\alg}$ is an infinite Galois extension of $\F$)  that $\fp = \ulim_{s\in S}\fp_s$ is unramified in $\fL_{\alg}^{(a)}$.

\end{proof}

In complete analogy with cyclotomic function fields for rational function fields $\bF_q(t)$ over finite fields $\bF_q$ as in the works of Carlitz \cite{Carlitz-1935, Carlitz-1938}and Hayes \cite{hayes}, we prove that cyclotomic function fields for $\F = \fK(t)$ are geometric extensions of $\F$.

\begin{proposition}
\label{prop-cyclotomic-function-fields-are-geometric}

For every nonzero polynomial $a \in \A = \fK[t]$, the $a$-th cyclotomic function field $\fL_{\alg}^{(a)}$ is a geometric extension of $\F$, i.e., $\fL_{\alg}^{(a)} \cap \fK^{\alg} = \fK$, where $\fK^{\alg}$ is the algebraic closure of $\fK$.

\end{proposition}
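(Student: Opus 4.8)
The strategy mirrors the arguments already developed for ramification: reduce the claim about $\F = \fK(t)$ to the corresponding statement at the level of the function fields $\fF_s = \bF_s(t)$, where it is classically known. First I would recall that, by the theory of Carlitz modules over finite fields (see Hayes \cite{hayes} and Rosen \cite{rosen}), each $a_s$-th cyclotomic function field $\fL_s^{(a_s)}$ is a geometric extension of $\fF_s$, i.e. $\bF_s$ is algebraically closed in $\fL_s^{(a_s)}$; equivalently $\fL_s^{(a_s)} \cap \bF_s^{\alg} = \bF_s$ for $\cD$-almost all $s \in S$. This is the input I will transfer. Write $a = \ulim_{s\in S}a_s$ with $a_s$ a nonzero polynomial in $\fA_s$ of the same degree as $a$ (Lemma \ref{lem-elementary-lemma}), and form $\fL^{(a)} = \prod_{s\in S}\fL_s^{(a_s)}/\cD$ as in \eqref{e-ultra-field-extension-L_a-for-a-th-CFF-in-ultra-finite-fields}.

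\textbf{Key steps.} The core of the argument is to show $\fL_{\alg}^{(a)} \cap \fK^{\alg} \subseteq \fK$, the reverse inclusion being trivial since $\fK \subset \F \subseteq \fL_{\alg}^{(a)}$ and $\fK \subset \fK^{\alg}$. So let $\alpha \in \fL_{\alg}^{(a)} \cap \fK^{\alg}$. By Lemma \ref{lem-algebraic-part-of-ultra-algebraic-closure-of-constant-fields}, $\fK^{\alg} = \bK \cap \F^{\alg}$ where $\bK = \prod_{s\in S}\bF_s^{\alg}/\cD$; hence we may write $\alpha = \ulim_{s\in S}\alpha_s$ with $\alpha_s \in \bF_s^{\alg}$ for $\cD$-almost all $s \in S$. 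On the other hand $\alpha \in \fL_{\alg}^{(a)} = \fL^{(a)} \cap \F^{\alg} \subseteq \fL^{(a)} = \prod_{s\in S}\fL_s^{(a_s)}/\cD$, so also $\alpha = \ulim_{s\in S}\beta_s$ with $\beta_s \in \fL_s^{(a_s)}$; matching representatives, $\alpha_s = \beta_s$ for $\cD$-almost all $s \in S$, i.e. $\alpha_s \in \fL_s^{(a_s)} \cap \bF_s^{\alg}$ for $\cD$-almost all $s \in S$. By the classical geometricity recalled above, $\alpha_s \in \bF_s$ for $\cD$-almost all $s \in S$, hence $\alpha = \ulim_{s\in S}\alpha_s \in \prod_{s\in S}\bF_s/\cD = \fK$, as desired. (An alternative, equivalent phrasing: the minimal polynomial of $\alpha$ over $\F$ has coefficients in $\fK$ after clearing denominators via Lemma \ref{lem-U(F)-algebraic-part-is-F}, and \L{}o\'s' theorem plus Lemma \ref{lem-elementary-lemma1} descend irreducibility to $\fF_s$, forcing the $s$-th minimal polynomials to lie over $\bF_s$; but the ultraproduct-of-representatives argument above is cleaner and I would present that one.)

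\textbf{Main obstacle.} The only subtlety worth flagging is the bookkeeping of representatives: an element of an ultraproduct has many representing sequences, and I must be careful that the sequence $(\alpha_s)$ witnessing $\alpha \in \bK$ and the sequence $(\beta_s)$ witnessing $\alpha \in \fL^{(a)}$ can be taken to agree $\cD$-almost everywhere. This is exactly the standard fact that two representatives of the same ultraproduct class coincide on a set in $\cD$, so it causes no real difficulty, but it must be stated explicitly to make the descent to $\alpha_s \in \fL_s^{(a_s)} \cap \bF_s^{\alg}$ legitimate. A secondary point is simply to make sure the classical geometricity statement for $\fL_s^{(a_s)}/\fF_s$ is correctly cited; this is immediate from Theorem \ref{thm-a-cyclotomic-function-fields-structure} together with the well-known fact that cyclotomic function fields over $\bF_q(t)$ are constant-field-preserving, so no new work is needed there. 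Everything else is a routine application of \L{}o\'s' theorem and the lemmas on algebraic parts already proved in Section \ref{sec-algebraic-part-of-an-ultra-finite-extension}.
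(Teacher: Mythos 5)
Your proof is correct and follows essentially the same route as the paper's: both reduce to the classical geometricity of each $\fL_s^{(a_s)}/\fF_s$ and transfer via Lemma~\ref{lem-algebraic-part-of-ultra-algebraic-closure-of-constant-fields}. The only difference is cosmetic — you carry out the intersection $\fL^{(a)} \cap \prod_{s\in S}\bF_s^{\alg}/\cD = \fK$ element-by-element via matching representatives, while the paper phrases the same computation as a set-level ultraproduct identity — so the two arguments are logically identical.
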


\begin{remark}

Note that since $\fK = \prod_{s\in S}\bF_s/\cD$, where $\bF_s$ is a finite field of $q_s$ elements, $\fK$ is quasi-finite (see Proposition \ref{prop-quasi-finite-for-ultra-fields}), and thus
\begin{align*}
\fK^{\alg} = \bigcup_{d \ge 1}\fK(d),
\end{align*}
where for each integer $d \ge 1$, $\fK(d)$ denotes a unique extension of degree $d$ over $\fK$.

\end{remark}

\begin{proof}

Let $a = \ulim_{s\in S}a_s$, where $a_s \in \fA_s = \bF_s[t]$ for $\cD$-almost all $s \in S$. Since the $a_s$-th cyclotomic function field $\fL_s^{(a_s)}$ for $\fF_s = \bF_s(t)$ is a geometric extension of $\fF_s$ (see Rosen \cite{rosen}), $\fL_s^{(a_s)} \cap \bF_s^{\alg} = \bF_s$ for $\cD$-almost all $s \in S$. Thus
\begin{align*}
\fL^{(a)} \cap \prod_{s\in S}\bF_s^{\alg}/\cD &= \left(\prod_{s\in S}\fL_s^{(a_s)}/\cD \right) \cap \left(\prod_{s\in S}\bF_s^{\alg}/\cD\right) \\
&= \prod_{s\in S}\left(\fL_s^{(a_s)}\cap \bF_s^{\alg}\right)/\cD \\
&= \prod_{s\in S}\bF_s/\cD \\
&= \fK,
\end{align*}

By Lemma \ref{lem-algebraic-closure-of-n-th-level-constant-field-is-subfield-of-ultra-algebraic-closure}, we know that $\fK^{\alg}$ is a subfield of $\prod_{s\in S}\bF_s^{\alg}/\cD $, and thus it follows from Lemma \ref{lem-algebraic-part-of-ultra-algebraic-closure-of-constant-fields} that
\begin{align*}
\left(\prod_{s\in S}\bF_s^{\alg}/\cD\right) \cap \F^{\alg} = \fK^{\alg}.
\end{align*}
Since $\fK \subset \F^{\alg}$, we deduce that
\begin{align*}
\fK = \fK \cap \F^{\alg} &= \left(\fL^{(a)}\cap \prod_{s\in S}\bF_s^{\alg}/\cD\right) \cap \F^{\alg} \\
&= \left(\fL^{(a)}\cap \F^{\alg}\right) \cap \left(\prod_{s\in S}\bF_s^{\alg}/\cD \cap \F^{\alg} \right) \\
&= \fL^{(a)}_{\alg} \cap \fK^{\alg}.
\end{align*}

\end{proof}

\subsubsection{Cyclotomic function fields for rational function fields over $n$-th level ultra-finite fields $\fK^{(n)} = \prod_{s\in S}\bF_s^{(n - 1)}/\cD$.}
\label{subsubsec-Cyclotomic-function-fields-for-function-fields-over-n-th-level-ultra-finite-fields}

In this section, we define an analogue of cyclotomic function fields for rational function fields over $n$-th level ultra-finite fields $\fK^{(n)} = \prod_{s\in S}\bF_s^{(n - 1)}/\cD$ for a general positive integer $n$.

Let $n$ be a positive integer such that $n > 1$. Assume that an analogue of cyclotomic function fields for rational function fields  over $(n - 1)$-th level ultra-finite fields $\fK^{(n - 1)}$ has already been defined. More precisely, for each nonzero polynomial $a \in \A^{(n - 1)} = \fK^{(n - 1)}[t]$, assume that the $a$-th cyclotomic function field, denoted by $\fL_{\alg}^{(a), (n - 1)}$, as an algebraic extension of $\F^{(n - 1)} = \fK^{(n - 1)}(t)$, has already been defined such that the following are true:
\begin{itemize}

\item [(CFF1)] $\fL_{\alg}^{(a), (n - 1)}$ is an abelian extension of $\F^{(n - 1)}$.

\item [(CFF2)] if $a = P^h$ for some monic irreducible polynomial $P$ in $\A^{(n - 1)} = \fK^{(n - 1)}[t]$ and some positive integer $h$, then $\fL_{\alg}^{(P^h), (n - 1)}$ is unramified at every prime ideal $\fp\A^{(n - 1)} \ne P\A^{(n - 1)}$, and $P\A^{(n - 1)}$ is totally ramified in $\fL_{\alg}^{(P^h), (n - 1)}$.

\item [(CFF3)] if $a = \alpha P_1^{n_1}\cdots P_r^{n_r}$ is the prime factorization of $a$ into distinct prime elements $P_1, \ldots, P_r$ in $\A^{(n - 1)}$ for some element $\alpha \in \fK^{(n - 1)}$, then the only prime ideals in $\A^{(n - 1)} = \fK^{(n - 1)}[t]$ that ramify in $\fL_{\alg}^{(a), (n - 1)}$ are prime ideals $P_1\A^{(n - 1)}, \ldots, P_r\A^{(n - 1)}$.

\end{itemize}

Now take an arbitrary nonzero polynomial $a \in \A^{(n)} = \fK^{(n)}[t]$. Since $\fK^{(n)} = \prod_{s\in S}\bF_s^{(n - 1)}/\cD$, using Lemma \ref{lem-elementary-lemma0}, one can write $a = \ulim_{s\in S}a_s$, where $a_s \in \fA_s^{(n - 1)} = \bF_s^{(n - 1)}[t]$ for $\cD$-almost all $s \in S$. Set
\begin{align*}
\fL^{(a), (n)} = \prod_{s\in S}\fL_s^{(a_s), (n - 1)}/\cD,
\end{align*}
where $\fL_s^{(a_s), (n - 1)}$ is the $a_s$-th cyclotomic function field for the rational function field $\fF_s^{(n - 1)} = \bF_s^{(n - 1)}(t)$ over the $(n - 1)$-th level ultra-finite field $\bF_s^{(n - 1)}$ for $\cD$-almost all $s\in S$ that is defined in (CFF1), (CFF2), (CFF3).

\begin{definition}
\label{def-a-th-cyclotomic-function-field-for-n-th-level-ultra-finite-fields}
($a$-th cyclotomic function fields for $\F^{(n)} = \fK^{(n)}(t)$)

For each nonzero polynomial $a \in \A^{(n)} = \fK^{(n)}[t]$, the algebraic part of $\fL^{(a), (n)}$ over $\F^{(n)}$, i.e., $\fL^{(a), (n)}_{\alg} = \fL^{(a), (n)} \cap \F^{(n), \alg}$, is called the $a$-th cyclotomic function field for $\F^{(n)}$.

\end{definition}

Using induction over the level $n$, and using Theorems \ref{thm-P^h-cyclotomic-function-fields-structure}, \ref{thm-a-cyclotomic-function-fields-structure}, \ref{thm-important-thm1-the-case-P^h-for-ultra-finite-fields}, and \ref{thm-important-thm1-the-case-a-for-ultra-finite-fields}, we obtain the following.

\begin{theorem}
\label{thm-greatest-thm-cyclotomic-function-fields-for-n-th-level-ultra-finite-fields}

Let $n$ be a positive integer. For any nonzero polynomial $a \in \A^{(n)} = \fK^{(n)}[t]$, the $a$-th cyclotomic function field $\fL_{\alg}^{(a), (n)}$ defined in Definition \ref{def-a-th-cyclotomic-function-field-for-n-th-level-ultra-finite-fields} satisfies the following.
\begin{itemize}

\item [(i)] $\fL_{\alg}^{(a), (n)}$ is an abelian extension of $\F^{(n)}$.

\item [(ii)] if $a = P^h$ for some monic irreducible polynomial $P$ in $\A^{(n)} = \fK^{(n)}[t]$ and some positive integer $h$, then $\fL_{\alg}^{(P^h), (n)}$ is unramified at every prime ideal $\fp\A^{(n)} \ne P\A^{(n)}$, and $P\A^{(n)}$ is totally ramified in $\fL_{\alg}^{(P^h), (n)}$.

\item [(iii)] if $a = \alpha P_1^{n_1}\cdots P_r^{n_r}$ is the prime factorization of $a$ into distinct prime elements $P_1, \ldots, P_r$ in $\A^{(n)}$ for some element $\alpha \in \fK^{(n)}$, then the only prime ideals in $\A^{(n)} = \fK^{(n)}[t]$ that ramify in $\fL_{\alg}^{(a), (n)}$ are $P_1\A^{(n)}, \ldots, P_r\A^{(n)}$.

\end{itemize}

\end{theorem}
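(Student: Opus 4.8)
The plan is to prove Theorem \ref{thm-greatest-thm-cyclotomic-function-fields-for-n-th-level-ultra-finite-fields} by induction on the level $n$, using the previously established framework of algebraic parts and shadows as the inductive engine. For the base case $n = 1$, the three assertions are exactly Theorems \ref{thm-important-thm1-the-case-P^h-for-ultra-finite-fields} and \ref{thm-important-thm1-the-case-a-for-ultra-finite-fields}, which were proved by pushing the classical results of Drinfeld--Hayes (Theorems \ref{thm-P^h-cyclotomic-function-fields-structure} and \ref{thm-a-cyclotomic-function-fields-structure}) through the ultraproduct via Corollaries \ref{cor-main-corollary-1-abelian-Galois-group-of-L-alg-and-shadows}, \ref{cor-main-corollary-1-procyclic-Galois-group-of-L-alg-and-shadows}, and \ref{cor-unramified-primes-from-local-in-L=prod-L-s-to-L-alg-finite-case}, together with the ramification transfer results of Section \ref{sec-ramification-of-primes-Hilbert-theory}. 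So the base case requires no new argument.

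For the inductive step, I would assume the statement holds at level $n - 1$; that is, for $\cD$-almost all $s \in S$ the $a_s$-th cyclotomic function field $\fL_s^{(a_s), (n-1)}$ over $\fF_s^{(n-1)} = \bF_s^{(n-1)}(t)$ satisfies properties (CFF1), (CFF2), (CFF3). I then form $\fL^{(a), (n)} = \prod_{s\in S}\fL_s^{(a_s),(n-1)}/\cD$ and take its algebraic part over $\F^{(n)}$, which is the definition of $\fL_{\alg}^{(a),(n)}$. Now I apply the machinery of Section \ref{sec-algebraic-part-of-an-ultra-finite-extension} with $\bF_s^{(n-1)}$ in place of $\bF_s$: since each $\bF_s^{(n-1)}$ is quasi-finite (Corollary \ref{cor-ultra-finite-fields-are-quasi-finite}), in particular a perfect procyclic field, the hypotheses of Theorems \ref{thm-big-theorem-I-the-finite-case} and \ref{thm-big-theorem-II-infinite-case} are met. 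For (i): each $\fL_s^{(a_s),(n-1)}$ is abelian over $\fF_s^{(n-1)}$ by the inductive hypothesis (CFF1), so Corollary \ref{cor-main-corollary-1-abelian-Galois-group-of-L-alg-and-shadows} gives that $\fL_{\alg}^{(a),(n)}$ is abelian over $\F^{(n)}$. For (ii): writing $a = P^h$ with $P = \ulim_{s\in S}P_s$, $P_s$ irreducible in $\fA_s^{(n-1)}$ by Lemma \ref{lem-elementary-lemma0}, the inductive hypothesis (CFF2) says $\fp_s\fA_s^{(n-1)}$ is unramified in $\fL_s^{(P_s^h),(n-1)}$ for every prime $\fp_s \ne P_s$, and $P_s\fA_s^{(n-1)}$ is totally ramified; then Corollary \ref{cor-unramified-primes-from-local-in-L=prod-L-s-to-L-alg-finite-case} together with (B1) of Theorem \ref{thm-big-theorem-II-infinite-case} yields unramifiedness at $\fp\A^{(n)} \ne P\A^{(n)}$, while part (iv) of Theorem \ref{thm-big-theorem-I-the-finite-case} and part (4) of Theorem \ref{thm-big-theorem-II-infinite-case} yield total ramification of $P\A^{(n)}$. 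For (iii): with $a = \alpha P_1^{n_1}\cdots P_r^{n_r}$, one factors each $P_i = \ulim_{s\in S}P_{i,s}$, applies (CFF3) componentwise to get that only $P_{1,s}\fA_s^{(n-1)}, \ldots, P_{r,s}\fA_s^{(n-1)}$ ramify in $\fL_s^{(a_s),(n-1)}$, and then argues exactly as in the proof of Theorem \ref{thm-important-thm1-the-case-a-for-ultra-finite-fields} --- using that $\fL^{(P_i^{n_i}),(n)}$ is an ultra-subfield of $\fL^{(a),(n)}$ so that each $P_i\A^{(n)}$ ramifies by part (ii), and using a finite-intersection-of-ultrafilter-sets argument to transfer unramifiedness of all other primes.

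The main obstacle I anticipate is purely bookkeeping rather than conceptual: I must verify that the hypotheses of the transfer theorems in Section \ref{sec-ramification-of-primes-Hilbert-theory} genuinely apply at each level, and in particular that the distinction between the finite-extension case (Theorem \ref{thm-big-theorem-I-the-finite-case}, Corollary \ref{cor-unramified-primes-from-local-in-L=prod-L-s-to-L-alg-finite-case}) and the infinite-extension case (Theorem \ref{thm-big-theorem-II-infinite-case}) is handled uniformly --- one does not know a priori whether $\fL_{\alg}^{(a),(n)}$ is a finite or infinite extension of $\F^{(n)}$, so every ramification claim must be phrased so that it follows from whichever of the two theorems is relevant. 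This is precisely the style already used in the proofs of Theorems \ref{thm-important-thm1-the-case-P^h-for-ultra-finite-fields} and \ref{thm-important-thm1-the-case-a-for-ultra-finite-fields}, so the inductive step is essentially a verbatim repetition of those arguments with $\bF_s^{(n-1)}$, $\fF_s^{(n-1)}$, $\A^{(n)}$, $\F^{(n)}$ substituted for $\bF_s$, $\fF_s$, $\A$, $\F$, and with ``Theorem \ref{thm-P^h-cyclotomic-function-fields-structure}/\ref{thm-a-cyclotomic-function-fields-structure}'' replaced by ``the inductive hypothesis (CFF1)--(CFF3)''. I would close by noting that properties (i)--(iii) established at level $n$ are exactly (CFF1)--(CFF3) required to run the induction at level $n+1$, so the induction is self-sustaining and the theorem follows for all $n \ge 1$.
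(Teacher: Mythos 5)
Your proposal is correct and matches the paper's own proof: the paper proves the result by induction on $n$, taking the base case from Theorems \ref{thm-important-thm1-the-case-P^h-for-ultra-finite-fields} and \ref{thm-important-thm1-the-case-a-for-ultra-finite-fields}, and for the inductive step says only that one repeats those arguments with (CFF1)--(CFF3) in place of the Carlitz--Hayes theorems. You have simply unpacked what ``the same arguments'' means (in particular correctly pinpointing that the inductive hypothesis supplies abelianness for Corollary \ref{cor-main-corollary-1-abelian-Galois-group-of-L-alg-and-shadows}, and that the quasi-finite property of $\bF_s^{(n-1)}$ from Corollary \ref{cor-ultra-finite-fields-are-quasi-finite} is what licenses the perfect-procyclic hypothesis of Theorems \ref{thm-big-theorem-I-the-finite-case} and \ref{thm-big-theorem-II-infinite-case}), which is exactly what a full write-out of the paper's proof would contain.
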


\begin{proof}

When $n = 1$, the above theorem follows immediately from Theorems \ref{thm-important-thm1-the-case-P^h-for-ultra-finite-fields} and \ref{thm-important-thm1-the-case-a-for-ultra-finite-fields}.

Assume the theorem holds for $n - 1$ for some integer $n > 1$, i.e., (CFF1), (CFF2), (CFF3) hold for $n - 1$. Using the same arguments as in the proofs of Theorems \ref{thm-important-thm1-the-case-P^h-for-ultra-finite-fields} and \ref{thm-important-thm1-the-case-a-for-ultra-finite-fields}, the theorem also holds for $n$, and thus the theorem follows immediately from the induction.

\end{proof}

Using the same arguments as in the proof of Proposition \ref{prop-cyclotomic-function-fields-are-geometric}, we also obtain the following.

\begin{proposition}

For all integers $n \ge 1$ and all nonzero polynomials $a \in \A^{(n)} = \fK^{(n)}[t]$, the $a$-th cyclotomic function field $\fL_{\alg}^{(a), (n)}$ for $\F^{(n)} = \fK^{(n)}(t)$ are geometric extensions of $\F^{(n)}$, i.e., 
\begin{align*}
\fL_{\alg}^{(a), (n)} \cap \fK^{(n), \alg} = \fK^{(n)}.
\end{align*}

\end{proposition}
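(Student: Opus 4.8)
The plan is to mimic the proof of Proposition~\ref{prop-cyclotomic-function-fields-are-geometric} verbatim, with the finite fields $\bF_s$ replaced by $(n-1)$-th level ultra-finite fields $\bF_s^{(n-1)}$, and using induction on the level $n$. The base case $n=1$ is exactly Proposition~\ref{prop-cyclotomic-function-fields-are-geometric}. So assume $n>1$ and that the statement holds at level $n-1$, i.e.\ for every nonzero $b\in\A^{(n-1)}=\fK^{(n-1)}[t]$ the cyclotomic function field $\fL_{\alg}^{(b),(n-1)}$ satisfies $\fL_{\alg}^{(b),(n-1)}\cap\fK^{(n-1),\alg}=\fK^{(n-1)}$.

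First I would fix a nonzero $a\in\A^{(n)}=\fK^{(n)}[t]$ and, via Lemma~\ref{lem-elementary-lemma0}, write $a=\ulim_{s\in S}a_s$ with $a_s\in\fA_s^{(n-1)}=\bF_s^{(n-1)}[t]$ for $\cD$-almost all $s\in S$. By the inductive hypothesis applied to each $\bF_s^{(n-1)}$, the $a_s$-th cyclotomic function field $\fL_s^{(a_s),(n-1)}$ is a geometric extension of $\fF_s^{(n-1)}=\bF_s^{(n-1)}(t)$, hence
\begin{align*}
\fL_s^{(a_s),(n-1)}\cap \bF_s^{(n-1),\alg}=\bF_s^{(n-1)}
\end{align*}
for $\cD$-almost all $s\in S$. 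Taking ultraproducts and using that an ultraproduct of intersections is the intersection of ultraproducts, I get
\begin{align*}
\fL^{(a),(n)}\cap\prod_{s\in S}\bF_s^{(n-1),\alg}/\cD
=\prod_{s\in S}\left(\fL_s^{(a_s),(n-1)}\cap\bF_s^{(n-1),\alg}\right)\Big/\cD
=\prod_{s\in S}\bF_s^{(n-1)}/\cD=\fK^{(n)}.
\end{align*}
Next I would invoke Lemma~\ref{lem-algebraic-closure-of-n-th-level-constant-field-is-subfield-of-ultra-algebraic-closure} to conclude that $\fK^{(n),\alg}$ is a subfield of $\prod_{s\in S}\bF_s^{(n-1),\alg}/\cD$, and then Lemma~\ref{lem-algebraic-part-of-ultra-algebraic-closure-of-constant-fields} (in its level-$n$ form, with $\bF_s^{(n-1)}$ and $\fF_s^{(n-1)}$ in place of $\bF_s$ and $\fF_s$) to get $\left(\prod_{s\in S}\bF_s^{(n-1),\alg}/\cD\right)\cap\F^{(n),\alg}=\fK^{(n),\alg}$. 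Finally, since $\fK^{(n)}\subset\F^{(n),\alg}$, intersecting the displayed equality with $\F^{(n),\alg}$ yields
\begin{align*}
\fK^{(n)}=\fK^{(n)}\cap\F^{(n),\alg}
=\left(\fL^{(a),(n)}\cap\F^{(n),\alg}\right)\cap\left(\prod_{s\in S}\bF_s^{(n-1),\alg}/\cD\cap\F^{(n),\alg}\right)
=\fL_{\alg}^{(a),(n)}\cap\fK^{(n),\alg},
\end{align*}
which is the assertion at level $n$; the induction then closes.

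The only genuine points needing care, rather than routine bookkeeping, are: (i) justifying that the Lemmas~\ref{lem-algebraic-closure-of-n-th-level-constant-field-is-subfield-of-ultra-algebraic-closure} and~\ref{lem-algebraic-part-of-ultra-algebraic-closure-of-constant-fields} apply at every level, which is immediate since $\bF_s^{(n-1)}$ is quasi-finite by Corollary~\ref{cor-ultra-finite-fields-are-quasi-finite} (so its algebraic closure is canonically a union of the unique finite extensions) and the proof of Lemma~\ref{lem-algebraic-part-of-ultra-algebraic-closure-of-constant-fields} only uses that $\bF_s^{(n-1)}$ is a field together with the structure of $\cU(\A^{(n)})$; and (ii) the interchange of ``$\cap$'' with ``$\prod_{s\in S}(-)/\cD$'', which holds because $a\in\Omega_1\cap\Omega_2$ in an ultraproduct is equivalent, by \L{}o\'s' theorem, to $a_s\in\Omega_{1,s}\cap\Omega_{2,s}$ for $\cD$-almost all $s$. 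I expect the main (mild) obstacle to be simply phrasing the inductive hypothesis precisely enough that the level-$n-1$ geometricity is available for \emph{each} $\bF_s^{(n-1)}$ uniformly; once that is set up, the argument is a direct transcription of Proposition~\ref{prop-cyclotomic-function-fields-are-geometric}.
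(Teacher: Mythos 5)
Your proposal is correct and matches the paper's intent exactly: the paper omits the proof with the remark "Using the same arguments as in the proof of Proposition \ref{prop-cyclotomic-function-fields-are-geometric}, we also obtain the following," and the inductive scheme you describe is precisely the transcription of that proof to general level $n$, with the base case being the level-$1$ proposition and the inductive step replacing Rosen's geometricity statement for finite fields with the induction hypothesis at level $n-1$. The two technical points you flag (the level-$n$ analogues of Lemmas \ref{lem-algebraic-closure-of-n-th-level-constant-field-is-subfield-of-ultra-algebraic-closure} and \ref{lem-algebraic-part-of-ultra-algebraic-closure-of-constant-fields}, and the commutation of intersection with ultraproducts) are indeed the only ones requiring attention, and your justifications for both are sound.
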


\subsection{An analogue of the Kronecker--Weber theorem for rational function fields over $n$-level ultra-finite fields}
\label{subsec-KW-theorem}

In this subsection, we prove an analogue of the Kronecker--Weber theorem for rational function fields over $n$-level ultra-finite fields. We begin by proving such an analogue for $\F = \fK(t)$, where $\fK = \prod_{s\in S}\bF_s/\cD$ is a $1$st level ultra-finite field.

\subsubsection{The maximal abelian extension $\cA$ over the rational function field $L = \bF_q(t)$ over a finite field $\bF_q$.}
\label{subsubssec-MAE-for-Fq(t)}

We maintain the same notation as in Subsection \ref{subsec-KW-theorem-for-Fq[t]}. We recall the construction of the maximal abelian extension of $L = \bF_q(t)$ from Hayes \cite{hayes}. We construct three pairwise linearly disjoint extensions $\cC/L$, $\cQ_t/L$, and $\cR_{\infty}/L$ as follows.
\begin{itemize}

\item [(MAE1)] $\cC/L$ is the union of all the constant field extensions of $L$; more precisely, 
\begin{align*}
\cC = \bigcup_{n \ge 1}\bF_{q^n}L = \bigcup_{n \ge 1}\bF_{q^n}(t).
\end{align*}

\item [(MAE2)] $\cQ_t/L$ is the union of all the $a$-th cyclotomic function fields $L(\Lambda_C[a])$ for all polynomials $a \in A = \bF_q[t]$.

\item [(MAE3)] Replacing the generator $t$ by $1/t$, and defining the Carlitz module $C_{1/t}$ for $A_{1/t} = \bF_q[1/t]$ instead of $A = \bF_q[t]$ as in Subsection \ref{subsec-KW-theorem-for-Fq[t]}, i.e., $C^{1/t} : A_{1/t} \to L\langle \tau\rangle$, $a \mapsto C^{1/t}_a$ is the $\bF_q$-algebra homomorphism such that $C^{1/t}_{1/t} = 1/t + \tau$, we construct the $t^{-n - 1}$-th cyclotomic function field $\cF_n = L(\Lambda_{C^{1/t}}[t^{-n - 1}])$ for $n = 1, 2,\ldots$, where
    \begin{align*}
    \Lambda_{C^{1/t}}[t^{-n - 1}] = \{\beta \in L^{\alg} \; | \; C^{1/t}_{t^{-n - 1}}(\beta) = 0\}.
    \end{align*}
    
    Let $\lambda_{-n - 1}$ be a generator of the $A_{1/t}$-module $\Lambda_{C^{1/t}}[t^{-n - 1}]$. One can identify $\bF_q^{\times}$ with the group of automorphisms $\sigma_{\beta}$ for $\beta \in \bF_q^{\times}$ that map $\lambda_{-n - 1}$ to $\beta\lambda_{-n - 1}$. Let $\cR_n$ be the fixed field of $\bF_q^{\times}$ in $\cF_n$. It is known (see Hayes \cite[Section 5, p.86]{hayes}) that the extension $\cR_n/L$ is Galois of degree $q^n$ and that $\cR_n \subset \cR_{n + 1}$. Set
    \begin{align*}
    \cR_{\infty} = \bigcup_{n = 1}^{\infty}\cR_n.
    \end{align*}

\end{itemize}

\begin{definition}
\label{def-maximal-abelian-extension-of-Fq(t)}

Let $\cA = \cC\cdot \cQ_t\cdot \cR_{\infty}$ be the compositum of $\cC$, $\cQ_t$, and $\cR_{\infty}$.

\end{definition}

\begin{theorem}
\label{thm-Drinfeld-Hayes-thm-maximal-abelian-extension-version1}
$(\text{See Drinfleld \cite{drinfeld1, drinfeld2} and Hayes \cite[Theorem 7.1]{hayes}})$

The extension $\cA/L$ constructed in Definition \ref{def-maximal-abelian-extension-of-Fq(t)} is the maximal abelian extension of $L = \bF_q(t)$.

\end{theorem}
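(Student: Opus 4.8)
The plan is to establish the final statement---that the extension $\cA/L$ from Definition~\ref{def-maximal-abelian-extension-of-Fq(t)} is the maximal abelian extension of $L = \bF_q(t)$---by the standard class-field-theoretic argument for global function fields, which I would phrase via id\`ele class groups and the three standard completions. First I would recall that the abelian extensions of $L$ correspond, under the Artin reciprocity map, to open subgroups of the id\`ele class group $C_L = \bA_L^{\times}/L^{\times}$, and that the maximal abelian extension corresponds to the connected component (trivial, since $C_L$ is totally disconnected in the function field case) so that $\Gal(\cA^{\mathrm{ab}}/L) \cong \widehat{C_L}$, the profinite completion. The strategy is then to show that the three subextensions $\cC$, $\cQ_t$, $\cR_\infty$ account, respectively, for the unramified-everywhere part (constant field extensions, governed by $\widehat{\bZ} = \Gal(\overline{\bF_q}/\bF_q)$), the part ramified only at finite primes with tame-plus-wild behavior at $t$ governed by the local unit group at the place corresponding to $t$ and the global Carlitz construction, and the part ramified at the infinite place $\infty = 1/t$ governed by the analogous Carlitz construction with uniformizer $1/t$ together with the fixed field under $\bF_q^\times$.

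Next I would carry out the local analysis place by place. For a finite monic prime $P \ne t$ one uses Theorem~\ref{thm-P^h-cyclotomic-function-fields-structure}: the tower $\bigcup_h L(\Lambda_C[P^h])$ is totally ramified at $P$ with Galois group $\varprojlim_h (A/P^hA)^\times \cong \cO_P^\times$, the local units at $P$, and is unramified elsewhere among the finite primes. Taking the compositum over all finite $P$ gives $\cQ_t$ with Galois group realizing $\prod_{P \text{ finite}} \cO_P^\times$ (a product over finite places including $t$), which by Theorem~\ref{thm-a-cyclotomic-function-fields-structure} is abelian and ramified only at the prescribed primes. For the place $\infty$ one repeats the construction with $1/t$ as the variable; the extra step of passing to the fixed field of $\bF_q^\times$ (as in (MAE3)) precisely removes the residual-field-degree contribution at $\infty$ that is already supplied by $\cC$, so that $\cR_\infty$ contributes the local unit group $\cO_\infty^{(1),\times}$ (principal units) at $\infty$. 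Then I would verify that $\cC$, $\cQ_t$, $\cR_\infty$ are pairwise linearly disjoint over $L$ by comparing ramification loci and residue extensions: $\cC$ is everywhere unramified and totally inert at no finite prime in a way that conflicts with $\cQ_t$ (which is geometric, i.e.\ contains no constant extension, by Proposition~\ref{prop-cyclotomic-function-fields-are-geometric}), $\cR_\infty$ is unramified at all finite primes while $\cQ_t$ is unramified at $\infty$, and $\cR_\infty$ is geometric as well.

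Finally I would assemble the count: $\Gal(\cA/L) = \Gal(\cC/L) \times \Gal(\cQ_t/L) \times \Gal(\cR_\infty/L) \cong \widehat{\bZ} \times \prod_{v \text{ finite}} \cO_v^\times \times \cO_\infty^{(1),\times}$, and compare this with the id\`ele-theoretic description $\widehat{C_L} \cong \widehat{\bZ} \times \prod_v \cO_v^\times / (\text{diagonal }\bF_q^\times\text{ and the }\bZ\text{ of degrees})$ using the exact sequence $1 \to \prod_v \cO_v^\times \to C_L \to \mathrm{Pic}(L) \to 1$ together with $\mathrm{Pic}^0(L) = 1$ for the rational function field and the degree map onto $\bZ$ (whose completion $\widehat{\bZ}$ is exactly the constant-field Galois group). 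Matching these shows $\Gal(\cA/L)$ has the correct profinite structure, hence $\cA$ is the maximal abelian extension; alternatively, and more in the spirit of Hayes' original proof, one shows directly that every finite abelian extension $M/L$ is contained in $\cA$ by decomposing $M$ according to its ramification at $\infty$ versus finite places and its constant-field part, then invoking the Kronecker--Weber statement at each local place (local class field theory for $\bF_q((\pi))$, where cyclotomic-Carlitz towers realize the local maximal abelian extension). The main obstacle I expect is the bookkeeping at the infinite place $\infty$: one must check carefully that the fixed-field-under-$\bF_q^\times$ modification in (MAE3) interacts correctly with the constant extension $\cC$ so that no piece of the local abelian extension at $\infty$ is double-counted and none is missed---this is exactly the subtle point where the "sign function" / choice of uniformizer enters in the Drinfeld--Hayes theory, and it is the step where a naive product of local unit groups overcounts by a factor of $\bF_q^\times$. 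Since this statement is quoted from Drinfeld and Hayes, I would in practice simply cite \cite{drinfeld1, drinfeld2} and \cite[Theorem 7.1]{hayes} for the full verification, sketching only the id\`ele-class-group decomposition above to orient the reader.
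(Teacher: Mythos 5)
The paper offers no proof of this theorem: it is stated as a known result of Drinfeld and Hayes, with only a citation to \cite{drinfeld1, drinfeld2} and \cite[Theorem 7.1]{hayes} and no argument given. You correctly recognize this and ultimately defer to the same sources, so in the end the two treatments coincide, and your id\`ele-class-group sketch is a reasonable orientation in the spirit of Hayes' original proof. One factual claim in your sketch is wrong, however: you assert that $\cQ_t$ is unramified at $\infty$, but the cyclotomic function fields $L(\Lambda_C[a])$ are in fact tamely ramified at $\infty$ with inertia group $\bF_q^\times$ (this is part of \cite{hayes}). The point of passing to the fixed field of $\bF_q^\times$ in the construction (MAE3) is precisely that $\cR_\infty$ then carries the pro-$p$ (wild) part of the local abelian extension at $\infty$, so the linear disjointness of $\cQ_t$ and $\cR_\infty$ rests on the coprimality of their inertia groups at $\infty$ (order $q-1$ versus pro-$p$), not on disjoint ramification loci as you state. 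This is exactly the bookkeeping subtlety you flag at the end of your proposal; since you defer to \cite{hayes} for the full verification, it does not affect the correctness of your conclusion, but the misstatement is worth fixing if the sketch is to stand on its own.
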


\begin{remark}

It is known from Hayes \cite[Proposition 5.2]{hayes} that $\cC, \cQ_t, \cR_{\infty}$ are pairwise linearly disjoint over $L$.

\end{remark}

There is another characterization of the maximal abelian extension $\cA$ of $L$. Let $\cQ_{1/t}$ be the union of all the $a$-th cyclotomic function fields $L(\Lambda_{C^{1/t}}[a])$ for all polynomials $a \in A_{1/t} = \bF_q[1/t]$. 

\begin{theorem}
\label{thm-Drinfeld-Hayes-thm-for-maximal-abelian-extension-version2}
$(\text{See Hayes \cite[Theorem 7.2]{hayes}})$

The maximal abelian extension $\cA$ of $L$ is the compositum $\cQ_t\cdot \cQ_{1/t}$.

\end{theorem}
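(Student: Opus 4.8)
\textbf{Proof proposal for Theorem \ref{thm-Drinfeld-Hayes-thm-for-maximal-abelian-extension-version2}.}

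The statement to be proven is that $\cA = \cQ_t \cdot \cQ_{1/t}$, where $\cA = \cC \cdot \cQ_t \cdot \cR_\infty$ is the maximal abelian extension of $L = \bF_q(t)$ as described in Definition \ref{def-maximal-abelian-extension-of-Fq(t)}. The plan is to show containment in both directions. Since $\cQ_t$ is a common factor on both sides, it suffices to understand how $\cC$ and $\cR_\infty$ relate to $\cQ_{1/t}$, and conversely how $\cQ_{1/t}$ sits inside $\cC \cdot \cQ_t \cdot \cR_\infty$. The key structural input is the theory of cyclotomic function fields attached to the prime $\infty = 1/t$ (equivalently, to the ring $A_{1/t} = \bF_q[1/t]$), together with the explicit decomposition of $\cQ_{1/t}$ into its constant-field part, its ``totally ramified at $1/t$'' part, and its ``tamely ramified at $1/t$'' part — exactly mirroring the decomposition $\cA = \cC \cdot \cQ_t \cdot \cR_\infty$ but with the roles of $t$ and $1/t$ interchanged.

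First I would analyze $\cQ_{1/t}$. By Theorem \ref{thm-a-cyclotomic-function-fields-structure} applied with $1/t$ in place of $t$, for each nonzero $a \in A_{1/t}$ the field $L(\Lambda_{C^{1/t}}[a])$ has Galois group $(A_{1/t}/aA_{1/t})^\times$ over $L$, and the only prime of $A_{1/t}$ ramifying is the one dividing $a$. Writing $a = \alpha (1/t)^h P_1^{h_1}\cdots P_r^{h_r}$ with the $P_i$ monic irreducible in $A_{1/t}$ distinct from $1/t$, the cyclotomic field $L(\Lambda_{C^{1/t}}[a])$ factors (via the CRT on $(A_{1/t}/aA_{1/t})^\times$) as a compositum of $L(\Lambda_{C^{1/t}}[(1/t)^h])$ with the $L(\Lambda_{C^{1/t}}[P_i^{h_i}])$. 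The fields ramified only at finite primes $P_i$ of $A_{1/t}$ — i.e. at the places of $L$ other than $0$ and $\infty$ — together with the piece ramified at $t=0$ will be recovered inside $\cQ_t$ and $\cC$; the remaining piece, ramified only at $1/t$, splits as $\cC' \cdot \cR_\infty$ where $\cC'$ is the constant-field extension (from the $\bF_q^\times$-part of $(A_{1/t}/(1/t)^h A_{1/t})^\times$, which is cyclic of order prime to $p$) and $\cR_\infty = \bigcup_n \cR_n$ is precisely the fixed field of $\bF_q^\times$ constructed in (MAE3). This is the crux: the standard Hayes analysis of the $\infty$-adic cyclotomic tower shows $L(\Lambda_{C^{1/t}}[(1/t)^{n+1}]) = \cF_n$, and $\cF_n = \cR_n \cdot (\text{constant field piece})$, with $\cR_n/L$ Galois of degree $q^n$ and the constant part contributing $\bF_{q^m}$ for suitable $m$. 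Hence $\cQ_{1/t} \supseteq \cC'' \cdot \cR_\infty$ for a cofinal part of the constant tower, and in fact $\cC \subseteq \cQ_{1/t}$ once one checks that every $\bF_{q^n}(t)$ appears as a subfield of some $\cF_m$ (this follows because $|\bF_q^\times| = q-1$ and the residue degrees in the $\infty$-adic tower realize all constant extensions $\bF_{q^n}$ with $n \mid$ an appropriate index, and one may enlarge $a$ freely). Symmetrically, applying Theorem \ref{thm-P^h-cyclotomic-function-fields-structure} and Theorem \ref{thm-a-cyclotomic-function-fields-structure} to $A = \bF_q[t]$ shows $\cC \subseteq \cQ_t$ as well, so $\cC$ is redundant on the right-hand side of Definition \ref{def-maximal-abelian-extension-of-Fq(t)}, giving $\cA = \cQ_t \cdot \cR_\infty$. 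Combining $\cR_\infty \subseteq \cQ_{1/t}$ (just shown) with $\cA = \cQ_t \cdot \cR_\infty$ yields $\cA \subseteq \cQ_t \cdot \cQ_{1/t}$. For the reverse inclusion, $\cQ_t \subseteq \cA$ by (MAE2), and $\cQ_{1/t} = \cC'' \cdot (\text{finite-prime cyclotomic fields for } A_{1/t}) \cdot \cR_\infty$; the first factor lies in $\cC \subseteq \cA$, the last in $\cR_\infty \subseteq \cA$, and the finite-prime pieces $L(\Lambda_{C^{1/t}}[P^h])$ for $P \neq 1/t$ are abelian over $L$ hence contained in the maximal abelian extension $\cA$ by Theorem \ref{thm-Drinfeld-Hayes-thm-maximal-abelian-extension-version1}. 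Therefore $\cQ_t \cdot \cQ_{1/t} \subseteq \cA$, and equality follows.

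The main obstacle I anticipate is the bookkeeping in the $\infty$-adic cyclotomic tower: one must verify carefully that $L(\Lambda_{C^{1/t}}[(1/t)^{n+1}]) = \cF_n$ decomposes as the compositum of $\cR_n$ with a constant-field extension, that these $\cR_n$ are nested with $[\cR_n:L] = q^n$, and — most delicately — that the union over all $a \in A_{1/t}$ of the constant-field pieces of $L(\Lambda_{C^{1/t}}[a])$ exhausts all of $\cC = \bigcup_n \bF_{q^n}(t)$. This last point requires knowing the residue degree of $\infty$ in the relevant cyclotomic extensions, i.e. the order of the image of the Frobenius, which is where a genuine computation with the Carlitz module over the completion at $\infty$ enters. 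Everything else is formal manipulation of composita and linear disjointness (the pairwise linear disjointness of $\cC$, $\cQ_t$, $\cR_\infty$ recorded after Theorem \ref{thm-Drinfeld-Hayes-thm-maximal-abelian-extension-version1} will be used to avoid double-counting degrees). Since the excerpt attributes this result to Hayes \cite[Theorem 7.2]{hayes}, in practice I would cite that reference for the delicate tower computation and present only the reduction $\cA = \cQ_t \cdot \cR_\infty$ and the identification $\cR_\infty \subseteq \cQ_{1/t}$, $\cC \subseteq \cQ_t \cap \cQ_{1/t}$ in detail.
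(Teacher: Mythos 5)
The paper does not prove this theorem --- it cites Hayes \cite[Theorem 7.2]{hayes} --- so I can only assess your proposal on its own terms. The trivial direction $\cQ_t \cdot \cQ_{1/t} \subseteq \cA$ is fine (any abelian extension of $L$ lies in $\cA$), and the containment $\cR_\infty \subseteq \cQ_{1/t}$ is correct since each $\cR_n$ is by construction a subfield of $\cF_n = L(\Lambda_{C^{1/t}}[t^{-n-1}]) \subseteq \cQ_{1/t}$. The substantive content is the containment $\cC \subseteq \cQ_t \cdot \cQ_{1/t}$, and here your argument breaks down: you assert $\cC \subseteq \cQ_t$ (``so $\cC$ is redundant \ldots giving $\cA = \cQ_t \cdot \cR_\infty$'') and, in an aside, that $\cC \subseteq \cQ_{1/t}$ because ``every $\bF_{q^n}(t)$ appears as a subfield of some $\cF_m$.'' Both claims are false. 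Every Carlitz cyclotomic field $L(\Lambda_C[a])$ with $a \in A$, and likewise every $L(\Lambda_{C^{1/t}}[a'])$ with $a' \in A_{1/t}$, is a \emph{geometric} extension of $L$: its exact constant field is $\bF_q$ (the paper invokes exactly this fact, citing Rosen, in the proof of Proposition \ref{prop-cyclotomic-function-fields-are-geometric}). Hence $\cQ_t$ and $\cQ_{1/t}$ each contain no nontrivial constant-field extension. This is precisely what the pairwise linear disjointness of $\cC$, $\cQ_t$, $\cR_\infty$ recorded in the remark after Theorem \ref{thm-Drinfeld-Hayes-thm-maximal-abelian-extension-version1} encodes: $\cC \cap \cQ_t = L$, which is incompatible with $\cC \subseteq \cQ_t$.

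The phenomenon your proposal slides past is the heart of the theorem: $\cC$ lies in the compositum $\cQ_t \cdot \cQ_{1/t}$ without lying in either factor, because a compositum of two geometric extensions of $L$ need not be geometric. The residue-degree observation you invoke (``the residue degrees in the $\infty$-adic tower realize all constant extensions'') is a local statement at a single place; large residue fields at places above $\infty$ do not by themselves produce a global constant-field subextension of a geometric field, and the structure theorems \ref{thm-P^h-cyclotomic-function-fields-structure} and \ref{thm-a-cyclotomic-function-fields-structure} that you cite say nothing to the contrary. What Hayes' Theorem 7.2 actually requires is a computation comparing the decomposition and Frobenius data at the two distinguished places ($\infty$ for $A$ and $(t)$ for $A_{1/t}$) across the two cyclotomic towers, from which one deduces that the compositum already contains all of $\cC$ and hence all of $\cA = \cC \cdot \cQ_t \cdot \cR_\infty$. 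Without that computation you cannot conclude $\cA = \cQ_t \cdot \cQ_{1/t}$; as written, your reduction to $\cA = \cQ_t \cdot \cR_\infty$ incorrectly discards $\cC$, and the chain of containments collapses.
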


\subsubsection{The maximal abelian extension of the rational function field over a ($1$st level) ultra-finite field.}
\label{subsec-MAE-for-1st-ultra-finite-fields}

We maintain the same notation as in the beginning of Section \ref{subsec-cyclotomic-function-fields-over-nth-level-ultra-finite-fields} and  Subsection \ref{subsubsec-cyclotomic-function-fields-for-1st-level-ultra-finite-fields}. We recall that $\fK = \prod_{s\in S}\bF_s/\cD$ is a ($1$st level) ultra-finite field, where $\bF_s$ is a finite field of $q_s$ elements and $q_s$ is a power of a prime $p_s > 0$. 

For $\cD$-almost all $s \in S$, let $\cA_s$ be the maximal abelian extension of $\fF_s = \bF_s(t)$ that is defined as in Definition \ref{def-maximal-abelian-extension-of-Fq(t)} and Theorem \ref{thm-Drinfeld-Hayes-thm-maximal-abelian-extension-version1} with the rational function field $\fF_s = \bF_s(t)$ in place of the function field $L = \bF_q(t)$.
  
  Set
\begin{align}
\label{e-def-of-the-ultra-maximal-abelian-extension-of-F-1st-level-ultra-finite-field}
\cA^{(1)} = \prod_{s\in S}\cA_s/\cD.
\end{align} 

Note that the superscript $(1)$ of $\cA^{(1)}$ in the above equation indicates the level of the ultra-finite field $\fK$.

Using Corollary \ref{cor-relation-between-maximal-abelian-extensions-of-F_s-and-F}, we immediately obtain a characterization of the maximal abelian extension of $\F = \fK(t)$.

\begin{theorem}
\label{thm-greatest-thm-an-analogue-of-KW-for-F=fK(t)}
(Analogue of the Kronecker--Weber theorem for $\F = \fK(t)$)

The maximal abelian extension of $\F = \fK(t)$ is the algebraic part of $\cA^{(1)}$ over $\F$, i.e., $\cA^{(1)}_{\alg} = \cA^{(1)}\cap \F^{\alg}$.

\end{theorem}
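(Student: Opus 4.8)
The plan is to derive Theorem \ref{thm-greatest-thm-an-analogue-of-KW-for-F=fK(t)} as an immediate application of Corollary \ref{cor-relation-between-maximal-abelian-extensions-of-F_s-and-F}, which already establishes that if $\cA_s$ is the maximal abelian extension of $\fF_s = \bF_s(t)$ for $\cD$-almost all $s \in S$ and $\cA = \prod_{s\in S}\cA_s/\cD$, then the algebraic part $\cA_{\alg} = \cA \cap \F^{\alg}$ is the maximal abelian extension of $\F = \fK(t)$. The only thing to check is that the specific field $\cA^{(1)} = \prod_{s\in S}\cA_s/\cD$ constructed in \eqref{e-def-of-the-ultra-maximal-abelian-extension-of-F-1st-level-ultra-finite-field} is exactly the ultraproduct of the maximal abelian extensions of the $\fF_s$, which is true by construction: for $\cD$-almost all $s \in S$ the field $\cA_s$ is defined as in Definition \ref{def-maximal-abelian-extension-of-Fq(t)} and Theorem \ref{thm-Drinfeld-Hayes-thm-maximal-abelian-extension-version1} with $\fF_s = \bF_s(t)$ in place of $L = \bF_q(t)$, and by the Drinfeld--Hayes theorem it is indeed the maximal abelian extension of $\fF_s$.

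First I would invoke Theorem \ref{thm-Drinfeld-Hayes-thm-maximal-abelian-extension-version1} (with $q = q_s$, $L = \fF_s$) to record that $\cA_s = \cC_s \cdot \cQ_{t, s} \cdot \cR_{\infty, s}$ is the maximal abelian extension of $\fF_s = \bF_s(t)$ for $\cD$-almost all $s \in S$ (here the subscript $s$ denotes the corresponding object built from $\fF_s$). Then I would point out that the hypothesis of Corollary \ref{cor-relation-between-maximal-abelian-extensions-of-F_s-and-F} is met verbatim: taking the $\cA_s$ to be these maximal abelian extensions, the ultraproduct $\cA = \prod_{s\in S}\cA_s/\cD$ coincides with $\cA^{(1)}$ by definition \eqref{e-def-of-the-ultra-maximal-abelian-extension-of-F-1st-level-ultra-finite-field}. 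Applying the conclusion of Corollary \ref{cor-relation-between-maximal-abelian-extensions-of-F_s-and-F} then yields that $\cA^{(1)}_{\alg} = \cA^{(1)} \cap \F^{\alg}$ is the maximal abelian extension of $\F = \fK(t)$, which is precisely the assertion of the theorem.

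There is essentially no obstacle here beyond bookkeeping, since all the substantive content has been proved earlier: Theorem \ref{thm-main-thm2-Galois-property-of-algebraic-parts} gives that $\cA^{(1)}_{\alg}$ is Galois over $\F$, Corollary \ref{cor-main-corollary-1-abelian-Galois-group-of-L-alg-and-shadows} gives that it is abelian (since each $\cA_s$ is abelian over $\fF_s$), and the maximality is exactly the content of the proof of Corollary \ref{cor-relation-between-maximal-abelian-extensions-of-F_s-and-F}, which runs as follows: for any finite abelian $\H/\F$, the $s$-th shadow $\fH_s$ of $\H$ is a finite abelian extension of $\fF_s$ by Theorem \ref{thm-main-thm3-Galois-group-structures-of-shadows}, hence $\fH_s \subset \cA_s$ for $\cD$-almost all $s$, so the ultra-shadow $\fH = \prod_{s\in S}\fH_s/\cD$ lies in $\cA^{(1)}$, and since $\fH_{\alg} = \H$ we get $\H \subset \cA^{(1)}_{\alg}$. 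The subtle point worth isolating in the write-up is that one must use the particular model of the maximal abelian extension furnished by Hayes (the compositum $\cC_s \cdot \cQ_{t, s} \cdot \cR_{\infty, s}$) rather than an abstract one, so that the ultraproduct $\cA^{(1)}$ is a well-defined ultra-field extension of $\cU(\F)$ to which the theory of Section \ref{sec-algebraic-part-of-an-ultra-finite-extension} applies; this is automatic because Hayes's construction is uniform in $q$.

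\begin{proof}
By Theorem \ref{thm-Drinfeld-Hayes-thm-maximal-abelian-extension-version1}, applied with the rational function field $\fF_s = \bF_s(t)$ in place of $L = \bF_q(t)$, the field $\cA_s$ constructed as in Definition \ref{def-maximal-abelian-extension-of-Fq(t)} is the maximal abelian extension of $\fF_s$ for $\cD$-almost all $s \in S$. By \eqref{e-def-of-the-ultra-maximal-abelian-extension-of-F-1st-level-ultra-finite-field}, $\cA^{(1)} = \prod_{s\in S}\cA_s/\cD$ is precisely the ultraproduct of these maximal abelian extensions with respect to $\cD$. Hence Corollary \ref{cor-relation-between-maximal-abelian-extensions-of-F_s-and-F} applies with $\cA = \cA^{(1)}$, and it yields that the algebraic part $\cA^{(1)}_{\alg} = \cA^{(1)} \cap \F^{\alg}$ of $\cA^{(1)}$ over $\F$ is the maximal abelian extension of $\F = \fK(t)$, as claimed.
\end{proof}
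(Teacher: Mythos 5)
Your proof is correct and follows exactly the route the paper itself takes: identify $\cA_s$ as the maximal abelian extension of $\fF_s$ via Theorem \ref{thm-Drinfeld-Hayes-thm-maximal-abelian-extension-version1} and then invoke Corollary \ref{cor-relation-between-maximal-abelian-extensions-of-F_s-and-F}. The paper states the theorem as an immediate consequence of that corollary, so your write-up (including the recap of the corollary's shadow argument) matches the intended argument.
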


\begin{remark}

For $\cD$-almost all $s \in S$, let $\cQ^{(s)}_t$ be the union of all the $a_s$-th cyclotomic function fields $\fF_s(\Lambda_{C^{(s)}}[a_s])$ for all polynomials $a_s \in \fA_s = \bF_s[t]$, where $C^{(s)}$ is the Carlitz module for $\fA_s = \bF_s[t]$ (see Subsection \ref{subsubsec-cyclotomic-function-fields-for-1st-level-ultra-finite-fields}). Let $\cQ^{(s)}_{1/t}$ be the union of all the $a_s$-th cyclotomic function fields $\fF_s(\Lambda_{C^{(s)}_{1/t}}[a_s])$ for all polynomials $a_s \in \fA_{s, 1/t} = \bF_s[1/t]$, where $C^{(s)}_{1/t}$ is the Carlitz module for $\fA_{s, 1/t} = \bF_s[1/t]$ that is defined in the same way as in Subsection \ref{subsubssec-MAE-for-Fq(t)} with $\fF_s = \bF_s(t)$ in place of $L = \bF_q(t)$.

By Theorem \ref{thm-Drinfeld-Hayes-thm-for-maximal-abelian-extension-version2}, the maximal abelian extension $\cA_s$ of $\fF_s$ is the compositum $\cQ^{(s)}_t\cdot \cQ^{(s)}_{1/t}$. Thus one obtains another characterization of the maximal abelian extension of $\F = \fK(t)$ as follows.

\begin{corollary}
\label{cor-2nd-characterization-of-MAE-of-F-1st-level-ultra-finite-field}

The algebraic part of the ultraproduct $\prod_{s\in S}\cQ^{(s)}_t\cdot \cQ^{(s)}_{1/t}/\cD$ over $\F$ is the maximal abelian extension of $\F$.

\end{corollary}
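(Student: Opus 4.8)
The plan is to deduce Corollary~\ref{cor-2nd-characterization-of-MAE-of-F-1st-level-ultra-finite-field} from Theorem~\ref{thm-greatest-thm-an-analogue-of-KW-for-F=fK(t)} together with the second characterization of the maximal abelian extension of $\fF_s$ due to Hayes (Theorem~\ref{thm-Drinfeld-Hayes-thm-for-maximal-abelian-extension-version2}). The central observation is simply that the operation of passing to the ultraproduct commutes with taking composita: for $\cD$-almost all $s\in S$ we have $\cA_s = \cQ^{(s)}_t\cdot \cQ^{(s)}_{1/t}$, and therefore the ultraproducts $\cA^{(1)} = \prod_{s\in S}\cA_s/\cD$ and $\prod_{s\in S}\cQ^{(s)}_t\cdot \cQ^{(s)}_{1/t}/\cD$ coincide as subfields of $\cU(\F)^{\sep}_{\ultra}$. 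Once this is established, the algebraic part of the two coincide as well, and Theorem~\ref{thm-greatest-thm-an-analogue-of-KW-for-F=fK(t)} identifies $\cA^{(1)}_{\alg}$ with the maximal abelian extension of $\F$, which is exactly the assertion.

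Concretely I would proceed as follows. First, fix a compatible system of separable closures so that all the fields $\cQ^{(s)}_t$, $\cQ^{(s)}_{1/t}$, $\cA_s$ live inside $\fF_s^{\sep}$ and their ultraproducts live inside $\cU(\F)^{\sep}_{\ultra}$, which is legitimate by Corollary~\ref{cor-F^sep-is-contained-in-U(F)^sep_ultra}. Second, recall that since $\bF_s$ is a finite field for $\cD$-almost all $s\in S$, the equality $\cA_s = \cQ^{(s)}_t\cdot \cQ^{(s)}_{1/t}$ holds for $\cD$-almost all $s\in S$ by Theorem~\ref{thm-Drinfeld-Hayes-thm-for-maximal-abelian-extension-version2}; call this set $S_0\in\cD$. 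Third, verify the elementary set-theoretic fact that for a family of field extensions indexed by $s$, one has $\prod_{s\in S}(E_s\cdot E'_s)/\cD = \left(\prod_{s\in S}E_s/\cD\right)\cdot \left(\prod_{s\in S}E'_s/\cD\right)$ inside any ambient ultra-field; the inclusion $\supseteq$ is immediate since both factors sit inside the left-hand side, and the inclusion $\subseteq$ follows because any element of $E_s\cdot E'_s$ is a finite rational expression (indeed a polynomial over $E'_s$ in finitely many elements of $E_s$ or vice versa) whose ``shape'' can be taken uniform in $s$ after passing to a set in $\cD$, by Lemma~\ref{lem-at-least-one-set-in-th-union-of-sets-is-in-D}. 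Restricting to $s\in S_0$ then gives $\cA^{(1)} = \prod_{s\in S}\cQ^{(s)}_t\cdot\cQ^{(s)}_{1/t}/\cD$.

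Fourth, and finally, take algebraic parts over $\F$: since $\cA^{(1)} = \prod_{s\in S}\cQ^{(s)}_t\cdot\cQ^{(s)}_{1/t}/\cD$ as subfields of $\cU(\F)^{\sep}_{\ultra}$, intersecting with $\F^{\alg}$ yields
\begin{align*}
\cA^{(1)}_{\alg} = \left(\prod_{s\in S}\cQ^{(s)}_t\cdot\cQ^{(s)}_{1/t}/\cD\right)\cap \F^{\alg},
\end{align*}
and by Theorem~\ref{thm-greatest-thm-an-analogue-of-KW-for-F=fK(t)} the left-hand side is the maximal abelian extension of $\F$. Hence the algebraic part of $\prod_{s\in S}\cQ^{(s)}_t\cdot\cQ^{(s)}_{1/t}/\cD$ over $\F$ is the maximal abelian extension of $\F$, as claimed.

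The only step requiring genuine care — and the one I expect to be the main obstacle — is the third step, the commutation of ultraproducts with composita. The subtlety is that $E_s\cdot E'_s$ is the union of the finitely generated subextensions $E_s(x_1,\dots,x_k)$ with $x_i\in E'_s$, and an element of $\prod_s(E_s\cdot E'_s)/\cD$ is represented by a tuple $(z_s)$ with $z_s\in E_s(x_{1,s},\dots,x_{k_s,s})$ where a priori $k_s$ may be unbounded. One must argue that after intersecting with $\F^{\alg}$ — or more robustly, by working with the degrees $[\F(z_s):\F]$ or $[E_s(z_s):E_s]$ — the relevant data becomes uniform on a set in $\cD$; here Lemma~\ref{lem-elementary-lemma0}, Lemma~\ref{rem-irreducibility-is-the-same-in-F-and-U(F)}, and the machinery of shadows from Subsection~\ref{subsec-shadows-and-ultra-shadows} can be invoked to reduce to finite subextensions with bounded invariants. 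In fact, since the statement we actually need only concerns the \emph{algebraic part} over $\F$, it suffices to show that every finite abelian subextension $\H/\F$ of $\F^{\alg}$ contained in $\cA^{(1)}_{\alg}$ already appears in the algebraic part of $\prod_s\cQ^{(s)}_t\cdot\cQ^{(s)}_{1/t}/\cD$; using the $s$-th shadow $\fH_s$ of $\H$ (Definition~\ref{def-shadows-of-algebraic-extensions}) and the fact that $\fH_s$, being a finite abelian extension of $\fF_s$, lies inside $\cA_s = \cQ^{(s)}_t\cdot\cQ^{(s)}_{1/t}$ for $\cD$-almost all $s\in S$ (Theorem~\ref{thm-main-thm3-Galois-group-structures-of-shadows} and Theorem~\ref{thm-Drinfeld-Hayes-thm-for-maximal-abelian-extension-version2}), one gets $\fH = \prod_s\fH_s/\cD \subset \prod_s\cQ^{(s)}_t\cdot\cQ^{(s)}_{1/t}/\cD$, whence $\H = \fH_{\alg}$ lies in its algebraic part. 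This shadow-based argument neatly sidesteps the uniformity issue and mirrors exactly the proof of Corollary~\ref{cor-relation-between-maximal-abelian-extensions-of-F_s-and-F}.
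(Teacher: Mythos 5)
Your proposal reaches the right conclusion by the same route the paper implicitly uses: apply Theorem \ref{thm-Drinfeld-Hayes-thm-for-maximal-abelian-extension-version2} to identify $\cA_s = \cQ^{(s)}_t\cdot \cQ^{(s)}_{1/t}$ for $\cD$-almost all $s$, and then invoke Theorem \ref{thm-greatest-thm-an-analogue-of-KW-for-F=fK(t)}. However, the ``commutation of ultraproducts with composita'' concern that occupies your third step is a red herring. The corollary concerns the ultraproduct of the composita, $\prod_{s\in S}\bigl(\cQ^{(s)}_t\cdot\cQ^{(s)}_{1/t}\bigr)/\cD$, not the compositum of the ultraproducts $\bigl(\prod_{s}\cQ^{(s)}_t/\cD\bigr)\cdot\bigl(\prod_{s}\cQ^{(s)}_{1/t}/\cD\bigr)$; the identity you try to prove in step three relates these two objects, but that identity is never used. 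What actually does the work is the trivial fact that two families of subsets of $\fF_s^{\sep}$ that agree for $\cD$-almost all $s$ have literally the same ultraproduct inside $\cU(\F)^{\sep}_{\ultra}$. Since $\cA_s = \cQ^{(s)}_t\cdot\cQ^{(s)}_{1/t}$ on a set $S_0\in\cD$, one has $\cA^{(1)} = \prod_{s\in S}\bigl(\cQ^{(s)}_t\cdot\cQ^{(s)}_{1/t}\bigr)/\cD$ with no further argument, and Theorem \ref{thm-greatest-thm-an-analogue-of-KW-for-F=fK(t)} then gives the result immediately. Your closing shadow-based argument also works and mirrors Corollary \ref{cor-relation-between-maximal-abelian-extensions-of-F_s-and-F}, but it is extra machinery brought in to solve a problem that was never there; the corollary is a one-liner once you resist the temptation to split the compositum.
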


\end{remark}

\subsubsection{The maximal abelian extension of the rational function field over an $n$-th level ultra-finite field $\fK^{(n)} = \prod_{s\in S}\bF_s^{(n - 1)}/\cD$.}
\label{subsec-MAE-for-n-th-ultra-finite-fields}

In this section, we characterize the maximal abelian extension of the rational function field $\F^{(n)} = \fK^{(n)}(t)$ over an $n$-th level ultra-finite field $\fK^{(n)} = \prod_{s\in S}\bF_s^{(n - 1)}/\cD$ inductively on $n \ge 1$. We maintain the same notation as in Subsection \ref{subsubsec-Cyclotomic-function-fields-for-function-fields-over-n-th-level-ultra-finite-fields}.

Let $n$ be a positive integer such that $n \ge 1$. When $n = 1$, we know from Theorem \ref{thm-greatest-thm-an-analogue-of-KW-for-F=fK(t)} that $\cA^{(1)}_{\alg}$ is the maximal abelian extension of $\F = \fK(t)$. 

Assume that the maximal abelian extension, denoted by $\cA^{(n - 1)}_{\alg}$, of the rational function field $\F^{(n - 1)} = \fK^{(n - 1)}(t)$ over an arbitrary $(n - 1)$-th level ultra-finite field $\fK^{(n - 1)}$ has already been known for some integer $n > 1$. 

We consider the rational function field $\F^{(n)} = \fK^{(n)}(t)$, where $\fK^{(n)} = \prod_{s\in S}\bF_s^{(n -1)}/\cD$ is an $n$-th level ultra-finite field, and $\bF_s^{(n - 1)}$ is an $(n - 1)$-th level ultra-finite field for $\cD$-almost all $s \in S$. 

By induction hypothesis, let $\cA^{(n - 1)}_s$ denote the maximal abelian extension of $\fF_s^{(n - 1)} = \bF_s^{(n - 1)}(t)$ for $\cD$-almost all $s \in S$, and set
\begin{align}
\label{e-the-ultra-maximal-abelian-extension-of-function-field-over-nth-level-ultra-finite-field}
\cA^{(n)} = \prod_{s\in S}\cA^{(n - 1)}_s/\cD.
\end{align}

Using Corollary \ref{cor-relation-between-maximal-abelian-extensions-of-F_s-and-F}, we obtain an analogue of the Kronecker--Weber theorem for the rational function field $\F^{(n)} = \fK^{(n)}(t)$.

\begin{theorem}
\label{thm-the-greatest-thm-KW-thm-for-function-fields-over-nth-level-ultra-finite-fields}
(An analogue of the Kronecker--Weber theorem for the rational function field over an arbitrary $n$-th level ultra-finite field)

The algebraic part of $\cA^{(n)}$ over $\F^{(n)}$, i.e., $\cA^{(n)}_{\alg} = \cA^{(n)}\cap \F^{(n),\alg}$, is the maximal abelian extension of $\F^{(n)} = \fK^{(n)}(t)$, where $\cA^{(n)}$ is defined by (\ref{e-the-ultra-maximal-abelian-extension-of-function-field-over-nth-level-ultra-finite-field}).

\end{theorem}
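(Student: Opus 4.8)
The plan is to reduce the claim for $\F^{(n)}$ to a direct application of Corollary~\ref{cor-relation-between-maximal-abelian-extensions-of-F_s-and-F}, exactly as was done at level $n=1$ in Theorem~\ref{thm-greatest-thm-an-analogue-of-KW-for-F=fK(t)}. The essential point is that $\cA^{(n)}$ is, by construction~(\ref{e-the-ultra-maximal-abelian-extension-of-function-field-over-nth-level-ultra-finite-field}), the ultraproduct $\prod_{s\in S}\cA^{(n-1)}_s/\cD$ of the maximal abelian extensions $\cA^{(n-1)}_s$ of the function fields $\fF_s^{(n-1)}=\bF_s^{(n-1)}(t)$; here $\fF_s^{(n-1)}$ plays the role of the family $(\fF_s)_{s\in S}$ and $\F^{(n)}=\fK^{(n)}(t)$ plays the role of $\F$ in Section~\ref{sec-algebraic-part-of-an-ultra-finite-extension}. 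The induction hypothesis supplies that $\cA^{(n-1)}_s$ \emph{is} the maximal abelian extension of $\fF_s^{(n-1)}$ for $\cD$-almost all $s\in S$. Hence Corollary~\ref{cor-relation-between-maximal-abelian-extensions-of-F_s-and-F}, applied with $\bF_s$ replaced by $\bF_s^{(n-1)}$ and $\cA_s$ replaced by $\cA^{(n-1)}_s$, yields immediately that $\cA^{(n)}_{\alg}=\cA^{(n)}\cap\F^{(n),\alg}$ is the maximal abelian extension of $\F^{(n)}$.

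First I would set up the induction on $n\ge 1$. The base case $n=1$ is precisely Theorem~\ref{thm-greatest-thm-an-analogue-of-KW-for-F=fK(t)}, which itself rests on Corollary~\ref{cor-relation-between-maximal-abelian-extensions-of-F_s-and-F} with the finite fields $\bF_s$ as constant fields. For the inductive step, assuming the statement for $n-1$ (over an \emph{arbitrary} $(n-1)$-th level ultra-finite field), I would fix an $n$-th level ultra-finite field $\fK^{(n)}=\prod_{s\in S}\bF_s^{(n-1)}/\cD$ with $\bF_s^{(n-1)}$ an $(n-1)$-th level ultra-finite field for $\cD$-almost all $s$. Since the induction hypothesis holds for each $\bF_s^{(n-1)}$, the field $\cA^{(n-1)}_s$ appearing in~(\ref{e-the-ultra-maximal-abelian-extension-of-function-field-over-nth-level-ultra-finite-field}) is the maximal abelian extension of $\fF_s^{(n-1)}$ for $\cD$-almost all $s\in S$. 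One must check the mild hypotheses needed to invoke the framework of Section~\ref{sec-algebraic-part-of-an-ultra-finite-extension}: the constant fields $\bF_s^{(n-1)}$ are quasi-finite (Corollary~\ref{cor-ultra-finite-fields-are-quasi-finite}), hence in particular perfect procyclic; and by Proposition~\ref{prop-F^n-sep-is-contained-in-U(F^n)^sep_ultra} the relevant separable-closure containment $\F^{(n),\sep}\subset\cU(\F^{(n)})^{\sep}_{\ultra}$ holds, so the theory of shadows used in the proof of Corollary~\ref{cor-relation-between-maximal-abelian-extensions-of-F_s-and-F} applies verbatim with the superscripted notation. Then Corollary~\ref{cor-relation-between-maximal-abelian-extensions-of-F_s-and-F} gives that $\cA^{(n)}_{\alg}=\cA^{(n)}\cap\F^{(n),\alg}$ is the maximal abelian extension of $\F^{(n)}$, completing the step.

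The only genuinely substantive issue to address is a \emph{characteristic} subtlety: the framework of Section~\ref{sec-algebraic-part-of-an-ultra-finite-extension} produces Galois extensions that are automatically separable either when the $\fL_s$ are separable over $\fF_s$ or when $\F$ has characteristic $0$ (see parts~(ii) and~(iii) of Theorem~\ref{thm-main-thm1-Algebraic-part-of-L-is-a-finite-separable-extension}). For an arbitrary $n$-th level ultra-finite field $\fK^{(n)}$, the characteristic may be a fixed prime $p>0$, so I should note that the maximal abelian extension $\cA^{(n-1)}_s$ of $\fF_s^{(n-1)}$ arising from the Drinfeld--Hayes construction (Definition~\ref{def-maximal-abelian-extension-of-Fq(t)}, Theorem~\ref{thm-Drinfeld-Hayes-thm-maximal-abelian-extension-version1}) is a separable extension of $\fF_s^{(n-1)}$ --- indeed it is contained in $\fF_s^{(n-1),\sep}$ since it is built from constant-field extensions, cyclotomic function fields (which are separable, being Galois), and the tamely-built fields $\cR_n$. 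Hence part~(ii) of Theorem~\ref{thm-main-thm1-Algebraic-part-of-L-is-a-finite-separable-extension} applies, $\cA^{(n)}_{\alg}=\cA^{(n)}_{\sep}$, and the abelianness is preserved by Corollary~\ref{cor-main-corollary-1-abelian-Galois-group-of-L-alg-and-shadows}. This separability remark is the main obstacle, but it is a bookkeeping point rather than a new difficulty: once it is in place, the proof is a verbatim transcription of the $n=1$ argument with superscripts added, and the induction closes.
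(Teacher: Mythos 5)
Your proof is correct and takes essentially the same approach as the paper: induction on $n$ with base case Theorem~\ref{thm-greatest-thm-an-analogue-of-KW-for-F=fK(t)} and inductive step a direct application of Corollary~\ref{cor-relation-between-maximal-abelian-extensions-of-F_s-and-F} to the family $\cA^{(n-1)}_s$. The additional hypothesis-checking you supply (perfect procyclic, separability) is careful but not strictly needed here---Corollary~\ref{cor-relation-between-maximal-abelian-extensions-of-F_s-and-F} lives in the general framework of Section~\ref{sec-algebraic-part-of-an-ultra-finite-extension} with arbitrary constant fields, and maximal abelian extensions are Galois hence automatically separable---though it does no harm.
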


By Theorems \ref{thm-greatest-thm-an-analogue-of-KW-for-F=fK(t)} and \ref{thm-the-greatest-thm-KW-thm-for-function-fields-over-nth-level-ultra-finite-fields}, we have explicitly described the maximal abelian extension of the rational function field over an arbitrary $n$-th level ultra-finite field for all $n \ge 1$, which can be viewed as an analogue of the Kronecker--Weber theorem for the rational function field over an arbitrary $n$-th level ultra-finite field for all $n \ge 1$.

\section*{Acknowledgements}

I am very grateful to David Harbater for his generosity with his time in answering many questions of mine in Galois theory, and for explaining his work on Abhyankar's conjecture and on the Inverse Galois Problem to me on numerous occasions. I would also like to thank him for pointing out to me many relevant references during the preparation of this paper.

\end{document}